\DeclareMathOperator{\Ad}{Ad}
\DeclareMathOperator{\ad}{ad}
\DeclareMathOperator{\End}{End}
\DeclareMathOperator{\Aut}{Aut}
\DeclareMathOperator{\SO}{SO}
\DeclareMathOperator{\SU}{SU}
\DeclareMathOperator{\G}{G}
\DeclareMathOperator{\Sp}{Sp}
\DeclareMathOperator{\U}{U}
\DeclareMathOperator{\tr}{tr}
\DeclareMathOperator{\Spin}{Spin}
\DeclareMathOperator{\Id}{Id}
\DeclareMathOperator{\inv}{inv}
\DeclareMathOperator{\Span}{span}
\DeclareMathOperator{\AS}{AS}
\DeclareMathOperator{\vectorial}{vec}
\DeclareMathOperator{\totallyskew}{skew}
\DeclareMathOperator{\CT}{CT}
\theoremstyle{plain}
\newtheorem{theorem}{Theorem}[section]
\newtheorem*{theorem*}{Theorem}
\newtheorem{lemma}[theorem]{Lemma}
\newtheorem{proposition}[theorem]{Proposition}
\newtheorem{corollary}[theorem]{Corollary}
\theoremstyle{definition}
\newtheorem{remark}[theorem]{Remark}
\newtheorem{example}[theorem]{Example}
\newcommand{\R}{\mathbb{R}}
\newcommand{\C}{\mathbb{C}}
\newcommand{\tad}{{\text{3-}(\alpha,\delta)\text{-Sasaki}}}
\renewcommand{\H}{\mathbb{H}}
\renewcommand{\:}{\colon}
\newcommand{\vect}{\mathfrak{X}}
\newcommand{\mf}{\mathfrak{m}_F}
\newcommand{\mb}{\mathfrak{m}_B}
\newcommand{\iso}{{\;\stackrel{_\sim}{\to}\;}}
\newtcolorbox{mybox}
{colframe = purple!25,
  colback  = purple!10,
  coltitle = purple!20!black,  
  title    = Idea}
\newtcolorbox{mybox2}
{colframe = red!75,
  colback  = red!10,
  coltitle = red!20!white,  
  title    = Problem}
\newtcolorbox{mybox3}
{colframe = blue!75,
  colback  = blue!10,
  coltitle = blue!20!white,  
  title    = To be completed/added later}  
\begin{document}

\title{Invariant Spinors on Homogeneous Spheres}
\author{Ilka Agricola, Jordan Hofmann, and Marie-Am\'elie Lawn}
\begin{abstract}
	
	Using the characterization of the spin representation in terms of exterior forms, we give a complete classification of invariant spinors on the nine homogeneous realizations of the sphere $S^n$. In each of the cases we determine the dimension of the space of such spinors, give their explicit description, and study the underlying related geometric structures depending on the metric. We recover some known results in the Sasaki and 3-Sasaki cases and find several new examples: in particular we give the first known examples of generalized Killing spinors with four distinct eigenvalues. 
\end{abstract}

\maketitle


{\it Keywords:} Homogeneous spaces; invariant spinors; Killing spinors; generalized Killing spinors; Sasaki manifolds; 3-Sasakian manifolds; $\tad$ manifolds; $G$-structures; connections with torsion. \\\\
\noindent
{\it 2020 Mathematics Subject Classification:} 22E46, 22F30, 53C10, 53C25, 53C27\\\\
\noindent


\tableofcontents
\section{Introduction}

Special spinor fields on Riemannian manifolds are linked in many surprising ways to specific geometric structures. In particular, spinors parallelized by a connection are of remarkable interest. One of the first results in this direction is the work of Wang in \cite{Wang}, where he gives a classification of manifolds admitting Riemannian parallel spinors (i.\,e.\,parallel for the Levi-Civita connection) in terms of their possible holonomy groups. The fundamental idea behind this result is that the space of parallel spinors corresponds to the subspace of the spinor representation on which the lift of the holonomy group to the spin group acts trivially. In particular, manifolds admitting such spinors are necessarily Ricci-flat, so that for example the only symmetric spaces with parallel spinors are the flat ones. 

This severe restriction on the geometry of the space leads one to consider other connections on manifolds, e.g. those which are metric with non-vanishing torsion. This type of connection is of particular interest in the context of $G$-structures, where they arise naturally as the characteristic connections of non-integrable geometries. Famous examples are given by half-flat $\mathrm{SU}(3)$ manifolds and cocalibrated $\G_2$-manifolds, which in turn generalize the classical examples of Riemannian manifolds admitting Killing spinors \cite{Friedrich80,BFGK}. Unfortunately, because many classical results available in Riemannian holonomy theory fail to hold for $G$-connections with non-vanishing torsion, the theory of non-integrable geometries is not as well understood, and  classifications like the one of Wang are out of reach in this case. However various results in the past twenty years characterizing different geometries with torsion in terms of special spinors  (see e.g. \cite{dim67, hypo}) have proven the power of the spin geometry approach even in this context. An example of this are generalized Killing spinors, i.e. spinors $\varphi$ which are solutions of the equation $\nabla^{g}_X\varphi=A(X)\cdot\varphi,$ 
where  $\nabla^{g}$ and $\cdot$ are respectively the  spin Levi-Civita connection and the Clifford multiplication on $M$, and $A$ is a symmetric  endomorphism. Such spinors define a $G$-structure on the manifold, where $G$ is the stabilizer of the spinor at some point. Indeed due to the equation it satisfies, $\varphi$ is necessarily parallel for an appropriate connection depending on the endomorphism $A$, which furthermore determines the intrinsic torsion of this $G$-structure; the fact that $A$ is assumed to be symmetric implies that some component of the intrinsic torsion vanishes. This construction is very useful especially in low dimensions. It is well-known for instance that the existence of generalized Killing spinors is in one-to-one correspondence with half-flat $\SU(3)$-structures in dimension 6 and with cocalibrated $\G_2$-structures in dimension 7 \cite{CS02, CS06, dim67}. But, again, despite some progress in this direction \cite{GKSEinstein, GKSspheres} no classification of generalized Killing spinors is available at the moment, even in the simpler case of homogeneous spaces like the sphere. On the other hand, lots of examples of non-integrable geometries are given by (non-symmetric) homogeneous spaces \cite{BFGK, finoalmostcontact, nearly_parallel_g2, kathhabil, SRNI}. It is therefore a sensible question to ask if, in the case of homogeneous spaces, it is possible to classify spinors parallelized by some suitable and natural connection on the manifold. 

In the case of a homogeneous space $M=G/H$ with a reductive action of the group $G$ and a fixed reductive $Ad|_H$-invariant decomposition $\mathfrak{g}=\mathfrak{m}\oplus\mathfrak{h}$, there exists a distinguished, so-called Ambrose-Singer (or canonical) connection parallelizing G-invariant tensor fields on $M$, and in particular its torsion and curvature tensors. If the manifold admits a spin structure which is $G$-invariant, then $G$ acts on the spinor fields, and the parallel spinors of the Ambrose-Singer connection are precisely the $G$-invariant spinors. These are equivalent to  $H$-invariant elements of the spin representation and can be computed using representation theory arguments. However there is to our knowledge no systematic study of such spinors on homogeneous spaces, although they appear naturally in literature as illustrated above.
 
A particularly interesting case are the spheres. Due to their high level of symmetry they can be realized according to exactly nine different groups acting transitively and effectively on them, which were classified by Montgomery and Samelson \cite{MontgomerySamelson43}. In \cite{invariantspinstructures} the authors identify which of these actions leave the unique spin structure of the sphere invariant. A summary of these actions, their isotropy groups, and the existence of a G-invariant spin structure for each case is given in the following table:
\begin{table}[h!] 
\centering
\caption{Homogeneous Spheres and $G$-Invariant Spin Structures}
\begin{tabular}{ |l||l|l|l| }
	\hline
	Lie group & Manifold & Isotropy Subgroup & $G$-inv. spin struct. \cite{invariantspinstructures} \\
	\hline
	$\SO(n+1)$   &    $S^{n}$    &  $\SO(n)$   & No\\
	$\U(n+1)$   &     $S^{2n+1}$    & $\U(n)$  & No\\
	$\SU(n+1)$   &    $S^{2n+1}$    & $\SU(n)$ & Yes\\
	$\Sp(n)$   &    $S^{4n-1}$    & $\Sp(n-1)$ & Yes\\
$\Sp(n)\Sp(1)$   &$S^{4n-1}$    & $\Sp(n-1)\Sp(1)$ & $n$ even\\
	$\Sp(n)\U(1)$   &$S^{4n-1}$    & $\Sp(n-1)\U(1)$ & $n$ even\\
	$\G_2$   &$S^6$    & $\SU(3)$&  Yes\\
	$\Spin(7)$   &$S^7$    & $\G_2$ & Yes\\
	$\Spin(9)$   &$S^{15}$    & $\Spin(7)$&  Yes\\
	\hline
\end{tabular}
\label{Tab:homogeneousspheres}
\end{table}

It is worth noting that in all of the non-lifting cases, there exists a unique connected double covering, up to isomorphism, which acts non effectively on the sphere. In the cases where the spin structure  is not $G$-invariant, we can therefore take this double covering instead of the original effective action and this does preserve the spin structure \cite{invariantspinstructures}. 

Of course, in general, and even if the necessary condition of the existence of a G-invariant spin structure is satisfied, $G$-invariant spinors need not exist. Our goal is to compute the space of these spinors in each of the nine cases, find the equation they satisfy in terms of the Levi-Civita connection, and deduce properties of the geometric structure of the manifold. To that end, we use an explicit realization of the spin representation in terms of exterior forms, similar to \cite{GoodmanWallach}. This approach can be found in \cite{Wang}, though not very explicitly, and it has so far not been widely employed elsewhere in the spin geometry literature, with many authors instead using the traditional viewpoint found in \cite{BFGK, FriedrichBook}. The exterior form viewpoint greatly simplifies calculations, especially in larger dimensions, and allows us to present spinors of interest in a much more readable way--we thus believe that this alternative approach is a computational technique that will prove useful in other applications of spin geometry as well. We would like to emphasize that, in contrast to the realization in terms of exterior forms found in Section 3.1 of \cite{Gau1997}, we make no assumption about the existence of an almost complex structure. We also note that the dimension of the space of invariant spinors may in some cases be deduced from the main proposition in \cite{Wang}.
In each case we give an explicit construction of the invariant spinors and discuss the related geometric structures. We summarize our main results in the following theorem:
\begin{theorem*}
	The dimensions of the spaces of invariant spinors for each realization of the sphere as a homogeneous space are given in Table \ref{Tab:main_results_table}. For the realizations admitting non-trivial invariant spinors, we find:\emph{
	\begin{enumerate}[(1)]
		\item A pair of linearly independent generalized Killing spinors with two eigenvalues on $(S^{2n+1}=\SU(n+1)/\SU(n),g_{a,b})$, and a related invariant $\alpha$-Sasakian structure for $\alpha=\frac{\sqrt{a(n+1)}}{2b\sqrt{n}}$;
		\item A $2n$-dimensional space of invariant spinors on $(S^{4n-1}=\Sp(n)/\Sp(n-1),g_{\vec{a}})$, expressed in terms of the structure tensors of the invariant $3$-Sasakian structure;
		\begin{itemize}
			\item For $n=2$, four linearly independent invariant generalized Killing spinors with four eigenvalues;
		\end{itemize}
	\item A generalized Killing spinor with two eigenvalues on $(S^7=\Sp(2)\Sp(1)/\Sp(1)\Sp(1),g_{a,b})$;
	\item An invariant $\alpha$-Sasakian structure on $(S^{4n-1}=\Sp(n)\U(1)/\Sp(n-1)\U(1), g_{a,b,c})$ for $\alpha=\frac{a}{b\Omega} =\frac{a}{2c\Omega} $, together with a pair of linearly independent invariant spinors not associated to the $\alpha$-Sasakian structure and which, for $n>2$ are not generalized Killing spinors;
		\begin{itemize}
		\item For $n=2$, a pair of linearly independent invariant generalized Killing spinors with three eigenvalues;
	\end{itemize}
\item An invariant Killing spinor (resp. a pair of linearly independent invariant Killing spinors) on the round sphere $S^6=\G_2/\SU(3)$ (resp. the round sphere $S^7=\Spin(7)/\G_2$);
\item An invariant spinor on $(S^{15}=\Spin(9)/\Spin(7),g_{a,b})$ satisfying a differential equation depending on the $3$-form determined by the spinor via the squaring construction.
	\end{enumerate}
}
\end{theorem*}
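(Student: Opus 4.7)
The plan is to treat the nine cases of Table~\ref{Tab:homogeneousspheres} uniformly via the representation-theoretic identification of $G$-invariant spinors on $M = G/H$ with the space $(\Delta_n)^{\tilde H}$ of $\tilde H$-fixed vectors in the spin representation, where $\tilde H$ denotes the lift of $H$ to $\Spin(n)$. As announced in the introduction, I would realize $\Delta_n$ concretely as the exterior algebra of a maximal isotropic subspace of the complexified tangent space, so that both the identification of fixed vectors and the subsequent Clifford computations can be performed in the language of differential forms.

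First, for each line of Table~\ref{Tab:homogeneousspheres} with a $G$-invariant spin structure (or its double cover where required), I would decompose $\Delta_n$ as an $\tilde H$-module using standard branching rules. For the classical isotropies $\SU(n)$, $\Sp(n-1)$, $\Sp(n-1)\Sp(1)$, $\Sp(n-1)\U(1)$ this is a direct calculation in the exterior algebra; for the exceptional isotropies $\SU(3) \subset \G_2$, $\G_2 \subset \Spin(7)$, and $\Spin(7) \subset \Spin(9)$ one uses the classical octonionic/triality embeddings. The dimensions recorded in Table~\ref{Tab:main_results_table} then come out as the multiplicities of the trivial summands, and explicit generators can be written down as specific exterior forms built from the invariant K\"ahler, contact, and hyperk\"ahler structures naturally present on each space.

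The heart of the theorem is the second step: for each case admitting a nontrivial invariant spinor $\varphi$, determine the equation satisfied by $\nabla^g_X \varphi$ in terms of the chosen homogeneous metric in the family $g_{a,b}$, $g_{\vec a}$, or $g_{a,b,c}$. Since $\varphi$ is $\nabla^c$-parallel for the Ambrose--Singer canonical connection, and the spinor Levi-Civita connection differs from $\nabla^c$ by the Clifford action of half the contorsion tensor, the computation reduces to evaluating that Clifford action on the explicit exterior-form representative of $\varphi$. The metric parameters enter only through the expression of the torsion in terms of brackets on $\mathfrak{m}$, so the calculation is essentially algebraic once the form model is in place. Reading off the resulting expression case by case will give the geometric conclusions stated in (1)--(6); matching the resulting endomorphism $A$ with the known dictionaries of \cite{CS02,CS06,dim67,hypo} identifies the $\alpha$-Sasaki, $3$-Sasaki, Killing, and generalized Killing structures listed.

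The main obstacle, I expect, is the $\Spin(9)/\Spin(7)=S^{15}$ case of item~(6): the invariant spinor is unique but its Levi-Civita derivative is not of generalized Killing type. Instead, the right-hand side involves a $3$-form extracted from $\varphi$ itself via the squaring construction, and casting this in closed form requires pushing the exterior-form model through the rather rigid $\Spin(9)$-geometry and recognizing the resulting Clifford expression as Clifford multiplication by that $3$-form. A secondary subtlety appears in item~(4) for $n>2$, where one must show that the two ``extra'' invariant spinors, though they exist, fail the symmetry requirement $A=A^{*}$ needed to qualify as generalized Killing spinors; this is a pointwise computation that nonetheless demands the full contorsion calculation to rule out symmetry. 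For the small-dimensional exceptional sub-cases ($n=2$ in items (2) and (4)), the branching produces additional invariant summands beyond the generic pattern, and extracting the precise eigenvalue count (four and three, respectively) requires a careful diagonalization of $A$ in the enlarged trivial isotypic component.
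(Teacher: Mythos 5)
Your overall strategy matches the paper's: both use the exterior-algebra realization of the spin module on $\Lambda^{\bullet} L'$, identify $G$-invariant spinors with the trivial $\widetilde{H}$-isotypic component via the branching of $\Delta_n$ under the isotropy lift, and then differentiate explicit exterior-form representatives using the Nomizu map $\Uplambda^g$ of the Levi-Civita connection (equivalently, the contorsion relative to the Ambrose--Singer connection, whose Nomizu map vanishes so that invariant spinors are $\nabla^{\AS}$-parallel). The case-by-case organization, the branching computations for the quaternionic and exceptional isotropies, the squaring-construction description for $S^{15}$, and the special treatment of $n=2$ are all as in the paper.

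There is, however, one point where your plan would run into trouble as stated. In item (4) for $n>2$, you propose to show the extra invariant spinors ``fail the symmetry requirement $A=A^*$''. What actually goes wrong (Theorem \ref{u1twistedGKS} in the paper) is more fundamental: for $n>2$ the spinor $\widetilde{\Uplambda}^{g_{a,b,c}}(e_{4p})\cdot\psi$ cannot be written as $v\cdot\psi$ for \emph{any} tangent vector $v$ unless $\psi=0$. One sees this by comparing the exterior degrees appearing in the Nomizu expression against those in the Clifford orbit $\{v\cdot\psi\}$ of $\psi$; the degree mismatch forces $\psi=0$. So no candidate endomorphism $A$ exists at all, symmetric or not --- the Levi-Civita derivative simply leaves the Clifford orbit of $\psi$. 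If you proceed by first solving $\nabla^g_X\psi = A(X)\cdot\psi$ for a candidate $A$ and only then checking $A=A^*$, you will find the first step already fails; you should frame the obstruction as a degree count rather than a symmetry test. The rest of your outline, including the $n=2$ diagonalizations in items (2) and (4) and the $S^{15}$ subtlety, is on track.
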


\begin{table}[h!] 
	\centering
	\caption{Invariant Spinors and Geometric Structures on Homogeneous Spheres}
	\begin{tabular}{ |l||l|l|l| }
		\hline
		$G$  &  $\dim_{\C}\Sigma_{\inv}$ & Notable Spinors & Geometric Structures \\
		\hline
		$\SO(n+1)$         & $0$ &  & \\
		$\U(n+1)$       & $0$ &  & \\
		$\SU(n+1)$      & $2$ & generalized Killing & $\alpha$-Sasakian ({\tiny $\alpha = \frac{\sqrt{a(n+1)}}{2b\sqrt{n}}$}) \\
		$\Sp(n)$        & $2n$ & deformed Killing   & $\tad$   \\
		$\Sp(n)\Sp(1)$      & $1$ ($n=2$), $0$ ($n\neq 2$) & generalized Killing ($n=2$) & cocalibrated $\G_2$ ($n=2$)\\
		$\Sp(n)\U(1)$      & $2$ ($n$ even), $0$ ($n$ odd) & generalized Killing ($n=2$) & $\alpha$-Sasakian ($\frac{a}{b\Omega} = \frac{a}{2c\Omega}=\alpha$) \\
		$\G_2$    &   $2$ & Killing & nearly K\"{a}hler \\
		$\Spin(7)$       &  $1$ & Killing & nearly parallel $\G_2$ \\
		$\Spin(9)$     &  $1$ &  & \\
		\hline
	\end{tabular}
	\label{Tab:main_results_table}
\end{table}

The paper is organized as follows. In Section 2 we introduce the necessary background and definitions related to homogeneous spin structures, invariant metric connections, and connections with torsion. In Section 3 we treat the classical spheres with isometry groups defined over $\R$ or $\C$. These cases are simpler from the perspective of spin geometry, with invariant spinors only in the case $G=\SU(n+1)$. The invariant spinors in this case are generalized Killing spinors, and may be viewed as deformations of the invariant Killing spinors carried by the round (Sasakian) metric. In Section 4 we provide a unified approach to the three families with isometry groups defined over $\mathbb{H}$. We compare the resulting spinors with previously known results for Sasakian, 3-Sasakian, $\tad$, and $\G_2$ spaces, and provide to our knowledge the first examples of generalized Killing spinors with four eigenvalues. In Section 5 we provide a unified construction of the invariant spinors for the exceptional cases $G= \G_2$, $\Spin(7)$, $\Spin(9)$. In each case we also calculate explicitly the torsion of the Ambrose-Singer connection and its projection onto the space of totally skew-symmetric torsion tensors, as well as any relevant invariant differential forms.
To conclude, in Section 6 we consider homogeneous realizations of the round metric, and discuss the existence of invariant generalized Killing spinors. These are compared to the previous results of Moroianu and Semmelmann from \cite{GKSEinstein,GKSspheres}. 

\textbf{Acknowledgments:} The authors would like to thank Travis Schedler and Leander Stecker for helpful discussions, and Uwe Semmelmann for pointing out to us Remarks \ref{Uwe1} and \ref{Uwe2}. We would also like to thank the referee for their detailed and very helpful comments. This work was supported by the Engineering and Physical Sciences Research Council [EP/L015234/1]; the EPSRC Centre for Doctoral Training in Geometry and Number Theory (The London School of Geometry and Number Theory); University College London; King's College London; and Imperial College London.

\section{Preliminaries}
For a detailed introduction to spin geometry we refer to the classical monographies \cite{LM, BFGK, FriedrichBook}. For basic definitions and properties of (3-)contact, (3-)Sasakian, $\tad$ structures we recommend \cite{SasakianGeometry,3str}.
\subsection{Understanding the spin representation--the Lagrangian subspace approach}\label{understandingspinrep}
In order to understand spin structures we need to look at representations of the Spin group. For the sake of completeness and because this somewhat non-standard approach is crucial to our work, we will recall it here. We will deal only with complex representations in this survey and therefore consider an $n$-dimensional complex vector space $V$ together with a non-degenerate symmetric bilinear form $\beta$. Assuming first that $n=2l$, there exists a decomposition of the form
\begin{align}
\label{isotropic_decomposition_even} V= L\oplus L',
\end{align}
where $L$ and $L'$ are Lagrangians (i.e. $\beta$-isotropic subspaces of maximal dimension $l$). The bilinear form $\beta$ furnishes an isomorphism $L' \cong L^*$ via $l' \mapsto \beta(l',-)$. If instead $n=2l+1$, there is a decomposition
\begin{align}
V= L\oplus L'\oplus \mathbb{C}_0, \label{isotropic_decomposition_odd}
\end{align}
with $L$ and $L'$ Lagrangians with respect to $\beta$, $L \oplus L'$ nondegenerate with respect to $\beta$, and $\mathbb{C}_0 = (L \oplus L')^\perp$ the 1-dimensional $\beta$-orthogonal complement.  We put a subscript of zero to distinguish it from the ground field $\mathbb{C}$ itself. Since $\Spin(n)$ is a subgroup of the Clifford algebra $\mathbb{C}l(n)$, we first look at the irreducible Clifford algebra representations,
\[
\mathbb{C}l(n)\times \Sigma\rightarrow \Sigma.
\]
For a detailed description of these representations we refer to \cite{LM, FriedrichBook}. Recalling that $\dim \Sigma=2^l$, we note that a vector space of this dimension is furnished by the exterior algebra, $\Sigma:= \Lambda^{\bullet}L'$. Following the construction of \cite{GoodmanWallach}(albeit with a slightly different convention for the Clifford relation), we will give explicit formulas for the representation of $\mathbb{C}l(n)$ on $\Sigma$.

To begin, we view $V=\C^n$ as the complexification of $\R^n$, and take $\beta$ the $\C$-linear extension of the standard real Euclidean form, and $e_1,\dots, e_n$ the standard orthonormal basis vectors of $\R^n$. If $n=2l$ we choose the bases for $L$ and $L'$ given by
\begin{align} \label{originalLL'}
L=\text{span}_{\C}\{x_j:=\frac{1}{\sqrt{2}} (e_{2j-1}-ie_{2j})    \}_{j=1}^l, \quad
L'=\text{span}_{\C}\{y_j:=\frac{1}{\sqrt{2}} (e_{2j-1}+ie_{2j})     \}_{j=1}^l,
\end{align}
and if $n=2l+1$ we choose the same bases as above for $L$, $L'$, and take the vector 
\[
u_0:=ie_{2l+1}
\]
as a basis for the 1-dimensional complement $\C_0$ in (\ref{isotropic_decomposition_odd}). Throughout the paper we shall adopt the standard convention for the Clifford relation, 
\[
vw+ wv=-2\beta(v,w)1 . 
\]
With this convention, the action of $V$ on $\Sigma= \Lambda^{\bullet} L'$ is given by
\begin{align}
x_j\cdot \eta &:= i\sqrt{2} \ x_j\lrcorner \eta,\quad 
y_j\cdot \eta := i\sqrt{2} \ y_j\wedge \eta,\quad 
 u_0:= -\Id\rvert_{\Sigma^{\text{even}}} +\Id\rvert_{\Sigma^{\text{odd}}} , \label{clifford_rep_formulas}
\end{align}
where $\Sigma^{\text{even}}$ and $\Sigma^{\text{odd}} $ denote the even and odd graded parts of $\Sigma=\Lambda^{\bullet} L'$ (if $n=2l$ simply take the same operators for $x_j$, $y_j$ and ignore $u_0$). That this descends to a representation of $\C l(n)$ is a consequence of the identities (5.43) in \cite{GoodmanWallach}, which state the canonical (anti)commutation relations of the interior and exterior product operators. Noting that
\begin{align*}
e_{2j-1} &= \frac{1}{\sqrt{2}} (x_j+y_j),\quad e_{2j} = \frac{-i}{\sqrt{2}} (y_j-x_j), \quad 
e_{2l+1}= -iu_0,
\end{align*}
for $j=1,\dots,l$, we see that the corresponding operators associated to the real orthonormal basis vectors $e_1,\dots, e_{n}$ are 
\begin{align}\label{cliffordmultONB}
e_{2j-1}\cdot \eta  &=  i(x_j \lrcorner \eta + y_j\wedge \eta ),\quad e_{2j}\cdot \eta =  (y_j\wedge\eta - x_j\lrcorner\eta ),\quad 
e_{2l+1}=  i\Id\rvert_{\Sigma^{\text{even}}} -i\Id\rvert_{\Sigma^{\text{odd}}},
\end{align}
for all $\eta\in\Sigma$. 

We also note that there is an equivalent realization of this representation in terms of Kronecker products. Indeed, if we define subspaces $U_j:=\Lambda^{\bullet} \C y_j = \C 1 \oplus \C y_j$ of the spinor module $\Sigma$ then we have 
\[
\Sigma = \Lambda^{\bullet} L' = \Lambda^{\bullet} (\C y_1\oplus \dots \oplus \C y_l) \cong U_1\wedge \dots \wedge U_l, 
\]
giving the vector space isomorphism
\[
\Sigma \cong U_1\otimes \dots\otimes U_l. 
\]
Explicitly, this isomorphism is given by identifying $y_{j_1}\wedge \dots \wedge y_{j_p} \in\Lambda^{\bullet} L' $ with $u_1\otimes \dots\otimes u_l\in U_1\otimes \dots\otimes U_l$, where 
\[
u_j:=\begin{cases}
y_j &\quad\text{if } j\in \{j_1,\dots,j_p\},\\
1 &\quad\text{if } j\notin \{j_1,\dots,j_p\}
\end{cases} . 
\]
Under this identification we have 
\[
\End(\Sigma) \cong \End(U_1)\otimes \dots\otimes \End(U_l). 
\]
With the choice of ordered bases $\{1,y_j\}$ for the $U_j$, the representation (\ref{clifford_rep_formulas}) of $\C l(n)$ is realized by the Kronecker products
\begin{align*}
x_j&\mapsto  i\sqrt{2} \ H\otimes \dots \otimes H\otimes \underbrace{\begin{bmatrix} 0&1\\0&0\end{bmatrix}}_{\text{$j$th place}} \otimes \text{Id}\otimes \dots\otimes \text{Id} , \\
y_j&\mapsto  i\sqrt{2} \ H\otimes \dots \otimes H\otimes \underbrace{\begin{bmatrix} 0&0\\1&0\end{bmatrix}}_{\text{$j$th place}} \otimes \text{Id}\otimes \dots\otimes \text{Id},  \\
u_0&\mapsto   -H\otimes \dots\otimes H,
\end{align*}
where $H:=\text{diag}[1,-1]$. The corresponding operators associated to the real orthonormal basis $e_1,\dots, e_{n}$ are
\begin{align*}
e_{2j-1} &\mapsto i\ H\otimes \dots \otimes H\otimes \underbrace{\begin{bmatrix} 0&1\\1&0\end{bmatrix}}_{\text{$j$th place}} \otimes \text{Id}\otimes \dots\otimes \text{Id}, \\
e_{2j}&\mapsto  \ H\otimes \dots \otimes H\otimes \underbrace{\begin{bmatrix} 0&-1\\1&0\end{bmatrix}}_{\text{$j$th place}} \otimes \text{Id}\otimes \dots\otimes \text{Id}, \\
e_{2l+1}&\mapsto  i \ H \otimes\dots\otimes H.
\end{align*} 
\begin{remark}
	If $n=2l+1$ there is another inequivalent irreducible representation of the Clifford algebra, obtained by replacing the action of $u_0$ in (\ref{clifford_rep_formulas}) with
	\[
	u_0:= \Id\rvert_{\Sigma^{\text{even}}} -\Id\rvert_{\Sigma^{\text{odd}}}.
	\]
	The corresponding operator for the real orthonormal basis vector is
	\[
	e_{2l+1} =  -i\Id\rvert_{\Sigma^{\text{even}}} +i\Id\rvert_{\Sigma^{\text{odd}}},
	\]
	and, in the Kronecker product setting, 
	\[u_0\mapsto  H\otimes \dots \otimes H, \quad e_{2l+1} \mapsto -i H\otimes \dots \otimes H. \]
	In this paper we will always use the representation described in (\ref{clifford_rep_formulas}).	
\end{remark}
\begin{remark} \label{reorderedCliffalgrepresentation}
	In odd dimensions $n=2l+1$ it will be more convenient to use a slightly different basis ordering when defining the Clifford algebra representation. Indeed, we set
	\begin{align}\label{reorderedLL'}
	L&:= \text{span}_{\C} \{ x_j:= \frac{1}{\sqrt{2}} (e_{2j}-ie_{2j+1})\}_{j=1}^{2n-1} , \quad 
	L':= \text{span}_{\C} \{ y_j:= \frac{1}{\sqrt{2}} (e_{2j}+ie_{2j+1})\}_{j=1}^{2n-1} , \quad 
	u_0:= ie_1
	\end{align}  
	and then define the action of $x_j$, $y_j$, $u_0$ on $\Sigma:= \Lambda^{\bullet} L'$ via the formulas in (\ref{clifford_rep_formulas}). This alternative ordering will be used for all odd dimensional cases, and noted explicitly each time.
\end{remark}
\subsection{Spinors on Homogeneous Spaces}
The problem of explicitly finding spinors on arbitrary manifolds is in general quite difficult, but becomes more tractable if we restrict attention to reductive homogeneous spaces. Let $M=G/H$ be a reductive homogeneous space, and $\mathfrak{g}=\mathfrak{h}\oplus_{\perp} \mathfrak{m}$ a reductive decomposition which is orthogonal with respect to the Killing form on $\mathfrak{g}$. What follows is a general procedure described in the examples section of \cite{BFGK} for constructing invariant spinors on $G/H$, which will be used to construct all the spinors in this paper. Note that if $\mathfrak{g}$ is not semisimple (e.g. $G=\U(n+1)$ or $\Sp(n)\U(1)$), the reductive complement must be chosen differently since the Killing form is no longer non-degenerate. This will be explained in detail in the relevant sections. For a more detailed introduction to reductive homogeneous spaces we refer to \cite{Arvan}. 
First, we recall that the tangent space $T_e(G/H)$ is identified with $\mathfrak{m}$ as the kernel of the projection $G\to G/H$ at the identity element. The full tangent bundle is obtained via the associated bundle construction, $$TM= G\times_{\Ad\rvert_H} \mathfrak{m}, $$ where we are viewing $G$ as a principal $H$-bundle via the quotient map $G\to G/H$. The isotropy representation in this situation is given by the restriction $\Ad\rvert_H\: \mathfrak{m}\to \mathfrak{m}$. In fact $\Ad\rvert_H\in \SO(\mathfrak{m})$ with respect to any invariant metric (essentially by definition of invariance), and it is well-known that the oriented frame bundle is given by $$P_{\SO}=G\times_{\Ad\rvert_H} \SO(\mathfrak{m}) . $$ Assuming $H$ is connected, there exists a $G$-invariant spin structure if and only if the isotropy representation $\Ad\rvert_H$ lifts to the spin group, i.e.
\[
\begin{tikzcd}
& \Spin(\mathfrak{m}) \arrow[d, "2:1", two heads] \\
H \arrow[ru, "\widetilde{\Ad\rvert_H}", dashed] \arrow[r, "\Ad\rvert_H"'] & \SO(\mathfrak{m})                              
\end{tikzcd}
\]
and, in this case, the $G$-invariant spin structure is unique (see \cite[Prop.\@ 1.3, Cor.\@ 1.4]{invariantspinstructures}, as well as the earlier works \cite{Cahen_Gutt_spin_struct_symmetric_spaces,Hirzebruch_Slodowy_elliptic_genera} which cover certain special cases). If it exists, such a spin structure and its associated spinor bundle are given as homogeneous bundles via 
\[
P_{\Spin}:=G \times_{\widetilde{\Ad\rvert_H}} \Spin(\mathfrak{m}), \quad  \Sigma M :=P_{\Spin}\times_{\sigma} \Sigma = G\times_{\sigma\circ \widetilde{\Ad\rvert_H}} \Sigma, 
\]
where $\sigma\: \Spin(\mathfrak{m}) \to \Aut(\Sigma)$ denotes the spin representation (see \cite{invariantspinstructures}). As such, spinors are identified with $H$-equivariant maps $\varphi\: G\to \Sigma$, 
\begin{align}\label{spinoridentification}
\varphi(gh) = \sigma\circ\widetilde{\Ad\rvert_H}(h^{-1})\cdot  \varphi(g) \quad \forall g\in G,h\in H. 
\end{align}
The group $G$ acts on spinors by sending $\varphi \: G\to \Sigma$ to the map $(g\cdot \varphi)\: g' \mapsto \varphi(g^{-1}g')$, and we are interested in the space $\Sigma_{\text{inv}}$ of \emph{invariant spinors}, i.e. spinors fixed by this $G$-action. This is equivalent to the condition $\varphi(g^{-1}g')=\varphi(g')$ for all $g,g'\in G$, which holds precisely when the $H$-equivariant map $\varphi\: G \to \Sigma$ is constant. By setting $\varphi$ to be constant in (\ref{spinoridentification}), it is easy to see that invariant spinors correspond to trivial $H$-subrepresentations of the spinor module for the action $\sigma\circ\widetilde{\Ad\rvert_H}$. Using the Lie algebra isomorphism $\mathfrak{spin}(\mathfrak{m}) \cong \mathfrak{so}(\mathfrak{m})$ and assuming $H$ is connected, this condition may equivalently be phrased as looking for trivial subrepresentations of $\sigma\circ \widetilde{\ad\rvert_\mathfrak{h}}\: \mathfrak{h}\to \mathfrak{gl}(\Sigma)$. Counting such spinors is therefore a representation theory problem, which we approach using various methods for the homogeneous spheres in Table \ref{Tab:homogeneousspheres}.

A similar viewpoint may be taken when considering differential forms (or tensors, for that matter) on $M=G/H$. The isotropy group $H$ acts on $\Lambda^k\mathfrak{m}$ by the $k$th exterior power of the isotropy representation,
\[ 
(\Lambda^k  \Ad\rvert_H)(h)(\omega)(X_1,\dots, X_k) =  \omega(\Ad(h)^{-1}X_1,\dots,\Ad(h)^{-1}X_k  ), \quad h\in H,\  \omega\in\Lambda^k\mathfrak{m},  \ X_i\in \mathfrak{m},
\]
and the bundle of $k$-forms is realized via the associated bundle construction,
\[
\Lambda^k TM = G\times_{\Lambda^k\Ad\rvert_H} \Lambda^k\mathfrak{m}.
\]
Invariant $k$-forms then correspond to constant $H$-equivariant maps $\omega \: G\to \Lambda^k\mathfrak{m}$, or equivalently, trivial $H$-subrepresentations of $\Lambda^k\mathfrak{m}$. 

Let us now fix notation related to matrix Lie algebras. We will use $E_{i,j}^{(n)}$ (resp. $F_{i,j}^{(n)}$) throughout to denote the elementary skew-symmetric $n\times n$ matrix (resp. the elementary symmetric $n\times n$ matrix),
\begin{align*}
E_{i,j}^{(n)}=\begin{blockarray}{r*{4}{ >{}c}}
&  & i & j &  \\
\begin{block}{ r!{\,}[cccc]}
      & &  & \vdots &  \\
i      & &  & -1     & \hdots  \\
j     &\hdots & 1&  &  \\
      & & \vdots  &  &  \\
\end{block}
\end{blockarray} 
, \quad F_{i,j}^{(n)} = \begin{blockarray}{r*{4}{ >{}c}}
&  & i & j &  \\
\begin{block}{ r!{\,}[cccc]}
& &  & \vdots &  \\
i      & &  & 1     & \hdots  \\
j     &\hdots & 1&  &  \\
& & \vdots  &  &  \\
\end{block}
\end{blockarray} .
\end{align*}
We also adopt the convention that $F_{i,i}^{(n)}$ is the diagonal matrix with $1$ in the $(i,i)$ position and zeros elsewhere. 
The following commutator relations will be used extensively throughout the paper for calculations involving matrix Lie algebras.
\begin{align*}
[E_{i,j}^{(n)},E_{k,l}^{(n)}] &= \begin{cases*} E_{j,l}^{(n)} & if $i=k$, \\
0  & if $i,j,k,l$ distinct,
\end{cases*}    \\
 [E_{i,j}^{(n)},F_{k,l}^{(n)}] &=  \begin{cases*} F_{j,l}^{(n)} & if $i=k$, $j\neq l$, $k\neq l$, \\
2( F_{j,j}^{(n)} - F_{i,i}^{(n)}) & if $i=k$, $j=l$, $k\neq l$, \\
(\delta_{i,k}F_{j,k}^{(n)} - \delta_{j,k}F_{i,k}^{(n)}) & if $k=l$, \\
 0  & if $i,j,k,l$ distinct, 
 \end{cases*} \\
  [ F_{p,q}^{(n)}, F_{r,s}^{(n)} ] &=  \begin{cases*}  - E_{q,s}^{(n)} & if $p=r$, $q\neq s$, $p\neq q$, $r\neq s$, \\
  (-\delta_{q,r}E_{p,r}^{(n)} - \delta_{p,r}E_{q,r}^{(n)}) & if $p\neq q$, $r=s$, \\
  0 & if $p=q$, $r=s$, \\
  0  & if $p,q,r,s$ distinct 
  \end{cases*} \\
    [\lambda_1 F_{p,q}^{(n)},\lambda_2 F_{r,s}^{(n)} ] &=  \begin{cases*} \lambda_3 F_{q,s}^{(n)} & if $p=r$, $q\neq s$, $p\neq q$, $r\neq s$, \\
   2\lambda_3(F_{p,p}^{(n)} + F_{q,q}^{(n)}) & if $p=r$, $q=s$, $p\neq q$,     \\
   \lambda_3(\delta_{q,r}F_{p,r}^{(n)} + \delta_{p,r}F_{q,r}^{(n)}) &   if $p\neq q$, $r=s$,    \\
  2 \delta_{p,r}\lambda_3 F_{p,p}^{(n)} & if $p=q$, $r=s$,       \\
  0  & if $p,q,r,s$ distinct, 
  \end{cases*}
\end{align*}
where $(\lambda_1,\lambda_2,\lambda_3)$ is an even permutation of the imaginary quaternions $(i,j,k)$. Note that this doesn't cover all possible cases, however the rest can be deduced from above using skew-symmetry (resp. symmetry) of the matrices $E_{i,j}^{(n)}$ (resp. $F_{i,j}^{(n)}$). We shall use $B_0$ to denote the bilinar form on the space of matrices (of the appropriate size, depending on context) given by
\begin{align}\label{B0definition}
B_0(X_1,X_2) := - \Re \tr (X_1X_2).
\end{align}
 After fixing an invariant inner product on $\mathfrak{m}$, an orthonormal basis will be denoted by $e_1,\dots, e_{\dim \mathfrak{m}}$, and the shorthand $e_{i_1,\dots, i_p}:= e_{i_1}\wedge\dots \wedge e_{i_p}$ for differential forms will be used.
\subsection{Invariant Metric Connections on Homogeneous Spaces}
By applying translations, any invariant connection on $G/H$ is uniquely determined by its value at the origin $o:=eH$, and a similar principal applies to spinorial connections. Indeed, under the identification $\mathfrak{m}\cong T_o(G/H)$, it follows from a result of Nomizu in \cite{Nomizumap}, later generalized by Wang in \cite{Wangconnections}, that an invariant metric connection corresponds to the data of an $\Ad(H)$-equivariant \emph{Nomizu map} 
\begin{align}\label{nom_map_explanation}
\Uplambda\: \mathfrak{m}\to\mathfrak{so}(\mathfrak{m}).
\end{align}
Explicitly, the relationship between $\Uplambda$ and the covariant derivative $\nabla$ associated to the connection is
\[
(\nabla_{\widehat{X}} \omega)_o = \Uplambda(X)\omega_o, \quad X\in \mathfrak{m},
\]
for any invariant tensor (or invariant differential form) $\omega$, where $\widehat{X}$ is the fundamental vector field associated to $X\in\mathfrak{m}$ and the action of $\Uplambda(X)\in \mathfrak{so}(\mathfrak{m})$ on $\omega_o$ is the natural one (see Chapter 6 in \cite{ANT_book_principal_fibre_bundles} for a modern treatment of the topic). Moreover, by Proposition 2.3 in \cite{KN2}, the torsion and curvature tensors of $\nabla$ are given at the origin by
\begin{align}
	T_o(X,Y) &= \Uplambda(X)Y - \Uplambda(Y)X - [X,Y]_{\mathfrak{m}},\label{torsionatorigin}\\
	R_o(X,Y)   &= [\Uplambda(X),\Uplambda(Y)] - \Uplambda([X,Y]_{\mathfrak{m}}) - \ad([X,Y]_{\mathfrak{h}}),
\end{align}
for all $X,Y\in\mathfrak{m}$. Composing (\ref{nom_map_explanation}) with the Lie algebra isomorphism $\mathfrak{spin}(\mathfrak{m})\cong \mathfrak{so}(\mathfrak{m})$ gives the Nomizu map  
\[
\widetilde{\Uplambda}\: \mathfrak{m}\to \mathfrak{spin}(\mathfrak{m})
\]
associated to the spin lift $\widetilde{\nabla}$ of $\nabla$,
and the covariant derivative at the origin of an invariant spinor $\psi$ can be similarly described via
\[
(\widetilde{\nabla}_{\widehat{X}})_o \psi = \widetilde{\Uplambda}(X)\cdot \psi_o ,
\]
where the action of $\widetilde{\Uplambda}(X)$ on $\psi_0$ is via the spin representation.

For an invariant Riemannian metric $g$ on $G/H$, the Nomizu map $\Uplambda^g \: \mathfrak{m}\to \mathfrak{so}(\mathfrak{m})$ of the Levi-Civita connection is given by
\begin{align}
\Uplambda^g(X)Y= \frac{1}{2}[X,Y]_{\mathfrak{m}} + U(X,Y), \quad \forall X,Y\in\mathfrak{m}, \label{nomizumap}
\end{align}
where the symmetric $(2,0)$-tensor $U$ is determined by 
\begin{align} 
2g(U(X,Y) ,Z)  = g( [Z,X]_{\mathfrak{m}},Y) + g( X,[Z,Y]_{\mathfrak{m}}) \label{Utensor} . 
\end{align}
For a proof of this fact we refer to Theorem 13.1 in \cite{Nomizumap}, noting that there is a sign error in equation (13.1).

Another geometrically significant invariant connection is the \emph{Ambrose-Singer connection}, sometimes called the canonical connection, whose horizontal distribution is generated by left translations of $\mathfrak{m}\subset \mathfrak{g}$. Such a connection is unique after fixing a reductive complement $\mathfrak{m}$. In the present paper we shall always refer to this as the Ambrose-Singer connection, and reserve the term canonical connection for the distinguished metric connection on $\tad$ manifolds introduced in \cite{3str}. The Ambrose-Singer connection has Nomizu map identically equal to zero, 
\[
\Uplambda^{\AS} \equiv 0,
\]
and it parallelizes all invariant tensors (Proposition 2.7 in \cite{KN2}). Noting that the Ambrose-Singer connection coincides with the Levi-Civita connection if and only if its torsion tensor vanishes, it is evident from (\ref{torsionatorigin}) that they coincide precisely when the underlying space is symmetric. One sees furthermore that the Ambrose-Singer torsion is totally skew-symmetric (i.e. a 3-form) if and only if $g$ is a naturally reductive metric,
\[
g([X,Y]_{\mathfrak{m}}, Z) + g(Y,[X,Z]_{\mathfrak{m}})=0,\quad \forall X,Y,Z\in\mathfrak{m}.
\]

Fixing notation, the Levi-Civita and Ambrose-Singer connections, their corresponding Nomizu maps, and their torsion tensors will be denoted by $\nabla^g$, $\nabla^{\AS}$, $\Uplambda^g$, $\Uplambda^{\AS}$, and $T^g$, $T^{\AS}$ respectively. By abuse of notation we shall denote the corresponding spinorial connections also by $\nabla^g$, $\nabla^{\AS}$, and the associated spinorial Nomizu maps by $\widetilde{\Uplambda}^g$, $\widetilde{\Uplambda}^{\AS}$. Any other connections used will be introduced in the relevant sections.
\subsection{Metric Connections with Torsion}
Let $(M^n,g)$ be an $n$-dimensional Riemannian manifold. In certain situations it will be advantageous to consider metric connections other than the Levi-Civita connection which are better adapted to the geometry at hand. Such connections are uniquely determined by their torsion tensor, and for a detailed introduction to the subject we refer to \cite{SRNI}. The space of possible torsion tensors is given by
\[
\mathcal{T} := \{ T\in TM^{\otimes 3} \: T(X,Y,Z) + T(Y,X,Z) = 0\},
\]
and it splits as an $\text{O}(n)$-representation into three inequivalent irreducible submodules,
\[
\mathcal{T} \simeq  \mathcal{T}_{\vectorial} \oplus \mathcal{T}_{\totallyskew} \oplus \mathcal{T}_{\CT},
\]
called \emph{torsion classes}. Metric connections with torsion in these three spaces are called \emph{vectorial}, \emph{totally skew-symmetric}, and \emph{cyclic traceless} respectively.

For a metric connection $\nabla$, the \emph{difference tensor} is defined by
\[
A(X,Y) := \nabla_X Y - \nabla^g_XY.
\]
We note that the space
\[
\mathcal{A}^g := \{ A \in TM^{\otimes 3}\: A(X,Y,Z) + A(X,Z,Y)=0  \}
\]
of possible difference tensors is isomorphic to $\mathcal{T}$ as $\text{O}(n)$-representations via
\begin{align}\label{torsionisomorphism1}
T(X,Y,Z) &= A(X,Y,Z) - A(Y,X,Z) , \\ A(X,Y,Z)&= \frac{1}{2}(T(X,Y,Z) - T(Y,Z,X) + T(Z,X,Y)) .\label{torsionisomorphism2}
\end{align}

Let $\nabla$ be a metric connection with torsion $T\in\mathcal{T}$ and difference tensor $A \in\mathcal{A}^g$. With respect to an arbitrary orthonormal frame $e_1,\dots, e_n$, we define the trace
\[
c_{12}(A) := \sum_{i=1}^n A(e_i,e_i, - ) .
\]
The images of the torsion classes under the isomorphism (\ref{torsionisomorphism2}) are given in Chapter 3 of \cite{tricerri_vanhecke_1983} by 
\begin{align}
\mathcal{A}^g_{\vectorial} &= \{ A \in\mathcal{A}^g \: A(X,Y,Z) = g(X,Y)g(V,Z) - g(X,Z)g(V,Y), \  V\in TM \}, \\ \mathcal{A}^g_{\totallyskew}&=\{ A\in\mathcal{A}^g \: A(X,Y,Z)+A(Y,X,Z)=0  \}, \\ 
\mathcal{A}^g_{\CT} &= \{ A\in\mathcal{A}^g\: \mathfrak{S}_{X,Y,Z} A(X,Y,Z)=0, \ c_{12}(A)=0\},
\end{align}
as well as explicit formulas for the projections of $A$ onto each class,
\begin{align}
\label{vectorialprojection} A_{\vectorial}(X,Y,Z) &=  g(X,Y) \phi(Z) - g(X,Z) \phi(Y) , \\
\label{skewprojection} A_{\totallyskew}(X,Y,Z) &= \frac{1}{3}\mathfrak{S}_{X,Y,Z} A(X,Y,Z) , \\
\label{CTprojection} A_{\CT}(X,Y,Z) &= A(X,Y,Z) - A_{\vectorial}(X,Y,Z) - A_{\totallyskew} (X,Y,Z) ,
\end{align}
where
\[
\phi(v) := \frac{1}{n-1} c_{12}(A)(v), \quad v\in TM.
\]
Formulas for the projections of $T$ onto the three torsion classes may then be easily deduced using (\ref{torsionisomorphism1}). By examining the torsion type of the Ambrose-Singer connection, as we do in this article for each homogeneous realization of the sphere, one obtains intrinsic geometric information about the homogeneous structure.
\section{Classical Spheres, Part I: Spheres over $\R$ and $\C$}
\subsection{Symmetric Spheres, $S^n = \SO(n+1)/\SO(n)$}
The isotropy representation here is the standard representation of $\SO(n)$ on $\R^n$, which is irreducible, hence the only invariant metrics correspond to negative multiples of the Killing form (equivalently, positive multiples of $B_0$). We remark that any such metric is naturally reductive. The embedding $\SO(n)\hookrightarrow \SO(n+1)$ may be realized as the the lower right hand $n\times n$ block, and we choose the reductive complement $\mathfrak{m}=(\mathfrak{so}(n))^{\perp}$ with respect to the Killing form. Explicitly,
\begin{align*}
\mathfrak{so}(n+1)&= \Span_{\R} \{ E_{i,j}^{(n+1)}  \}_{1\leq i< j\leq n+1},\\
\mathfrak{so}(n)&= \Span_{\R} \{ E_{i,j}^{(n+1)}  \}_{2\leq i<j \leq n+1},
\end{align*}
and
\begin{align*}
\mathfrak{m}&= \Span_{\R} \{ E_{1,j}^{(n+1)} \}_{2\leq j \leq n+1}.
\end{align*}
One sees immediately from the main proposition in \cite{Wang} that these standard round spheres are not very interesting from the viewpoint of homogeneous spin geometry:
\begin{theorem}\label{SOtheorem}
	The spheres $S^n=\SO(n+1)/\SO(n)$ do not admit a non-trivial invariant spinor for any choice of invariant metric.
\end{theorem}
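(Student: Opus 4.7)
The plan is to exploit the fact recorded in Table \ref{Tab:homogeneousspheres}: the group $\SO(n+1)$ does not lift the spin structure of $S^n$, so I would replace $G$ by its double cover $\Spin(n+1)$, which acts transitively (though non-effectively) with isotropy $H = \Spin(n)$; this preserves the unique spin structure by the remarks immediately after the table. Under this identification, the isotropy representation $\Ad|_H$ of $\Spin(n)$ on $\mathfrak{m} \cong \R^n$ is the composition of the covering map $\Spin(n) \to \SO(n)$ with the standard vector representation, so its natural spin lift $\widetilde{\Ad|_H} : \Spin(n) \to \Spin(\mathfrak{m}) = \Spin(n)$ is simply the identity. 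Consequently, the representation $\sigma \circ \widetilde{\Ad|_H}$ governing (\ref{spinoridentification}) is nothing but the standard complex spin representation of $\Spin(n)$ on $\Sigma$.

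By the correspondence recalled in Section 2.2 --- equivalently, the main proposition of \cite{Wang} --- the quantity $\dim_\C \Sigma_{\inv}$ equals the multiplicity of the trivial summand in $\Sigma$ as a $\Spin(n)$-module. I would then invoke the classical decomposition of the spin representation: for odd $n$ it is irreducible of dimension $2^{(n-1)/2} \geq 2$, while for even $n \geq 4$ it splits as $\Sigma = \Sigma^+ \oplus \Sigma^-$ into two inequivalent irreducible half-spin representations of dimension $2^{n/2-1} \geq 2$, each non-trivial. In either case no trivial summand appears, so $\Sigma_{\inv} = 0$. The low-dimensional case $n=2$ needs only a direct check: $\Spin(2) = \U(1)$ acts on $\Sigma = \C \oplus \C$ via the two characters of non-zero weight $\pm 1$, which are manifestly non-trivial.

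There is essentially no serious obstacle here beyond confirming that every half-spin representation of $\Spin(n)$ is non-trivial, which is standard. Independence from the choice of invariant metric is automatic, since the whole computation is purely representation-theoretic and never invokes the metric; the metric only enters through the identification $\Spin(\mathfrak{m}) = \Spin(n)$, which holds for any positive multiple of $B_0$.
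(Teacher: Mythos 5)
Your proof is correct and takes essentially the same approach as the paper, whose own ``proof'' is simply a citation of the main proposition in \cite{Wang}: both arguments reduce to the fact that invariant spinors are the $\Spin(n)$-fixed vectors in the spin module under $\sigma\circ\widetilde{\Ad\rvert_H}=\sigma$ (since the lift $\widetilde{\Ad\rvert_H}$ is the identity), and the (half-)spin representations of $\Spin(n)$ are irreducible and non-trivial, so no fixed vectors exist. You have merely filled in the details behind the citation; one could note that the bound $2^{(n-1)/2}\geq 2$ for odd $n$ implicitly assumes $n\geq 3$, but you treat $n=2$ separately and $n\leq 1$ is a degenerate case outside the paper's scope.
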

\begin{remark}
		Since any invariant metric on $S^n=\SO(n+1)/\SO(n)$ is normal homogeneous, and in particular naturally reductive, the Ambrose-Singer connection always has totally skew-symmetric torsion, $T^{\AS} \in \mathcal{T}_{\totallyskew}$ (in fact, $T^{\AS}=0$ since the space is symmetric). This also applies to the other two realizations of the sphere with irreducible isotropy representation, $S^6=\G_2/\SU(3)$ and $S^7=\Spin(7)/\G_2$.
\end{remark}
\subsection{Hermitian Spheres, $S^{2n+1}= \U(n+1)/\U(n)$}
The isotropy representation splits into one copy of the trivial representation $\R$ and one copy of $\R^{2n}\cong \C^n$, leading to a 2-parameter family of invariant metrics. Note, however, that the Killing form is no longer non-degenerate so more care must be taken when choosing a reductive complement. The embedding $\U(n)\hookrightarrow \U(n+1)$ may be realized as the lower right hand $n\times n$ block, leading to the realization of Lie algebras given by
\begin{align*}
\mathfrak{u}(n+1)&= \Span_{\R} \{  E_{j,k}^{(n+1)}, iF_{p,q}^{(n+1)}    \}_{\substack{1\leq j<k\leq n+1\\ p,q=1,\dots  n+1}}, \\
\mathfrak{u}(n)&= \Span_{\R} \{  E_{j,k}^{(n+1)}, iF_{p,q}^{(n+1)}    \}_{\substack{2\leq j<k\leq n+1\\ p,q=2,\dots  n+1}},
\end{align*}
and one verifies that
\begin{align*}
\mathfrak{m}&:= \Span_{\R} \{  iF_{1,1}^{(n+1)},  E_{1,j+1}^{(n+1)},  iF_{1,j+1}^{(n+1)} \}_{1\leq j \leq n}
\end{align*}
is a reductive complement. The two irreducible submodules are given by
\begin{align*}
\mathfrak{m}_1:= \Span_{\R}\{ iF_{1,1}^{(n+1)} \}, \quad \mathfrak{m}_2:= \Span_{\R} \{ E_{1,j+1}^{(n+1)},iF_{1,j+1}^{(n+1)}\}_{1\leq j \leq n},
\end{align*}
and the 2-parameter family of invariant metrics is given by
\begin{align*}
g_{a,b}:=a B_0\rvert_{\mathfrak{m}_1\times \mathfrak{m}_1} + b B_0\rvert_{\mathfrak{m}_2\times \mathfrak{m}_2}, \quad a,b>0.
\end{align*}
These spheres are the complex analog of the previous case and, as such, one may deduce a similar result about the space of invariant spinors from \cite{Wang} by noting that $\mathfrak{m}_2\simeq \C^{n}$ is isomorphic to the standard representation of $\U(n)$ and that the spinor modules in dimensions $2n$ and $2n+1$ are naturally identified. Here we give an alternative elementary proof of this result:
\begin{theorem}\label{Utheorem}
	The spheres $S^{2n+1}=\U(n+1)/\U(n)$ do not admit a non-trivial invariant spinor for any choice of invariant metric. 
\end{theorem}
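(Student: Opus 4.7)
The plan is to work purely on the Lie algebra level: since $\U(n)$ is connected, the space of $\U(n)$-invariants in $\Sigma$ under the lifted action coincides with the $\mathfrak{u}(n)$-invariants under $\widetilde{\ad}|_{\mathfrak{u}(n)}\colon \mathfrak{u}(n)\to\mathfrak{spin}(\mathfrak{m})$, independently of whether we work with $\U(n+1)$ or its spin-lifting double cover. I would then show that a standard Cartan subalgebra of $\mathfrak{u}(n)$ has no zero-weight vector in $\Sigma$, which rules out any invariants.

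Concretely, using the reordered convention of Remark \ref{reorderedCliffalgrepresentation}, I would fix $e_1$ as an orthonormal generator of $\mathfrak{m}_1$ together with orthonormal pairs $(e_{2j}, e_{2j+1})$ spanning the $j$-th complex line in $\mathfrak{m}_2\cong \C^n$ (normalized multiples of $E_{1,j+1}^{(n+1)}$ and $iF_{1,j+1}^{(n+1)}$), so that $\Sigma=\Lambda^{\bullet}L'$ with $L' = \Span_{\C}\{y_j\}_{j=1}^n$. As a Cartan subalgebra I take $\mathfrak{t}=\Span_{\R}\{Z_k\}_{k=1}^n$ with $Z_k := iF_{k+1,k+1}^{(n+1)}$.

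Using the commutator relations of the Preliminaries, a short calculation shows that $\ad(Z_k)$ acts on $\mathfrak{m}$ as the $90^\circ$-rotation $e_{2k}\mapsto e_{2k+1},\ e_{2k+1}\mapsto -e_{2k}$ in the $k$-th plane of $\mathfrak{m}_2$ and as zero on all other basis vectors. Under the standard isomorphism $\mathfrak{so}(\mathfrak{m})\cong\mathfrak{spin}(\mathfrak{m})$, which sends each elementary skew matrix $E_{ij}$ to $\tfrac{1}{2}\,e_i\cdot e_j$, this lifts to $\widetilde{\ad}(Z_k) = \tfrac{1}{2}\,e_{2k}\cdot e_{2k+1}$. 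Applying the Clifford formulas (\ref{clifford_rep_formulas}) then yields
\[
\widetilde{\ad}(Z_k)\cdot (y_{j_1}\wedge\cdots\wedge y_{j_r}) = \begin{cases}+\tfrac{i}{2}\, y_{j_1}\wedge\cdots\wedge y_{j_r} & \text{if }k\notin\{j_1,\ldots,j_r\},\\ -\tfrac{i}{2}\, y_{j_1}\wedge\cdots\wedge y_{j_r} & \text{if }k\in\{j_1,\ldots,j_r\}.\end{cases}
\]

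Hence the weights of $\mathfrak{t}$ on $\Sigma$, read off in the basis dual to $(-iZ_1,\ldots,-iZ_n)$, form the vertex set $\{\pm\tfrac{1}{2}\}^n\subset\mathfrak{t}^*$, which never contains $0$. The zero-weight space is therefore trivial, and \emph{a fortiori} so is the space of $\mathfrak{u}(n)$-invariants in $\Sigma$. The main (minor) technical point is correctly fixing the normalization of the spin lift; once that is done, the argument reduces to a one-line weight calculation and avoids any appeal to Wang's proposition or to the detailed $\SU(n)$-representation structure of $\Sigma$.
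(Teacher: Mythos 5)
Your proposal is correct and follows essentially the same route as the paper: both proofs reduce to the fact that the isotropy operators $\ad(iF_{p,p}^{(n+1)})$ (your $Z_k$) lift to Clifford multiplication by $\tfrac{1}{2}\,e_{2p-2}\cdot e_{2p-1}$, and then observe that this operator has trivial kernel on $\Sigma$. The only difference is cosmetic: the paper argues via unitarity ($\|e_{2p-2}\cdot e_{2p-1}\cdot\psi\| = \|\psi\|$, so the kernel vanishes), whereas you compute the eigenvalues $\pm\tfrac{i}{2}$ explicitly and read off that the zero-weight space of the Cartan subalgebra is empty; both are instances of the same elementary observation.
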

\begin{proof}
The basis $e_1,\dots, e_{2n+1}$ for $\mathfrak{m}$ given by 
\begin{align*} 
e_1:=\frac{1}{\sqrt{a}} iF_{1,1}^{(n+1)}, \quad e_{2j}:=\frac{1}{\sqrt{2b}} E_{1,j+1}^{(n+1)}, \quad e_{2j+1} := \frac{1}{\sqrt{2b}} iF_{1,j+1}^{(n+1)},
\end{align*}
for $j=1,\dots, n$ is orthonormal with respect to $g_{a,b}$, and the isotropy algebra is spanned by the operators
\begin{align*}
\ad(E_{j,k}^{(n+1)}) &= e_{2j-2}\wedge e_{2k-2}+e_{2j-1}\wedge e_{2k-1}, \\  \ad(iF_{p,q}^{(n+1)}) &= e_{2p-2} \wedge e_{2q-1} + e_{2q-2} \wedge e_{2p-1}     \quad (p\neq q), \\
\ad(iF_{p,p}^{(n+1)}) &= e_{2p-2} \wedge e_{2p-1}  .
\end{align*}
In particular the lifts of the operators $\ad(iF_{p,p}^{(n+1)} )$ act on the spinor bundle via Clifford multiplication by $\frac{1}{2}e_{2p-2}\cdot e_{2p-1}$, and the result then follows by noting that if $\psi\in \Sigma_{\inv}$ then
\[
 0 = || e_{2p-2}\cdot e_{2p-1}\cdot \psi||^2 = \langle e_{2p-2}\cdot e_{2p-1}\cdot \psi,e_{2p-2}\cdot e_{2p-1}\cdot \psi\rangle = \langle \psi,\psi\rangle = ||\psi||^2.
\] 
\end{proof}
In the following proposition we calculate the Ambrose-Singer torsion and determine its type. We show that it has mixed skew-symmetric and cyclic traceless type in general, and has pure skew-symmetric type if and only if $a=b$, which means that the only naturally reductive metric on this space is the normal homogeneous metric. Our result also shows that it never has pure cyclic traceless type.
\begin{proposition}
	For any $a,b>0$ the sphere $(S^{2n+1}=\U(n+1)/\U(n),g_{a,b})$ has Ambrose-Singer torsion of type $\mathcal{T}_{\totallyskew} \oplus \mathcal{T}_{\CT}$, given by
	\begin{align*}
	T^{\AS}(e_1,e_{2j}) &= \frac{1}{\sqrt{a}} e_{2j+1}, \quad T^{\AS}(e_1,e_{2j+1}) = \frac{-1}{\sqrt{a}} e_{2j} , \\
	T^{\AS}(e_{2j},e_{2l}) &= T^{\AS}(e_{2j+1},e_{2l+1})= 0,\quad T^{\AS}(e_{2j},e_{2l+1})= \frac{\delta_{j,l}\sqrt{a}}{b} e_1   ,
	\end{align*}
for all $j,l=1,\dots, n $. The projection of $T^{\AS}$ onto $\mathcal{T}_{\totallyskew}$ is
\[
T^{\AS}_{\totallyskew}:= \left(\frac{a+2b}{3b\sqrt{a}}\right) \sum_{j=1}^{n} e_1\wedge e_{2j}\wedge e_{2j+1},
\]
with $T^{\AS}=T^{\AS}_{\totallyskew}$ if and only if $a=b$ (i.e. $g_{a,b}$ is a multiple of the Killing form).
\end{proposition}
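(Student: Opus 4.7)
The plan is to use the formula $T^{\AS}(X,Y) = -[X,Y]_{\mathfrak{m}}$ obtained from (\ref{torsionatorigin}) together with the vanishing $\Uplambda^{\AS} \equiv 0$, combined with the matrix commutator relations collected above. I first compute the brackets of all pairs of generators from $\{iF_{1,1},\, E_{1,j+1},\, iF_{1,j+1}\}_{j=1,\dots,n}$ of $\mathfrak{m}$. Most of these brackets turn out to lie entirely in $\mathfrak{u}(n) = \mathfrak{h}$ and therefore contribute nothing to $T^{\AS}$; the only exceptions are $[iF_{1,1}, E_{1,j+1}]$, $[iF_{1,1}, iF_{1,j+1}]$, and $[E_{1,j+1}, iF_{1,j+1}]$, whose $\mathfrak{m}$-components are proportional to $iF_{1,j+1}$, $E_{1,j+1}$, and $iF_{1,1}$ respectively. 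Rescaling by $iF_{1,1} = \sqrt{a}\,e_1$, $E_{1,j+1} = \sqrt{2b}\,e_{2j}$, and $iF_{1,j+1} = \sqrt{2b}\,e_{2j+1}$ then yields the claimed formulas for $T^{\AS}(e_1, e_{2j})$, $T^{\AS}(e_1, e_{2j+1})$, and $T^{\AS}(e_{2j}, e_{2j+1})$, with all remaining components vanishing.

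Next I identify the torsion type by examining the three projections. For the vectorial component, observe that for every pair $i \neq j$ the value $T^{\AS}(e_i, e_j)$ is a scalar multiple of some $e_k$ with $k \notin \{i,j\}$; consequently $g_{a,b}(T^{\AS}(e_i, X), e_i) = 0$ for every basis vector $X$ and every $i$, which via the isomorphism (\ref{torsionisomorphism2}) and formula (\ref{vectorialprojection}) forces $T^{\AS}_{\vectorial} = 0$. For the totally skew-symmetric component I apply (\ref{skewprojection}), transferred through (\ref{torsionisomorphism1})--(\ref{torsionisomorphism2}) to the cyclic-sum formula for $T$ itself. The only three-element subsets of the orthonormal basis on which the cyclic sum $\mathfrak{S}_{X,Y,Z}\, T^{\AS}(X,Y,Z)$ is non-zero are the triples $\{e_1, e_{2j}, e_{2j+1}\}$: on each such triple the three cyclic contributions are $\tfrac{1}{\sqrt{a}}$, $\tfrac{\sqrt{a}}{b}$, and $\tfrac{1}{\sqrt{a}}$, summing to $\tfrac{a+2b}{b\sqrt{a}}$ and yielding the claimed coefficient $\tfrac{a+2b}{3b\sqrt{a}}$ after dividing by three.

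Finally, to pin down the equality condition, I compare $T^{\AS}$ and $T^{\AS}_{\totallyskew}$ component by component on the triple $\{e_1, e_{2j}, e_{2j+1}\}$: matching $\tfrac{1}{\sqrt{a}}$ with $\tfrac{a+2b}{3b\sqrt{a}}$, or equivalently $\tfrac{\sqrt{a}}{b}$ with $\tfrac{a+2b}{3b\sqrt{a}}$, each independently forces $a = b$; conversely, when $a = b$ both coefficients agree and $T^{\AS}$ is already fully antisymmetric. The main obstacle is organizational rather than conceptual: the $[F,F]$ commutator relations stated in the preliminaries do not explicitly cover every configuration of indices, so some of the required cases must be recovered via skew-symmetry of the Lie bracket, and the normalization factors $\sqrt{a}$ and $\sqrt{2b}$ must be tracked carefully when passing to the orthonormal basis.
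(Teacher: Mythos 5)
Your proposal is correct and follows the same route as the paper's proof (which the paper gives as a one-line instruction to compute commutators and apply (\ref{torsionatorigin}) together with the projection formulas (\ref{vectorialprojection})--(\ref{CTprojection}) and (\ref{torsionisomorphism1})); the bracket values, the vanishing of the vectorial part via $c_{12}=0$, the cyclic-sum computation $\tfrac{1}{\sqrt{a}}+\tfrac{\sqrt{a}}{b}+\tfrac{1}{\sqrt{a}}=\tfrac{a+2b}{b\sqrt{a}}$, and the reduction of the equality condition to $3b=a+2b$ all check out. You have simply spelled out the computation the paper leaves implicit.
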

\begin{proof}
	Straightforward calculation of the commutator relations, and subsequent application of (\ref{torsionatorigin}), (\ref{vectorialprojection})-(\ref{CTprojection}) and the isomorphism (\ref{torsionisomorphism1}).
\end{proof}
\subsection{Special Hermitian Spheres, $S^{2n+1}=\SU(n+1)/\SU(n)$}
The isotropy group $\SU(n)\hookrightarrow \SU(n+1)$ may be realized as the lower right hand $n\times n$ block. We take the reductive complement $\mathfrak{m}:=(\mathfrak{su}(n))^{\perp_{B_0}}$, where the orthogonal complement is taken with respect to $B_0$ (a multiple of the Killing form on $\mathfrak{su}(n+1)$). At the level of Lie algebras,
\begin{align*}
	\mathfrak{su}(n+1)&= \Span_{\R} \{ iF_{p,q}^{(n+1)}, E_{p,q}^{(n+1)} , i(-nF_{1,1}^{(n+1)} +\sum_{l=2}^{n+1} F_{l,l}^{(n+1)}), i(F_{r,r}^{(n+1)} - F_{r+1,r+1}^{(n+1)})  \}_{\substack{  1\leq p < q \leq n+1 \\ r=2,\dots ,n+1 }}, \\ \mathfrak{su}(n)&=     \Span_{\R} \{ iF_{p,q}^{(n+1)}, E_{p,q}^{(n+1)} , i(F_{r,r}^{(n+1)} - F_{r+1,r+1}^{(n+1)})  \}_{\substack{  2\leq p < q \leq n+1 \\ r=2,\dots ,n+1 }}, 
\end{align*}
and
\begin{align*}
	\mathfrak{m}&= \Span_{\R} \{ i(-nF_{1,1}^{(n+1)} +\sum_{l=2}^{n+1} F_{l,l}^{(n+1)}), iF_{1,p}^{(n+1)}, E_{1,p}^{(n+1)}  \}_{p=2,\dots, n+1}.
\end{align*}
The isotropy representation splits into one copy of the trivial representation and one copy of the standard representation, $\mathfrak{m}\simeq \mathfrak{m}_1\oplus \mathfrak{m}_2$, leading to the 2-parameter family of invariant metrics
\begin{align*}
	g_{a,b}&:= aB_0\rvert_{\mathfrak{m}_1} + b B_0\rvert_{\mathfrak{m}_2}, \quad a,b>0.
\end{align*}
A $g_{a,b}$-orthonormal basis of $\mathfrak{m}$ is given by $\{ e_i\}_{i=1}^{2n+1}$, where
\begin{align*}
	e_1 &:= \frac{1}{\sqrt{an(n+1)}} (-niF_{1,1}^{(n+1)} + \sum_{l=2}^{n+1} iF_{l,l}^{(n+1)}  ),\quad
	e_{2p}:= \frac{1}{\sqrt{2b}} E_{1,p+1}^{(n+1)},\quad 
	e_{2p+1}:= \frac{i}{\sqrt{2b}}F_{1,p+1}^{(n+1)},
\end{align*} 
for $p=1,\dots, n$, and the two isotropy summands may be written explicitly in terms of this basis as
\[
\mathfrak{m}_1= \Span_{\R}\{ e_1\}, \quad \mathfrak{m}_2=\Span_{\R}\{ e_2,\dots, e_{2n+1}\}.
\] 
The complexified algebra $\mathfrak{su}(n+1)^{\C}$ has a Cartan subalgebra spanned by
\begin{align*} 
	\tau_k&:= \frac{1}{\sqrt{k(k+1)}} (\sum_{p=2}^{k+1} F_{p,p}^{(n+1)} - k F_{k+2,k+2}^{(n+1)} ), \qquad k=1,\dots,n-1, \\
	\tau_n&:=i\sqrt{a} e_1,
\end{align*}
and the elements $\tau_1,\dots,\tau_{n-1}$ span a Cartan subalgebra for the complexified isotropy algebra $\mathfrak{su}(n)^{\C}$. A straightforward calculation then gives,
\begin{proposition}
	The above Cartan subalgebra of $\mathfrak{su}(n)^{\C}$ acts on $\mathfrak{m}^{\C}$ via
	\begin{align*}
		\ad(\tau_k)e_{2p} &=\begin{cases*}
			\frac{-i}{\sqrt{k(k+1)}} e_{2p+1}  & if $p\leq k$, \\
			\frac{ik}{\sqrt{k(k+1)}}e_{2p+1}       & if $p=k+1$, \\
			0        & if $p\geq k+2$,
		\end{cases*}  \qquad 
		\ad(\tau_k)e_{2p+1} =\begin{cases*}
			\frac{i}{\sqrt{k(k+1)}} e_{2p}  & if $p\leq k$, \\
			\frac{-ik}{\sqrt{k(k+1)}}e_{2p}       & if $p=k+1$, \\
			0        & if $p\geq k+2$,
		\end{cases*}
	\end{align*}
	and $\ad(\tau_k)e_1=0$ for $k=1,\dots, n-1$. 
\end{proposition}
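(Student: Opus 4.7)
The plan is a direct computation using the commutator relations listed in the preliminaries section, applied to the explicit matrix representatives of $\tau_k$ and the basis vectors $e_1, e_{2p}, e_{2p+1}$ of $\mathfrak{m}^{\mathbb{C}}$. The key observation that organizes the whole calculation is that $\tau_k$ (for $k \leq n-1$) is a real linear combination of the diagonal matrices $F_{q,q}^{(n+1)}$ with indices $q \in \{2,\dots,k+2\}$ only, i.e.\ all indices are strictly greater than $1$. This immediately forces most contributions to vanish, since the $e_{2p}, e_{2p+1}$ are (up to constants) $E_{1,p+1}^{(n+1)}$ and $iF_{1,p+1}^{(n+1)}$.

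First I would handle $e_1$: it is a real linear combination of $F_{1,1}^{(n+1)}$ and the $F_{l,l}^{(n+1)}$ for $l \geq 2$, and $\tau_k$ consists of $F_{q,q}^{(n+1)}$'s with $q \geq 2$. All of these are diagonal, hence pairwise commute, which gives $\ad(\tau_k)e_1 = 0$. Next, for $\ad(\tau_k)e_{2p}$ I would invoke the relation $[E_{i,j}^{(n+1)},F_{k,k}^{(n+1)}] = \delta_{i,k}F_{j,k}^{(n+1)} - \delta_{j,k}F_{i,k}^{(n+1)}$ from the commutator table. With $i=1$ and $j=p+1$, the $\delta_{1,q}$ term drops out for all $q \geq 2$, so only the term $-\delta_{p+1,q}F_{1,p+1}^{(n+1)}$ survives. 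Summing over $q = 2,\dots,k+1$ and adding the $-k F_{k+2,k+2}^{(n+1)}$ contribution yields the coefficient $+1$ of $F_{1,p+1}^{(n+1)}$ when $2 \leq p+1 \leq k+1$, the coefficient $-k$ when $p+1 = k+2$, and $0$ otherwise. Finally, converting $F_{1,p+1}^{(n+1)}$ back to $-i\sqrt{2b}\,e_{2p+1}$ (via the defining formula for the orthonormal basis) absorbs the extra $i$ and produces the stated factors $\tfrac{-i}{\sqrt{k(k+1)}}$ and $\tfrac{ik}{\sqrt{k(k+1)}}$.

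The computation of $\ad(\tau_k)e_{2p+1}$ is entirely analogous, using instead the relation $[F_{p,q}^{(n+1)},F_{r,r}^{(n+1)}] = -\delta_{q,r}E_{p,r}^{(n+1)} - \delta_{p,r}E_{q,r}^{(n+1)}$ with $(p,q) = (1,p+1)$; again the $\delta_{1,r}$ term drops out for $r \geq 2$, leaving the same combinatorial analysis of which $q$'s in the sum defining $\tau_k$ hit the index $p+1$. The resulting coefficient on $E_{1,p+1}^{(n+1)} = \sqrt{2b}\,e_{2p}$ gives the listed formulas (the overall sign flip relative to the $e_{2p}$ case reflecting the $i$ in $e_{2p+1}$).

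There is no genuine obstacle here: the result follows from bookkeeping of Kronecker deltas against the index range $\{2,\dots,k+2\}$ appearing in $\tau_k$, together with the normalization constants $\tfrac{1}{\sqrt{k(k+1)}}$ and $\tfrac{1}{\sqrt{2b}}$. The only mildly subtle point, worth remarking on in the write-up, is the case distinction $p \leq k$ versus $p = k+1$, which reflects that the $-kF_{k+2,k+2}^{(n+1)}$ summand in $\tau_k$ contributes with the opposite sign and a factor of $k$ exactly when the index $p+1$ equals $k+2$; this is what produces the coefficient $\tfrac{ik}{\sqrt{k(k+1)}}$ rather than $\tfrac{-i}{\sqrt{k(k+1)}}$ in that case.
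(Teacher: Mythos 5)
Your proposal is correct and takes essentially the same approach as the paper, which simply states that the proposition follows by a "straightforward calculation." Your bookkeeping of the Kronecker deltas against the index range $\{2,\dots,k+2\}$, the observation that $e_1$ and $\tau_k$ are both diagonal hence commute, and the conversion back to the orthonormal basis via $F_{1,p+1}^{(n+1)} = -i\sqrt{2b}\,e_{2p+1}$ and $E_{1,p+1}^{(n+1)} = \sqrt{2b}\,e_{2p}$ all check out, including the sign-and-factor-of-$k$ analysis in the case $p = k+1$.
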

\begin{corollary}
	The isotropy representation maps the above Cartan subalgebra of $\mathfrak{su}(n)^{\C}$ into $\mathfrak{so}(\mathfrak{m}^{\C},g_{a,b}^{\C} )\cong \mathfrak{so}(2n+1,\C)$ as the operators 
	\begin{align}\label{specialunitaryisotropyoeprators}
		\tau_k \mapsto \ad(\tau_k)\rvert_{\mathfrak{m}^{\C}} = \frac{-i}{\sqrt{k(k+1)}} \left(\sum_{p=1}^k e_{2p}\wedge e_{2p+1}  - k e_{2k+2}\wedge e_{2k+3} \right), 
	\end{align}
for $k=1,\dots, n-1$.
\end{corollary}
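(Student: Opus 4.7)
The plan is to verify the formula directly by unwinding the standard identification $\mathfrak{so}(\mathfrak{m}^{\C},g_{a,b}^{\C}) \cong \Lambda^2\mathfrak{m}^{\C}$ and comparing the action of the candidate operator on each basis vector with the formulas supplied by the preceding proposition. No new geometric input is needed; the statement is essentially a repackaging of the previous computation as a bivector.

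First I would fix the convention: for $g_{a,b}^{\C}$-orthonormal vectors $X,Y\in\mathfrak{m}^{\C}$, the bivector $X\wedge Y$ is identified with the skew-symmetric endomorphism sending $X\mapsto Y$, $Y\mapsto -X$, and annihilating the $g_{a,b}^{\C}$-orthogonal complement of $\Span\{X,Y\}$. Since $g_{a,b}$ is $\Ad(H)$-invariant by construction and $\tau_k\in\mathfrak{h}^{\C}$, the operator $\ad(\tau_k)\rvert_{\mathfrak{m}^{\C}}$ is automatically $g_{a,b}^{\C}$-skew, so it really does land in $\mathfrak{so}(\mathfrak{m}^{\C},g_{a,b}^{\C})$.

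Next, denote by $\mathcal{O}_k$ the right-hand side of (\ref{specialunitaryisotropyoeprators}) and apply it to each of $e_1,e_2,\dots,e_{2n+1}$. Since $\mathcal{O}_k$ is a linear combination only of terms $e_{2p}\wedge e_{2p+1}$ for $1\leq p\leq k+1$, it annihilates $e_1$ and all $e_{2p},e_{2p+1}$ with $p\geq k+2$, matching the proposition. For $1\leq p\leq k$, the only surviving term is $e_{2p}\wedge e_{2p+1}$, giving
\[
\mathcal{O}_k(e_{2p}) = \tfrac{-i}{\sqrt{k(k+1)}}\,e_{2p+1}, \qquad \mathcal{O}_k(e_{2p+1}) = \tfrac{i}{\sqrt{k(k+1)}}\,e_{2p},
\]
while for $p=k+1$ only the term $-k\,e_{2k+2}\wedge e_{2k+3}$ contributes, producing $\mathcal{O}_k(e_{2k+2}) = \tfrac{ik}{\sqrt{k(k+1)}}\,e_{2k+3}$ and $\mathcal{O}_k(e_{2k+3}) = \tfrac{-ik}{\sqrt{k(k+1)}}\,e_{2k+2}$. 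These are precisely the values recorded by the preceding proposition for $\ad(\tau_k)$, so by linearity $\mathcal{O}_k = \ad(\tau_k)\rvert_{\mathfrak{m}^{\C}}$.

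The only conceivable obstacle is a bookkeeping sign coming from the choice of $\wedge$-convention, which could flip the overall sign of $\mathcal{O}_k$; this is fixed once and for all in the first step, after which the verification is entirely mechanical.
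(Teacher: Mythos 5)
Your proof is correct and is exactly the routine verification the paper leaves implicit: the corollary is simply the preceding proposition repackaged in bivector notation. Your chosen convention, $X\wedge Y: X\mapsto Y,\ Y\mapsto -X$ for orthonormal $X,Y$, agrees with the paper's identification $v_1\wedge v_2\mapsto\varphi_{v_1,v_2}$ with $\varphi_{v_1,v_2}(w)=\beta(v_1,w)v_2-\beta(v_2,w)v_1$ from \eqref{e:wedge-so}, so the sign bookkeeping you flag is already fixed consistently with the rest of the paper. The term-by-term check on $e_1$, on $e_{2p},e_{2p+1}$ for $p\leq k$, on $e_{2k+2},e_{2k+3}$, and on the annihilated vectors with $p\geq k+2$ reproduces the proposition's formulas exactly, and the preliminary observation that $\ad(\tau_k)\rvert_{\mathfrak{m}^{\C}}$ is $g_{a,b}^{\C}$-skew by $\Ad(H)$-invariance of the metric is the right justification that the target really is $\mathfrak{so}(\mathfrak{m}^{\C},g_{a,b}^{\C})$. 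Nothing is missing.
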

\begin{theorem} \label{deformedSasakianinvariantspinors}
	For any $a,b>0$, the space of invariant spinors on $ (S^{2n+1}=\frac{\SU(n+1)}{\SU(n)}, g_{a,b})$ is given by
	\begin{align*}
		\Sigma_{\inv} &= \Span_{\C} \{ \psi_+:=1, \ \psi_-:= y_1\wedge y_2\wedge \dots \wedge y_n\}.
	\end{align*}
\end{theorem}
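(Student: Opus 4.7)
The plan is to identify $\Sigma_{\inv}$ with the simultaneous annihilator of $\widetilde{\ad\rvert_{\mathfrak{h}}}(\mathfrak{su}(n))$ in $\Sigma=\Lambda^\bullet L'$, which is legitimate since $\SU(n)$ is connected. Because this annihilator is automatically an $\mathfrak{su}(n)^{\mathbb{C}}$-subrepresentation, I will first cut down to the zero weight space of the Cartan $\{\tau_1,\ldots,\tau_{n-1}\}$, and then verify separately that the surviving vectors are actually killed by all of $\mathfrak{su}(n)$.

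The first key step is a short Clifford computation. Under $\mathfrak{so}(\mathfrak{m}^{\mathbb{C}})\cong\mathfrak{spin}(\mathfrak{m}^{\mathbb{C}})$, the bivector $e_{2p}\wedge e_{2p+1}$ appearing in (\ref{specialunitaryisotropyoeprators}) lifts to $\tfrac12 e_{2p}\cdot e_{2p+1}$. Using the reordered Clifford realization of Remark \ref{reorderedCliffalgrepresentation} together with the contraction identity $x_p\lrcorner y_p=1$, I find that $e_{2p}\cdot e_{2p+1}$ acts diagonally on each monomial spinor $y_S:=y_{j_1}\wedge\cdots\wedge y_{j_r}$ (with $S=\{j_1<\cdots<j_r\}\subseteq\{1,\ldots,n\}$), multiplying by $+i$ when $p\notin S$ and by $-i$ when $p\in S$. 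Combining this with the preceding corollary yields
\[
\widetilde{\ad}(\tau_k)\cdot y_S \;=\; \frac{1}{\sqrt{k(k+1)}}\,\bigl(k\,\chi_S(k+1)-|S\cap\{1,\ldots,k\}|\bigr)\,y_S,
\]
where $\chi_S$ denotes the indicator function of $S$.

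Imposing that $y_S$ lies in the kernel of every $\tau_k$ ($k=1,\ldots,n-1$) gives the recursion: $k+1\in S$ forces $\{1,\ldots,k\}\subseteq S$, while $k+1\notin S$ forces $\{1,\ldots,k\}\cap S=\emptyset$. A direct induction on $k$ collapses this dichotomy to $S=\emptyset$ or $S=\{1,\ldots,n\}$, so the zero weight space of $\Sigma$ is precisely $\mathbb{C}\,\psi_+\oplus\mathbb{C}\,\psi_-$, and hence $\Sigma_{\inv}\subseteq \mathbb{C}\,\psi_+\oplus\mathbb{C}\,\psi_-$.

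For the reverse inclusion I observe that $\mathfrak{su}(n)$ acts trivially on $\mathfrak{m}_1$ and $\mathbb{C}$-linearly on $\mathfrak{m}_2$ with respect to the complex structure determined by $L\oplus L'$, so the inclusion $\mathfrak{su}(n)\hookrightarrow\mathfrak{u}(n)\hookrightarrow\mathfrak{so}(\mathfrak{m})$ and hence its spin lift preserve the form degree on $\Sigma=\Lambda^\bullet L'$. Consequently $\mathfrak{su}(n)$ acts on the one-dimensional pieces $\Lambda^0 L'=\mathbb{C}\,\psi_+$ and $\Lambda^n L'=\mathbb{C}\,\psi_-$ by scalars, and semisimplicity of $\mathfrak{su}(n)$ (with $n=1$ handled trivially) forces these scalars to vanish, proving $\psi_+,\psi_-\in\Sigma_{\inv}$. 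The most error-prone step to execute carefully is the Clifford computation of $e_{2p}\cdot e_{2p+1}$ on monomials under the reordered basis of Remark \ref{reorderedCliffalgrepresentation}, where a sign slip would swap $\psi_+$ and $\psi_-$ in the Cartan weight formula; the ensuing inductive dichotomy and grading-preservation argument are then routine.
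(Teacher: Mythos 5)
Your proof is correct, and its first half — cutting down to the Cartan zero-weight space via the operators $\tau_k$, computing that $e_{2p}\cdot e_{2p+1}$ acts diagonally on monomials $y_S$ by $\pm i$, and deriving the dichotomy forcing $S=\emptyset$ or $S=\{1,\dots,n\}$ — is essentially the same calculation the paper performs, phrased a bit more explicitly through the eigenvalue formula. Where you genuinely diverge is the reverse inclusion. The paper proves $\psi_\pm\in\Sigma_{\inv}$ by invoking the invariant Sasakian structure on the round metric and the Friedrich--Kath eigenspaces $E_\pm$: it cites that $\dim(E_++E_-)=2$, shows $E_\pm$ are $\SU(n+1)$-representations by the argument of \cite{kath_Tduals}, and uses the dimension bound $\dim E_\pm\leq 2$ together with the fact that $\SU(n+1)$ ($n\geq 2$) has no nontrivial representations of dimension $\leq 2$ to force triviality, treating $n=1$ separately. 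You replace this with a purely algebraic observation: since the isotropy image of $\mathfrak{su}(n)$ lies in $\mathfrak{gl}(L)\subseteq\mathfrak{so}(\mathfrak{m}^{\C})$ (as the paper itself records in Remark \ref{change_of_basis_effect}), Proposition \ref{p:gl-action} shows the spin action preserves the grading of $\Lambda^\bullet L'$, hence acts by scalars on the one-dimensional pieces $\Lambda^0 L'$ and $\Lambda^n L'$, and these scalars vanish because $\mathfrak{su}(n)$ is semisimple (equivalently, because the relevant scalars are traces of traceless matrices). Your route is shorter and independent of the Sasakian spinor machinery; the paper's route is heavier but has the side benefit of exhibiting $\Sigma_{\inv}$ as $E_++E_-$, a fact it exploits in the subsequent discussion of the round metric and Killing spinors.
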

\begin{proof}
Considering the spin lifts of the operators in (\ref{specialunitaryisotropyoeprators}),
%
%
%
%
%
%
%
%
%
%
one notes that $\widetilde{\ad(\tau_k)}\rvert_{\mathfrak{m}^{\C}}\cdot \psi=0$ for all $k=1,\dots, n-1$ if and only if $e_{2p}\cdot e_{2p+1}\cdot \psi = e_{2p+2}\cdot e_{2p+3}\cdot \psi $ for all $p=1,\dots,n-1$. Note that this condition is necessarily satisfied if $\psi \in \Sigma_{\inv}$. Using the representation described in Remark \ref{reorderedCliffalgrepresentation}, one has
\begin{align*}
e_{2p}\cdot e_{2p+1}\cdot \psi &= i(x_p\lrcorner + y_p\wedge)(y_p\wedge -x_p \lrcorner) \psi  = \dots = i[\psi -2y_p\wedge (x_p\lrcorner\psi)],\\
e_{2p+2}\cdot e_{2p+3}\cdot \psi &= i(x_{p+1}\lrcorner + y_{p+1}\wedge)(y_{p+1}\wedge -x_{p+1} \lrcorner) \psi  = \dots = i[\psi -2y_{p+1}\wedge (x_{p+1}\lrcorner\psi)],
\end{align*}
and hence $\Sigma_{\inv} \subseteq \Span_{\C} \{ 1, y_1\wedge y_2\wedge \dots \wedge y_n\}$. Thus it suffices to show that there are two linearly independent invariant spinors. Since the isotropy representation decomposes as the sum of one copy of the trivial representation and one non-trivial module, the number of invariant spinors is independent of the choice of $a,b>0$. In particular we consider the round metric, corresponding to the parameters $a=\frac{n}{n+1}$, $b=\frac{1}{2}$, together with its usual $\SU(n+1)$-invariant Sasakian structure (see \cite{draper_spheres} for a more detailed description). Denoting by $(\varphi,\xi:=e_1,\eta:=\xi^{\flat})$ the Sasakian structure tensors, it follows from results in \cite{Fried90,BFGK} that the spaces $$E_{\pm}:=\{\psi\in\Gamma(\Sigma M)\:( \pm 2\varphi(X)+\xi\cdot X-X\cdot \xi)\cdot \psi=0 \ \forall X\in\vect(M) \} $$
satisfy $\dim(E_+ + E_-)=2$, and hence it suffices to show that they have a basis consisting of invariant spinors. One also remarks from \cite{draper_spheres} that $\varphi$ is an invariant tensor (in fact, using their setup one finds the explicit algebraic description $\varphi= \frac{n}{n+1}\ad(\xi)$).
Let $\phi\in\Gamma(E_+)$, so that $$(2\varphi(X) +\xi\cdot X-X\cdot \xi)\cdot \phi =0 \quad \forall X\in\vect(M).$$
Since $\varphi$ and $\xi$ are both invariant tensors, it suffices to consider this defining equation at the origin (i.e. for $X\in\mathfrak{m}$). By performing a similar type of calculation as in the proof of Proposition 7.1 in \cite{kath_Tduals}, it follows that for any $g_0\in \SU(n+1)$ we have
\begin{align*}
((2\varphi(X) +\xi\cdot X-X\cdot \xi)\cdot (g_0\phi))(g)   &= 	(2\varphi(X)\cdot (g_o\phi ) +\xi\cdot X\cdot (g_0\phi)-X\cdot \xi\cdot (g_0\phi))(g) \\
&= ((2\varphi(X)+\xi\cdot X -X\cdot \xi)\cdot \phi)(g_0^{-1}g)\\
&=0,
\end{align*}
where we have slightly abused notation to denote a spinor and the corresponding $\SU(n)$-equivariant map $\SU(n+1)\to \Sigma$ by the same symbol. Similarly for $\phi\in \Gamma(E_-)$. This shows that the spaces $E_{\pm}$ are representations of $\SU(n+1)$. However we know from \cite{BFGK} that $\dim(E_{\pm})\leq 2$, and thus they must be trivial representations for $n\geq 2$, proving the result in these cases. For $n=1$ the isotropy group is trivial $\SU(1)=\{e\}$, so every spinor is invariant. The spinor module in this dimension is 2-dimensional, so in particular there are two linearly independent invariant spinors.
\end{proof}
\begin{remark}
The fact that the space of invariant spinors is 2-dimensional may also be deduced from part (a) of the main proposition in \cite{Wang}, since the spinor modules in dimensions $2n$ and $2n+1$ are naturally identified and the isotropy representation acts trivially on $\R e_1$.
\end{remark}
\begin{remark}\label{change_of_basis_effect}
A priori, choosing a different orthonormal basis for $\mathfrak{m}$ can lead to different expressions for the invariant spinors, since the identification from Section \ref{understandingspinrep} of spinors with (algebraic) exterior forms is very much basis dependent. This runs counter to the natural expectation that the invariants here should be spanned by $1$ and the anti-holomorphic volume form, however, by choosing a well-suited orthonormal basis for $\mathfrak{m}$ one can avoid this problem. Indeed, our chosen $g_{a.b}$-orthonormal basis $\{e_i\}$ is \emph{adapted} to the invariant almost complex structure $\varphi=\sqrt{\frac{an}{n+1}}\ad(e_1)$ on $\mathfrak{m}_2=(\R e_1)^{\perp}$ in the sense that $\varphi(e_{2p})=e_{2p+1}$ for $p=1,\dots, n$. This gives $\mathfrak{m}_2$ the structure of a complex representation (which is isomorphic to the standard representation of $\SU(n)$ on $\C^n$), and complexifying the full isotropy representation therefore gives:
\[
\mathfrak{m}^{\C} = (\mathfrak{m}_1\oplus \mathfrak{m}_2)^{\C} \simeq (\R \oplus \mathfrak{m}_2)^{\C} \simeq \C \oplus \mathfrak{m}_2 \oplus \mathfrak{m}_2^*.
\]
In particular, this shows that the image of the isotropy representation lies inside $\mathfrak{gl}(L)\subseteq \mathfrak{so}(\mathfrak{m^{\C}})$ (see Section \ref{quaternionic_case_setup} for the details of this inclusion), which by Proposition \ref{p:gl-action} then implies that $\Sigma \simeq \Lambda^{0,\bullet}\mathfrak{m}$ as complex representations. It follows that the spinors $\psi_+ =1$ and $\psi_-= y_1\wedge\dots \wedge y_n$ are unaffected by orthonormal changes of adapted basis, since they are unaffected when viewed as anti-holomorphic forms. More generally, this argument shows that it is possible to choose expressions for the spinors in a consistent way whenever $G/H$ admits an invariant orthogonal almost complex structure or an invariant almost contact metric structure, which are precisely the cases $G=\SU(n+1)$, $\Sp(n)$, $\Sp(n)\U(1)$, and $\G_2$ in our classification.
\end{remark}
Next, we calculate the Ambrose-Singer torsion and determine its type:
\begin{proposition}
	For any $a,b>0$ the sphere $(S^{2n+1}=\SU(n+1)/\SU(n),g_{a,b})$ has Ambrose-Singer torsion of type $\mathcal{T}_{\totallyskew}\oplus \mathcal{T}_{\CT}$, given by
	\begin{align*}
	T^{\AS}(e_1,e_{2p}) &= -\sqrt{\frac{n+1}{an}} e_{2p+1}, \quad T^{\AS}(e_1,e_{2p+1}) =\sqrt{\frac{n+1}{an}}  e_{2p} , \\
	T^{\AS}(e_{2p},e_{2q}) &= T^{\AS}(e_{2p+1},e_{2q+1})=0, \quad 
	 T^{\AS} (e_{2p},e_{2q+1}) = \frac{-\delta_{p,q}\sqrt{a(n+1)}}{b\sqrt{n}} e_1,
	\end{align*}
	for all $p,q=1,\dots, n$. The projection of $T^{\AS}$ onto $\mathcal{T}_{\totallyskew}$ is
\[
T^{\AS}_{\totallyskew} := -\frac{(a+2b)\sqrt{n+1}}{3b\sqrt{an}}\sum_{p=1}^{n} e_1\wedge e_{2p}\wedge e_{2p+1},
\]
with $T^{\AS} = T^{\AS}_{\totallyskew}$ if and only if $a=b$.
\end{proposition}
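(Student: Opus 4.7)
The plan is to exploit the fact that $\Uplambda^{\AS} \equiv 0$, so formula (\ref{torsionatorigin}) reduces to $T^{\AS}_o(X,Y) = -[X,Y]_{\mathfrak{m}}$. The whole computation is therefore just a matter of computing the brackets of the orthonormal basis $\{e_1, e_{2p}, e_{2p+1}\}_{p=1}^{n}$ in $\mathfrak{su}(n+1)$ and projecting onto $\mathfrak{m}$ along $\mathfrak{su}(n)$. First I would compute $[e_1, e_{2p}]$ and $[e_1, e_{2p+1}]$ using the $[E_{i,j}, F_{k,l}]$ relations from the preliminaries: the element $e_1$ is proportional to $-nF_{1,1}+\sum_{l\geq 2} F_{l,l}$, and commuting it with $E_{1,p+1}$ or $iF_{1,p+1}$ yields a scalar multiple of the partner basis vector lying entirely in $\mathfrak{m}_2$, giving the stated formulas with factor $\sqrt{(n+1)/(an)}$ after accounting for the $1/\sqrt{a}$ and $1/\sqrt{2b}$ normalizations. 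Similarly $[e_{2p}, e_{2q}]$ and $[e_{2p+1}, e_{2q+1}]$ for $p \neq q$ produce brackets landing in $\mathfrak{su}(n) = \mathfrak{h}$ (so the $\mathfrak{m}$-projection vanishes), while $[e_{2p}, e_{2q+1}]$ vanishes for $p \neq q$ and for $p=q$ produces the diagonal element $2i(F_{1,1} - F_{p+1,p+1})/(2b)$, whose $\mathfrak{m}_1$-component must be extracted by projecting onto $e_1$ — this gives the coefficient $\sqrt{a(n+1)}/(b\sqrt{n})$.

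Once the torsion components are established, the determination of torsion type proceeds by converting $T^{\AS}$ to its associated difference tensor $A$ via (\ref{torsionisomorphism2}) and applying the projection formulas (\ref{vectorialprojection})–(\ref{CTprojection}). To rule out a vectorial component, I would compute the trace $c_{12}(A)$ directly against each basis vector; the calculation simplifies because $T^{\AS}(e_i, e_i, -) = 0$ trivially and the remaining paired terms cancel by the observed antisymmetry pattern, so $\phi \equiv 0$ and $A_{\vectorial} = 0$. For the totally skew-symmetric part, the cyclic sum $\mathfrak{S}_{X,Y,Z} A(X,Y,Z)$ only contributes for triples of the form $(e_1, e_{2p}, e_{2p+1})$ (all other cyclic sums vanish from the bracket structure), and combining the two contributing coefficients $\sqrt{(n+1)/(an)}$ and $\sqrt{a(n+1)}/(b\sqrt{n})$ with the factor of $1/3$ and the $(-1)$ sign from $T = -[\cdot,\cdot]_{\mathfrak{m}}$ produces exactly the prefactor $-(a+2b)\sqrt{n+1}/(3b\sqrt{an})$ claimed.

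Finally, the equality $T^{\AS} = T^{\AS}_{\totallyskew}$ reduces, after comparing coefficients on the pairs $(e_1, e_{2p})$ versus $(e_{2p}, e_{2p+1})$, to the single scalar identity $\sqrt{(n+1)/(an)} = \sqrt{a(n+1)}/(b\sqrt{n})$, i.e. $a = b$. This confirms in particular that $T^{\AS}$ always has a nonzero cyclic-traceless component except at the Killing-form metric, and never has a nonzero vectorial component. The main obstacle is purely bookkeeping: keeping track of the normalization factors coming from the $g_{a,b}$-orthonormalization of $\{e_i\}$ (where $\mathfrak{m}_1$ and $\mathfrak{m}_2$ carry different scalings) and correctly projecting diagonal matrix commutators onto the one-dimensional summand $\mathbb{R} e_1$ of $\mathfrak{m}_1$. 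No conceptual difficulty arises, and the $\mathfrak{S}_3$-cyclic structure together with the vanishing of the trace $c_{12}(A)$ makes the type decomposition essentially automatic.
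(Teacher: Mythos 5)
Your proposal is correct and follows exactly the route the paper intends (the proposition is stated without proof, but the analogous $\U(n+1)$ proposition just above it cites ``straightforward calculation of the commutator relations, and subsequent application of (\ref{torsionatorigin}), (\ref{vectorialprojection})--(\ref{CTprojection}) and the isomorphism (\ref{torsionisomorphism1})'', which is precisely your plan). One small slip worth fixing: in the vectorial-part calculation the two surviving terms in $A(e_i,e_i,Z)=\tfrac12\bigl(T(e_i,e_i,Z)-T(e_i,Z,e_i)+T(Z,e_i,e_i)\bigr)$ do not cancel but rather \emph{add}, giving $A(e_i,e_i,Z)=T(Z,e_i,e_i)$; what actually kills $c_{12}(A)$ is that $T(Z,e_i)$ is always orthogonal to $e_i$ for this torsion tensor (it swaps $e_{2p}\leftrightarrow e_{2p+1}$ and sends horizontal to $e_1$), so every summand $T(Z,e_i,e_i)$ vanishes individually rather than in pairs.
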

Since $a,b>0$ (and hence $a+2b\neq 0$) we deduce from the preceding proposition that the Ambrose-Singer connection again never has torsion of pure cyclic traceless type. Next, we would like to differentiate the spinors $\psi_{\pm}$ from Theorem \ref{deformedSasakianinvariantspinors}. By calculating explicitly in the Lie algebra (or by other means), one finds:
\begin{lemma} The Nomizu map for the Levi-Civita connection on $(S^{2n+1}=\SU(n+1)/\SU(n),g_{a,b}) $ is given by
	\begin{align*}
		&\Uplambda^{g_{a,b}}(x_1)x_2 = 0, \quad
		\Uplambda^{g_{a,b}}(x)y= (1-\frac{a}{2b})[x,y]_{\mathfrak{m}},\\
		&\Uplambda^{g_{a,b}}(y)x = \frac{a}{2b}[y,x]_{\mathfrak{m}}, \quad
		\Uplambda^{g_{a,b}}(y_1)y_2= \frac{1}{2}[y_1,y_2]_{\mathfrak{m}},
	\end{align*} 
	for $x,x_1,x_2\in\mathfrak{m}_1$, $y,y_1,y_2\in\mathfrak{m}_2$.
\end{lemma}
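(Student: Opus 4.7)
The plan is to apply the general formula (\ref{nomizumap}) together with the defining equation (\ref{Utensor}) for the symmetric tensor $U$, and then to evaluate $U$ case by case on the decomposition $\mathfrak{m}=\mathfrak{m}_1\oplus \mathfrak{m}_2$. Three standing ingredients enter throughout: (a) the bracket structure, namely $[\mathfrak{m}_1,\mathfrak{m}_1]=0$, $[\mathfrak{m}_1,\mathfrak{m}_2]\subseteq \mathfrak{m}_2$, and $[\mathfrak{m}_2,\mathfrak{m}_2]\subseteq \mathfrak{h}\oplus \mathfrak{m}_1$, all of which are immediate from the matrix commutator relations of Section~2.2 applied to the basis $\{e_i\}$ fixed above; (b) the orthogonal splitting $g_{a,b}=a B_0$ on $\mathfrak{m}_1$ and $g_{a,b}=b B_0$ on $\mathfrak{m}_2$, with $\mathfrak{m}_1\perp \mathfrak{m}_2$; and (c) the $\ad$-invariance $B_0([Z,X],Y)+B_0(X,[Z,Y])=0$ of the trace form.

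For $x_1,x_2\in\mathfrak{m}_1$, both terms on the right-hand side of (\ref{Utensor}) vanish for every $Z$: if $Z\in\mathfrak{m}_1$ the brackets are zero by (a), and if $Z\in\mathfrak{m}_2$ the brackets land in $\mathfrak{m}_2$ and so are $g_{a,b}$-orthogonal to $\mathfrak{m}_1$. Combined with $[x_1,x_2]=0$ this gives $\Uplambda^{g_{a,b}}(x_1)x_2=0$. For $y_1,y_2\in\mathfrak{m}_2$ the argument is analogous: when $Z\in\mathfrak{m}_2$ the $\mathfrak{m}$-components of $[Z,y_i]$ lie in $\mathfrak{m}_1$ by (a) and are orthogonal to $y_j$, while for $Z\in\mathfrak{m}_1$ both brackets lie in $\mathfrak{m}_2$, so both terms carry the same weight $b$ and cancel by $\ad$-invariance. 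Hence $U(y_1,y_2)=0$ and $\Uplambda^{g_{a,b}}(y_1)y_2=\tfrac{1}{2}[y_1,y_2]_\mathfrak{m}$.

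The only genuinely non-trivial case is the mixed one, $x\in\mathfrak{m}_1$ and $y\in\mathfrak{m}_2$. For $Z\in\mathfrak{m}_1$ the right-hand side of (\ref{Utensor}) again vanishes, but for $Z\in\mathfrak{m}_2$ the two terms carry different weights, and two applications of (c) give
\[
2g_{a,b}(U(x,y),Z)=b\, B_0([Z,x],y)+a\, B_0(x,[Z,y])=(b-a)B_0([Z,x],y)=(b-a)B_0(Z,[x,y]).
\]
Since $[x,y]\in\mathfrak{m}_2$ by (a), the right-hand side equals $\tfrac{b-a}{b}\,g_{a,b}(Z,[x,y]_\mathfrak{m})$, whence $U(x,y)=\tfrac{b-a}{2b}[x,y]_\mathfrak{m}=\bigl(\tfrac{1}{2}-\tfrac{a}{2b}\bigr)[x,y]_\mathfrak{m}$. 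Substituting into (\ref{nomizumap}) produces $\Uplambda^{g_{a,b}}(x)y=\bigl(1-\tfrac{a}{2b}\bigr)[x,y]_\mathfrak{m}$, and the symmetry $U(y,x)=U(x,y)$ together with antisymmetry of the bracket yields $\Uplambda^{g_{a,b}}(y)x=\tfrac{a}{2b}[y,x]_\mathfrak{m}$.

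The only potentially delicate step is verifying the bracket inclusions in (a); once these are in hand, everything is forced by orthogonality and $\ad$-invariance, with no subtle cancellation to monitor beyond the single display above.
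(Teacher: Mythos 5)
Your proof is correct and is exactly the computation the paper alludes to when it says ``by calculating explicitly in the Lie algebra'': you apply the general formula (\ref{nomizumap}) and the defining relation (\ref{Utensor}) for $U$, and evaluate case by case using the bracket inclusions $[\mathfrak{m}_1,\mathfrak{m}_1]=0$, $[\mathfrak{m}_1,\mathfrak{m}_2]\subseteq\mathfrak{m}_2$, $[\mathfrak{m}_2,\mathfrak{m}_2]\subseteq\mathfrak{h}\oplus\mathfrak{m}_1$ (all of which do hold for this reductive decomposition), the orthogonality of $\mathfrak{m}_1$ and $\mathfrak{m}_2$, and the $\ad$-invariance of $B_0$. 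The displayed chain of equalities, the resulting $U(x,y)=\tfrac{b-a}{2b}[x,y]_{\mathfrak{m}}$, and the two coefficients $1-\tfrac{a}{2b}$ and $\tfrac{a}{2b}$ all check out.
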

Combining the preceding proposition and lemma then allows us to give an explicit expression for the Nomizu map of the Levi-Civita connection:
\begin{corollary}
	With respect to the above basis $\{e_i\}$ for $\mathfrak{m}$, the Nomizu map $\Uplambda^{g_{a,b}}$ takes the form
	\begin{align*}
		\Uplambda^{g_{a,b}}(e_1) &=(1-\frac{a}{2b}) \sqrt{\frac{n+1}{an}} \sum_{l=1}^n e_{2l}\wedge e_{2l+1} ,\quad
		\Uplambda^{g_{a,b}}(e_{2p})=-\frac{1}{2b}\sqrt{\frac{a(n+1)}{n}} e_1\wedge e_{2p+1} , \\
		\Uplambda^{g_{a,b}}(e_{2p+1})&=\frac{1}{2b}\sqrt{\frac{a(n+1)}{n}} e_1\wedge e_{2p} , \quad (p=1,\dots n).
	\end{align*} 
\end{corollary}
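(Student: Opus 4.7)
The plan is to combine the preceding lemma with the preceding proposition, which together deliver the formulas almost directly. The lemma expresses $\Uplambda^{g_{a,b}}(X)Y$ purely in terms of the bracket $[X,Y]_{\mathfrak{m}}$, with the prefactor determined by which of $\mathfrak{m}_1,\mathfrak{m}_2$ contain $X$ and $Y$; meanwhile the Ambrose--Singer Nomizu map is identically zero, so $T^{\AS}(X,Y) = -[X,Y]_{\mathfrak{m}}$ by the torsion formula $T_o(X,Y) = \Uplambda(X)Y - \Uplambda(Y)X - [X,Y]_{\mathfrak{m}}$. Hence every bracket $[e_i,e_j]_{\mathfrak{m}}$ needed here has already been tabulated in the preceding proposition, up to a sign.

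First I would read off from that proposition the bracket values $[e_1,e_{2p}]_{\mathfrak{m}} = \sqrt{(n+1)/(an)}\,e_{2p+1}$, $[e_1,e_{2p+1}]_{\mathfrak{m}} = -\sqrt{(n+1)/(an)}\,e_{2p}$, $[e_{2p},e_{2q}]_{\mathfrak{m}} = [e_{2p+1},e_{2q+1}]_{\mathfrak{m}} = 0$, and $[e_{2p},e_{2q+1}]_{\mathfrak{m}} = \delta_{p,q}\,\sqrt{a(n+1)}/(b\sqrt{n})\,e_1$. Then I would substitute these into the three cases of the lemma according to the membership of the arguments in $\mathfrak{m}_1 = \R e_1$ or $\mathfrak{m}_2 = \Span_\R\{e_2,\dots,e_{2n+1}\}$.

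For the first case, $e_1 \in \mathfrak{m}_1$ and $e_{2p},e_{2p+1}\in\mathfrak{m}_2$, so the relevant coefficient is $1 - a/(2b)$; this immediately yields $\Uplambda^{g_{a,b}}(e_1)(e_{2p}) = (1 - a/(2b))\sqrt{(n+1)/(an)}\,e_{2p+1}$ and the skew-symmetric partner on $e_{2p+1}$. Using the convention $(X \wedge Y)(Z) = g(X,Z)Y - g(Y,Z)X$ for the identification of a 2-form with an element of $\mathfrak{so}(\mathfrak{m})$, this action is exactly $(1 - a/(2b))\sqrt{(n+1)/(an)}\sum_{l=1}^n e_{2l}\wedge e_{2l+1}$. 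For the second case with $e_{2p}\in\mathfrak{m}_2$, the only nonzero outputs arise on $e_1$ (using the $\mathfrak{m}_2$-to-$\mathfrak{m}_1$ rule with coefficient $a/(2b)$) and on $e_{2p+1}$ (using the $\mathfrak{m}_2$-to-$\mathfrak{m}_2$ rule with coefficient $1/2$), and plugging in the brackets produces the claimed multiple of $e_1 \wedge e_{2p+1}$. The third case with $e_{2p+1}$ is completely analogous.

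There is no real obstacle here beyond bookkeeping: the entire argument is mechanical once the brackets are in hand and the convention for $X \wedge Y$ is fixed. The only thing to be careful about is verifying sign consistency when switching between the lemma's two rules $\Uplambda(x)y = (1-a/(2b))[x,y]_{\mathfrak{m}}$ and $\Uplambda(y)x = (a/(2b))[y,x]_{\mathfrak{m}}$, since these differ both in prefactor and in bracket order, but skew-symmetry of the bracket makes them compatible.
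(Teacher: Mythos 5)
Your proof is correct and is exactly the intended route: the paper's own justification for the corollary is the one-line remark that it follows by combining the preceding proposition (which gives the brackets via $[X,Y]_{\mathfrak{m}}=-T^{\AS}(X,Y)$) with the lemma expressing $\Uplambda^{g_{a,b}}$ in terms of those brackets, and your case analysis and sign checks carry this out faithfully, including the correct convention $(X\wedge Y)(Z)=g(X,Z)Y-g(Y,Z)X$ for packaging the operators as elements of $\mathfrak{so}(\mathfrak{m})$.
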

Lifting these to the spin bundle and applying them to $\psi_{\pm}$ gives,
\begin{theorem}\label{special_unitary_GKS_theorem}
	The invariant spinors $\psi_{\pm}$ are generalized Killing spinors, i.e. $\nabla^{g_{a,b}}_X\psi_{\pm}= A_{\pm}(X)\cdot \psi_{\pm}$, for the endomorphisms
	\begin{align*}
		A_+&:=  \lambda_1  \Id\rvert_{\mathfrak{m}_1} +  \lambda_2 \Id\rvert_{\mathfrak{m}_2}, \quad
		A_-:= (-1)^{n+1}A_+,
	\end{align*}
	where $\lambda_1:=\frac{(2b-a)\sqrt{n(n+1)}}{4b\sqrt{a}}  $, $\lambda_2:=\frac{\sqrt{a(n+1)}}{4b\sqrt{n}} $.
\end{theorem}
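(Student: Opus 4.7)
The plan is to verify the generalized Killing equation $\nabla^{g_{a,b}}_{\hat X}\psi_\pm = A_\pm(X)\cdot\psi_\pm$ pointwise at the origin $o = eH$ for each $X\in\mathfrak{m}$; $G$-invariance of both sides will then extend the equation to all of $M$. Since $\psi_\pm$ are $G$-invariant, the Levi-Civita covariant derivative at the origin reduces to Clifford multiplication by the spin lift of the Nomizu map, namely $\nabla^{g_{a,b}}_{\hat X}\psi_\pm|_o = \widetilde{\Uplambda}^{g_{a,b}}(X)\cdot\psi_\pm$. Using the standard identification $e_i\wedge e_j \leftrightarrow \tfrac{1}{2}e_i\cdot e_j$ of $\Lambda^2\mathfrak{m}$ with $\mathfrak{spin}(\mathfrak{m})$, the explicit Nomizu map from the preceding corollary translates directly into concrete Clifford expressions, so it will be enough to check the equation on the orthonormal basis vectors $e_1, e_{2p}, e_{2p+1}$. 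Note that the claimed $A_\pm$ are automatically symmetric since they act as scalars on each of the mutually $g_{a,b}$-orthogonal isotypic summands $\mathfrak{m}_1$, $\mathfrak{m}_2$.

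For $X = e_1 \in \mathfrak{m}_1$ I would exploit that, in the Clifford representation of Remark \ref{reorderedCliffalgrepresentation}, each operator $e_{2l}\cdot e_{2l+1}$ preserves the grading on $\Sigma = \Lambda^\bullet L'$. A short direct check — most transparently carried out in the Kronecker product realization, where $\psi_+$ and $\psi_-$ correspond to the pure tensor states $v_0^{\otimes n}$ and $v_1^{\otimes n}$ with $v_0 = \begin{pmatrix}1\\0\end{pmatrix}$, $v_1 = \begin{pmatrix}0\\1\end{pmatrix}$ — shows that $\psi_\pm$ are simultaneous eigenvectors of all these operators with eigenvalue $\pm i$. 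This turns the computation of $\widetilde{\Uplambda}^{g_{a,b}}(e_1)\cdot\psi_\pm$ into a scalar multiplication, yielding $\pm i\lambda_1 \psi_\pm$. Since $e_1$ itself acts on $\Sigma$ by $i$ on $\Sigma^{\mathrm{even}}$ and $-i$ on $\Sigma^{\mathrm{odd}}$, a parity check on the degrees of $\psi_+$ (which has degree $0$) and $\psi_-$ (which has degree $n$) matches this to $A_\pm(e_1)\cdot\psi_\pm$ with the stated $\lambda_1$ and pins down the $(-1)^{n+1}$ sign relating $A_-$ to $A_+$.

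For $X\in\mathfrak{m}_2$, the Nomizu map sends $e_{2p}$ and $e_{2p+1}$ to multiples of $e_1\wedge e_{2p+1}$ and $e_1\wedge e_{2p}$ respectively. Because $e_1$ still acts as a scalar on each graded component of $\Sigma$, Clifford multiplication by $e_1\cdot e_{2p+1}$ on $\psi_\pm$ is a scalar multiple of $e_{2p+1}\cdot\psi_\pm$, with the scalar determined by the parity of the degree of $e_{2p+1}\cdot\psi_\pm$. Combining this with the explicit prefactor coming from $\Uplambda^{g_{a,b}}(e_{2p})$ produces exactly the coefficient $\lambda_2 = \frac{\sqrt{a(n+1)}}{4b\sqrt{n}}$; the analogous calculation for $e_{2p+1}$ yields the same $\lambda_2$, again with the $(-1)^{n+1}$ sign in the $\psi_-$ case. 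The main obstacle is not any conceptual subtlety but the parity bookkeeping across the four cases $\psi_+/\psi_-$ and $e_{2p}/e_{2p+1}$; organizing the computation in the Kronecker product picture — where $\psi_\pm$ are pure tensor states and the Clifford action splits into a product of $2\times 2$ matrices acting on $\{v_0, v_1\}$ — should keep the sign tracking transparent.
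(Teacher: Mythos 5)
Your proposal is correct and follows essentially the same route as the paper's proof: both reduce to evaluating the spin lift of the Nomizu map $\widetilde{\Uplambda}^{g_{a,b}}$ on $\psi_\pm$ at the origin and matching the result with Clifford multiplication by $A_\pm(X)$. The only difference is stylistic — the paper carries out the Clifford computation in the exterior-algebra realization (interior/exterior products and writes out only the $X=e_1$, $\psi_+$ case explicitly), whereas you organize the same computation in the equivalent Kronecker-product picture, which, as you note, makes the parity bookkeeping for the $\psi_-$ and $\mathfrak{m}_2$ cases more transparent.
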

\begin{proof}
	The proof proceeds by direct calculation. As an example, we show that the desired equation holds for $\psi_+$ in the direction of $X=e_1$. Using the preceding corollary, we differentiate at the origin $o=eH$:
	\begin{align*}
		\widetilde{\Uplambda}^{g_{a,b}}(e_1)\cdot \psi_+ &= \frac{1}{2}(1-\frac{a}{2b})\sqrt{\frac{n+1}{an}}\sum_{l=1}^n e_{2l}\cdot e_{2l+1} \cdot \psi_+ = \frac{2b-a}{4b}\sqrt{\frac{n+1}{an}} \sum_{l=1}^n i(x_l\lrcorner + y_l\wedge)(y_l\wedge - x_l\lrcorner)1 \\
		&= \frac{2b-a}{4b}\sqrt{\frac{n+1}{an}} \sum_{l=1}^n i = \left( \frac{(2b-a)\sqrt{n(n+1)}}{4b\sqrt{a}}\right) i = \lambda_1 e_1\cdot \psi_+.
	\end{align*}
\end{proof}
\begin{corollary} \label{sunweakprop}
	The spinors $\psi_{\pm}$ are Killing spinors if and only if $a=\frac{2bn}{n+1}$, leading to $\lambda_1=\lambda_2 = \frac{1}{2\sqrt{2b}}$. The round metric corresponds to the parameters  $a=\frac{n}{n+1}$, $b= \frac{1}{2} $, in which case we recover the usual Sasakian Killing spinors for the constants $\frac{1}{2}$, $\frac{-1}{2}$ (or $\frac{1}{2}$, $\frac{1}{2}$, depending on $n$). 
\end{corollary}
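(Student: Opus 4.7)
The statement is an immediate algebraic consequence of Theorem \ref{special_unitary_GKS_theorem}, so the proof plan is entirely mechanical. Recall that a (real) Killing spinor is precisely a generalized Killing spinor whose defining endomorphism $A$ is a scalar multiple of the identity. Since $A_+ = \lambda_1 \Id\rvert_{\mathfrak{m}_1} + \lambda_2 \Id\rvert_{\mathfrak{m}_2}$ with $\lambda_1, \lambda_2$ as in the theorem, my first step is to impose $\lambda_1 = \lambda_2$, i.e.
\[
\frac{(2b-a)\sqrt{n(n+1)}}{4b\sqrt{a}} \;=\; \frac{\sqrt{a(n+1)}}{4b\sqrt{n}}.
\]
Clearing denominators and cancelling $\sqrt{n+1}$ reduces this to $(2b-a)n = a$, which I solve to obtain $a = \frac{2bn}{n+1}$, establishing the first claim.

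Next, I substitute $a = \frac{2bn}{n+1}$ back into $\lambda_2 = \frac{\sqrt{a(n+1)}}{4b\sqrt{n}}$, observing that $a(n+1) = 2bn$ so that $\sqrt{a(n+1)}/\sqrt{n} = \sqrt{2b}$, giving the common value $\lambda_1 = \lambda_2 = \frac{1}{2\sqrt{2b}}$.

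For the round sphere, I verify directly that the parameters $a = \frac{n}{n+1}$, $b = \frac{1}{2}$ satisfy $a = \frac{2bn}{n+1}$, so that $\psi_+$ is a Killing spinor with constant $\frac{1}{2\sqrt{2b}} = \frac{1}{2}$. Finally, for $\psi_-$, I invoke the relation $A_- = (-1)^{n+1} A_+$ from Theorem \ref{special_unitary_GKS_theorem}: this shows that $\psi_-$ is Killing under exactly the same parameter condition, with Killing number $\frac{(-1)^{n+1}}{2}$, which equals $-\frac{1}{2}$ when $n$ is even and $\frac{1}{2}$ when $n$ is odd, giving the parenthetical statement in the corollary.

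There is no real obstacle here; the entire proof is a one-paragraph calculation once Theorem \ref{special_unitary_GKS_theorem} is in hand. The only thing worth being careful about is the parity sign for $\psi_-$ and matching the result with the classical Sasakian Killing spinor conventions in \cite{Fried90,BFGK}.
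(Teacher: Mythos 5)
Your proof is correct and is exactly the intended one: since the paper gives no proof for this corollary (it is presented as an immediate consequence of Theorem \ref{special_unitary_GKS_theorem}), the mechanical computation you carry out — imposing $\lambda_1=\lambda_2$, solving for $a$, substituting back, verifying the round parameters, and tracking the sign $(-1)^{n+1}$ in $A_-$ — is the whole argument. The only implicit step, which you are right to treat as automatic, is that the generalized Killing endomorphism attached to a nonzero spinor is unique (because $X\mapsto X\cdot\psi$ is injective for $\psi\neq 0$), so "Killing" is genuinely equivalent to the condition that $A_{\pm}$ be a scalar multiple of the identity, i.e.\ $\lambda_1=\lambda_2$.
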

Generalizing the usual Sasakian structure, we have:
\begin{proposition}\label{specialunitaryalphaSasakian}
	The sphere $(S^{2n+1}=\frac{\SU(n+1) }{\SU(n) },  g_{a,b})$ admits:
	\begin{enumerate}[(i)] \item a compatible invariant normal almost contact metric structure for all $a,b >0$.
		\item a compatible invariant $\alpha$-contact structure if and only if $\alpha = \frac{\sqrt{a(n+1)}}{2b\sqrt{n}}$.
		\item a compatible invariant $\alpha$-K-contact structure if and only if $\alpha = \frac{\sqrt{a(n+1)}}{2b\sqrt{n}}$.
	\end{enumerate}
	In particular there exists a compatible invariant $\alpha$-Sasakian structure if and only if $\alpha = \frac{\sqrt{a(n+1)}}{2b\sqrt{n}}$.
\end{proposition}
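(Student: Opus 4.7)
The strategy is to exhibit a natural candidate invariant almost contact metric structure on $(S^{2n+1}=\SU(n+1)/\SU(n),g_{a,b})$ and then verify the three claimed properties by direct computation at the origin $o$ using the $g_{a,b}$-orthonormal basis $\{e_1,\dots,e_{2n+1}\}$ introduced earlier. Motivated by Remark \ref{change_of_basis_effect} and the fact that $\R e_1$ is the unique $\Ad(\SU(n))$-invariant line in $\mathfrak{m}$, I would take $\xi:=e_1$, $\eta:=g_{a,b}(\xi,\cdot)$, and let $\varphi$ be the invariant extension of $\sqrt{\tfrac{an}{n+1}}\,\ad(e_1)|_{\mathfrak{m}_2}$ by zero on $\R\xi$. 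The commutator identities in $\mathfrak{su}(n+1)$ already employed earlier in the paper yield $\varphi(e_{2p})=e_{2p+1}$ and $\varphi(e_{2p+1})=-e_{2p}$, from which the algebraic axioms $\eta(\xi)=1$, $\varphi^{2}=-\Id+\eta\otimes\xi$, and $g_{a,b}(\varphi X,\varphi Y)=g_{a,b}(X,Y)-\eta(X)\eta(Y)$ are immediate on the basis.

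The heart of parts (ii) and (iii) is the computation of $d\eta$ and of the fundamental $2$-form $\Phi(X,Y):=g_{a,b}(X,\varphi Y)$. Since $\eta$ is invariant, $(d\eta)_o(X,Y)=-\eta([X,Y]_\mathfrak{m})$; the identity $[E_{1,p+1},iF_{1,p+1}]=2i(F_{p+1,p+1}-F_{1,1})$ together with the $g_{a,b}$-projection onto $\R e_1$ gives $[e_{2p},e_{2p+1}]_\mathfrak{m}=\tfrac{1}{b}\sqrt{\tfrac{a(n+1)}{n}}\,e_1$, while all other $\mathfrak{m}$-brackets of basis pairs contribute nothing to $\eta$. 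Combined with the expression $\Phi=-\sum_{p=1}^{n} e_{2p}\wedge e_{2p+1}$, this shows that $d\eta=2\alpha\Phi$ if and only if $\alpha=\frac{\sqrt{a(n+1)}}{2b\sqrt{n}}$, establishing (ii). The operator $\ad(e_1)$ is manifestly skew-symmetric on the orthonormal basis, so $\xi$ is Killing for every $a,b>0$; thus (iii) reduces to (ii).

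The remaining and most subtle step is the normality claim (i), i.e., the vanishing of the Nijenhuis-type tensor $N^{(1)}:=[\varphi,\varphi]+2\,d\eta\otimes\xi$. By invariance this identity need only be checked at $o$ on pairs of basis vectors. The pairs $(e_1,e_{2p})$ and $(e_1,e_{2p+1})$ vanish because $\varphi$ commutes with $\ad(e_1)$ (both being scalar multiples of the same operator on $\mathfrak{m}_2$), while the off-diagonal pairs $(e_{2p},e_{2q})$, $(e_{2p+1},e_{2q+1})$, and $(e_{2p},e_{2q+1})$ with $p\neq q$ vanish because the relevant Lie brackets in $\mathfrak{su}(n+1)$ project trivially onto $\mathfrak{m}$. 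The only non-trivial case is the diagonal pair $(e_{2p},e_{2p+1})$, and here a direct computation shows that the Nijenhuis contribution $4\alpha\,e_1$ is cancelled exactly by $2\,d\eta(e_{2p},e_{2p+1})\,\xi=-4\alpha\,e_1$, yielding $N^{(1)}\equiv 0$ for all $a,b>0$. Combining (i) and (ii) then yields the final $\alpha$-Sasakian statement. The main obstacle will be the bookkeeping in this diagonal case, in particular carefully tracking the sign and factor conventions relating $[\varphi,\varphi]$ to the naive Nijenhuis expression $\varphi^{2}[X,Y]+[\varphi X,\varphi Y]-\varphi[\varphi X,Y]-\varphi[X,\varphi Y]$.
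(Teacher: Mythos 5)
Your proposal follows essentially the same route as the paper's proof: identify the unique (up to sign) candidate structure forced by invariance and compatibility, namely $\xi = e_1$, $\eta = g_{a,b}(\xi,\cdot)$, $\varphi = \sqrt{\tfrac{an}{n+1}}\,\ad(e_1)|_{\mathfrak{m}_2}$, and then verify (i)--(iii) by Lie-algebraic computation at the origin. Your reduction of (iii) to (ii) via the observation that $\xi$ is Killing (so that $d\eta(X,Y) = 2g_{a,b}(\nabla^g_X\xi,Y)$, making $d\eta = 2\alpha\Phi$ equivalent to $\nabla^g_X\xi = -\alpha\varphi(X)$) is a pleasant shortcut over checking the $\alpha$-K-contact equation directly, but not a different approach in any substantive sense. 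The computation $[e_{2p},e_{2p+1}]_{\mathfrak{m}} = \tfrac{1}{b}\sqrt{\tfrac{a(n+1)}{n}}\,e_1$ and its use in (ii) are correct.

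One caveat, which you yourself anticipate: your stated Nijenhuis bookkeeping does not quite balance. With the Nomizu-map formula $(\nabla_Z\varphi)_o = [\Uplambda^{g_{a,b}}(Z),\varphi]$ one finds
\[
N_\varphi(e_{2p},e_{2p+1}) = (\nabla_{e_{2p+1}}\varphi)(e_{2p+1}) + (\nabla_{e_{2p}}\varphi)(e_{2p}) = 2\alpha\,e_1,
\]
not $4\alpha\,e_1$. With the convention $d\eta(X,Y)=-\eta([X,Y]_{\mathfrak{m}})$ that your proof of (ii) uses, one then has $d\eta(e_{2p},e_{2p+1})\,\xi = -2\alpha\,e_1$, so the vanishing of $N^{(1)}$ requires the normalization $N^{(1)} = N_\varphi + d\eta\otimes\xi$ (as in Boyer--Galicki), not $N_\varphi + 2\,d\eta\otimes\xi$ with the unnormalized $d\eta$; using the latter pair of conventions simultaneously leaves a residual $-2\alpha\,e_1$. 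The correct conclusion ($N^{(1)}\equiv 0$) is unaffected, but the coefficients $4\alpha$ and $-4\alpha$ as you wrote them are not internally consistent, and this is exactly the bookkeeping trap you flagged. You should also spell out, rather than merely cite, why the candidate structure is the unique compatible invariant one up to sign, since the ``only if'' direction of (ii)--(iii) rests on that uniqueness; the paper does this by noting $\Phi$ must lie in the one-dimensional space $(\Lambda^2\mathfrak{m}_2)^{\SU(n)}$ and that metric compatibility then pins down the scale.
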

\begin{proof}
	In order for the structure to be invariant, the only choices for the Reeb vector field are $\xi=\pm e_1$. We note that the 2-form $\Phi:=g_{a,b}(\cdot, \varphi(\cdot))$ is invariant if and only if
 \[
 \Phi \in (\Lambda^2 \mathfrak{m}_2)^{\SU(n)} \simeq \Span_{\R}\{ \ad\xi\rvert_{\mathfrak{m}_2} \},
 \] and the metric compatibility condition $g_{a,b}(\varphi(X),\varphi(Y)) = g_{a,b}(X,Y) -g_{a,b}(\xi,X)g_{a,b}(\xi,Y)$ is satisfied if and only if
 \[
 \varphi = \sqrt{\frac{an}{n+1}} \ad\xi.
 \]
A tedious but straightforward Lie algebra computation then shows that the Nijenhuis tensor vanishes for any values of $a,b$, and the structure is $\alpha$-contact ($d\eta = 2\alpha \Phi$) and $\alpha$-K-contact ($\nabla^g_X\xi = -\alpha\varphi(X)$) if and only if $\alpha = \frac{\sqrt{a(n+1)}}{2b\sqrt{n}}$. 
\end{proof}
\begin{remark}
	For the parameters $a=\frac{-n\epsilon}{n+1}$, $b=\frac{1}{2}$ one has the Berger metrics $g_{\epsilon}$ described in \cite{draper_spheres}, with $\epsilon=-1$ corresponding to the round metric (see also Section 6). We would like to see what spinorial equations are satisfied by our spinors for the invariant connections constructed in \cite{draper_spheres}. In order to deal only with the Riemannian case, we will require $\epsilon<0$. Let us focus on dimensions not equal to 5, 7 ($n\neq 2,3$), in which case there is a 1-parameter family of invariant connections with skew torsion,
	\begin{align}\label{invconndimneq23}
		\nabla^s = \nabla^{g_{\epsilon}} -\epsilon s \ \Phi\wedge \eta,\quad s\in\R,
	\end{align}
	with torsion $T^s= -2\epsilon s \ \Phi\wedge \eta$, where $\Phi$ is the invariant 2-form defined in Section 2.2 of \cite{draper_spheres} and $\eta $ is the metric dual of $\xi:= e_1$. 
\end{remark}
Generalizing Theorem \ref{special_unitary_GKS_theorem}, we have
\begin{proposition}
	For $n\neq 2,3$ the invariant spinors $\psi_{\pm}$ on $( S^{2n+1}=\frac{\SU(n+1)}{\SU(n)},g_{\epsilon})$ satisfy the generalized Killing equation with torsion,
	\[
	\nabla^s_X\psi_+ = A^s_+(X)\cdot \psi_+, \quad \nabla^s_X\psi_- = A_-^s(X)\cdot \psi_-,
	\]
	for the endomorphisms
	\begin{align*} 
		A^s_+ &:= A_+ -\frac{\epsilon s n}{2} \Id\rvert_{\mathfrak{m}_1}+\frac{\epsilon s}{2}\Id\rvert_{\mathfrak{m}_2} ,  \\
		A^s_- &:= A_-  - \frac{(-1)^{n+1} \epsilon s n}{2} \Id\rvert_{\mathfrak{m}_1} + \frac{(-1)^{n+1} \epsilon s}{2} \Id\rvert_{\mathfrak{m}_2}.
	\end{align*}
\end{proposition}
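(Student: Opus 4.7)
My plan is to exploit the fact that each $\nabla^s$ has totally skew-symmetric torsion, so its spinorial lift admits the standard formula
\[
\nabla^s_X \psi \;=\; \nabla^{g_\epsilon}_X \psi \;+\; \tfrac{1}{4}(X \lrcorner T^s) \cdot \psi,
\]
where $X \lrcorner T^s$ acts on spinors by Clifford multiplication of a 2-form. Combined with Theorem \ref{special_unitary_GKS_theorem}, this reduces the claim to verifying, for each orthonormal basis direction $X \in \{e_1, e_{2p}, e_{2p+1}\}$, the identity
\[
-\tfrac{\epsilon s}{2}\bigl(X \lrcorner (\Phi \wedge \eta)\bigr) \cdot \psi_\pm \;=\; \bigl(A_\pm^s(X) - A_\pm(X)\bigr) \cdot \psi_\pm.
\]

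The first step is to compute the interior products $X \lrcorner (\Phi \wedge \eta)$. Since $\eta = e^1$ and $\Phi$ is a 2-form supported entirely on $\mathfrak{m}_2 = \Span_{\R}\{e_2,\dots,e_{2n+1}\}$, the Leibniz rule yields $e_1 \lrcorner (\Phi \wedge \eta) = \Phi$, while contractions by $e_{2p}$ and $e_{2p+1}$ produce decomposable 2-forms of shape $e^{2p\pm 1} \wedge e^1$. The second step is to translate these into Clifford operators on $\psi_\pm$ using the representation from Section \ref{understandingspinrep} in the reordered odd-dimensional basis of Remark \ref{reorderedCliffalgrepresentation}. The essential inputs are $e_{2p}\cdot e_{2p+1} \cdot \psi_+ = i\psi_+$ and $e_{2p}\cdot e_{2p+1}\cdot \psi_- = -i\psi_-$, both immediate from the identity used in the proof of Theorem \ref{deformedSasakianinvariantspinors}, together with $e_1\cdot \psi_+ = i\psi_+$ and $e_1 \cdot \psi_- = (-1)^n i \psi_-$, obtained from the action of $u_0 = ie_1$ on the parity-graded pieces $\Sigma^{\text{even}}, \Sigma^{\text{odd}}$. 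Summing over $p$ yields $\Phi \cdot \psi_\pm = \pm in \psi_\pm$, proportional to $e_1 \cdot \psi_\pm$, which produces the $\Id\rvert_{\mathfrak{m}_1}$-part of the correction; the remaining contractions produce proportional multiples of $e_{2p}\cdot \psi_\pm$ and $e_{2p+1}\cdot\psi_\pm$, giving the $\Id\rvert_{\mathfrak{m}_2}$-part.

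The parity sign $(-1)^{n+1}$ separating $A_+^s$ from $A_-^s$ arises naturally from the degree-$n$ nature of $\psi_- = y_1\wedge \dots\wedge y_n$, exactly as in the derivation of $A_-$ from $A_+$ in Theorem \ref{special_unitary_GKS_theorem}. The main obstacle is purely bookkeeping: fixing a consistent convention for the sign of $\Phi$ relative to the almost contact structure of Proposition \ref{specialunitaryalphaSasakian}, tracking the ordering of basis vectors in the identification of Clifford 2-forms, and correctly handling the reordered odd-dimensional spinor module. Once these conventions are in place, the verification is a direction-by-direction Clifford calculation with no conceptual difficulty.
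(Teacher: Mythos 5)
Your proposal is correct and follows essentially the same approach as the paper: it invokes the spinorial lift formula $\nabla^s_X\psi = \nabla^{g_\epsilon}_X\psi + \tfrac{1}{4}(X\lrcorner T^s)\cdot\psi$ for a connection with totally skew torsion, computes the contractions $\xi\lrcorner(\Phi\wedge\eta)=\Phi$ and $e_{2p}\lrcorner(\Phi\wedge\eta),e_{2p+1}\lrcorner(\Phi\wedge\eta)$ case-by-case, and then evaluates the resulting Clifford products on $\psi_\pm$ using the same identities $e_{2p}\cdot e_{2p+1}\cdot\psi_\pm = \pm i\psi_\pm$, $e_1\cdot\psi_\pm$, and $\Phi\cdot\psi_\pm$ that appear in the paper's proof. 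The only work remaining is the sign/normalization bookkeeping you already flag, and that is handled correctly.
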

\begin{proof}
Suppose that $a=\frac{-n\epsilon}{n+1}$, $b=\frac{1}{2}$. With respect to our chosen orthonormal basis $\{e_i\}_{i=1}^{2n+1}$, the invariant 2-form $\Phi$ takes the form  $$\Phi = -\sum_{p=1}^{n} e_{2p}\wedge e_{2p+1}  = - \sum_{p=1}^n e_{2p}\wedge \varphi(e_{2p})  , $$ where $\varphi = \frac{n\sqrt{-\epsilon}}{n+1}\ad(e_1)$. One easily calculates, 
\begin{align*} \Phi\cdot \psi_{+} &= n\xi \cdot \psi_+ , \quad \Phi\cdot \psi_{-} = (-1)^{n+1}n \xi\cdot \psi_{-},\quad \xi\cdot e_{2p}\cdot \psi_+  =  e_{2p+1}\cdot \psi_+   , \\
\xi\cdot e_{2p}\cdot \psi_- &= (-1)^{n+1} e_{2p+1} \cdot \psi_-, \quad \xi \cdot e_{2p+1} \cdot \psi_+ = -e_{2p}\cdot \psi_+, \quad
\xi\cdot e_{2p+1} \cdot \psi_- = (-1)^n e_{2p}\cdot \psi_-.
\end{align*}
We now consider all possible cases:
\begin{enumerate}
\item If $Z=\xi$ then $Z\lrcorner T^s  =-2\epsilon s \ \Phi  $, and we have
\begin{align*}
\nabla_{\xi}^s \psi_+ &= \nabla^{g_{\epsilon}}_{\xi}  \psi_+ - \frac{1}{2}\epsilon s \Phi\cdot \psi_+  = A_+(\xi) \cdot \psi_+ -\frac{1}{2}\epsilon s n \xi\cdot \psi_+ , \\
\nabla_{\xi}^s \psi_- &=  \nabla^{g_{\epsilon}}_{\xi}  \psi_- - \frac{1}{2}\epsilon s \Phi\cdot \psi_-  = A_-(\xi) \cdot \psi_- -\frac{1}{2}\epsilon s n(-1)^{n+1} \xi\cdot \psi_- .
\end{align*}
\item If $Z= e_{2p}$ or $Z=e_{2p+1}$ then $Z\lrcorner T^s = 2\epsilon s \ \varphi(Z)\wedge \eta = -2\epsilon s \ \eta\wedge \varphi(Z)$, and we have
\begin{align*}
\nabla^s_{e_{2p}} \psi_+ &=  \nabla^{g_{\epsilon}}_{e_{2p}}  \psi_+ - \frac{1}{2}\epsilon s \xi\cdot e_{2p+1} \cdot \psi_+ = A_+(e_{2p})\cdot \psi_+ +\frac{1}{2}\epsilon s  e_{2p}\cdot \psi_+    ,   \\
\nabla^s_{e_{2p}}\psi_- &= \nabla_{e_{2p}}^{g_{\epsilon}} \psi_- -\frac{1}{2} \epsilon s \xi \cdot e_{2p+1} \cdot \psi_- = A_-(e_{2p}) \cdot \psi_- +\frac{1}{2}\epsilon s (-1)^{n+1} e_{2p}\cdot \psi_-   , \\
\nabla_{e_{2p+1}}^s \psi_+ &= \nabla_{e_{2p+1}}^{g_{\epsilon}} \psi_+ +\frac{1}{2}\epsilon s \xi \cdot e_{2p} \cdot \psi_+ = A_+(e_{2p+1}) \cdot \psi_+ +\frac{1}{2}\epsilon s e_{2p+1} \cdot \psi_+ , \\
\nabla_{e_{2p+1}}^s \psi_- &= \nabla_{e_{2p+1}}^{g_{\epsilon}} \psi_- +\frac{1}{2} \epsilon s \xi\cdot e_{2p} \cdot \psi_- = A_- (e_{2p+1}) \cdot \psi_- +\frac{1}{2}\epsilon s (-1)^{n+1} e_{2p+1}\cdot \psi_-.  
\end{align*}
\end{enumerate}
\end{proof}
\begin{remark}
	For $n=2,3$ the families of invariant metric connections with skew torsion are larger, and depend on certain special tensors available in these dimensions \cite{draper_spheres}. We omit these cases here in the interest of brevity.
\end{remark}
\section{Classical Spheres, Part II: Spheres over $\H$}
This section is devoted to finding invariant spinors on the quaternionic spheres $S^{4n-1} = \frac{\Sp(n)\cdot K}{\Sp(n-1)\cdot K}$, where $K= \{1\}$, $\U(1)$, or $\Sp(1)$. We begin with a general discussion of the isotropy representations of these spaces and the action on the spin representation, then provide a method of finding the invariant spinors for each case $K=\{1\}$, $\U(1)$, $\Sp(1)$ in a unified way. In subsequent subsections we discuss each case in detail, including constructions of the invariant spinors in terms of explicit bases, and a discussion of the relevant induced geometries and spinorial equations. 
\subsection{The General Case, $S^{4n-1} = \frac{\Sp(n)\cdot K}{\Sp(n-1)\cdot K}$}

\subsubsection{The isotropy representation}
For the action of $G:= \Sp(n)\cdot K$ on $\mathbb{H}^n$ via $(A,z)\cdot v = Avz^{-1}$, it is well-known that the isotropy group of $(1,0,\dots, 0)\in\mathbb{H}^n$ is $H:=\Sp(n-1)\cdot K$, where the inclusion $H\subseteq G$ is realized as in (\ref{inclusion_isotropy_productgroup}). Since the spinor module is complex, we may pass freely between the isotropy Lie algebra and its complexification. Next observe that, for any $K =\{1\}$, $\U(1)$, $\Sp(1)$, with Lie algebra $\mathfrak{k}$, the complexified isotropy representation of $\Sp(n-1) \cdot K$ is
$(\mathfrak{sp}_{\mathbb{C}}(2n) \oplus \mathfrak{k}_{\mathbb{C}}) / (\mathfrak{sp}_{\mathbb{C}}(2n-2) \oplus \mathfrak{k}_{\mathbb{C}}) \cong \mathfrak{sp}_{\mathbb{C}}(2n) / \mathfrak{sp}_{\mathbb{C}}(2n-2)$. As a vector space this does not depend on $K$; the action is merely restricted from the action of $\Sp(n) \cdot \Sp(1)$.
\begin{remark}\label{Uwe1}
	Note that we can also use the $H$-$E$ formalism  introduced by Salamon \cite{Sal82} where $H:=\mathbb{H}\cong\mathbb{C}^2$, and $E:=\mathbb{H}^n\cong\mathbb{C}^{2n}$ are the standard (faithful, self-dual) complex representations of $\Sp(1)$ and $\Sp(n)$ respectively.
\end{remark}
Let us review the structure of this representation. Recall that $\Sp( n)\cdot \Sp( 1)=\Sp( n)\times \Sp( 1)/\{\pm(Id,1)\}$, so the group is doubly covered by $\Sp( n)\times \Sp( 1)$. We denote the elements of $\Sp( n)\times \Sp( 1)$ by $(A,z)$ and the elements of $\Sp( n)\cdot \Sp( 1)$ by $[A,z]$. Given $[A,z]\in \Sp( n)\cdot \Sp( 1)$, we define $\mu_{[A,z]}:S^{4n-1}\longrightarrow S^{4n-1}$ via $\mu_{[A,z]}(v)=Avz^{-1}$, where $vz^{-1}=(v_0z^{-1},...,v_nz^{-1})$. Note that this shows why we take $\Sp( n)\cdot \Sp( 1)$ instead of $\Sp( n)\times \Sp( 1)$ to get an effective action. The action is moreover transitive, since the usual action of $\Sp(n)$ on $S^{4n-1}$ is transitive. The isotropy subgroup $H$ stabilizing the point $p=(1,0,...,0)\in S^{4n-1}$ can be described in the following way: if $[A,z]\in H$ then $Ap=(z,0,...,0)$. From there, it is straightforward to see that  
\[H=\{[\left(\begin{array}{c|c}
z  & 0 \\ \hline
0 & A
\end{array}\right),z]:A\in \Sp( n-1),z\in \Sp( 1)\}=\Sp( n-1)\cdot \Sp'( 1)\cong \Sp( n-1)\cdot \Sp(1),
\]
where $\Sp'(1)$ denotes the group $\{[\left(\begin{array}{c|c}
z  & 0 \\ \hline
0 & 0
\end{array}\right),z]:z\in \Sp( 1)\}\cong\Sp(1)$ diagonally embedded in $\Sp(n)\cdot \Sp( 1)$, and the inclusion $\iota:\Sp( n-1)\cdot \Sp'( 1)\longrightarrow \Sp( n)\cdot \Sp( 1)$ is given by
\begin{eqnarray}\label{inclusion_isotropy_productgroup}\iota([A,z])=[\left(\begin{array}{c|c}
z  & 0 \\ \hline
0 & A
\end{array}\right),z].
\end{eqnarray}
Since $\Sp(n)\times \Sp(1)$  is a double cover of $\Sp(n)\cdot \Sp(1)$, they have the same complexified Lie algebra, 
\[
\mathfrak{g}_{\mathbb{C}}\cong\mathfrak{sp}_{\mathbb{C}}(2n)\oplus\mathfrak{sp}_{\mathbb{C}}(2).
\]
It is well-known (see for example \cite{Arvan}, p32) that $\mathfrak{sp}_{\mathbb{C}}(2n)\simeq  S^2(\mathbb{C}^{2n})$ as $\mathfrak{g}_{\mathbb{C}}$-representations, where $S^2(\mathbb{C}^{k})$ denotes the second complex linear symmetric power of the (irreducible) standard representation of $\mathfrak{sp}_{\mathbb{C}}(k)$, hence 
\[
\mathfrak{g}_{\mathbb{C}}\cong S^2(\mathbb{C}^{2n})\oplus S^2(\mathbb{C}^{2}).
\]
Restricting to the isotropy Lie algebra $\mathfrak{h}_{\mathbb{C}}\cong \mathfrak{sp}_{\mathbb{C}}(2n-2)\oplus\mathfrak{sp}'_{\mathbb{C}}(2)$ we get by \eqref{inclusion_isotropy_productgroup}
$$\mathfrak{g}_{\mathbb{C}}\cong S^2(\mathbb{C}^{2n-2}\oplus\mathbb{C}^{2})\oplus S^2(\mathbb{C}^{2})\cong [S^2(\mathbb{C}^{2n-2})\oplus(\mathbb{C}^{2n-2}\otimes \mathbb{C}^{2})\oplus S^2(\mathbb{C}^{2})]\oplus S^2(\mathbb{C}^{2}),$$
with $\mathfrak{h}_{\mathbb{C}}\cong S^2(\mathbb{C}^{2n-2})\oplus S^2(\mathbb{C}^{2})$.
Note that here $\mathbb{C}^{2n-2}$ and $\mathbb{C}^2$ are the standard representations of $\mathfrak{sp}_{\mathbb{C}}(2n-2)$ and $\mathfrak{sp}'_{\mathbb{C}}(2)$ respectively, as $\mathfrak{h}_{\mathbb{C}}$-representations. The isotropy representation is therefore given by the quotient
$$\mathfrak{g}_{\mathbb{C}}/\mathfrak{h}_{\mathbb{C}}\cong
\frac{[S^2(\mathbb{C}^{2n-2})\oplus(\mathbb{C}^{2n-2}\otimes \mathbb{C}^{2})\oplus S^2(\mathbb{C}^{2})]\oplus S^2(\mathbb{C}^{2})}{S^2(\mathbb{C}^{2n-2})\oplus S^2(\mathbb{C}^{2})}\cong (\mathbb{C}^{2n-2}\otimes \mathbb{C}^{2})\oplus S^2(\mathbb{C}^{2}).$$
Finally, since $G$ is compact, we may give $G/H$ the Riemannian metric coming from the Killing form on $\mathfrak{g}$, which descends to
a metric on $\mathfrak{g}/\mathfrak{h}$. Here we have used the observation that, if the isotropy representation of $G/H$ doesn't have a pair of non-trivial isomorphic subrepresentations (as is the case for $K=\{1\}$, $\U(1)$, $\Sp(1)$), any invariant metric gives the same dimension for the space of invariant spinors.  Since $G$ has finite center, the complexified Killing form on $\mathfrak{g}_{\mathbb{C}}$ is the unique adjoint-invariant
bilinear form up to scaling. Another adjoint-invariant bilinear form is given in terms of the isomorphism $S^2(\mathbb{C}^{2n}) \cong \mathfrak{sp}_{\mathbb{C}}(2n)$ by
\[
(v_1 v_2, w_1 w_2) = \frac{1}{2} \bigl(\omega_{2n}(v_1,w_1)\omega_{2n}(v_2,w_2)+\omega_{2n}(v_1,w_2)\omega_{2n}(v_2,w_1) \bigr),
\]
where $\omega_{2k}$ ($k\geq 1$) is the symplectic form on $\C^{2k}$ and $v_1v_2$ (resp. $w_1w_2$) the symmetric products of $v_1,v_2\in \C^{2n}$ (resp. $w_1,w_2\in \C^{2n}$).
Thus, this must be a (nonzero) scalar multiple of the Killing form under the isomorphism. (Actually, the scalar multiple is determined by the choice of isomorphism $S^2(\mathbb{C}^{2n-2}) \cong \mathfrak{sp}_{\mathbb{C}}(2n-2)$, which is also only unique up to scaling.)  Using this bilinear form, we see that the isomorphism
\[
\mathfrak{g}_{\mathbb{C}}/\mathfrak{h}_{\mathbb{C}} \cong (\mathbb{C}^{2n-2} \otimes \mathbb{C}^2) \oplus S^2(\mathbb{C}^2)
\]
is compatible with the symmetric bilinear form, in the sense that under the above isomorphism, the invariant inner product is given by tensoring the standard symplectic forms together:{\small
\[(v_1 \otimes v_2 \oplus v_3 \cdot v_4, w_1 \otimes w_2 \oplus w_3 \cdot w_4) = \omega_{2n-2}(v_1, w_1) \omega_2(v_2, w_2) + \frac{1}{2} \bigl(\omega_2(v_3,w_3)\omega_2(v_4,w_4) + \omega_2(v_3, w_4)\omega_2(v_4,w_3)\bigr).
\]
}
Up to scaling, this is the Riemannian metric on $\mathfrak{g}/\mathfrak{h}$.
\begin{proposition} For any decomposition $\mathbb{C}^2 = \mathbb{C} \oplus \mathbb{C}'$ into lines, we obtain a decomposition of the isotropy representation $V$ into $V = L \oplus L' \oplus \mathbb{C}_0$, for
	\begin{equation}
	L := (\mathbb{C}^{2n-2} \otimes \mathbb{C}) \oplus (\mathbb{C} \otimes \mathbb{C}), \quad L' := (\mathbb{C}^{2n-2} \otimes \mathbb{C}') \oplus (\mathbb{C}' \otimes \mathbb{C}'),
	\quad \mathbb{C}_0 := \mathbb{C} \otimes \mathbb{C}'.
	\end{equation}
\end{proposition}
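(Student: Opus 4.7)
The plan is to verify the three defining conditions of the odd-dimensional Lagrangian splitting (\ref{isotropic_decomposition_odd}): that the proposed direct sum exhausts $V$, that $L$ and $L'$ are $\beta$-Lagrangian, and that $\C_0$ is $\beta$-nondegenerate and orthogonal to $L \oplus L'$.

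First I would tally dimensions. Since $\dim L = (2n-2) + 1 = 2n-1$, $\dim L' = 2n-1$, and $\dim \C_0 = 1$, these sum to $4n-1 = \dim V$, and the vector-space direct sum follows from the decompositions $\C^{2n-2} \otimes \C^2 = (\C^{2n-2} \otimes \C) \oplus (\C^{2n-2} \otimes \C')$ and $S^2(\C^2) = (\C \cdot \C) \oplus (\C \cdot \C') \oplus (\C' \cdot \C')$ induced by $\C^2 = \C \oplus \C'$.

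The key observation driving the isotropy claim is that the symplectic form $\omega_2$, being alternating, vanishes identically on any line of $\C^2$. Combined with the explicit bilinear form displayed just before the proposition, this makes all the required computations transparent. For two elements of $L$, every factor of $\omega_2$ that appears pairs vectors lying in $\C$ (coming from the $\C^{2n-2} \otimes \C$ and $\C \cdot \C$ components), so $\beta$ vanishes identically on $L$; the same argument gives isotropy of $L'$, and the dimension count promotes both to Lagrangians. Orthogonality of $\C_0$ to $L \oplus L'$ is analogous: within the $\C^{2n-2} \otimes \C^2$ summand there is no $\C_0$ component to pair with, while within $S^2(\C^2)$ one has for example $\beta(e \cdot e, e \cdot e') = \tfrac{1}{2}(\omega_2(e,e)\omega_2(e,e') + \omega_2(e,e')\omega_2(e,e)) = 0$, and similarly for $\C' \cdot \C'$ paired against $\C_0$. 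Nondegeneracy of $\C_0$ is the one place where transversality of the lines is used: choosing generators $e \in \C$ and $e' \in \C'$, direct substitution yields $\beta(e \cdot e', e \cdot e') = -\tfrac{1}{2}\omega_2(e,e')^2$, which is nonzero precisely because $\C$ and $\C'$ span $\C^2$ and $\omega_2$ is nondegenerate.

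No substantive obstacle is anticipated: the proposition is essentially the symplectic linear-algebra fact that a choice of two transverse lines in $(\C^2,\omega_2)$ induces a Lagrangian decomposition, and propagating this through the tensor product and symmetric square via the explicit form of $\beta$ is mechanical.
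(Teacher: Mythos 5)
Your proof is correct and takes essentially the same route as the paper, which simply invokes $S^2(V\oplus W)=S^2V\oplus(V\otimes W)\oplus S^2W$ and leaves the isotropy and nondegeneracy verifications implicit given the explicit bilinear form stated just before the proposition. You carry out those verifications explicitly (using that $\omega_2$ vanishes on each line of $\C^2$), which is a useful but mechanical addition rather than a different argument.
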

Here we used the well-known fact that $S^2(V\oplus W)=S^2V\oplus( V\otimes W)\oplus S^2W$.
\subsubsection{The general linear and orthogonal Lie algebras, and the Clifford algebra}\label{quaternionic_case_setup}
Next we want to compute the action of $\mathfrak{h}_{\mathbb{C}}$ on the spin representation.  In order to do this, we will exploit the fact that $\mathfrak{sp}_{\mathbb{C}}(2n-2)$ and $\mathfrak{gl}_{\mathbb{C}}(1)$ (the complexification of $\mathfrak{u}(1)$) are both Lie subalgebras of the general linear Lie algebra $\mathfrak{gl}(L)$, for $L$ a maximal isotropic subspace of the complexified tangent space $V$ of $G/H$. Before we do this, we recall how to embed the general linear Lie algebra inside the orthogonal Lie algebra, and the orthogonal Lie algebra inside the Clifford algebra. As before, let 
\[
V:=\mathbb{C}^k=\begin{cases} L\oplus L' & \textrm{ if } k \textrm{ is even},\\
L\oplus L'\oplus \mathbb{C}_0 & \textrm{ if }$k$ \textrm{ is odd},\end{cases}
\]
together with its invariant bilinear form $\beta$. We will focus on the odd case $V=L\oplus L'\oplus \mathbb{C}_0$, which is the only one of interest for us here, but the even case is exactly the same if one ignores the scalar part $\mathbb{C}_0$ in the computation.

We first recall that there is a canonical embedding $\iota \: \mathfrak{gl}(L) \hookrightarrow \mathfrak{so}(L\oplus L' \oplus \mathbb{C}_0)$ given by the formula:
\begin{equation}\label{e:iota}
\iota (\xi)(l,l',c) = (\xi(l),-\xi^*l' ,0), \qquad \xi\in \mathfrak{gl}(L), \  l\in L, \ l'\in L',
\end{equation}
where we use the isomorphism $L'\cong L^*$ and notation $-\xi^*\: l' \mapsto -l'\circ \xi$ for the representation of $\mathfrak{gl}(L)$ on $L'$ dual to the standard representation on $L$. To check that $\iota$ is well-defined we have to show that 
\[
\beta(\iota (\xi)(l_1,l'_1,c_1),(l_2,l'_2,c_2) ) + \beta\left((l_1,l'_1,c_1),\iota (\xi)(l_2,l'_2,c_2) \right)=0.
\]
This follows from the calculation
\begin{multline*}
	\beta\left((\xi(l_1), (-\xi)^*(l_1'),0), (l_2, l_2', c_2)\right) = \beta\left(\xi(l_1), l_2'\right)+ \beta\left((-\xi)^*(l_1'), l_2\right)  \\ = \beta\left( l_1, \xi^*(l_2')\right) + \beta\left( l_1', -\xi(l_2)\right) 
	= -\beta\left((l_1, l_1' ,c_1), (\xi(l_2), (-\xi^*)(l_2') , 0)\right).
\end{multline*}
Next, it is well known that there is a Lie algebra embedding $\mathfrak{so}(V) \hookrightarrow Cl(V)$. To write this down explicitly, we follow
\cite[p.\@ 303]{FultonHarris1991}: we first identify $\Lambda^2 V \cong \mathfrak{so}(V)$, via the map
\begin{equation}\label{e:wedge-so}
v_1 \wedge v_2 \mapsto \varphi_{v_1,v_2}, \quad \varphi_{v_1, v_2}(w) =   \beta (v_1, w)v_2 - \beta(v_2, w)v_1 .
\end{equation}
This endows  $\Lambda^2 V$ with a Lie bracket,
\begin{equation}\label{e:bracket-wedge}
[v_1 \wedge v_2, w_1 \wedge w_2] =  - \beta(v_2, w_1) v_1 \wedge w_2  + \beta(v_2, w_2) v_1 \wedge w_1  + \beta(v_1,w_1) v_2 \wedge w_2 - \beta(v_1,w_2) v_2 \wedge w_1 .
\end{equation}
We now consider the linear embedding $\Lambda^2 V \hookrightarrow Cl(V)$ given by the formula
\begin{equation}\label{e:wedge-cl}
v \wedge w \mapsto \frac{1}{4} (v \cdot w - w \cdot v),
\end{equation}
with the product $\cdot$ the Clifford multiplication. This map preserves the Lie bracket (which, on the Clifford algebra, is the Clifford commutator $v \cdot w - w \cdot v$). We conclude the following (cf.~\emph{op.~cit.}):
\begin{lemma}
	The composition $\mathfrak{so}(V) \cong \Lambda^2 V \hookrightarrow Cl(V)$ given by the inverse of \eqref{e:wedge-so} followed by \eqref{e:wedge-cl} is a Lie algebra embedding. 
\end{lemma}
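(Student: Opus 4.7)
The plan is as follows. Linearity is immediate from the definitions, and injectivity can be checked by noting that under the natural filtration $F^0 \subset F^1 \subset F^2 \subset \cdots$ of $Cl(V)$ (where $F^k$ is spanned by products of at most $k$ elements of $V$), the map $v \wedge w \mapsto \tfrac14(v\cdot w - w\cdot v) = \tfrac12(v\cdot w + \beta(v,w))$ lands in $F^2$ and induces the identity $\Lambda^2 V \iso F^2/F^1$ on the associated graded, so kernel elements must vanish. The whole content of the lemma is therefore bracket preservation.

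My approach is to reduce bracket preservation to a much simpler identity involving a single Clifford element and one vector. Specifically, I would first verify the auxiliary formula
\[
\tfrac14 \bigl[\,v_1 \cdot v_2 - v_2 \cdot v_1,\ w\,\bigr]_{Cl} \;=\; \beta(v_1,w)v_2 - \beta(v_2,w)v_1 \;=\; \varphi_{v_1,v_2}(w) \qquad (w \in V),
\]
which follows from two applications of the Clifford relation $xy+yx=-2\beta(x,y)1$: one to move $w$ through $v_2$ and one to move it through $v_1$. This identity says precisely that the image of \eqref{e:wedge-cl} acts on $V \subset Cl(V)$, via the Clifford commutator, by the same endomorphisms produced by \eqref{e:wedge-so}.

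Given this, for $A,B \in \Lambda^2 V$ with images $\tilde A, \tilde B \in Cl(V)$ and $\hat A, \hat B \in \mathfrak{so}(V)$, the Jacobi identity in $Cl(V)$ gives
\[
\bigl[[\tilde A,\tilde B]_{Cl},\,w\bigr]_{Cl} \;=\; \hat A\bigl(\hat B(w)\bigr) - \hat B\bigl(\hat A(w)\bigr) \;=\; \widehat{[A,B]}_{\Lambda^2}(w) \;=\; \bigl[\widetilde{[A,B]}_{\Lambda^2},\,w\bigr]_{Cl}
\]
for every $w \in V$. Hence $X := [\tilde A,\tilde B]_{Cl} - \widetilde{[A,B]}_{\Lambda^2}$ centralizes $V$ inside $Cl(V)$, and moreover lies in $F^2 \cap Cl(V)^{\mathrm{even}}$.

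The last step is to conclude $X=0$. Under the PBW/symmetrization isomorphism $\Lambda^\bullet V \cong Cl(V)$ of filtered vector spaces, $F^2 \cap Cl(V)^{\mathrm{even}}$ decomposes as the image of $\Lambda^0 \oplus \Lambda^2$; the $\Lambda^2$-summand is exactly $\mathfrak{so}(V)\subset Cl(V)$, and since its action on $V$ by Clifford commutator coincides (by the auxiliary identity above) with the faithful action of $\mathfrak{so}(V)$ on $V$, any centralizing element must have zero $\Lambda^2$-component. So $X$ is scalar; but an easy filtration check shows $[\tilde A,\tilde B]_{Cl}$ has no $F^0$-component (the associated-graded product $\Lambda^2 \otimes \Lambda^2 \to \Lambda^4$ in the exterior algebra is commutative, so the commutator drops strictly below the top filtered degree and contributes nothing to $F^0$), and $\widetilde{[A,B]}_{\Lambda^2}$ manifestly does not either. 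The main obstacle in the above outline is the final scalar argument: all the other steps are formal once the auxiliary identity is in hand, but ruling out a central scalar discrepancy requires invoking the filtration/symmetrization structure of $Cl(V)$, which is the only slightly delicate point.
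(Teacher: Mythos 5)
Your route is genuinely different from the one the paper relies on: the paper asserts bracket compatibility and cites Fulton--Harris, where it is verified by a direct Clifford computation on spanning elements $v_1\wedge v_2$, $w_1\wedge w_2$. Your reduction to the action on $V$ via the auxiliary commutator identity and the Jacobi identity is more conceptual, and the first three steps of your outline are sound: the auxiliary identity is a two-line application of the Clifford relation, the Jacobi step is clean, and the decomposition $F^2\cap Cl(V)^{\mathrm{even}} = \C\cdot 1 \oplus \widetilde{\Lambda^2 V}$ together with faithfulness of the $\mathfrak{so}(V)$-action on $V$ correctly forces the discrepancy $X$ to be a scalar.

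The final step, however, has a real gap. Graded commutativity of $\Lambda^\bullet V$ shows only that $[\tilde A,\tilde B]$ drops from $F^4$ to $F^3$, hence to $F^2$ by parity; it says nothing about the $F^0$-component, since $F^2\cap Cl(V)^{\mathrm{even}}$ contains the scalars and the associated graded algebra is blind to the contraction terms of a Clifford product. Indeed the scalar part of $\tilde A\,\tilde B$ is generically nonzero (for instance $\tilde A^2 = -\tfrac14$ when $A = e_1\wedge e_2$ with $e_1,e_2$ orthonormal), so the vanishing of the scalar part of the commutator is not a filtration phenomenon; it requires knowing that the scalar parts of $\tilde A\tilde B$ and $\tilde B\tilde A$ agree. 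You therefore need a genuine extra input. Two short options: the principal anti-automorphism of $Cl(V)$ (reversal of Clifford products) negates every element of $\widetilde{\Lambda^2 V}$ and fixes $1$, hence negates $[\tilde A,\tilde B]$ and kills its scalar part; equivalently, the $\Lambda^0$-projection under symmetrization is a trace on $Cl(V)$, so it vanishes on commutators. Either observation is two lines, but one of them must be made; the phrase ``easy filtration check'' does not deliver it.
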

Now, let $V = L \oplus L' \oplus \mathbb{C}_0$.   Using \eqref{e:iota}, \eqref{e:wedge-so}, and \eqref{e:wedge-cl}, we obtain by composition a Lie algebra embedding
\[
\mathfrak{gl}(L) \hookrightarrow \mathfrak{so}(V) \cong \Lambda^2 V \hookrightarrow Cl(V).
\]
Let us give an alternative expression for this composition. Consider the linear isomorphism
\begin{equation}
\beta^\flat: L'  \to L^*, \quad \beta^\flat(\ell')(\ell) = \beta(\ell', \ell).
\end{equation}
Again here we follow the convention in \cite{FultonHarris1991}.
Using this we obtain a composition
\begin{equation}\label{e:gl-cl}
L \otimes L' \iso L \otimes L^*  \iso \mathfrak{gl}(L) \hookrightarrow \mathfrak{so}(V) \cong \Lambda^2 V \hookrightarrow Cl(V).
\end{equation}
\begin{lemma}\label{l:ll'-image}
	The composition \eqref{e:gl-cl} equals the map
	\begin{equation}
	\ell \otimes \ell' \mapsto - \frac{1}{4} \bigl(\ell \cdot \ell' - \ell' \cdot \ell\bigr).
	\end{equation}
\end{lemma}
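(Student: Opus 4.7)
The plan is to simply chase the element $\ell \otimes \ell' \in L \otimes L'$ through each of the four maps comprising the composition \eqref{e:gl-cl} and compare with the right-hand side. Since all the maps are linear, it suffices to handle pure tensors.

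First I would compute the image in $\mathfrak{gl}(L)$. Under $\beta^\flat : L' \to L^*$, the element $\ell'$ becomes the linear functional $\beta(\ell', \cdot)$, so $\ell \otimes \ell'$ maps to the rank-one endomorphism $\xi := \ell \otimes \beta^\flat(\ell') \in \mathfrak{gl}(L)$ acting by $\xi(v) = \beta(\ell', v)\, \ell$ for $v \in L$. Next I would write down $\iota(\xi) \in \mathfrak{so}(V)$ from \eqref{e:iota} and guess its preimage in $\Lambda^2 V$ under the isomorphism \eqref{e:wedge-so}. A short sign check singles out the candidate $-\ell \wedge \ell' \in \Lambda^2 V$.

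To verify this candidate, I would compute $\varphi_{-\ell,\ell'}$ from \eqref{e:wedge-so} and check that it agrees with $\iota(\xi)$ on each summand of $V = L \oplus L' \oplus \mathbb{C}_0$. On $L$ it reduces to $\varphi_{-\ell,\ell'}(v) = \beta(\ell', v)\, \ell = \xi(v)$, using that $\beta(\ell, L) = 0$ since $L$ is isotropic. On $L'$, the required equality $\varphi_{-\ell,\ell'}(v') = -\beta(\ell, v')\, \ell' = -\xi^*(v')$ follows from the defining relation $\beta(\xi(\ell_0), v') = \beta(\ell_0, \xi^*(v'))$ together with a brief calculation. On $\mathbb{C}_0$ both sides vanish by the orthogonality $\mathbb{C}_0 \perp L \oplus L'$. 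Finally, applying \eqref{e:wedge-cl} to $-\ell \wedge \ell'$ produces $-\tfrac{1}{4}(\ell \cdot \ell' - \ell' \cdot \ell)$, which is the claimed formula.

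The main obstacle is really just sign bookkeeping: one has to keep track of the conventions in identifying $L' \cong L^*$ via $\beta^\flat$, of the sign in the dual action $-\xi^*$ appearing in \eqref{e:iota}, and of the antisymmetry of the wedge and Clifford commutator. Once these conventions are pinned down, the lemma reduces to the three short summand-wise computations outlined above, and no further non-trivial step is required.
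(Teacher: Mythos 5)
Your proof is correct and follows essentially the same route as the paper's: both identify the image of $\ell\otimes\ell'$ in $\mathfrak{so}(V)$ with $-\varphi_{\ell,\ell'}$ (equivalently $-\ell\wedge\ell'$ in $\Lambda^2V$) and then apply \eqref{e:wedge-cl}; the paper simply asserts this identification while you verify it summand-by-summand on $L$, $L'$, and $\mathbb{C}_0$, which is precisely the omitted bookkeeping.
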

\begin{proof}
	The image of $\ell \otimes \ell'$ under the composition
	\[
	L \otimes L' \iso L \otimes L^*  \iso \mathfrak{gl}(L) \hookrightarrow \mathfrak{so}(V)
	\]
	is $ - \varphi_{\ell,\ell'}$. Then the image under \eqref{e:wedge-so} and \eqref{e:wedge-cl} is $- \frac{1}{4} (\ell \cdot \ell' - \ell' \cdot \ell)$.
\end{proof}
\subsubsection{The spin representation}
Next we pass to the spin representation, which is defined on the vector space $\Sigma:= \Lambda^{\bullet}  L'$.
\begin{proposition}\label{p:gl-action}
	The action of an element $\varphi \in \mathfrak{gl}(L)$ on $\Sigma $
	is given by the formula
	\begin{equation}
	\varphi \cdot (\ell_1' \wedge \cdots \wedge \ell_m')
	= - \sum_{i=1}^m (-1)^{i+1} (\varphi^* \ell_i') \wedge \ell_1' \wedge \cdots \wedge \widehat{\ell_i'} \wedge \cdots  \wedge \ell_m'
	+ \frac{1}{2} \tr(\varphi)  (\ell_1' \wedge \cdots \wedge \ell_m')
	\end{equation}
	for all $\ell_1', \ldots, \ell_m' \in L'$, where $\widehat{\ell_i'}$ indicates omission of that term.
\end{proposition}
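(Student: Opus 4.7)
By linearity of both sides in $\varphi$, it suffices to check the formula for elementary tensors $\varphi = \ell \otimes \beta^\flat(\ell') \in L \otimes L^* \cong \mathfrak{gl}(L)$, i.e.\ for the rank-one map $v \mapsto \beta(\ell',v)\,\ell$. For such a $\varphi$ one checks directly that, under the identification $L' \cong L^*$ via $\beta^\flat$, the dual endomorphism is
\[
\varphi^* \colon L' \to L', \qquad \ell'' \longmapsto \beta(\ell'',\ell)\,\ell',
\]
and that $\tr \varphi = \beta^\flat(\ell')(\ell) = \beta(\ell',\ell)$.

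Next, Lemma \ref{l:ll'-image} tells us that this $\varphi$ acts on $\Sigma$ via the Clifford element $-\tfrac{1}{4}(\ell \cdot \ell' - \ell' \cdot \ell)$. Using the action formulas \eqref{clifford_rep_formulas}, namely $\ell \cdot \eta = i\sqrt{2}\,\ell \lrcorner \eta$ for $\ell \in L$ and $\ell' \cdot \eta = i\sqrt{2}\,\ell' \wedge \eta$ for $\ell' \in L'$, together with the observation $\ell \lrcorner \ell' = \beta(\ell,\ell')$ and the antiderivation identity $\ell \lrcorner (\ell' \wedge \eta) = \beta(\ell,\ell')\,\eta - \ell' \wedge (\ell \lrcorner \eta)$, a short computation yields
\[
\varphi \cdot \eta \;=\; -\tfrac{1}{4}(\ell \cdot \ell' - \ell' \cdot \ell) \cdot \eta \;=\; \tfrac{1}{2}\beta(\ell,\ell')\,\eta \;-\; \ell' \wedge (\ell \lrcorner \eta).
\]

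To conclude, I would expand the interior product via the standard formula
\[
\ell \lrcorner (\ell_1' \wedge \cdots \wedge \ell_m') \;=\; \sum_{i=1}^{m} (-1)^{i-1} \beta(\ell,\ell_i')\,\ell_1' \wedge \cdots \wedge \widehat{\ell_i'} \wedge \cdots \wedge \ell_m',
\]
and use the symmetry $\beta(\ell,\ell_i') = \beta(\ell_i',\ell)$ to rewrite each $\beta(\ell_i',\ell)\,\ell'$ as $\varphi^*\ell_i'$, while $\tfrac{1}{2}\beta(\ell,\ell')$ becomes $\tfrac{1}{2}\tr \varphi$; this produces exactly the formula in the statement (note $(-1)^{i-1} = (-1)^{i+1}$, and the minus sign from pulling $\ell'$ to the left is absorbed). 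The only delicate point is keeping the identification $L' \cong L^*$ consistent when computing $\varphi^*$ and $\tr \varphi$, but once the pairing $\beta(x_i,y_j) = \delta_{ij}$ is fixed, the signs fall into place and no induction on $m$ is needed.
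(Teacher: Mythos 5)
Your proof is correct and follows essentially the same route as the paper's: reduce by linearity to rank-one $\varphi = \ell \otimes \beta^\flat(\ell')$, invoke Lemma~\ref{l:ll'-image} to identify the Clifford element $-\tfrac{1}{4}(\ell\cdot\ell' - \ell'\cdot\ell) = -\tfrac{1}{2}\ell\cdot\ell' - \tfrac{1}{2}\beta(\ell,\ell')$, apply the representation formulas \eqref{clifford_rep_formulas}, and expand the interior product. The one place you are slightly more careful than the paper—spelling out explicitly that $\varphi^*\ell'' = \beta(\ell'',\ell)\,\ell'$ and $\tr\varphi = \beta(\ell,\ell')$ for the rank-one map—is a nice touch, though the parenthetical remark about "the minus sign from pulling $\ell'$ to the left" is spurious, since after expanding $\ell\lrcorner(\ell_1'\wedge\cdots\wedge\ell_m')$ the factor $\ell'$ is already wedged in from the left with no reordering needed; the signs are simply $(-1)^{i-1} = (-1)^{i+1}$ from the interior product formula, as you note.
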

\begin{proof}
	To prove this, it is enough to take $\varphi$ to be the image of $\ell \otimes \ell'$, for $\ell \in L, \ell' \in L'$, as these span all of $\mathfrak{gl}(L)$. Applying Lemma \ref{l:ll'-image},
	its image in $Cl(V)$ is 
	\[
	-\frac{1}{4}(\ell \cdot \ell' - \ell' \cdot \ell) =-\frac{1}{2}\ell \cdot \ell' - \frac{1}{2} \beta(\ell,\ell').
	\]
	Using the Clifford representation (\ref{clifford_rep_formulas}), and noting that $\beta(\ell,\ell')$ is the trace of the given element of $\mathfrak{gl}(L)$, we then calculate
	\begin{align*}
		-\frac{1}{2}(\ell \cdot \ell') \cdot (\ell_1' \wedge \cdots \wedge \ell_m') &= -\frac{1}{2}\left( i\sqrt{2} \ \ell \lrcorner (i\sqrt{2} \  \ell' \wedge \ell_1' \wedge \cdots \wedge \ell_m' )\right) =  \ell \lrcorner ( \  \ell' \wedge \ell_1' \wedge \cdots \wedge \ell_m' )  \\
		&= \beta(\ell,\ell') \ell_1' \wedge \cdots \wedge \ell_m' - \ell'\wedge \sum_{i=1}^m (-1)^{i+1}  \beta(\ell, \ell_i') \ell_1' \wedge \cdots \wedge  \widehat{\ell_i'} \wedge \cdots \wedge \ell_m' \\
		&= \tr(\varphi)(\ell_1' \wedge \cdots \wedge \ell_m') - \sum_{i=1}^m (-1)^{i+1} (\varphi^* \ell_i') \wedge \ell_1' \wedge \cdots \wedge \widehat{\ell_i'} \wedge \cdots  \wedge \ell_m',
	\end{align*}
	and the result follows.
\end{proof}
Note that $L$ is the standard representation of $\mathfrak{gl}(L)$, and every exterior power $\Lambda^i L' $ is an irreducible representation of it. Since $\mathbb{C}_0$ carries the trivial action of $\mathfrak{gl}(L)$, we obtain:
\begin{corollary}
	The restriction of the spin representation to $\mathfrak{gl}(L)$ decomposes as a direct sum of irreducible representatations,
	\begin{equation}
	\Sigma\rvert_{\mathfrak{gl}(L)}\simeq \bigoplus_{k=0}^{\dim L'} \Lambda^k L'.
	\end{equation}
\end{corollary}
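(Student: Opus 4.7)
The plan is to extract the decomposition directly from the explicit formula in Proposition \ref{p:gl-action}, essentially by inspection, followed by invoking the classical fact that exterior powers of the standard representation of $\mathfrak{gl}(L)$ are irreducible.

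First, I would note that the formula
\[
\varphi \cdot (\ell_1' \wedge \cdots \wedge \ell_m')
= - \sum_{i=1}^m (-1)^{i+1} (\varphi^* \ell_i') \wedge \ell_1' \wedge \cdots \wedge \widehat{\ell_i'} \wedge \cdots  \wedge \ell_m'
+ \frac{1}{2} \tr(\varphi)  (\ell_1' \wedge \cdots \wedge \ell_m')
\]
is manifestly degree-preserving on the exterior algebra $\Sigma = \Lambda^\bullet L'$: the first term replaces one factor $\ell_i'$ by another element $\varphi^* \ell_i' \in L'$, and the second term is a scalar multiple of the identity. Thus the grading
\[
\Sigma = \bigoplus_{k=0}^{\dim L'} \Lambda^k L'
\]
is preserved as a decomposition of $\mathfrak{gl}(L)$-representations, giving the claimed splitting at the level of vector spaces.

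Next I would identify the induced action on each graded piece. The first summand is (up to sign) the canonical derivation extension of the dual representation of $\mathfrak{gl}(L)$ on $L' \cong L^*$ (via the isomorphism $\beta^\flat$) to $\Lambda^k L'$; the second summand is $\tfrac{1}{2}\tr(\varphi)\cdot \Id$, i.e.\ a twist by the central character. So on each piece, the action is the standard exterior power representation $\Lambda^k L^*$ tensored with a one-dimensional character.

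Finally, I would invoke the well-known classical fact that the exterior powers $\Lambda^k L^*$ are irreducible representations of $\mathfrak{gl}(L)$ for $0 \leq k \leq \dim L$ (they are fundamental representations of type $A$), and that tensoring with a one-dimensional character does not affect irreducibility. This yields the desired decomposition into irreducibles. There is no real obstacle here, since the corollary follows almost immediately from the explicit formula in the preceding proposition; the only thing to be careful about is correctly tracking that the scalar term does not mix the grading and that the dual action on $L'$ really does yield the standard exterior power representation of $\mathfrak{gl}(L)$.
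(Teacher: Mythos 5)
Your proof is correct and takes the same route as the paper: the paper's argument is exactly the observation (stated just before the corollary) that each $\Lambda^i L'$ is irreducible under $\mathfrak{gl}(L)$, combined with the degree-preservation manifest in Proposition \ref{p:gl-action}. You simply spell out the trace-twist and the irreducibility of the fundamental representations of $\mathfrak{gl}(L)$ in slightly more detail.
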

\subsubsection{Case I: $K=\{1\}$}\label{caseIk=1}
We are now ready to analyse the isotropy Lie algebra action on the spin representation  in the case $K=\{1\}$, i.e. the isotropy algebra is $ \mathfrak{sp}(2n-2)$. Note that $\mathfrak{sp}(2n-2)$ acts on $V$ in such a way that
$L' = (\mathbb{C}^{2n-2} \otimes \mathbb{C}') \oplus  (\mathbb{C}' \otimes \mathbb{C}')$ is a subrepresentation, as is $L$.  Moreover, it acts trivially on $\mathbb{C}_0 =\mathbb{C} \otimes \mathbb{C}'$.  Therefore, $\mathfrak{sp}(2n-2) \to \mathfrak{so}(V)$ lands in the subalgebra $\iota(\mathfrak{gl}(L))$.  Next, as a representation of $\mathfrak{sp}(2n-2)$, $L'$ is a direct sum of the dual $\mathbb{C}^{2n-2} \otimes \mathbb{C}'$ of the standard representation, and the trivial representation $\mathbb{C}'\otimes \mathbb{C}'$.  Finally, recall that $\mathfrak{sp}(2n-2) \subseteq \mathfrak{sl}(2n-2)$ consists of elements of trace zero.
Thus, Proposition \ref{p:gl-action} reduces in this case to the statement: the spin representation of $\mathfrak{sp}_{\mathbb{C}}(2n-2)$ is isomorphic to the direct
sum of representations
\begin{equation}
\Sigma\rvert_{\mathfrak{sp}_{\C}(2n-2)} \simeq  \bigoplus_{k=0}^{\dim L'} \Lambda^k (\mathbb{C}^{2n-2} \otimes \mathbb{C}')  \oplus \bigoplus_{k=0}^{\dim L'} \Lambda^k (\mathbb{C}^{2n-2} \otimes \mathbb{C}') \otimes (\mathbb{C}' \otimes \mathbb{C}').
\end{equation}
By the first fundamental theorem of invariant theory (Proposition F.13 on pg 510 of \cite{FultonHarris1991}), we have
\begin{equation}
(\Lambda^k \mathbb{C}^{2n-2})^{\mathfrak{sp}(2n-2)} \cong \begin{cases} \omega^{k/2}, & \text{for $k$ even}, \\
0, & \text{for $k$ odd}.
\end{cases}
\end{equation}
We therefore find that the invariant elements of the spin representation are
\begin{equation} \label{e:sp-invts}
\Sigma_{\text{inv}} = \bigoplus_{k=0}^{n-1} \mathbb{C} \cdot (\omega^k  \otimes 1) \oplus \bigoplus_{k=0}^{n-1} \mathbb{C} (\omega^k \otimes 1) \otimes (1 \otimes 1),
\end{equation}
spanning a space of complex dimension $2n$.
\subsubsection{Case II: $K=\U(1)$}\label{caseIIK=U1}
We have a unique decomposition $\mathbb{C}^2 = \mathbb{C} \oplus \mathbb{C}'$ into one-dimensional (nontrivial) irreducible representations of $\mathfrak{gl}_{\mathbb{C}}(1)\cong \C\otimes \mathfrak{u}(1)$.  In more detail, $\mathbb{C}^2$ is the standard representation of $\mathfrak{sp}_{\mathbb{C}}(2) \cong \mathfrak{sl}_{\mathbb{C}}(2)$, and we may realize $\mathfrak{gl}_{\mathbb{C}}(1) \subseteq \mathfrak{sl}_{\mathbb{C}}(2)$ as the subset of diagonal two-by-two matrices of trace zero:
\begin{equation} \label{e:gl1-2d}
\mathfrak{gl}_{\mathbb{C}}(1)  = \left\{ \tau_{\lambda}:=\begin{pmatrix} \lambda & 0 \\ 0 & -\lambda \end{pmatrix} \: \lambda \in \mathbb{C}\right\}.
\end{equation}
Then the standard representation $\mathbb{C}^2$ decomposes uniquely into the coordinate axes in $\mathbb{C}^2$, which are the two eigenspaces of every nonzero element of $\mathfrak{gl}_{\mathbb{C}}(1)$. With this caveat in place, the preceding decomposition goes through again.

The action of $\mathfrak{gl}_{\mathbb{C}}(1)$ on \eqref{e:sp-invts} can be realised as follows: first, by \eqref{e:gl1-2d}, it acts on the first coordinate axis, $\mathbb{C}$, by the standard one-dimensional representation. Next, note that $\mathfrak{gl}_{\mathbb{C}}(1) \not \subseteq \mathfrak{sl}(2n-2)$.  Indeed, $\tau_{\lambda} \in \mathfrak{gl}_{\mathbb{C}}(1)$ acts on $L=(\mathbb{C}^{2n-2} \otimes \mathbb{C}) \oplus( \mathbb{C} \otimes \mathbb{C})$ as the direct sum  $\lambda \ Id_{2n-2} \oplus 2\lambda \ Id_1$, of trace $(2n) \lambda$.
So to compute its action on $\Lambda^k(\mathbb{C}^{2n-2} \otimes \mathbb{C}')$, we follow Proposition \ref{p:gl-action}.  This yields the action, for $\tau_{\lambda} \in \mathfrak{gl}_{\mathbb{C}}(1)$,
\begin{equation}
\tau_{\lambda}\rvert_{\Lambda^k(\mathbb{C}^{2n-2} \otimes \mathbb{C}')} = (n-k)\lambda \ Id.
\end{equation}
We also note that the action on $\mathbb{C}'\otimes \mathbb{C}'$ is 
\[
\tau_{\lambda}\rvert_{\mathbb{C}'\otimes \mathbb{C}'} = (-2\lambda) \ Id_1.
\]
Since $\omega \in \Lambda^2 L'$, we get
\begin{equation}\label{e:gl1-action-invts}
\tau_{\lambda} \cdot (\omega^k \otimes 1) = \lambda(n-2k) \  (\omega^k \otimes 1), \quad \tau_{\lambda} \cdot ((\omega^k \otimes 1) \otimes (1 \otimes 1))  = \lambda(n-2k-2) \
((\omega^k \otimes 1) \otimes (1 \otimes 1)).
\end{equation}
The action is zero when $n-2k = 0$ and $n-2k-2=0$, respectively. This can happen if and only if $n$ is even, in which case the invariant elements are
\begin{align}\label{invariantelementsu1twistedMA}
	\Sigma_{\inv} = \text{span}_{\mathbb{C}} \{\omega^{n/2} \otimes 1, \ (\omega^{(n - 2)/2} \otimes 1) \otimes (1 \otimes 1)\}.
\end{align}
\subsubsection{Case III: $K= \Sp(1)$}\label{caseIIIK=Sp(1)}
It remains to compute the
action of $\mathfrak{sp}_{\mathbb{C}}(2) \supseteq \mathfrak{gl}_{\mathbb{C}}(1)$ on $(\Lambda^{\bullet}  L')^{\mathfrak{sp}_{\mathbb{C}}(2n-2)}$.
In this case, $\mathfrak{h}_{\mathbb{C}}$ is no longer in $\mathfrak{gl}(L)$ at all, so we cannot apply Proposition \ref{p:gl-action}.  However, by the above, we may view $\mathfrak{gl}_{\mathbb{C}}(1) \subseteq \mathfrak{sp}_{\mathbb{C}}(2)\cong  \mathfrak{sl}_{\mathbb{C}}(2)$ as the inclusion $\mathbb{C} \cdot \tau_1 \subseteq \mathfrak{sl}_{\mathbb{C}}(2)$, where $\tau_1= \begin{pmatrix} 1 & 0 \\ 0 & -1 \end{pmatrix}$ is the standard element.  Every finite-dimensional representation of $\mathfrak{sl}_{\mathbb{C}}(2)$ is a direct sum of irreducible representations, where the eigenvalues of $\tau_1$ are of the form $-m, 2-m, \ldots, m-2, m$.  The eigenvalues of $h$ on a representation uniquely determine this decomposition up to isomorphism.  Looking at \eqref{e:gl1-action-invts}, we see that the highest $\tau_1$-eigenvalues are $n$ and $n-2$, corresponding to $k=0$.  This means that the $\mathfrak{sp}_{\mathbb{C}}(2)$-action on \eqref{e:sp-invts} must decompose as a direct sum of the irreducible representations of dimension $n+1$ and $n-1 $ (of highest weight $n$ and $n-2$, respectively).  The trivial representation is the unique irreducible representation of dimension one, and this can occur if and only if $n=2$. This is the case of $\Sp(2) \cdot \Sp(1)$ acting on $S^7$, in which case we get a unique $\mathfrak{h}_{\mathbb{C}}$-invariant element of the spin representation up to scaling. For $n\geq 2$ we do not get any $\mathfrak{h}_{\mathbb{C}}$-invariant elements.  We summarize this as follows:
\begin{proposition}\label{MAcaseK=sp1}
	Let $\Sigma\rvert_{\mathfrak{h}_{\C}}$ be the restriction of the spin representation to $\mathfrak{h}_{\mathbb{C}}$.  Then, as a representation of $\mathfrak{sp}_{\mathbb{C}}(2)$, $(\Sigma\rvert_{\mathfrak{h}_{\C}})^{\mathfrak{sp}_{\mathbb{C}}(2n-2)}$ is isomorphic to a direct sum of irreducible representations of dimensions $n+1$ and $n-1$. There are no $\Sp(n-1)\cdot \Sp(1)$-invariant spinors unless $n=2$, in which case there is a unique invariant spinor, up to scaling, and it lies in the span of $ (\omega\otimes 1)$ and $(1\otimes 1)\otimes (1\otimes 1)$.
\end{proposition}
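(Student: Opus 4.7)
The plan is to leverage the already-established description of $(\Sigma\rvert_{\mathfrak{h}_{\C}})^{\mathfrak{sp}_{\C}(2n-2)}$ from \eqref{e:sp-invts} together with the $\mathfrak{gl}_{\C}(1)$-weights computed in \eqref{e:gl1-action-invts}, and then apply the standard representation theory of $\mathfrak{sl}_{\C}(2)\cong \mathfrak{sp}_{\C}(2)$. The key observation is that although $\mathfrak{sp}_{\C}(2)$ is no longer contained in $\mathfrak{gl}(L)$, the subalgebra $\mathfrak{gl}_{\C}(1) = \C \cdot \tau_1$ still is, and its action has already been worked out in Section \ref{caseIIK=U1}. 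Since $\tau_1$ is a semisimple (Cartan) element of $\mathfrak{sp}_{\C}(2)$, its eigenvalues on any finite-dimensional $\mathfrak{sp}_{\C}(2)$-module determine that module up to isomorphism.

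First, I would tabulate the $\tau_1$-eigenvalues on the $2n$-dimensional space \eqref{e:sp-invts}. From \eqref{e:gl1-action-invts}, the vectors $\omega^k \otimes 1$ contribute the eigenvalues $n, n-2, \dots, -n+2$ (for $k=0,\dots, n-1$), and the vectors $(\omega^k\otimes 1)\otimes (1\otimes 1)$ contribute $n-2, n-4,\dots, -n$. Combining these, each integer of the form $n-2, n-4, \dots, -n+2$ appears with multiplicity $2$, while $n$ and $-n$ each appear with multiplicity $1$. This is precisely the weight multiset of the direct sum $V_n \oplus V_{n-2}$, where $V_m$ denotes the irreducible $\mathfrak{sl}_{\C}(2)$-module of highest weight $m$ (of dimension $m+1$). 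Since the classification of finite-dimensional $\mathfrak{sl}_{\C}(2)$-modules by weight data is complete, this forces the desired isomorphism $(\Sigma\rvert_{\mathfrak{h}_{\C}})^{\mathfrak{sp}_{\C}(2n-2)} \cong V_n \oplus V_{n-2}$ as $\mathfrak{sp}_{\C}(2)$-modules.

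For the statement about invariants, a trivial subrepresentation appears exactly when one of the summands has dimension $1$, i.e., when $n+1 = 1$ or $n-1 = 1$. Since $n\geq 1$, only $n=2$ is possible, and in this case the trivial summand is precisely $V_{n-2}=V_0$, occurring with multiplicity one. To pinpoint where this invariant spinor lives, observe that the trivial subrepresentation sits inside the $\tau_1$-weight-zero subspace. Reading off from the eigenvalue computation for $n=2$, the weight-zero subspace is exactly $\Span_{\C}\{\omega\otimes 1,\ (1\otimes 1)\otimes (1\otimes 1)\}$, which proves the localization claim in the proposition.

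The calculation itself is straightforward once the earlier setup is in place, so there is no significant technical obstacle; the only subtle point is the conceptual one of recognising that the $\mathfrak{gl}_{\C}(1)$-eigenvalues fully determine the $\mathfrak{sp}_{\C}(2)$-module structure, which is justified by invoking the classification of finite-dimensional representations of $\mathfrak{sl}_{\C}(2)$. I would note explicitly that one does not need to compute the action of the off-diagonal generators of $\mathfrak{sp}_{\C}(2)$ at all for the abstract decomposition; the explicit trivial vector for $n=2$ could, if desired, be found a posteriori by solving a small linear system involving the raising or lowering operator restricted to the two-dimensional weight-zero space.
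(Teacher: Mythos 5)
Your proof is correct and follows essentially the same route as the paper: both arguments read off the $\tau_1$-weights from \eqref{e:gl1-action-invts}, invoke the classification of finite-dimensional $\mathfrak{sl}_{\C}(2)$-modules to conclude $(\Sigma\rvert_{\mathfrak{h}_{\C}})^{\mathfrak{sp}_{\C}(2n-2)} \cong V_n \oplus V_{n-2}$, and then observe that a trivial summand forces $n=2$, with the invariant vector sitting in the weight-zero subspace. The only (minor) difference is that you tabulate the full weight multiset, whereas the paper identifies the two highest weights and lets the dimension count do the rest; your version makes that step a bit more explicit but the logic is identical.
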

Note that the preceding proposition for $n>2$ parallels the fact that quaternionic K\"{a}hler manifolds don't admit parallel spinors in dimensions larger than $4$ (see \cite{Wang}). For $n=2$ the qualitative situation differs from Wang's result; in dimension $4$, the only compact Riemannian manifolds admitting parallel spinors are Calabi-Yau manifolds, for which the space of parallel spinors is $2$-dimensional, whereas in the preceding proposition the dimension of the space of invariant spinors is $1$. 

Since the isotropy representation in each case $K=\{1\}$, $\U(1)$, $\Sp(1)$ may be obtained by restriction from the case $K=\Sp(1)$, another consequence of Proposition \ref{MAcaseK=sp1} is the immediate corollary:
\begin{corollary} \label{K_restriction} For any $K ={1}$, $\U(1)$, $\Sp(1)$, the  $\mathfrak{sp}_{\mathbb{C}}(2n-2) \oplus \mathfrak{k}_{\mathbb{C}}$-invariant spinors are identified with the $\mathfrak{k}_{\mathbb{C}}$-invariant vectors in the direct sum of irreducible representations of 
	$\mathfrak{sp}_{\mathbb{C}}(2) \cong \mathfrak{sl}_{\mathbb{C}}(2)$
	of dimensions $n+1$ and $n-1$.
\end{corollary}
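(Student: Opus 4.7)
The plan is to reduce the computation of $\mathfrak{sp}_{\C}(2n-2) \oplus \mathfrak{k}_{\C}$-invariants to a two-stage process: first take $\mathfrak{sp}_{\C}(2n-2)$-invariants (which is exactly what Proposition \ref{MAcaseK=sp1} computes), then take $\mathfrak{k}_{\C}$-invariants of the resulting space. Since $\mathfrak{h}_{\C} = \mathfrak{sp}_{\C}(2n-2) \oplus \mathfrak{k}_{\C}$ is a direct sum of Lie subalgebras, the two summands commute, and in particular the $\mathfrak{k}_{\C}$-action preserves the subspace of $\mathfrak{sp}_{\C}(2n-2)$-invariants. Thus the order of taking invariants is immaterial, and
\[
(\Sigma\rvert_{\mathfrak{h}_{\C}})^{\mathfrak{h}_{\C}} = \bigl((\Sigma\rvert_{\mathfrak{h}_{\C}})^{\mathfrak{sp}_{\C}(2n-2)}\bigr)^{\mathfrak{k}_{\C}}.
\]

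Next I would invoke Proposition \ref{MAcaseK=sp1} to identify $(\Sigma\rvert_{\mathfrak{h}_{\C}})^{\mathfrak{sp}_{\C}(2n-2)}$ with a direct sum of two irreducible $\mathfrak{sp}_{\C}(2)$-representations of dimensions $n+1$ and $n-1$. The key point making the corollary immediate is that for each of the three choices $K = \{1\}, \U(1), \Sp(1)$, one has an inclusion $\mathfrak{k}_{\C} \subseteq \mathfrak{sp}_{\C}(2)$ (trivially for $K=\{1\}$, via the diagonal embedding \eqref{e:gl1-2d} for $K = \U(1)$, and as an isomorphism for $K = \Sp(1)$). Moreover, this inclusion $\mathfrak{k}_{\C} \hookrightarrow \mathfrak{sp}_{\C}(2)$ is precisely the one coming from restricting the action of $\mathfrak{sp}_{\C}(2) \subseteq \mathfrak{g}_{\C}$ on $\Sigma$ to $\mathfrak{k}_{\C}$, because in all three cases the embedding $H \hookrightarrow G$ in \eqref{inclusion_isotropy_productgroup} sends the $K$-factor of $H$ onto the $K$-factor of $G$ (equivalently, the factor realized as $S^2(\C^2)$ in the Lie algebra decomposition).

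Combining these two observations, the $\mathfrak{k}_{\C}$-action on $(\Sigma\rvert_{\mathfrak{h}_{\C}})^{\mathfrak{sp}_{\C}(2n-2)}$ is simply the restriction of the $\mathfrak{sp}_{\C}(2)$-action determined in Proposition \ref{MAcaseK=sp1} to the subalgebra $\mathfrak{k}_{\C}$, and the corollary follows by taking $\mathfrak{k}_{\C}$-invariants on each of the two irreducible $\mathfrak{sp}_{\C}(2)$-summands. There is no genuine obstacle here; the only subtle point to verify carefully is that the $\mathfrak{k}_{\C}$-action on the $\mathfrak{sp}_{\C}(2n-2)$-invariants really does coincide with the restriction of the ambient $\mathfrak{sp}_{\C}(2)$-action, which amounts to unwinding the inclusion \eqref{inclusion_isotropy_productgroup} at the Lie algebra level.
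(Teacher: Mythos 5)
Your proposal is correct and follows the same reasoning the paper uses: the paper simply observes that the isotropy representation for each $K$ is obtained by restriction from the $K=\Sp(1)$ case, which is exactly your key point that the $\mathfrak{k}_{\C}$-action on the $\mathfrak{sp}_{\C}(2n-2)$-invariants is the restriction of the $\mathfrak{sp}_{\C}(2)$-action computed in Proposition~\ref{MAcaseK=sp1}. You have merely spelled out the two supporting facts (commuting direct summands allow taking invariants in stages, and the inclusion \eqref{inclusion_isotropy_productgroup} makes $\mathfrak{k}_{\C}\subseteq\mathfrak{sp}_{\C}(2)$ compatible with the ambient action) that the paper leaves implicit in declaring the corollary immediate.
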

\begin{remark} This can be made quite explicit: consider $\Sp(2,\C) = \operatorname{SU}(2,\mathbb{C})$ to be a group of two-by-two complex matrices acting on the vector space $\mathbb{C} x \oplus \mathbb{C} y$.  Then it also acts on the vector space $\mathbb{C}[x,y]_m$ of homogeneous polynomials of degree $m$ in $x,y$.  This representation is irreducible of dimension $m+1$.  So one may take the invariants $\mathbb{C}[x,y]_m^K$ under any subgroup $K < \Sp(1)$, as well as the invariants $\mathbb{C}[x,y]_m^{\mathfrak{k}_{\mathbb{C}}}$ under its complexified Lie algebra (the former is contained in the latter, and the two are equal if $K$ is connected).
\end{remark}
\begin{remark}
Finally, we note that the above results are independent of the choice of invariant Riemannian metric. To see this it suffices to note that, for each of the three cases, any decomposition of the isotropy representation into irreducible submodules doesn't contain pairwise isomorphic nontrivial summands.
\end{remark}
In what follows we examine each case in more detail and give explicit constructions of the invariant spinors.
\subsection{Standard Quaternionic Spheres, $S^{4n-1}= \Sp(n)/\Sp(n-1)$}
\label{deformed3Sas}
This is the case from Section \ref{caseIk=1} corresponding to $K=\{1\}$.
The organization of this section is as follows. To begin, we find the invariant spinors and Ambrose-Singer torsion for general invariant metrics, then subsequently specialize to the $\tad$ setting, where we discuss the relationship between the invariant spinors and the $\tad$ structure. Particular attention is paid to the $7$-dimensional example $S^7=\Sp(2)/\Sp(1)$, where we compare our results with the known results from \cite{nearly_parallel_g2,3Sasdim7,3str,AHduality}. We conclude by returning to the setting of a general invariant metric and giving examples of generalized Killing spinors with $4$ eigenvalues in dimension $7$.

To describe the invariant metrics on $S^{4n-1}=\Sp(n)/\Sp(n-1)$, we first recall that the isotropy representation splits into three copies of the trivial representation and one copy of the standard representation of $\Sp(n-1)$,
\[
\mathfrak{m}\simeq \bigoplus_{i=1}^4\mathfrak{m}_i, \quad \text{ where } \mathfrak{m}_i\simeq \R \  (i=1,2,3), \  \mathfrak{m}_4\simeq \R^{4(n-1)}.
\]Up to isometry, there is a 4-parameter family of invariant metrics (see \cite{Ziller_homogeneous_einsten_metrics}), and these are given by rescaling $B_0$ separately on the isotropy components:
\begin{align} 
	g_{\vec{a}}&:=  a_1 B_0 \rvert_{\mathfrak{m}_1\times \mathfrak{m}_1} + a_2 B_0 \rvert_{\mathfrak{m}_2\times \mathfrak{m}_2} + a_3 B_0 \rvert_{\mathfrak{m}_3\times \mathfrak{m}_3} + a_4 B_0\rvert_{\mathfrak{m}_4\times \mathfrak{m}_4}, \qquad a_1,a_2,a_3,a_4>0. \label{diagonalmetric} 
\end{align}

Next, we explicitly describe the reductive complement $\mathfrak{m}$. From (\ref{inclusion_isotropy_productgroup}), the embedding $\Sp(n-1)\hookrightarrow \Sp(n)$ may be realized as the lower right hand $(n-1)\times (n-1)$ block, and we take $\mathfrak{m}:=\mathfrak{sp}(n-1)^{\perp}$ with respect to the Killing form, $\kappa_{\mathfrak{sp}(n)}:= -4(n+1) B_0$. We then have, at the level of Lie algebras,
\begin{align*}
\mathfrak{sp}(n)&= \Span_{\R} \{iF_{p,p}^{(n)},jF_{p,p}^{(n)}, kF_{p,p}^{(n)}, iF_{r,s}^{(n)},jF_{r,s}^{(n)}, kF_{r,s}^{(n)}, E_{r,s}^{(n)}  \}_{\substack{ p=1,\dots, n, \\ 1\leq r<s\leq n  }} \\
\mathfrak{sp}(n-1)&= \Span_{\R} \{iF_{p,p}^{(n)},jF_{p,p}^{(n)}, kF_{p,p}^{(n)}, iF_{r,s}^{(n)},jF_{r,s}^{(n)}, kF_{r,s}^{(n)}, E_{r,s}^{(n)}  \}_{\substack{ p=2,\dots, n, \\ 2\leq r<s\leq n  }} 
\end{align*}
and
\begin{align*}
\mathfrak{m}&= \Span_{\R}\{ iF_{1,1}^{(n)}, jF_{1,1}^{(n)}, kF_{1,1}^{(n)}, iF_{1,p}^{(n)}, jF_{1,p}^{(n)}, kF_{1,p}^{(n)}, E_{1,p}^{(n)}   \}_{p=2,\dots, n}.
\end{align*}
Using this description, the isotropy components are
\begin{align*}
	\mathfrak{m}_1= \R iF_{1,1}^{(n)} , \quad \mathfrak{m}_2=  \R jF_{1,1}^{(n)} , \quad 
	\mathfrak{m}_3= \R  kF_{1,1}^{(n)}  , \quad 
	\mathfrak{m}_4 = \Span_{\R}\{ iF_{1,p}^{(n)}, jF_{1,p}^{(n)}, kF_{1,p}^{(n)}, E_{1,p}^{(n)}   \}_{p=2,\dots, n},
\end{align*}
and a $g_{\vec{a}}$-orthonormal basis for $\mathfrak{m}$ is given by 
\begin{align}
	e_1&:= \frac{1}{\sqrt{a_1}} \  iF_{1,1}^{(n)}, \quad 
	e_2:= \frac{-1}{\sqrt{a_2}}\  kF_{1,1}^{(n)}, \quad 
	e_3:= \frac{1}{\sqrt{a_3}}\  jF_{1,1}^{(n)}, \quad 
	e_{4p}:= \frac{1}{\sqrt{2a_4}}\ j F_{1,p+1}^{(n)}, \label{spnONBline1}\\
	e_{4p+1}&:= \frac{1}{\sqrt{2a_4}}\ k F_{1,p+1}^{(n)} , \quad  e_{4p+2}:= \frac{1}{\sqrt{2a_4}}\ i F_{1,p+1}^{(n)}, \quad e_{4p+3}:= \frac{1}{\sqrt{2a_4}} \ E_{1,p+1}^{(n)}, \label{spnONBline2}
\end{align}
for $ p=1,\dots,n-1$. From equation (\ref{e:sp-invts}), we immediately obtain:
\begin{theorem}\label{invspinorssp(n)sp(n-1)}
	Using the above orthonormal basis and the corresponding realization of the spinor module from Remark \ref{reorderedCliffalgrepresentation}, the space of invariant spinors on $(S^{4n-1}=\Sp(n)/\Sp(n-1), g_{\vec{a}})$ for any $a_1,a_2,a_3,a_4>0$ is given by
\[
\Sigma_{\inv} = \Span_{\C} \{ \omega^j, \  y_1\wedge \omega^j \}_{j=0}^{n-1},
\]
where $\omega:=\sum_{i=1}^{n-1} y_{2i}\wedge y_{2i+1}$.
\end{theorem}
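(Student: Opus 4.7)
The strategy is to match the abstract description of invariant spinors established in Section~\ref{caseIk=1} with the explicit basis from Remark~\ref{reorderedCliffalgrepresentation}, then read off the stated formula. Equation~\eqref{e:sp-invts} already gives $\dim_{\C}\Sigma_{\inv}=2n$, and the $2n$ proposed elements $\{\omega^j,\,y_1\wedge\omega^j\}_{j=0}^{n-1}$ are manifestly linearly independent: they lie in pairwise distinct subspaces of $\Lambda^\bullet L'$ distinguished by their degree and by the presence or absence of the factor $y_1$. So it suffices to check invariance.

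Using the reordered basis from Remark~\ref{reorderedCliffalgrepresentation} one has $u_0=ie_1$, which identifies $\C_0=\C\otimes\C'$ with $\mathfrak{m}_1^{\C}$. Under the general decomposition $\mathfrak{m}^{\C}\cong(\C^{2n-2}\otimes\C^2)\oplus S^2(\C^2)$, the three one-dimensional summands of $S^2(\C^2)=\C\otimes\C\oplus\C\otimes\C'\oplus\C'\otimes\C'$ correspond to $\mathfrak{m}_1^{\C},\mathfrak{m}_2^{\C},\mathfrak{m}_3^{\C}$; the middle one equals $\C_0=\mathfrak{m}_1^{\C}$, so the two isotropic lines $\C\otimes\C\subset L$ and $\C'\otimes\C'\subset L'$ together span $(\mathfrak{m}_2\oplus\mathfrak{m}_3)^{\C}$. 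Since $y_1=\tfrac{1}{\sqrt{2}}(e_2+ie_3)$ is the unique Lagrangian direction of $(\mathfrak{m}_2\oplus\mathfrak{m}_3)^{\C}$ sitting in $L'$, it spans the $\C'\otimes\C'$ summand. The remaining vectors $\{y_2,\dots,y_{2n-1}\}$, built from the $\mathfrak{m}_4$-basis~\eqref{spnONBline1}--\eqref{spnONBline2}, then span the $\C^{2n-2}\otimes\C'$ part of $L'$.

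As recorded in Section~\ref{caseIk=1}, $\mathfrak{h}_{\C}=\mathfrak{sp}_{\C}(2n-2)$ lands inside $\iota(\mathfrak{gl}(L))\subseteq\mathfrak{so}(V)$: it preserves $L$ and $L'$ summand-by-summand, acts trivially on the trivial factors $\C\otimes\C$ and $\C'\otimes\C'$, and has zero trace on $L$. Proposition~\ref{p:gl-action} therefore computes the $\mathfrak{h}_{\C}$-action on $\Sigma=\Lambda^\bullet L'$ as the dual standard representation on each $y_j$-factor, with no trace contribution. In particular $y_1$ is annihilated, while on $\Span\{y_2,\dots,y_{2n-1}\}\cong\C^{2n-2}$ one recovers the dual standard representation of $\mathfrak{sp}_{\C}(2n-2)$, which preserves its canonical symplectic form. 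Any wedge power of that symplectic form, as well as its product with $y_1$, is therefore $\mathfrak{h}_{\C}$-invariant.

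The only remaining step, and the main obstacle, is to verify that the abstract symplectic form $\omega_{2n-2}$ on $\Span\{y_2,\dots,y_{2n-1}\}$ agrees with the explicit expression $\omega=\sum_{i=1}^{n-1}y_{2i}\wedge y_{2i+1}$. This reduces to checking that each quaternionic quadruple $\{e_{4p},e_{4p+1},e_{4p+2},e_{4p+3}\}\subset\mathfrak{m}_4$ complexifies into a symplectic $2$-plane $\Span\{y_{2p},y_{2p+1}\}\subset\C^{2n-2}$ on which $\mathfrak{sp}_{\C}(2n-2)$ acts through the $p$-th standard symplectic pair. This follows by a short commutator computation involving the matrices $iF^{(n)}_{1,p+1},\,jF^{(n)}_{1,p+1},\,kF^{(n)}_{1,p+1},\,E^{(n)}_{1,p+1}$ used in~\eqref{spnONBline1}--\eqref{spnONBline2} together with the bracket relations recorded in the preliminaries. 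Once this identification is in place, comparison with~\eqref{e:sp-invts} yields exactly the spinors $\omega^j$ (from $\omega^k\otimes 1$) and $y_1\wedge\omega^j$ (from $(\omega^k\otimes 1)\otimes(1\otimes 1)$), completing the proof.
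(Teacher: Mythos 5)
Your proof is correct and follows essentially the same route as the paper, which presents Theorem \ref{invspinorssp(n)sp(n-1)} as an immediate consequence of equation \eqref{e:sp-invts}; you have supplied the implicit identifications (matching $\C_0$ with $\mathfrak{m}_1^{\C}$ via $u_0=ie_1$, matching $y_1$ with the $\C'\otimes\C'$ summand, and matching the abstract $\mathfrak{sp}_{\C}(2n-2)$-invariant symplectic form with $\sum_{i=1}^{n-1}y_{2i}\wedge y_{2i+1}$) that the paper treats as self-evident. The one deferred step, checking that $\sum_{i=1}^{n-1}y_{2i}\wedge y_{2i+1}$ is indeed annihilated by the isotropy generators, is precisely the routine commutator computation the paper omits, and your reduction of it to the quaternionic-quadruple observation is sound.
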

\begin{remark}
	The fact that the space of invariant spinors is $2n$-dimensional may also be deduced from part (b) of the main proposition in \cite{Wang} by noting that the spinor module in dimension $4n-1$ is the tensor product of $\C^2$ with the spinor module in dimension $4(n-1)$, and the isotropy representation acts trivially on the span of $e_1,e_2,e_3$. Explicitly, the proposition in \cite{Wang} gives $n$ linearly independent $\Sp(n-1)$-stabilized spinors in $\Sigma_{4n-4}$, and one then takes the tensor products of these with a basis of $\C^2$ to obtain $2n$ invariant spinors in $\Sigma_{4n-1}$. Our approach has the added benefit of providing an explicit description of the spinors, and allowing us to treat the cases $G=\Sp(n)$, $\Sp(n)\Sp(1)$, and $\Sp(n)\U(1)$ in a unified way.
\end{remark}
Before discussing the $\tad$ case in more detail, we first calculate the Ambrose-Singer torsion in the general case and determine its type:
\begin{proposition}\label{spnASTORSION}
	For any $a_1,a_2,a_3,a_4>0$ the sphere $(S^{4n-1}=\Sp(n)/\Sp(n-1),g_{\vec{a}} )$ has Ambrose-Singer torsion of type $\mathcal{T}_{\totallyskew}\oplus \mathcal{T}_{\CT}$, given by {\small
		\begin{align*}
			T^{\AS}(e_1,e_2)&= \frac{-2\sqrt{a_3}}{\sqrt{a_1a_2}} e_3, \quad T^{\AS}(e_1,e_3) = \frac{2\sqrt{a_2}}{\sqrt{a_1a_3}} e_2, \quad  T^{\AS}(e_2,e_3) = \frac{-2\sqrt{a_1}}{\sqrt{a_2a_3}} e_1 ,  \\
			T^{\AS}(e_1, - )\rvert_{\mathfrak{m}_4} &= \frac{1}{\sqrt{a_1}}\Phi_1\rvert_{\mathfrak{m}_4}, \quad T^{\AS}(e_2, - )\rvert_{\mathfrak{m}_4} = \frac{1}{\sqrt{a_2}}\Phi_2\rvert_{\mathfrak{m}_4}, \quad  T^{\AS}(e_3, - )\rvert_{\mathfrak{m}_4} = \frac{1}{\sqrt{a_3}}\Phi_3\rvert_{\mathfrak{m}_4},        \\
			T^{\AS}(e_{4p},e_{4q}) &= T^{\AS}(e_{4p+1},e_{4q+1}) = T^{\AS}(e_{4p+2},e_{4q+2}) = T^{\AS}(e_{4p+3},e_{4q+3})=0\\
			T^{\AS} (e_{4p},e_{4q+1}) &= \frac{-\delta_{p,q}\sqrt{a_1}}{a_4} e_1 , \quad T^{\AS}(e_{4p},e_{4q+2}) = \frac{-\delta_{p,q} \sqrt{a_2}}{a_4} e_2, \quad T^{\AS}(e_{4p},e_{4q+3}) = \frac{-\delta_{p,q} \sqrt{a_3} }{a_4} e_3, \\
			T^{\AS}(e_{4p+1},e_{4q+2}) &= \frac{-\delta_{p,q}\sqrt{a_3} }{a_4} e_3,\quad  T^{\AS} (e_{4p+1},e_{4q+3}) = \frac{\delta_{p,q}\sqrt{a_2}}{a_4} e_2, \quad T^{\AS}(e_{4p+2},e_{4q+3}) = \frac{-\delta_{p,q}\sqrt{a_1}}{a_4}e_1,  
		\end{align*}
	}
	for $p,q=1,\dots, n-1$, where $\Phi_1,\Phi_2,\Phi_3$ are defined formally as in (\ref{Phi1})-(\ref{Phi3}). The projection of $T^{\AS}$ onto $\mathcal{T}_{\totallyskew}$ is 
	\begin{align}\label{diagonalmetricsAStorsionprojection}
		T^{\AS}_{\totallyskew} = -\frac{2}{3}\left( \frac{a_1+a_2+a_3}{\sqrt{a_1a_2a_3}} \right)e_1\wedge e_2\wedge e_3    +\frac{1}{3}\sum_{i=1}^3 \left( \frac{a_i+2a_4}{a_4\sqrt{a_i}} \right) e_i \wedge \Phi_i\rvert_{\mathfrak{m}_4}   ,
	\end{align}
	with $T^{\AS} = T^{\AS}_{\totallyskew}$ if and only if $a_1=a_2=a_3=a_4$.
\end{proposition}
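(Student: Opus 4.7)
The plan is direct computation. Since $\Uplambda^{\AS} \equiv 0$, formula (\ref{torsionatorigin}) reduces to $T^{\AS}(X,Y) = -[X,Y]_{\mathfrak{m}}$ at the origin, so everything is controlled by Lie brackets in $\mathfrak{sp}(n)$ using the commutator tables from Section~2. I would organize the case analysis by the isotropy decomposition $\mathfrak{m} = \mathfrak{m}_1 \oplus \mathfrak{m}_2 \oplus \mathfrak{m}_3 \oplus \mathfrak{m}_4$. Brackets within $\mathfrak{m}_1\oplus\mathfrak{m}_2\oplus\mathfrak{m}_3$ reduce to the quaternionic identities $[iF_{1,1}^{(n)},jF_{1,1}^{(n)}] = 2kF_{1,1}^{(n)}$ and cyclic permutations; brackets of $e_1,e_2,e_3$ against elements of $\mathfrak{m}_4$ stay inside $\mathfrak{m}_4$ and assemble into the restricted structure maps $\Phi_1,\Phi_2,\Phi_3\rvert_{\mathfrak{m}_4}$; brackets within $\mathfrak{m}_4$ split into an $\mathfrak{sp}(n-1)$-part that is discarded upon projection to $\mathfrak{m}$ and a diagonal $\delta_{p,q}$-part lying in $\mathfrak{m}_1\oplus\mathfrak{m}_2\oplus\mathfrak{m}_3$. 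Feeding the normalizations (\ref{spnONBline1})-(\ref{spnONBline2}) through these three blocks of calculations yields the stated formulas for $T^{\AS}$.

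For the torsion-type decomposition, I would first show the vectorial component vanishes: writing the difference tensor $A$ via (\ref{torsionisomorphism2}) and inspecting the computed $T^{\AS}$, each value $T^{\AS}(e_i,e_j)$ is a scalar multiple of some $e_k$ with $k\notin\{i,j\}$, so no diagonal contributions $A(e_i,e_i,-)$ arise, forcing $c_{12}(A)\equiv 0$ and killing the vectorial projection (\ref{vectorialprojection}). Then (\ref{skewprojection}) gives $T^{\AS}_{\totallyskew}$ as the cyclic antisymmetrization. Collecting cyclic sums from the three-dimensional block $\{e_1,e_2,e_3\}$ produces the $e_1\wedge e_2\wedge e_3$ coefficient in (\ref{diagonalmetricsAStorsionprojection}), while collecting the mixed contributions---the $e_i\wedge \Phi_i\rvert_{\mathfrak{m}_4}$ terms coming from $[e_i,\mathfrak{m}_4]$ together with the $\delta_{p,q}e_i$ terms coming from brackets within $\mathfrak{m}_4$---produces the corresponding $e_i\wedge \Phi_i\rvert_{\mathfrak{m}_4}$ coefficients, with the two contributions combining into $(a_i + 2a_4)/(a_4\sqrt{a_i})$ as claimed.

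For the equivalence $T^{\AS}=T^{\AS}_{\totallyskew}\Leftrightarrow a_1=a_2=a_3=a_4$, the cleanest route is via the naturally reductive criterion recalled just before the proposition: $g_{\vec{a}}([X,Y]_{\mathfrak{m}},Z) + g_{\vec{a}}(Y,[X,Z]_{\mathfrak{m}}) = 0$ for all $X,Y,Z\in\mathfrak{m}$. Testing this on triples drawn across different summands (e.g.\ $(e_1,e_{4p+2},e_{4p+3})$ forces $a_1=a_4$, and similar triples force $a_2=a_4$ and $a_3=a_4$); conversely, if all $a_i$ coincide then $g_{\vec{a}}$ is a multiple of the Killing form, hence normal homogeneous and therefore naturally reductive, so $T^{\AS}$ is totally skew. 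The main obstacle is purely bookkeeping---the large number of commutator cases and careful propagation of the sign in $e_2 = -kF_{1,1}^{(n)}/\sqrt{a_2}$---with no conceptual difficulty. A sanity check is that restricting to $n=2$ (where $\mathfrak{m}_4$ has only one quaternionic block) must recover the expected $3$-Sasakian/$\tad$ torsion formulas that will be used in the subsequent discussion.
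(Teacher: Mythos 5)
Your proposal is correct and matches the paper's (implicit) approach: the paper gives no written-out proof for this proposition, and the parallel $\U(n+1)$ case is dispatched with a one-line remark that it is a "straightforward calculation of the commutator relations" followed by application of the torsion-class projection formulas. Your outline — reducing $T^{\AS}(X,Y) = -[X,Y]_{\mathfrak m}$ to blockwise Lie brackets, observing that every computed $T^{\AS}(e_i,e_j)$ is orthogonal to both $e_i$ and $e_j$ so that $c_{12}(A)$ and hence the vectorial projection vanishes, antisymmetrizing cyclically to get the $\mathcal T_{\totallyskew}$ part, and invoking the naturally-reductive criterion (with the converse coming from normal homogeneity when all $a_i$ coincide) for the final equivalence — is exactly what such a computation requires, and the coefficients you describe do combine to the stated $(a_i+2a_4)/(a_4\sqrt{a_i})$.
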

\subsubsection{Spinors on $\tad$ spheres}
Among the metrics (\ref{diagonalmetric}), we consider in this subsection the distinguished subfamily of $\tad$ metrics $g_{\alpha,\delta}$. Following \cite{hom3alphadelta}, and noting that the Killing form on $\mathfrak{sp}(n)$ is $\kappa_{\mathfrak{sp}(n)}:= -4(n+1)B_0$, the $\tad$ structure tensors are defined by
\begin{align}
\xi_1&:= i\delta F_{1,1},\quad
\xi_2:=-k\delta F_{1,1},\quad 
\xi_3:= j\delta F_{1,1}, \label{Reeb_vec_fields_definition_SpnSpnminus1} \\ \label{tadstructuretensorsdefinition}
g_{\alpha,\delta}&:= \frac{1}{\delta^2} B_0\rvert_{\mathcal{V}\times \mathcal{V}}  + \frac{1}{2\alpha\delta} B_0\rvert_{\mathcal{H}\times \mathcal{H}} , \quad 
\varphi_p := \frac{1}{2\delta} \ad(\xi_p)\rvert_{\mathcal{V}} + \frac{1}{\delta}\ad(\xi_p)\rvert_{\mathcal{H}} ,
\end{align}
for $p=1,2,3$, where 
\begin{align*}
\mathcal{V}&:= \Span_{\R} \{ \xi_1,\xi_2,\xi_3\}, \quad \mathcal{H}:= \mathcal{V}^{\perp_{g_{\alpha,\delta}}}
\end{align*}
denote the vertical and horizontal spaces, and we note that this corresponds to the metric $g_{\vec{a}}$ in (\ref{diagonalmetric}) for the parameters $a_1=a_2=a_3=\frac{1}{\delta^2}$, $a_4= \frac{1}{2\alpha\delta}$.
Substituting these into (\ref{spnONBline1}) and (\ref{spnONBline2}) gives a $g_{\alpha,\delta}$-orthonormal basis
\begin{align}
	e_r&:= \xi_r , \quad e_{4p}:= j\sqrt{\alpha\delta}F_{1,p+1}^{(n)} , \quad
	e_{4p+1}:= k\sqrt{\alpha\delta}F_{1,p+1}^{(n)} ,\label{tadONB} \\
	e_{4p+2}&:= i\sqrt{\alpha\delta}F_{1,p+1}^{(n)} , \quad 
	e_{4p+3}:= \sqrt{\alpha\delta}E_{1,p+1}^{(n)}, \label{tadONBline2}
\end{align}
for $r=1,2,3$ and $p=1,\dots, n-1$, and the vertical and horizontal spaces are given with respect to this basis by
\[ 
\mathcal{V}=\Span_{\R}\{ e_1,e_2,e_3\},\quad \mathcal{H}=\Span_{\R}\{ e_4,\dots, e_{4n-1}\}.
\]
The fundamental 2-forms $\Phi_r(X,Y):=g_{\alpha,\delta}(X,\varphi_r(Y))$ are given by
\begin{align}
\Phi_1&= -\xi_2\wedge \xi_3 -\sum_{p=1}^{n-1} (e_{4p}\wedge e_{4p+1} + e_{4p+2}\wedge e_{4p+3}) ,  \label{Phi1}\\
\Phi_2&= \xi_1\wedge \xi_3 -\sum_{p=1}^{n-1}(e_{4p}\wedge e_{4p+2} - e_{4p+1}\wedge e_{4p+3}) ,  \label{Phi2} \\
\Phi_3&= -\xi_1\wedge \xi_2 -\sum_{p=1}^{n-1}(e_{4p} \wedge e_{4p+3}+ e_{4p+1}\wedge e_{4p+2}) . \label{Phi3} 
\end{align}
\begin{remark}\label{deformed_KS_bundle}
Of all the homogeneous realizations of spheres, $S^{4n-1}=\Sp(n)/\Sp(n-1)$ is the only one to admit an invariant $3$-Sasakian structure \cite{BG3Sas}, hence also the only one to admit an invariant $\tad$ structure \cite{hom3alphadelta}. It is shown in \cite{AHduality} that any $\tad$ manifold admits a certain bundle $E\subseteq \Sigma M$ spanned by spinors satisfying the \emph{deformed Killing equation}
\begin{align}
\nabla^g_X\psi &= \frac{\alpha}{2} X\cdot \psi + \frac{\alpha-\delta}{2}\sum_{p=1}^3 \eta_p(X)\Phi_p\cdot \psi \qquad \text{for all } X\in TM, \label{deformed_KS}
\end{align}
generalizing the Killing equation in the $3$-Sasakian case ($\alpha=\delta=1$). Indeed, the bundle $E$ can been constructed explicitly in terms of the structure tensors, using the technique of Friedrich and Kath in \cite{Fried90}. They considered the spaces
\begin{align*}
E_{i}^{\pm} &= \{ \psi\in\Sigma M \: (\pm 2\varphi_i(X) +\xi_i\cdot X - X\cdot \xi_i)\cdot \psi =0 \text{ for all } X\in TM\}, \qquad i=1,2,3
\end{align*}
and showed that, in the Sasakian and 3-Sasakian cases, these are spanned by Killing spinors. In dimension $4n-1$, by choosing the Clifford algebra representation with $u_0 \cdot \eta^{\pm} =\mp \eta^{\pm}$, we have $\dim E_i^+=0$, and $\dim E_i^- =2$ (see \cite{BFGK}, \cite{Fried90}), and the space $E$ from the preceding theorem is given by the (non-direct) sum $E=E_1^-+E_2^- +E_3^-$ (Theorem 3.1 in \cite{AHduality}). Moreover, an argument analogous to the proof of Theorem \ref{deformedSasakianinvariantspinors} shows that $E$ has a basis consisting of invariant spinors.
\end{remark}
On the sphere $S^{4n-1}=\Sp(n)/\Sp(n-1)$, the space $E_1^-$ has a simple description in terms of the basis of invariant spinors from Theorem \ref{invspinorssp(n)sp(n-1)}:
\begin{proposition}\label{E1spinorbasis}
On $S^{4n-1}= \Sp(n)/\Sp(n-1)$ with the invariant $\tad$ metric $g_{\alpha,\delta}$ one has
\begin{align} 
E_1^-&= \Span_{\C} \{ 1,\  y_1\wedge \omega^{n-1}   \}. 
\end{align}
\end{proposition}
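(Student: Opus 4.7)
The plan is to combine the known fact that $\dim E_1^- = 2$ (Remark \ref{deformed_KS_bundle}, citing Friedrich--Kath) with a direct verification that both proposed spinors lie in $E_1^-$. This reduces the problem to identifying a $2$-dimensional subspace inside the $2n$-dimensional space $\Sigma_{\inv}$ described in Theorem \ref{invspinorssp(n)sp(n-1)}.

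First, I would argue that $E_1^-\subseteq \Sigma_{\inv}$. Since the Reeb vector field $\xi_1$ and the endomorphism $\varphi_1$ are $\Sp(n)$-invariant tensors on $\Sp(n)/\Sp(n-1)$ (the right $\Sp(1)$-action generating the $3$-Sasakian structure commutes with the left $\Sp(n)$-action), the same translation argument used in the proof of Theorem \ref{deformedSasakianinvariantspinors} shows that $E_1^-$ is a representation of $\Sp(n)$. For $n\geq 2$ every nontrivial irreducible $\Sp(n)$-representation has dimension at least $2n>2\geq \dim E_1^-$, so $E_1^-$ must be a trivial subrepresentation; the case $n=1$ is vacuous since the isotropy is trivial.

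Next, using $\xi_1=e_1$ from the orthonormal basis (\ref{tadONB}) and the Clifford relation $e_1\cdot X+X\cdot e_1=-2g_{\alpha,\delta}(e_1,X)$, the defining equation of $E_1^-$ is trivial at $X=e_1$ (since $\varphi_1(\xi_1)=0$) and reduces for $X\in e_1^{\perp}$ to
\[
e_1\cdot X\cdot \psi = \varphi_1(X)\cdot \psi.
\]
By $\Sp(n-1)$-equivariance it suffices to check this at the origin for $X$ running through $e_2,\ldots,e_{4n-1}$. Reading $\varphi_1$ off (\ref{Phi1}) gives the pairings $\varphi_1(e_2)=e_3$, $\varphi_1(e_{4p})=e_{4p+1}$, $\varphi_1(e_{4p+2})=e_{4p+3}$ (and the negatives of these by antisymmetry).

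The main step is the direct verification of these identities on the two candidate spinors using the Clifford multiplication formulas of Remark \ref{reorderedCliffalgrepresentation}, where $e_1=-iu_0$ acts as $+i\cdot\Id$ on $\Sigma^{\mathrm{even}}$ and $-i\cdot\Id$ on $\Sigma^{\mathrm{odd}}$. For $\psi=1\in\Sigma^{\mathrm{even}}$ one finds $e_{2j}\cdot 1=iy_j$ and $e_{2j+1}\cdot 1=y_j$, so that $e_1\cdot e_{2j}\cdot 1 = y_j = e_{2j+1}\cdot 1$, matching $\varphi_1(e_{2j})=e_{2j+1}$ in our pairing; the companion identities follow immediately. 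For $\psi=y_1\wedge\omega^{n-1}$, which up to a nonzero scalar is the top form $\Omega:=y_1\wedge\cdots\wedge y_{2n-1}\in\Sigma^{\mathrm{odd}}$, Clifford multiplication by each $e_{2j}$ or $e_{2j+1}$ collapses to an interior product with $x_j$ (the wedge terms vanish), producing $\pm i\,\Omega_{\hat{y}_j}$ and $\pm\Omega_{\hat{y}_j}$ respectively, and the same parity bookkeeping confirms the identity. Since $1$ and $y_1\wedge\omega^{n-1}$ have different $\Z_2$-degree they are linearly independent, and hence span the $2$-dimensional space $E_1^-$. The only potential obstacle is careful sign tracking in the interior-product formulas and in the parity-dependent action of $e_1$, but this is routine given the explicit formulas already at hand.
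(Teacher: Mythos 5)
Your proposal is correct and takes essentially the same approach as the paper: reduce to the dimension count $\dim E_1^-=2$ from Friedrich--Kath, then verify directly in the explicit Clifford model that $1$ and $y_1\wedge\omega^{n-1}$ satisfy the defining equation (you use the equivalent rearrangement $\xi_1\cdot X\cdot\psi=\varphi_1(X)\cdot\psi$ where the paper uses $e_j\cdot\varphi_1(e_j)\cdot\psi=\xi_1\cdot\psi$, but these are the same identity after one Clifford multiplication). The additional step you include, that $E_1^-\subseteq\Sigma_{\inv}$ via a trivial-subrepresentation argument, is correct but not needed once the two spinors are shown to lie in $E_1^-$.
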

\begin{proof}
As noted in \cite{Fried90}, the defining condition of $E_1^-$ is equivalent to $e_j\cdot \varphi_1(e_j)\cdot \psi = \xi_1\cdot \psi$ for all $j=2,\dots 4n-1$, so it suffices to show that 
\begin{align*}
e_{2p}\cdot e_{2p+1} \cdot 1 &= \xi_1\cdot 1 \quad  \text{ and } \quad 		e_{2p}\cdot e_{2p+1} \cdot (y_1\wedge \omega^{n-1}) = \xi_1\cdot (y_1\wedge \omega^{n-1})
\end{align*}
for all $p=1,\dots, 2n-1$. Using the realization of the spin representation described in Remark \ref{reorderedCliffalgrepresentation}, we calculate
\begin{align*}
e_{2p}\cdot e_{2p+1} &= i(x_p\lrcorner \ + \ y_p\wedge) \circ (y_p\wedge \ - \ x_p\lrcorner) = i\left[ x_p\lrcorner \circ y_p \wedge   -y_p\wedge \circ x_p\lrcorner   \right] ,
\end{align*}
and hence
\begin{align*}
e_{2p}\cdot e_{2p+1} \cdot 1 &= i\left[ x_p\lrcorner(y_p\wedge 1) - y_p \wedge (x_p\lrcorner 1)   \right] =i =\xi_1\cdot 1
\end{align*}
and
\begin{align*}
e_{2p}\cdot e_{2p+1}\cdot (y_1\wedge \omega^{n-1}) &= i\left[ x_p\lrcorner (y_p\wedge y_1 \wedge \omega^{n-1}) - y_p\wedge (x_p\lrcorner(y_1\wedge \omega^{n-1}))     \right] \\
&= -i y_1\wedge \omega^{n-1} = \xi_1 \cdot (y_1\wedge \omega^{n-1})
\end{align*}  
for all $p=1,\dots, 2n-1$. The penultimate equality follows by considering separately the cases $p=1$, $p\neq 1$ and using the fact that $\omega^{n-1}$ is a multiple of $y_2\wedge y_3\wedge \dots \wedge y_{2n-1}$, hence $y_p\wedge (x_p\lrcorner \omega^{n-1}) = \omega^{n-1}$ for $p\neq 1$.
\end{proof}
\begin{remark}
It is worth noting that the spinors $\omega^j $, $y_1\wedge \omega^j $ appearing in Theorem \ref{invspinorssp(n)sp(n-1)} have an interpretation in terms of the $\tad$ structure tensors. Indeed, using the spin representation described in Remark \ref{reorderedCliffalgrepresentation} one has $$ y_1 = \frac{1}{\sqrt{2}}(\xi_2 + i\xi_3)     , \quad  \omega = -\frac{1}{2}(\Phi_2\rvert_{\mathcal{H}} + i\Phi_3\rvert_{\mathcal{H}} )  .  $$
\end{remark}
Finally, before discussing the situation in dimension $7$, we recall the existence of the second Einstein metric on a $3$-Sasakian manifold:
\begin{remark}\label{spn_second_einstein_metric}
	It was shown in \cite{BG3Sas} that a $3$-Sasakian manifold admits (uniquely up to homothety) a second Einstein metric of positive scalar curvature, which differs from the $3$-Sasakian metric by a rescaling along the fibres of the canonical fibration. In dimension $7$ it is known from \cite{nearly_parallel_g2} that this scaling factor is $\frac{1}{5}$, and, more generally, it was shown in Proposition 2.3.3 in \cite{3str} that a $\tad$ manifold of dimension $4n-1$ is Riemannian Einstein if and only if $\delta=\alpha$ or $\delta= (2n+1)\alpha$; in particular, comparing with (\ref{tadstructuretensorsdefinition}) easily recovers the factor of $\frac{1}{5}$ in the $7$-dimensional case. It was furthermore shown in \cite{nearly_parallel_g2} that the second Einstein metric admits a proper nearly parallel $\G_2$-structure (equivalently, a unique Killing spinor up to scaling), and we shall see in the following example that this spinor turns out to be the canonical spinor of the $\tad$ structure. 
\end{remark}
\begin{example}\label{sp2sp1}Let us examine more closely the situation for the $\tad$ 7-sphere, $(S^7=\Sp(2)/\Sp(1),g_{\alpha,\delta})$, and compare with the spinors found in \cite{3Sasdim7} and \cite{3str}. At the Lie algebra level we decompose $\mathfrak{sp}(2)=\mathfrak{sp}(1)\oplus_{\perp_{\kappa_{\mathfrak{sp}(2)}}} \mathfrak{m}$, where
\begin{align*}
\mathfrak{sp}(1)&= \Span_{\R}\left\{ iF_{2,2}^{(2)}, jF_{2,2}^{(2)} , kF_{2,2}^{(2)}  \right\},\\
\mathcal{V}&= \Span_{\R}\left\{\xi_1:= i \delta  F_{1,1}^{(2)} ,\ \xi_2:= -k\delta F_{1,1}^{(2)} , \  \xi_3:=  j\delta F_{1,1}^{(2)}  \right\} , \\
\mathcal{H}&=\Span_{\R}\left\{e_4:= j\sqrt{\alpha\delta}F_{1,2}^{(2)} ,\ e_5:= k\sqrt{\alpha\delta}F_{1,2}^{(2)},\  e_6:= i\sqrt{\alpha\delta}F_{1,2}^{(2)}   , \ e_7:= \sqrt{\alpha\delta}E_{1,2}^{(2)}    \right\}, \\
\mathfrak{m}&:= \mathcal{V}\oplus \mathcal{H},
\end{align*}
and orthogonality is with respect to the Killing form $\kappa_{\mathfrak{sp}(2)}= -12 B_0$ on $\mathfrak{sp}(2)$. From \cite{hom3alphadelta}, the $\tad$ structure is given by the tensors $g_{\alpha,\delta}$, $\xi_p$, $\varphi_p$, $(p=1,2,3)$ described above. The above basis for $\mathfrak{m}$ is $g_{\alpha,\delta}$-orthonormal, and adapted to the $\tad$ structure in the sense that the fundamental 2-forms are given by
\begin{align*}
\Phi_1&= -( \xi_{2,3} +  e_{4,5} + e_{6,7} ), \quad 
\Phi_2= -(\xi_{3,1} + e_{4,6} - e_{5,7} ) ,\quad 
\Phi_3= -(\xi_{1,2}  +e_{4,7} +e_{5,6} ). 
\end{align*}
Using the spin representation described in Remark \ref{reorderedCliffalgrepresentation}, it follows from Theorem \ref{invspinorssp(n)sp(n-1)} that the space of invariant spinors is
\begin{align*}
\Sigma_{\inv} = \Span_{\C} \{   1,  \omega,   y_1,  y_1\wedge \omega  \},
\end{align*}
where $\omega:= y_2\wedge y_3$.

Let us illustrate in detail the process of finding these invariant spinors by hand. To begin, one finds that the isotropy operators are given by
\begin{align}
\ad(iF_{2,2}^{(2)} )\rvert_{\mathfrak{m}}&=  e_{4,5} -e_{6,7}, \quad  \ad(jF_{2,2}^{(2)} )\rvert_{\mathfrak{m}}=   -e_{4,7} +e_{5,6},\quad \ad(kF_{2,2}^{(2)} )\rvert_{\mathfrak{m}}= -e_{4,6} - e_{5,7}. \label{sp2sp1isotropyoperators}
\end{align}
Now, applying the first operator in (\ref{sp2sp1isotropyoperators}) to aritrary $\eta \in \Sigma = \Lambda^{\bullet} L'$ gives
\begin{align*}
\widetilde{	\ad(iF_{2,2}^{(2)})} \cdot \eta = \frac{1}{2}(e_4\cdot e_5 - e_6\cdot e_7) \cdot \eta   = \frac{1}{2}\left[  i(x_2\lrcorner +y_2\wedge) \ (y_2\wedge -x_2\lrcorner)\eta  - i(x_3\lrcorner+y_3 \wedge) \ (y_3\wedge -x_3 \lrcorner)\eta       \right]   ,
\end{align*}
and hence 
\begin{align*}
\widetilde{\ad(iF_{2,2}^{(2)})\rvert_{\mathfrak{m}}}\cdot 1 &= 0, \quad \widetilde{\ad(iF_{2,2}^{(2)})\rvert_{\mathfrak{m}}}\cdot y_1 =0, \quad \widetilde{\ad(iF_{2,2}^{(2)})\rvert_{\mathfrak{m}}}\cdot y_2 =-iy_2 , \\ \widetilde{\ad(iF_{2,2}^{(2)})\rvert_{\mathfrak{m}}}\cdot y_3 &= iy_3 , \quad
\widetilde{\ad(iF_{2,2}^{(2)})\rvert_{\mathfrak{m}}}\cdot (y_1\wedge y_2) = -iy_1\wedge y_2  ,  \quad \widetilde{\ad(iF_{2,2}^{(2)})\rvert_{\mathfrak{m}}}\cdot (y_2\wedge y_3) = 0, \\ 
\widetilde{\ad(iF_{2,2}^{(2)})\rvert_{\mathfrak{m}}}\cdot (y_1\wedge y_3) &= iy_1\wedge y_3 , \quad
\widetilde{\ad(iF_{2,2}^{(2)})\rvert_{\mathfrak{m}}}\cdot (y_1\wedge y_2\wedge y_3) =  0.
\end{align*}
The kernel of this operator is therefore given by
\[
\ker \widetilde{\ad(iF_{2,2}^{(2)})\rvert_{\mathfrak{m}}} = \Span_{\C} \{1, \ y_2\wedge y_3, \ y_1, \ y_1\wedge y_2\wedge y_3\}.
\]
Continuing similarly for the other two operators in (\ref{sp2sp1isotropyoperators}) and taking the intersection of the three kernels gives{\small
\[
\Sigma_{\inv} = \left( \ker \widetilde{\ad(iF_{2,2}^{(2)})\rvert_{\mathfrak{m}}} \right)\cap \left(\ker \widetilde{\ad(jF_{2,2}^{(2)})\rvert_{\mathfrak{m}}} \right) \cap \left(\ker \widetilde{\ad(kF_{2,2}^{(2)})\rvert_{\mathfrak{m}}}\right)  = \Span_{\C} \{1, \ y_2\wedge y_3, \ y_1, \ y_1\wedge y_2\wedge y_3\}.
\]
}
\begin{remark}
The canonical spinor $\psi_0$ and three auxiliary spinors $\psi_r:=\xi_r\cdot \psi_0$ $(r=1,2,3)$ described in Theorem 4.5.2 of \cite{3str} are given in terms of the above basis of $\Sigma_{\inv}$ by
\begin{align} \label{7tad_sphere_spinors}
\psi_0&= \frac{1}{\sqrt{2}}(\omega+ i y_1), \quad \psi_1= \frac{1}{\sqrt{2}} (i\omega + y_1), \quad \psi_2=\frac{1}{\sqrt{2}}( -1+i y_1\wedge \omega), \quad \psi_3= \frac{1}{\sqrt{2}} (-i+y_1\wedge \omega).
\end{align}
These spinors were studied in \cite{3Sasdim7}, and subsequently \cite{3str}, from the perspective of $\G_2$-geometry using the $\G_2$-form 
\[
\omega=\sum_{p=1}^3 \eta_p\wedge \Phi_p\rvert_{\mathcal{H}} + \eta_{123}
\]
arising naturally from the $\tad$ structure. They prove that the associated $\G_2$-structure is cocalibrated, and its characteristic connection agrees with the $\tad$ canonical connection (Theorem 4.5.1 in \cite{3str}). The \emph{canonical spinor} $\psi_0$ (or rather, its real part) arises as a unit vector in the $(-7)$-eigenspace for Clifford multiplication of $\omega$ on the real spin bundle, and is unique up to sign. 
\end{remark}
Let us now differentiate the spinors (\ref{7tad_sphere_spinors}) and compare with Theorem 4.5.2 of \cite{3str}. From \cite{hom3alphadelta}, the Nomizu map for the Levi-Civita connection is given by
\[
\Uplambda^{g_{\alpha,\delta}}(X)Y =  \begin{cases} 
\frac{1}{2}[X,Y]_{\mathfrak{m}}  & X,Y\in\mathcal{V} \ \text{or} \ X,Y\in\mathcal{H}, \\
(1-\frac{\alpha}{\delta})[X,Y] & X\in\mathcal{V}, Y\in\mathcal{H}, \\
\frac{\alpha}{\delta}[X,Y]  & X\in\mathcal{H}, Y\in\mathcal{V},
\end{cases}  
\]
and we calculate
\begin{align*}
\Uplambda^{g_{\alpha,\delta}}(\xi_1) &= \delta \xi_{2,3} +\delta (1-\frac{\alpha}{\delta}) (e_{4,5} +e_{6,7}), \quad \Uplambda^{g_{\alpha,\delta}}(\xi_2)= \delta\xi_{3,1}+ \delta (1-\frac{\alpha}{\delta}) (e_{4,6} -e_{5,7}), \\
\Uplambda^{g_{\alpha,\delta}}(\xi_3)&= \delta \xi_{1,2} + \delta (1-\frac{\alpha}{\delta}) (e_{4,7} +e_{5,6} ),  \quad \Uplambda^{g_{\alpha,\delta}}(e_4)= \alpha ( - \xi_1\wedge e_5 - \xi_2\wedge e_6 - \xi_3\wedge e_7  ), \\
\Uplambda^{g_{\alpha,\delta}}(e_5)&= \alpha ( \xi_1\wedge e_4 + \xi_2\wedge e_7 - \xi_3\wedge e_6    ) ,\quad 
\Uplambda^{g_{\alpha,\delta}}(e_6)= \alpha( - \xi_1\wedge e_7 +\xi_2\wedge e_4 +\xi_3\wedge e_5    ), \\
\Uplambda^{g_{\alpha,\delta}}(e_7)&= \alpha( \xi_1\wedge e_6 -\xi_2\wedge e_5 +\xi_3\wedge e_4   ).
\end{align*}
Lifting these to the spin bundle and calculating in the spin representation, as we did above for the isotropy operators, then gives the generalized Killing equations from Theorem 4.5.2 in \cite{3str}:
\begin{align} \label{dim7spinorialeqn3ad}
\nabla^{g_{\alpha,\delta}}_X\psi_0 &=  \begin{cases} 
-\frac{3\alpha}{2}X\cdot \psi_0  &  X\in\mathcal{H}, \\
\frac{2\alpha-\delta}{2}X\cdot \psi_0 & X\in\mathcal{V}, \\
\end{cases} \qquad 
\nabla^{g_{\alpha,\delta}}_X\psi_i = \begin{cases} 
\frac{2\alpha-\delta}{2} \xi_i\cdot \psi_i &  X=\xi_i, \\
\frac{3\delta-2\alpha}{2}\xi_j\cdot \psi_i  & X=\xi_j \ (j\neq i), \\
\frac{\alpha}{2}X\cdot \psi_i & X\in\mathcal{H},
\end{cases}
\end{align}
for $i=1,2,3$. For example, one calculates
{\small
\begin{align*} 
\widetilde{\Uplambda^{g_{\alpha,\delta}}(\xi_1)}\cdot \psi_0 &= \frac{\delta}{2}\left[  i( x_1\lrcorner + y_1\wedge) (y_1\wedge - x_1\lrcorner)\ \frac{1}{\sqrt{2}} (\omega + iy_1 ) \right] \\
&\qquad  + \frac{ (\delta-\alpha) }{2}\left[i( x_2\lrcorner + y_2\wedge) (y_2\wedge - x_2\lrcorner)\ \frac{1}{\sqrt{2}} (\omega + iy_1 ) + i( x_3\lrcorner + y_3\wedge) (y_3\wedge - x_3\lrcorner)\ \frac{1}{\sqrt{2}} (\omega + iy_1 ) \right]   \\
&= \frac{\delta}{2}\left[ i(x_1\lrcorner+y_1\wedge)\ \frac{1}{\sqrt{2}}  (-i +  y_1\wedge \omega )    \right] \\
&\qquad +\frac{(\delta-\alpha)}{2}\left[ i(x_2\lrcorner+y_2\wedge)\ \frac{1}{\sqrt{2}}  (iy_2\wedge y_1 - y_3 ) + i(x_3\lrcorner+y_3\wedge)\ \frac{1}{\sqrt{2}}( iy_3\wedge y_1 +y_2  )      \right] \\
&= \frac{i\delta}{2\sqrt{2}} \left[ -iy_1 +\omega    \right] + \frac{i(\delta-\alpha)}{2\sqrt{2}} \left[ (iy_1 -y_2\wedge y_3 ) +(iy_1 +y_3\wedge y_2 )      \right] \\
&=  \frac{(2\alpha-\delta)}{2} \ \frac{1}{\sqrt{2}} (y_1+i\omega) = \frac{(2\alpha-\delta)}{2} \psi_1 = \frac{(2\alpha-\delta)}{2} \xi_1\cdot \psi_0     .
\end{align*}}A similar calculation in the spin representation also shows that equation (\ref{dim7spinorialeqn3ad}) for the auxiliary spinors $\psi_r$, $r=1,2,3$ is equivalent to the deformed Killing equation (\ref{deformed_KS}) in dimension 7 \cite{AHduality}. Moreover, substituting the parameters for the second Einstein metric, $g_2:=g_{\alpha,\delta}\rvert_{\delta=5\alpha}$ (see Remark \ref{spn_second_einstein_metric}), into (\ref{dim7spinorialeqn3ad}) gives 
\[
\nabla^{g_2}_X\psi_0 =  -\frac{3\alpha}{2}X\cdot \psi_0 , \quad 
\nabla^{g_2}_X\psi_i = \begin{cases} 
	-\frac{3\alpha}{2} \xi_i\cdot \psi_i &  X=\xi_i ,\\
	\frac{13\alpha}{2}\xi_j\cdot \psi_i  & X=\xi_j \ (j\neq i), \\
	\frac{\alpha}{2}X\cdot \psi_i & X\in\mathcal{H},
\end{cases}  
\]
which, in particular, shows that $\psi_0$ is the Killing spinor determining the proper nearly parallel $\G_2$-structure described in \cite{nearly_parallel_g2}.
\end{example}
Finally, before discussing the general invariant metrics (\ref{diagonalmetric}) in more detail, we compare the Ambrose-Singer connection to the canonical connection of the $\tad$ structure introduced in \cite{3str}:
\begin{corollary}
	The canonical connection of the $\tad$ space $(S^{4n-1}=\Sp(n)/\Sp(n-1), g_{\alpha,\delta})$ coincides with the Ambrose-Singer connection if and only if the $\tad$ structure is parallel ($\delta=2\alpha$). 
\end{corollary}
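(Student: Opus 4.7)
The plan is to compare the torsion tensors of the two connections, since both are metric and hence fully determined by their torsion. Three ingredients enter: the canonical connection $\nabla^{\mathrm{can}}$ of a $\tad$ structure is by construction a metric connection with totally skew-symmetric torsion (\cite{3str}); Proposition \ref{spnASTORSION} describes $T^{\AS}$ explicitly and shows that $T^{\AS} \in \mathcal{T}_{\totallyskew}$ if and only if $a_1=a_2=a_3=a_4$; and $\nabla^{\AS}$ parallelizes every $G$-invariant tensor, in particular the $\tad$ structure tensors $\xi_i, \eta_i, \varphi_i$.

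For the forward direction, suppose $\nabla^{\AS} = \nabla^{\mathrm{can}}$. Then $T^{\AS}$ must be totally skew-symmetric, and Proposition \ref{spnASTORSION} forces $a_1=a_2=a_3=a_4$. Substituting the $\tad$ parametrization $a_1=a_2=a_3=\delta^{-2}$ and $a_4=(2\alpha\delta)^{-1}$, the equation $a_1=a_4$ becomes $\delta^{-2}=(2\alpha\delta)^{-1}$, equivalent to $\delta=2\alpha$, which is precisely the parallel condition.

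For the converse, if $\delta=2\alpha$ then all four parameters reduce to $\delta^{-2}$, so $g_{\alpha,\delta}$ is a multiple of the normal homogeneous metric and Proposition \ref{spnASTORSION} yields $T^{\AS}\in\mathcal{T}_{\totallyskew}$. Thus $\nabla^{\AS}$ is a metric connection with skew torsion parallelizing $\xi_i, \eta_i, \varphi_i$, and by the uniqueness of $\nabla^{\mathrm{can}}$ as such a connection on a parallel $\tad$ manifold (\cite{3str}) we conclude $\nabla^{\AS}=\nabla^{\mathrm{can}}$. Alternatively, one substitutes $\delta=2\alpha$ into the explicit skew-torsion formula (\ref{diagonalmetricsAStorsionprojection}) and verifies directly that it agrees with the canonical torsion computed in \cite{3str}.

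The main obstacle is this final identification in the converse direction: the forward implication is essentially immediate from the skew-torsion criterion in Proposition \ref{spnASTORSION}, but matching the resulting skew torsion with the specific $\nabla^{\mathrm{can}}$ requires either a clean uniqueness statement for the canonical connection in the parallel case, or a careful bookkeeping of signs, normalizations, and basis conventions against the formulas of \cite{3str}.
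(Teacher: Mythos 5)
Your forward direction is identical to the paper's: if the two connections coincide then $T^{\AS}$ is totally skew (since the canonical connection has skew torsion), Proposition \ref{spnASTORSION} then forces $a_1=a_2=a_3=a_4$, which under the $\tad$ parametrization $a_1=a_2=a_3=\delta^{-2}$, $a_4=(2\alpha\delta)^{-1}$ is exactly $\delta=2\alpha$. For the converse, the paper substitutes $\delta=2\alpha$ into Proposition \ref{spnASTORSION}, extracts the explicit skew torsion $T^{\AS}=-4\alpha\, e_1\wedge e_2\wedge e_3 + 2\alpha\sum_{i=1}^3 e_i\wedge\Phi_i\rvert_{\mathfrak{m}_4}$, and verifies directly that this matches the canonical torsion from \cite[Thm.\@ 4.4.1]{3str}. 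Your primary converse argument is genuinely different and more conceptual: since $\nabla^{\AS}$ parallelizes all invariant tensors, it parallelizes the invariant $\tad$ structure tensors, and once $\delta=2\alpha$ makes its torsion skew, the uniqueness of the canonical connection of a parallel $\tad$ manifold (as the metric connection with skew torsion parallelizing $\xi_i,\eta_i,\varphi_i$ --- the characterization in \cite{3str} prescribes covariant derivatives of $\xi_i$ and $\Phi_i$ which reduce to parallelism exactly when $\delta=2\alpha$) forces $\nabla^{\AS}=\nabla^{\mathrm{can}}$. This sidesteps the torsion-matching bookkeeping you flagged as the main obstacle; what it buys is a transparent structural reason for the coincidence, at the cost of having to confirm that \cite{3str} delivers uniqueness in precisely this form, which it does. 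Your alternative converse route is the paper's.
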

\begin{proof}
	We have seen that $g_{\alpha,\delta}$ is obtained from $g_{\vec{a}} $ by setting $a_1=a_2=a_3=\frac{1}{\delta^2}$ and $a_4=\frac{1}{2\alpha\delta}$. Recalling that the canonical connection has skew torsion (Theorem 4.4.1 in \cite{3str}), if the two connections are assumed to coincide then Proposition \ref{spnASTORSION} implies $a_1=a_2=a_3=a_4$, hence $\delta=2\alpha$. Conversely, if $\delta=2\alpha$, then $a_1=a_2=a_3=a_4= \frac{1}{4\alpha^2}$, and Proposition \ref{spnASTORSION} implies that the Ambrose-Singer connection has skew torsion given by
	\begin{align*}
		T^{\AS} =   -4\alpha \  e_1\wedge e_2\wedge e_3 +2\alpha \sum_{i=1}^3 e_i\wedge \Phi_i\rvert_{\mathfrak{m}_4} .
	\end{align*}
	The result then follows by comparing this to Theorem 4.4.1 in \cite{3str}.
\end{proof}
\subsubsection{General invariant metrics on $\Sp(n)/\Sp(n-1)$} 
We now leave the $\tad$ setting and return to the general invariant metrics (\ref{diagonalmetric}). In order to differentiate the invariant spinors from Theorem \ref{invspinorssp(n)sp(n-1)}, it is helpful to compare $g_{\vec{a}}$ with the round ($3$-Sasakian) metric, $g':=g_{\alpha,\delta}\rvert_{\alpha=\delta=1}$; they are related by
\[
g_{\vec{a}} = b_1g'\rvert_{\mathfrak{m}_1\times \mathfrak{m}_1} + b_2 g'\rvert_{\mathfrak{m}_2\times \mathfrak{m}_2} + b_3g'\rvert_{\mathfrak{m}_3\times \mathfrak{m}_3} + b_4g'\rvert_{\mathfrak{m}_4\times \mathfrak{m}_4},
\]
where $b_i:= a_i$ $(i=1,2,3)$ and $b_4:=2a_4$. We denote by $\{\overline{e}_i\} $ the $g_{\vec{a}}$-orthonormal basis defined in (\ref{spnONBline1})-(\ref{spnONBline2}), and by $\{e_i\}$ the $g'$-orthonormal basis defined by setting $\alpha=\delta=1$ in (\ref{tadONB})-(\ref{tadONBline2}). By adapting the proof of Proposition 2.33 in \cite{bourguignon2015spinorial}, we obtain:
%
%
%
%
%
\begin{lemma}\label{connection1formsrelation}
The Levi-Civita connection 1-forms $\overline{\omega}_{i,j}:= g_{\vec{a}}(\nabla^{g_{\vec{a}}} \overline{e}_i, \overline{e}_j)$ and $\omega_{i,j}':= g'(\nabla^{g'} e_i, e_j)$ are related by 
\[
\overline{\omega}_{i,j}(\overline{e}_k) = \frac{1}{2}\left( \Theta^p_{q,r}+\Theta^q_{p,r}    \right)  \omega_{i,j}'(e_k) +\frac{1}{2}\left( \Theta^q_{p,r} - \Theta^r_{p,q}   \right)\omega_{j,k}'(e_i) + \frac{1}{2}\left( \Theta^r_{p,q} -\Theta^p_{q,r}     \right)\omega_{i,k}'(e_j)  
\]
for $ e_i\in\mathfrak{m}_p, \ e_j\in\mathfrak{m}_q,\ e_k \in \mathfrak{m}_r$, where $ \Theta^l_{m,n} := \sqrt{ \frac{b_l}{b_m b_n}}$.
\end{lemma}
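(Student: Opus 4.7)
The plan is to reduce the statement to a Koszul-type identity for the Nomizu map and then track how each ingredient rescales when one switches from $g'$ to $g_{\vec{a}}$. First I would use (\ref{nomizumap})--(\ref{Utensor}) to obtain the Koszul-type identity
\[
2g(\Uplambda^g(e_k) e_i, e_j) = g([e_k, e_i]_{\mathfrak{m}}, e_j) + g([e_j, e_k]_{\mathfrak{m}}, e_i) + g(e_k, [e_j, e_i]_{\mathfrak{m}}),
\]
valid for an arbitrary invariant Riemannian metric $g$, and apply it both to $(g_{\vec{a}}, \{\overline{e}_i\})$ and to $(g', \{e_i\})$. This turns both $\overline{\omega}_{i,j}(\overline{e}_k)$ and $\omega'_{i,j}(e_k)$ into linear combinations of scalar pairings of brackets with basis vectors.

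Next I would exploit the fact that $g_{\vec{a}}$ differs from $g'$ only by the diagonal rescaling by $b_l$ on each summand $\mathfrak{m}_l$, together with $\overline{e}_\alpha = e_\alpha/\sqrt{b_{s_\alpha}}$, where $s_\alpha$ denotes the label of the summand containing $e_\alpha$. A short calculation should produce the uniform rescaling
\[
g_{\vec{a}}([\overline{e}_\alpha, \overline{e}_\beta]_{\mathfrak{m}}, \overline{e}_\gamma) \;=\; \Theta^{s_\gamma}_{s_\alpha, s_\beta}\, g'([e_\alpha, e_\beta]_{\mathfrak{m}}, e_\gamma),
\]
since the bracket contributes a factor $(b_{s_\alpha} b_{s_\beta})^{-1/2}$ and the pairing against $\overline{e}_\gamma$ contributes $\sqrt{b_{s_\gamma}}$. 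Applied termwise to the Koszul expansion of $2\overline{\omega}_{i,j}(\overline{e}_k)$ with $e_i \in \mathfrak{m}_p$, $e_j \in \mathfrak{m}_q$, $e_k \in \mathfrak{m}_r$, this should give
\[
2\overline{\omega}_{i,j}(\overline{e}_k) \;=\; \Theta^q_{p,r}\, A \;+\; \Theta^p_{q,r}\, B \;+\; \Theta^r_{p,q}\, C,
\]
where $A, B, C$ are the three $g'$-bracket scalars appearing on the right of the Koszul identity for $g'$.

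Finally I would invert the linear system expressing $2\omega'_{i,j}(e_k)$, $2\omega'_{j,k}(e_i)$, $2\omega'_{i,k}(e_j)$ as differently signed combinations of the same triple $(A,B,C)$; the three equations arise from cyclic permutations of the roles of $e_i, e_j, e_k$ in the Koszul identity, using antisymmetry of the bracket. Solving for $A, B, C$ in terms of these three $g'$-connection 1-forms and substituting back should yield exactly the asserted coefficients. I expect the main obstacle to be nothing more than careful bookkeeping: ensuring that the signs and indices in the $3 \times 3$ inversion conspire to produce the cyclic pattern $(\Theta^p_{q,r}, \Theta^q_{p,r}, \Theta^r_{p,q})$ with the stated signs in front of the three $\omega'$-terms. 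It is worth noting that no natural reductivity hypothesis on $g'$ is needed, which is fortunate: as observed in Proposition \ref{spnASTORSION}, $g'$ corresponds to $a_1=a_2=a_3 \neq a_4$, so the round $3$-Sasakian metric on $\Sp(n)/\Sp(n-1)$ is itself not naturally reductive.
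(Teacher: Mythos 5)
Your plan follows the paper's proof: both start from the reductive Koszul formula at the origin, deduce the uniform $\Theta$-rescaling $g_{\vec{a}}([\overline{e}_\alpha,\overline{e}_\beta]_{\mathfrak{m}},\overline{e}_\gamma)=\Theta^{s_\gamma}_{s_\alpha,s_\beta}\,g'([e_\alpha,e_\beta]_{\mathfrak{m}},e_\gamma)$, and then express the three $g'$-bracket scalars in terms of the $g'$-connection $1$-forms. The only cosmetic difference lies in the last step — the paper converts brackets into Nomizu-map differences via torsion-freeness of $\nabla^{g'}$ and then applies metric compatibility, while you invert the cyclic $3\times 3$ linear system coming from the Koszul identity for $g'$ — but these are algebraically identical and both yield the stated coefficients.
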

\begin{proof}
	Let $e_i,e_j,e_k,\Theta^l_{m,n}$ be as in the statement of the lemma. Using the Koszul formula and the fact that $\nabla^{g'}$ is torsion-free, we calculate
\begin{align*}
\overline{\omega}_{i,j}(\overline{e}_k) &= g_{\vec{a}}( \nabla^{g_{\vec{a}}}_{\overline{e}_k} \overline{e}_i, \overline{e}_j) = \frac{1}{2} \left[ -g_{\vec{a}}([\overline{e}_i,\overline{e}_k]_{\mathfrak{m}}, \overline{e}_j) - g_{\vec{a}}([\overline{e}_k,\overline{e}_j]_{\mathfrak{m}}, \overline{e}_i) - g_{\vec{a}}([\overline{e}_i,\overline{e}_j]_{\mathfrak{m}}, \overline{e}_k)     \right] \\
&= \frac{1}{2}\left[ -\Theta^q_{p,r}   g'([e_i,e_k]_{\mathfrak{m}},e_j) - \Theta^p_{q,r}    g'([e_k,e_j]_{\mathfrak{m}},e_i) - \Theta^r_{p,q}    g'([e_i,e_j]_{\mathfrak{m}},e_k)        \right] \\
&=  -\frac{1}{2}\Theta^q_{p,r}   g'(\Uplambda^{g'}(e_i)e_k - \Uplambda^{g'}(e_k)e_i,e_j) - \frac{1}{2}\Theta^p_{q,r}   g'(\Uplambda^{g'}(e_k)e_j-\Uplambda^{g'}(e_j)e_k,e_i) \\
&\qquad - \frac{1}{2}\Theta^r_{p,q}   g'(\Uplambda^{g'}(e_i)e_j-\Uplambda^{g'}(e_j)e_i,e_k)        ,
\end{align*}
and the result then follows from the fact that $\nabla^{g'}$ is metric (for $g'$). 
\end{proof}
In dimension 7, this comparison with the round metric allows us to easily find new examples of generalized Killing spinors:
\begin{proposition}\label{gks4EVs}
	The spinors $\psi_i$, $i=0,1,2,3$ on the 7-sphere $(S^7=\frac{\Sp(2)}{\Sp(1)},g_{\vec{a}} )$, defined as in (\ref{7tad_sphere_spinors}), are generalized Killing spinors for the endomorphisms
\begin{align*}
A_i&=  \lambda_{i,1} \Id\rvert_{\mathfrak{m}_1} +   \lambda_{i,2}\Id\rvert_{\mathfrak{m}_2}+  \lambda_{i,3}\Id\rvert_{\mathfrak{m}_3}+\lambda_{i,4} \Id\rvert_{\mathfrak{m}_4}, \quad i=0,1,2,3,
\end{align*}
with eigenvalues
\begin{align*}
	\lambda_{0,p}&= \begin{cases} 
		\frac{1}{2}( - \Theta^p_{p+1, p+2}+\Theta^{p+1}_{p, p+2} + \Theta^{p+2}_{p, p+1}) - (\Theta^4_{p,4} -\Theta^p_{4,4}) & p=1,2,3,\\
  -\frac{1}{2}(\Theta^1_{4,4} + \Theta^2_{4,4} +\Theta^3_{4,4})  & p=4,\\
	\end{cases}\\
\lambda_{k,p} &= 
\begin{cases}   
		\frac{1}{2}( - \Theta^p_{p+1, p+2}+\Theta^{p+1}_{p, p+2} + \Theta^{p+2}_{p, p+1}) -  (\Theta^4_{p,4} -\Theta^p_{4,4}) & k=p \text{ and } p=1,2,3, \\
			\frac{1}{2}( - \Theta^p_{p+1, p+2}+\Theta^{p+1}_{p, p+2} + \Theta^{p+2}_{p, p+1}) +  (\Theta^4_{p,4} -\Theta^p_{4,4}) & k\neq p \text{ and } p=1,2,3, \\
	\frac{1}{2}( -\Theta^k_{4,4} + \Theta^{k+1}_{4,4} + \Theta^{k+2}_{4,4}     )& p=4  ,  \\
\end{cases}
\end{align*}
where $k=1,2,3$, and the indices $k, k+1, k+2, p, p+1, p+2$ on the right hand side are taken modulo 3.
\end{proposition}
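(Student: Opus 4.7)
The plan is to reduce the computation to the round ($3$-Sasakian) case via Lemma \ref{connection1formsrelation}, and then perform the Clifford algebra calculations already outlined in Example \ref{sp2sp1}. Since each $\psi_i$ is an invariant spinor, we have the identity
\[
\nabla^{g_{\vec{a}}}_{\widehat{X}}\psi_i\big|_o = \widetilde{\Uplambda}^{g_{\vec{a}}}(X)\cdot \psi_i\big|_o \quad \text{for all } X\in\mathfrak{m},
\]
so it suffices to evaluate the spin lift of the Nomizu map on each basis vector $\overline{e}_k$ of $\mathfrak{m}$ and compare with $A_i(\overline{e}_k)\cdot \overline{e}_k\cdot \psi_i$. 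Writing $\widetilde{\Uplambda}^{g_{\vec{a}}}(\overline{e}_k) = \tfrac{1}{2}\sum_{i<j}\overline{\omega}_{i,j}(\overline{e}_k)\,\overline{e}_i\cdot \overline{e}_j$, the problem reduces to knowing the connection 1-forms $\overline{\omega}_{i,j}(\overline{e}_k)$.

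First I would record the round-metric connection 1-forms $\omega'_{i,j}(e_k)$ from the explicit Nomizu map formulas displayed in Example \ref{sp2sp1} (with $\alpha=\delta=1$, which kills the $(1-\alpha/\delta)$-correction terms on the vertical block). This gives a short table of nonzero connection 1-forms, essentially encoded in the three fundamental 2-forms $\Phi_1,\Phi_2,\Phi_3$ and the bracket $\xi_1\wedge\xi_2\wedge\xi_3$. Then I would substitute these into Lemma \ref{connection1formsrelation} to read off $\overline{\omega}_{i,j}(\overline{e}_k)$ as an $\R$-linear combination of $\Theta$-coefficients depending on which isotropy summands contain the three indices. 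By the structure of the round connection, the only index triples $(p,q,r)$ that contribute are either $(p,q,r)$ a permutation of $(1,2,3)$ (vertical-vertical terms) or $(p,q,r)$ of the form $(s,4,4)$ or $(4,4,s)$ for some $s\in\{1,2,3\}$ (mixed vertical-horizontal terms arising from the $\Phi_s$ blocks).

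Next I would lift to the spin bundle and act on each $\psi_i$. The relevant Clifford products have already been tabulated in Example \ref{sp2sp1}: for instance $(\overline{e}_4\cdot \overline{e}_5 - \overline{e}_6\cdot \overline{e}_7)\cdot \psi_1$, $(\overline{e}_2\cdot\overline{e}_3)\cdot\psi_0$, and so on, can all be read off from the calculations there (up to the rescaling $\overline{e}_i = b_p^{-1/2} e_i$ for $e_i\in\mathfrak{m}_p$, which only introduces overall $\Theta$-factors). The generalized Killing property is then verified direction-by-direction: for $\overline{e}_k\in\mathfrak{m}_r$, the claim is that all terms in $\widetilde{\Uplambda}^{g_{\vec{a}}}(\overline{e}_k)\cdot\psi_i$ not proportional to $\overline{e}_k\cdot\psi_i$ cancel, leaving precisely the eigenvalue $\lambda_{i,r}$ times $\overline{e}_k\cdot\psi_i$. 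The signs in the cases $k=p$ versus $k\neq p$ (for the first three lines of the $\lambda_{k,p}$ formula) will come from the $\pm$ signs in the Clifford action of the operators $\xi_k\cdot \overline{e}_j$ on the eigenvectors $\psi_k$ of $\Phi_k$, exactly as in the deformed Killing calculation of Example \ref{sp2sp1}.

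The main technical obstacle is the bookkeeping: there are four spinors, seven tangent directions, and three families of $\Theta$-coefficients, so roughly $28$ separate identities must be checked. However, the symmetry of the problem under the cyclic action of the three vertical directions $\xi_1,\xi_2,\xi_3$ (which permutes $\psi_1,\psi_2,\psi_3$ and the indices $1,2,3$ cyclically) should cut the real work down to essentially four cases: $(i,k)\in\{(0,\xi_1),(0,e_4),(1,\xi_1),(1,e_4)\}$. Once these are verified by the calculation sketched above, the remaining cases follow by symmetry, and the final expressions for $\lambda_{i,p}$ can be read off directly from the resulting $\Theta$-coefficient combinations.
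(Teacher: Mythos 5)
Your proposal is correct and follows essentially the same route as the paper: apply Lemma \ref{connection1formsrelation} to transfer the round (3-Sasakian, $\alpha=\delta=1$) connection 1-forms from Example \ref{sp2sp1} to the general diagonal metric $g_{\vec{a}}$, then lift the resulting Nomizu map to the spin bundle and evaluate Clifford products against $\psi_0,\dots,\psi_3$. The paper simply presents the rescaled Nomizu operators $\Uplambda^{g_{\vec a}}(\overline e_k)$ explicitly in terms of the $\Theta$-coefficients rather than keeping the argument at the level of connection 1-forms, and does not explicitly invoke the cyclic-symmetry reduction you mention, but that refinement is sound and only shortens the bookkeeping.
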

\begin{proof}
	Using the preceding lemma together with the explicit form of the Nomizu map for the 3-Sasakian 7-sphere (see Example \ref{sp2sp1}), we obtain:
	\begin{align*}
		\Lambda^{g_{\vec{a}} }(\overline{e}_p) &= (\Theta^p_{p+1,p+2} - \Theta^{p+1}_{p,p+2} -\Theta^{p+2}_{p,p+1}) \ \overline{\Phi}_p\rvert_{\mathcal{V}}  - (\Theta^4_{p,4} -\Theta^p_{4,4})\ \overline{\Phi}_p\rvert_{\mathcal{H}}, \quad p=1,2,3,\\
		\Lambda^{g_{\vec{a}}}(\overline{e}_4) &= -\Theta^1_{4,4} \overline{e}_{1,5} -\Theta^2_{4,4}\overline{e}_{2,6} - \Theta^3_{4,4} \overline{e}_{3,7} , \quad \Lambda^{g_{\vec{a}}}(\overline{e}_5) = \Theta^1_{4,4} \overline{e}_{1,4} +\Theta^2_{4,4}\overline{e}_{2,7} - \Theta^3_{4,4} \overline{e}_{3,6}, \\
		\Lambda^{g_{\vec{a}}}(\overline{e}_6) &= -\Theta^1_{4,4} \overline{e}_{1,7} +\Theta^2_{4,4}\overline{e}_{2,4} + \Theta^3_{4,4} \overline{e}_{3,5}, \quad \Lambda^{g_{\vec{a}}}(\overline{e}_7) = \Theta^1_{4,4} \overline{e}_{1,6} -\Theta^2_{4,4}\overline{e}_{2,5} + \Theta^3_{4,4} \overline{e}_{3,4},
	\end{align*}
where the indices $p,p+1,p+2$ are taken modulo 3, and $\overline{\Phi}_p$ are the forms defined by replacing each $e_i$ with $\overline{e}_i$ (and replacing each $\xi_i$ with $\overline{e}_i$, $i=1,2,3$) in (\ref{Phi1})-(\ref{Phi3}). The result then follows by lifting these operators and calculating the Clifford products with $\psi_i$, $i=0,1,2,3$ in the spin representation.
\end{proof}
\begin{remark}
	By choosing the metric parameters $a_1,a_2,a_3,a_4$ in (\ref{diagonalmetric}) appropriately, the endomorphisms $A_i$ can be arranged to have $4$ distinct eigenvalues. As far as the authors are aware, this provides the first example of generalized Killing spinors whose endomorphism has four distinct eigenvalues (see \cite{3Sasdim7, 3str} for examples of generalized Killing spinors with two or three distinct eigenvalues). We also note that, in the case of the $\tad$ metric ($b_1=b_2=b_3=\frac{1}{\delta^2}$, $b_4=\frac{1}{\alpha\delta}$), we have 
	\[
	\Theta^p_{q,r} = \Theta^4_{p,4} = |\delta|, \qquad \Theta^p_{4,4} = |\alpha|, \qquad \text{ for } p,q,r\in\{1,2,3\}.
	\]
	Since $S^{4n-1}$ is compact we have, by convention, $\alpha\delta>0$ (cf. \cite[Thm.\@ 3.1.1]{hom3alphadelta}), and thus $\alpha$ and $\delta$ have the same sign. If $\alpha, \delta>0$, then the generalized Killing equations in Proposition \ref{gks4EVs} immediately recover the known equations (\ref{dim7spinorialeqn3ad}). If $\alpha,\delta<0$, then we recover the equations (\ref{dim7spinorialeqn3ad}) up to a factor of $-1$, corresponding to the fact that replacing $\alpha,\delta$ with $-\alpha,-\delta$ in the orthonormal basis (\ref{tadONB})-(\ref{tadONBline2}) gives a basis with the opposite orientation.
\end{remark}
Somewhat surprisingly, performing a similar deformation of the 3-Sasakian Killing spinors in dimensions larger than $7$ is not guaranteed to produce generalized Killing spinors, as the following proposition shows:
\begin{proposition}\label{deformationofE1minus}
	Let $\psi = \mu_1 1 + \mu_2 y_1\wedge \omega^{n-1} \in E_1^-$ ($\mu_1,\mu_2\in \C$) be an invariant Killing spinor for the $3$-Sasakian metric on $S^{4n-1}=\Sp(n)/\Sp(n-1)$. If $n>2$, then the spinor $\psi$ on $(S^{4n-1}=\frac{\Sp(n)}{\Sp(n-1)},g_{\vec{a}} )$ defined by the same formula is a generalized Killing spinor if and only if $b_2=b_3=b_4$. If $b_2=b_3=b_4$, then $\psi$ is a generalized Killing spinor for the endomorphism
	\begin{align*}
		A&= \frac{1}{2}\left[(1-2n) \Theta^1_{2,2}+ 2n  \Theta^2_{1,2} \right] \Id\rvert_{\mathfrak{m}_1} + \frac{1}{2}\Theta^1_{2,2} \Id\rvert_{\mathfrak{m}_2\oplus\mathfrak{m}_3\oplus \mathfrak{m}_4}
	\end{align*}
with at most two distinct eigenvalues.
\end{proposition}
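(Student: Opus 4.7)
The plan is to proceed as in the proof of Proposition \ref{gks4EVs}, namely to compare the Levi-Civita Nomizu map of $g_{\vec{a}}$ with that of the $3$-Sasakian metric $g' := g_{\alpha,\delta}|_{\alpha=\delta=1}$ and then apply the spin lift to $\psi$. Since $\Uplambda^{g'}(X) = \tfrac{1}{2}\ad(X)|_\mathfrak{m}$ (read off from the $\tad$ formulas at $\alpha=\delta=1$), one can use Lemma \ref{connection1formsrelation} to write each $\Uplambda^{g_{\vec{a}}}(\overline{e}_k)$ explicitly as an element of $\mathfrak{so}(\mathfrak{m}) \cong \Lambda^2\mathfrak{m}$, with coefficients given by ratios of the $\Theta^l_{m,n}$. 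Because $\psi$ is $G$-invariant it suffices to check the generalized Killing condition at the origin, so the problem reduces to verifying, for each $\overline{e}_k$ in the orthonormal basis \eqref{spnONBline1}--\eqref{spnONBline2}, that $\widetilde{\Uplambda}^{g_{\vec{a}}}(\overline{e}_k)\cdot\psi$ is a scalar multiple of $\overline{e}_k\cdot\psi$, where the scalar depends only on which isotropy summand $\overline{e}_k$ lies in.

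The computation splits according to that summand. For $\overline{e}_1\in\mathfrak{m}_1$, the defining property of $E_1^-$ (exploited in Proposition \ref{E1spinorbasis}) gives $e_{2p}\cdot e_{2p+1}\cdot\psi = \xi_1\cdot\psi$ for $p=1,\ldots,2n-1$, so the lifted sum collapses to a multiple of $\overline{e}_1\cdot\psi$ for any choice of metric parameters, and tracking the $\Theta$-weights recovers the stated eigenvalue on $\mathfrak{m}_1$. For $\overline{e}_k$ in $\mathfrak{m}_2$, $\mathfrak{m}_3$, or $\mathfrak{m}_4$, the lifted connection form is a sum of $2$-forms of several types mixing different isotropy summands, each weighted by a different expression in the $\Theta^l_{m,n}$, and one evaluates their Clifford action on $\psi = \mu_1\cdot 1 + \mu_2\,y_1\wedge\omega^{n-1}$ using the exterior-form realization from Remark \ref{reorderedCliffalgrepresentation}.

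The main technical obstacle is to argue, for $n>2$, that cancellation of cross-terms forces $b_2=b_3=b_4$. Since $\omega^{n-1}$ is, up to scalar, the top horizontal form $y_2\wedge\cdots\wedge y_{2n-1}$, contractions $x_j\lrcorner\omega^{n-1}$ produce monomials of length $2n-3$, and further exterior multiplication by factors $y_p\wedge y_q$ generates spinor monomials of the shape $y_1\wedge y_p\wedge y_q\wedge(x_j\lrcorner\omega^{n-1})$. For $n>2$ these are linearly independent of both $\psi$ and its Clifford translates $\overline{e}_k\cdot\psi$, so requiring their coefficients to vanish produces identities on the $\Theta$-ratios equivalent to $b_2=b_3=b_4$. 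For $n=2$ the exponent $n-1=1$ is too low for such monomials to appear, which is consistent with the wider family of generalized Killing spinors found in Proposition \ref{gks4EVs}.

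Once $b_2=b_3=b_4$ is imposed, the relevant $\Theta^l_{m,n}$ collapse to common values for indices in $\{2,3,4\}$, the obstructing cross-terms vanish automatically, and a short spin-representation computation yields the eigenvalue $\tfrac{1}{2}\Theta^1_{2,2}$ uniformly on $\mathfrak{m}_2\oplus\mathfrak{m}_3\oplus\mathfrak{m}_4$. This gives the block-scalar endomorphism $A$ stated in the proposition. Since $A$ has one eigenvalue on $\mathfrak{m}_1$ and one eigenvalue on $\mathfrak{m}_2\oplus\mathfrak{m}_3\oplus\mathfrak{m}_4$, it has at most two distinct eigenvalues, as claimed.
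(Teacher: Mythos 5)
Your overall strategy matches the paper's: use Lemma \ref{connection1formsrelation} to express $\Uplambda^{g_{\vec{a}}}$ in terms of the $\Theta^l_{m,n}$, lift to the spin bundle, and check whether $\widetilde{\Uplambda^{g_{\vec{a}}}}(\overline{e}_k)\cdot\psi$ is proportional to $\overline{e}_k\cdot\psi$. The treatment of the $\mathfrak{m}_1$-direction via the $E_1^-$-property and the converse direction once $b_2=b_3=b_4$ is imposed are both fine. However, the step establishing the ``only if'' direction is muddled in a way that matters. The monomials you name, $y_1\wedge y_p\wedge y_q\wedge(x_j\lrcorner\omega^{n-1})$, are identically zero: $x_j\lrcorner\omega^{n-1}$ already contains every $y_m$ with $m\in\{2,\ldots,2n-1\}\setminus\{j\}$, so wedging with $y_1\wedge y_p\wedge y_q$ necessarily repeats a factor. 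The relevant obstructions actually live in degrees $2$ and $2n-3$, arising from the degree-$0$ component $\mu_1\cdot 1$ of $\psi$ raised by two, and the degree-$(2n-1)$ component $\mu_2\,y_1\wedge\omega^{n-1}$ lowered by two, under the spin lift of $\overline{\Phi}_p\rvert_{\mathcal{H}}$. The paper's version is a clean degree count: the lift of $\overline{\Phi}_p\rvert_{\mathcal{H}}$ shifts degree by $\pm 2$ while that of $\overline{\Phi}_p\rvert_{\mathcal{V}}$ shifts by $\pm 1$, whereas any $X\cdot\psi$ with $X\in\mathfrak{m}$ has components only in degrees $\{0,1,2n-2,2n-1\}$; for $n>2$ the degrees $2$ and $2n-3$ avoid this set, forcing the coefficients $\Theta^4_{p,4}-\Theta^p_{4,4}$ of $\overline{\Phi}_p\rvert_{\mathcal{H}}$ in $\Uplambda^{g_{\vec{a}}}(\overline{e}_p)$ to vanish for $p=2,3$, and these are precisely $b_2=b_4$ and $b_3=b_4$.

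Your appeal to $x_j\lrcorner\omega^{n-1}$ also suggests you were differentiating in the horizontal directions $\overline{e}_{4p}\in\mathfrak{m}_4$, but those alone only force $\Theta^2_{4,4}=\Theta^3_{4,4}$, i.e.\ $b_2=b_3$, which is strictly weaker: the degree-$2$ contributions $-\tfrac{1}{2}\Theta^2_{4,4}\,\overline{e}_2\cdot\overline{e}_{4p+2}\cdot 1$ and $-\tfrac{1}{2}\Theta^3_{4,4}\,\overline{e}_3\cdot\overline{e}_{4p+3}\cdot 1$ are both proportional to $y_1\wedge y_{2p+1}$ and can cancel against each other. It is the vertical directions $\overline{e}_2,\overline{e}_3$ that pin down $b_4$. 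Finally, the explanation of the $n=2$ exception (``the exponent $n-1=1$ is too low for such monomials to appear'') is also imprecise: the degree-$2$ and degree-$(2n-3)=1$ monomials do appear for $n=2$, but there $2=2n-2$ and $2n-3=1$, so they coincide with degrees occurring in $\overline{e}_k\cdot\psi$ and no constraint is forced.
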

\begin{proof}
	Using Lemma \ref{connection1formsrelation}, the Nomizu map for the Levi-Civita connection of $g_{\vec{a}} $ takes the same form as in the proof of the preceding proposition, with $\overline{e}_4$ replaced with $\overline{e}_{4p}$, $\overline{e}_5$ replaced with $\overline{e}_{4p+1}$, and so on. Using the spin representation ordering described in Remark \ref{reorderedCliffalgrepresentation}, one sees that Clifford multiplication by $\overline{\Phi}_2\rvert_{\mathcal{H}}$ and $\overline{\Phi}_3\rvert_{\mathcal{H}}$ (resp. $\overline{\Phi}_2\rvert_{\mathcal{V}}$ and $\overline{\Phi}_3\rvert_{\mathcal{V}}$) changes the degree of the spinors $1$ and $y_1\wedge \omega^{n-1}$ by two (resp. one). On the other hand, Clifford multiplication by a vector changes the degree by at most one. Thus, by comparing the degrees of $\widetilde{\Uplambda^{g_{\vec{a}}}}(\overline{e}_2)\cdot \psi$ and $\widetilde{\Uplambda^{g_{\vec{a}}}}(\overline{e}_3)\cdot \psi $ with elements of $\mathfrak{m}\cdot \psi$, we see that if $\psi$ is a generalized Killing spinor and $n>2$ then $\Theta^4_{2,4}-\Theta^2_{4,4}=0=\Theta^4_{3,4}-\Theta^3_{4,4}$. Simplifying these equations gives $b_2=b_3=b_4$, as desired. Conversely, if $b_2=b_3=b_4$ then lifting the Nomizu operators and calculating the Clifford product with $\psi$ in the spin representation gives the result. 
\end{proof}
The preceding proposition shows that attempting to produce generalized Killing spinors with a certain number of distinct eigenvalues by rescaling the isotropy components of metrics carrying Killing spinors is not a straightforward process. Indeed, if one starts with an arbitrary invariant Killing spinor for the round metric on $S^{4n-1}=\Sp(n)/\Sp(n-1)$, which, by a result in \cite{Hof22}, may be written as a linear combination of $\psi_{k}:= \omega^{k+1}-i(k+1)y_1\wedge \omega^k$ ($-1\leq k\leq n-1$), the resulting system of algebraic equations determining precisely which linear combination of the $\psi_k$'s is needed is difficult to solve. It remains to be understood why this deformation technique works in some situations but not others, and whether it can be used to produce other interesting examples of generalized Killing spinors.
\subsection{$S^3$-Quaternionic Spheres, $S^{4n-1}=\frac{\Sp(n) \Sp(1)}{\Sp(n-1) \Sp(1)}$}
This is the case from Section \ref{caseIIIK=Sp(1)} corresponding to $K=\Sp(1)$. We begin by discussing the general case, then pass to the $7$-dimensional setting, where the invariant spinor is related to the exceptional $\G_2$-geometry available in this dimension. 

From (\ref{inclusion_isotropy_productgroup}), we have at the level of Lie algebras, {\small
\begin{align*}
	\mathfrak{sp}(n)\oplus \mathfrak{sp}(1)&= \Span_{\R} \{ (iF_{p,q}^{(n)},0) , (jF_{p,q}^{(n)},0) ,(kF_{p,q}^{(n)},0) ,(E_{r,s}^{(n)},0), (0,i) ,(0,j),(0,k)    \}_{\substack{1\leq p\leq q \leq n \\ 1\leq r < s \leq n} }     ,\\ 
	\mathfrak{sp}(n-1) \oplus \mathfrak{sp}(1)&=  \Span_{\R} \{ (iF_{p,q}^{(n)},0) , (jF_{p,q}^{(n)},0) ,(kF_{p,q}^{(n)},0) ,(E_{r,s}^{(n)},0), (iF_{1,1}^{(n)},i) ,(jF_{1,1}^{(n)},j),(kF_{1,1}^{(n)},k)    \}_{\substack{2\leq p\leq q \leq n \\ 2\leq r < s \leq n} }  ,
\end{align*}
}and for a reductive complement we take the orthogonal complement $\mathfrak{m}:=(\mathfrak{sp}(n-1) \oplus \mathfrak{sp}(1))^{\perp}$ with respect to the Killing form $\kappa$ on $\mathfrak{sp}(n)\oplus \mathfrak{sp}(1)$,
\begin{align} 
	\label{squashedKF} \kappa((A,z),(A',z')) := -4(n+1)B_0(A,A') +8\Re(zz') .
\end{align}
The isotropy representation decomposes into two inequivalent irreducible summands, $\mathfrak{m}\simeq \mathfrak{m}_1\oplus \mathfrak{m}_2$, leading to the 2-parameter family of invariant metrics,
\begin{align*}
g_{a,b}:= -a\kappa\rvert_{\mathfrak{m}_1\times \mathfrak{m}_1} - b\kappa\rvert_{\mathfrak{m}_2\times \mathfrak{m}_2}  , \quad a,b>0.
\end{align*}
A $g_{a,b}$-orthonormal basis for $\mathfrak{m}$ is given by {\small
\begin{align*}
	 \xi_1&:= \frac{1}{\Omega}\left( iF_{1,1}^{(n)}, -\left( \frac{n+1}{2}\right) i\right), \quad
	\xi_2:= \frac{1}{\Omega}\left( -kF_{1,1}^{(n)}, \left( \frac{n+1}{2}\right) k\right), \quad 
	\xi_3:=\frac{1}{\Omega}\left(jF_{1,1}^{(n)}, -\left( \frac{n+1}{2}\right) j\right),  \\
	e_{4p}&:= \frac{1}{2\sqrt{2b(n+1)}}(jF_{1,p+1}^{(n)} ,0),\quad 
	e_{4p+1}:=  \frac{1}{2\sqrt{2b(n+1)}}(kF_{1,p+1}^{(n)} ,0),\\
	e_{4p+2}&:= \frac{1}{2\sqrt{2b(n+1)}}(iF_{1,p+1}^{(n)} ,0),\quad 
	e_{4p+3}:= \frac{1}{2\sqrt{2b(n+1)}}(E_{1,p+1}^{(n)} ,0)
\end{align*}
}for $p=1,\dots, n-1$, where $\Omega:=\sqrt{2a(n+1)(n+3)}$. In terms of this basis, the two isotropy summands are
\begin{align*}
	\mathfrak{m}_1&:=\Span_{\R} \{ \xi_1,\xi_2,\xi_3\},\quad  \mathfrak{m}_2:= \Span_{\R} \{ e_4,\dots, e_{4n-1}\} .  
\end{align*}
From Proposition \ref{MAcaseK=sp1} we obtain:
\begin{theorem}\label{explicitspinorsK=sp1}
	Using the above orthonormal basis and the corresponding description of the spinor module from Remark \ref{reorderedCliffalgrepresentation}, the space of invariant spinors on $(S^{4n-1}=\frac{\Sp(n)\Sp(1)}{\Sp(n-1)\Sp(1)}, g_{a,b})$ for any $a,b>0$ is trivial unless $n=2$, in which case $\dim_{\C}  \Sigma_{\inv} =1$. In this case, the $1$-dimensional $\Sigma_{\inv}$ is contained in the span of $y_1$ and $\omega := \sum_{i=1}^{n-1} y_{2i} \wedge y_{2i+1}$.
\end{theorem}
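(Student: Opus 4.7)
The strategy is to read Theorem \ref{explicitspinorsK=sp1} off of Proposition \ref{MAcaseK=sp1}, by making the abstract basis vectors $(\omega \otimes 1)$ and $(1 \otimes 1) \otimes (1 \otimes 1)$ concrete in the spinor module $\Sigma = \Lambda^{\bullet} L'$ realized via Remark \ref{reorderedCliffalgrepresentation}. For $n \geq 3$ the triviality of $\Sigma_{\inv}$ is given directly by Proposition \ref{MAcaseK=sp1}, so only the case $n = 2$ requires work. In that case, the same proposition already yields a one-dimensional $\Sigma_{\inv}$ contained in $\Span_{\C}\{(\omega \otimes 1),\ (1\otimes 1)\otimes (1\otimes 1)\}$, and only the identification of these two elements with explicit spinors remains.

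To perform this identification, I would unwind the isotropy-summand decompositions from Section \ref{quaternionic_case_setup}. The complexification $\mathfrak{m}_1^{\C} \cong S^2(\C^2)$ splits as $(\C\otimes \C) \oplus (\C \otimes \C') \oplus (\C'\otimes \C')$, whose three pieces lie respectively in $L$, $\C_0$, and $L'$; in particular, the $L'$-part of $\mathfrak{m}_1^{\C}$ is a single complex line $\C'\otimes \C'$. Similarly $\mathfrak{m}_2^{\C} \cong \C^{2n-2} \otimes \C^2$ splits as $(\C^{2n-2}\otimes \C) \oplus (\C^{2n-2} \otimes \C')$, with the first summand in $L$ and the second in $L'$, giving $L' \cap \mathfrak{m}_2^{\C} \cong \C^{2n-2} \otimes \C'$. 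Choosing the decomposition $\C^2 = \C \oplus \C'$ compatibly with the orthonormal basis $\xi_1, \xi_2, \xi_3$ of $\mathfrak{m}_1$, the Lagrangian basis vector $y_1 = \tfrac{1}{\sqrt 2}(\xi_2 + i\xi_3)$ from Remark \ref{reorderedCliffalgrepresentation} spans $\C' \otimes \C' \subset L'$, while $y_2, \dots, y_{2n-1}$ provide a basis for $\C^{2n-2} \otimes \C' \subset L'$.

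Under these identifications, $(1\otimes 1) \otimes (1 \otimes 1) \in \Lambda^1 L'$ is a nonzero scalar multiple of $y_1$, and $(\omega \otimes 1) \in \Lambda^2 L'$, being the image of the symplectic form on $\C^{2n-2}$, is a nonzero scalar multiple of $\sum_{i=1}^{n-1} y_{2i}\wedge y_{2i+1} = \omega$. Consequently the unique (up to scaling) invariant spinor given by Proposition \ref{MAcaseK=sp1} lies in $\Span_{\C}\{y_1, \omega\}$, which is exactly the claim of the theorem.

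The only real subtlety is verifying that the abstract isotropic decomposition $L \oplus L' \oplus \C_0$ of $\mathfrak{m}^{\C}$ can be chosen to align simultaneously with the concrete Lagrangian $\Span_{\C}\{y_1, \ldots, y_{2n-1}\}$ coming from the Clifford realization and with the isotropy-summand decomposition $\mathfrak{m}^{\C} = \mathfrak{m}_1^{\C}\oplus \mathfrak{m}_2^{\C}$; this is available precisely because the choice of one-dimensional subspace $\C \subset \C^2$ at the start of Section \ref{caseIIIK=Sp(1)} is free. Once that alignment is fixed, no further calculation is needed beyond what is already in Proposition \ref{MAcaseK=sp1}.
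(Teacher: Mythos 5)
Your proposal is correct and takes essentially the same route as the paper: invoke Proposition \ref{MAcaseK=sp1} for the dimension count and then identify the abstract invariants $(\omega\otimes 1)$ and $(1\otimes 1)\otimes(1\otimes 1)$ with the concrete spinors $\omega = \sum_{i} y_{2i}\wedge y_{2i+1}$ and $y_1$ under the Lagrangian decomposition of Remark \ref{reorderedCliffalgrepresentation}. The paper's own proof is a single sentence that simply notes $\omega$ is the symplectic form stabilized by $\mathfrak{sp}(2n-2,\C)$; you make explicit the alignment of the abstract $L\oplus L'\oplus \C_0$ decomposition with the orthonormal basis $\{\xi_1,\xi_2,\xi_3,e_4,\dots,e_{4n-1}\}$ (via $y_1 = \tfrac{1}{\sqrt 2}(\xi_2 + i\xi_3)$ spanning $\C'\otimes\C'$, and $y_2,\dots,y_{2n-1}$ spanning $\C^{2n-2}\otimes\C'$), which is exactly the bookkeeping the paper leaves implicit. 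So there is no gap; this is the intended argument, just spelled out more carefully.
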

\begin{proof}
	The result follows directly from Proposition \ref{MAcaseK=sp1} by noting that $\omega = \sum_{i=1}^{n-1} y_{2i} \wedge y_{2i+1}$ is the symplectic form stabilized by $\mathfrak{sp}(2n-2,\C)$.
\end{proof}
The 1-dimensional space of invariant spinors obtained in dimension 7 is explicitly constructed in Example \ref{sptwisted7sphereexample}, which appears immediately after the following proposition describing the Ambrose-Singer torsion in the general case:
\begin{proposition}
	For any $a,b>0$ the sphere $(S^{4n-1}=\frac{\Sp(n)\Sp(1)}{\Sp(n-1)\Sp(1)}, g_{a,b})$ has Ambrose-Singer torsion of type $\mathcal{T}_{\totallyskew}\oplus \mathcal{T}_{\CT}$, given by
		\begin{align*}
			T^{\AS}(\xi_1,\xi_2)&= \frac{(n-1)}{\Omega} \xi_3 , \quad T^{\AS}(\xi_1,\xi_3) = - \frac{(n-1)}{\Omega} \xi_2  , \quad  T^{\AS}(\xi_2,\xi_3) =  \frac{(n-1)}{\Omega}\xi_1 ,  \\
			T^{\AS}(\xi_1, - )\rvert_{\mathfrak{m}_2} &= \frac{1}{\Omega}\Phi_1\rvert_{\mathfrak{m}_2}, \quad T^{\AS}(\xi_2, - )\rvert_{\mathfrak{m}_2} = \frac{1}{\Omega}\Phi_2\rvert_{\mathfrak{m}_2}, \quad  T^{\AS}(\xi_3, - )\rvert_{\mathfrak{m}_2} = \frac{1}{\Omega}\Phi_3\rvert_{\mathfrak{m}_2},        \\
			T^{\AS}(e_{4p},e_{4q}) &= T^{\AS}(e_{4p+1},e_{4q+1}) = T^{\AS}(e_{4p+2},e_{4q+2}) = T^{\AS}(e_{4p+3},e_{4q+3})=0\\
			T^{\AS} (e_{4p},e_{4q+1}) &= \frac{-a \delta_{p,q} }{b\Omega } \xi_1 , \quad T^{\AS}(e_{4p},e_{4q+2}) = \frac{-a\delta_{p,q}}{b\Omega} \xi_2, \quad T^{\AS}(e_{4p},e_{4q+3}) = \frac{-a\delta_{p,q} }{b\Omega } \xi_3, \\
			T^{\AS}(e_{4p+1},e_{4q+2}) &= \frac{-a\delta_{p,q} }{b\Omega } \xi_3,\quad  T^{\AS} (e_{4p+1},e_{4q+3}) = \frac{a\delta_{p,q}}{b\Omega } \xi_2, \quad T^{\AS}(e_{4p+2},e_{4q+3}) = \frac{-a\delta_{p,q}}{b\Omega}\xi_1,  
		\end{align*}
	for $p,q=1,\dots, n-1$, where $\Phi_1,\Phi_2,\Phi_3$ are defined formally as in (\ref{Phi1})-(\ref{Phi3}). The projection of $T^{\AS}$ onto $\mathcal{T}_{\totallyskew}$ is 
	\[
	T^{\AS}_{\totallyskew} = \left( \frac{n-1}{\Omega} \right)\xi_1\wedge \xi_2\wedge \xi_3    +\frac{1}{3}\left( \frac{2}{\Omega} + \frac{a}{b\Omega }  \right)\sum_{i=1}^3  \xi_i \wedge \Phi_i\rvert_{\mathfrak{m}_2}   ,
	\]
	with $T^{\AS} = T^{\AS}_{\totallyskew}$ if and only if $a=b$.
\end{proposition}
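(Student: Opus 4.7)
The plan is to follow the strategy outlined in the proof of Proposition \ref{spnASTORSION}, adapted to handle the coupling between the two factors of $\mathfrak{g} = \mathfrak{sp}(n)\oplus\mathfrak{sp}(1)$. Since the Ambrose--Singer Nomizu map vanishes identically ($\Uplambda^{\AS}\equiv 0$), formula (\ref{torsionatorigin}) reduces the computation at the origin to $T^{\AS}(X,Y) = -[X,Y]_{\mathfrak{m}}$; the entire problem thus becomes one of computing Lie brackets of the orthonormal basis elements in $\mathfrak{g}$ and projecting them onto the reductive complement $\mathfrak{m}$.

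The brackets split naturally into three cases: $[\xi_i,\xi_j]$, $[\xi_i,e_k]$ for $e_k\in\mathfrak{m}_2$, and $[e_k,e_l]$ for $e_k,e_l\in\mathfrak{m}_2$. For the last two, the $\mathfrak{sp}(1)$-components of the $e_k$'s vanish, so only the matrix commutators in $\mathfrak{sp}(n)$ contribute, and these may be read off directly from the commutator relations for $E_{i,j}^{(n)}$ and $\lambda F_{i,j}^{(n)}$ collected in Section~2.2. For example, $[iF_{1,1}^{(n)},jF_{1,p+1}^{(n)}]=kF_{1,p+1}^{(n)}$, which after the orthonormal rescaling yields a coefficient of $\tfrac{1}{\Omega}$ in $T^{\AS}(\xi_1,e_{4p})$, matching the claimed $\Phi_1|_{\mathfrak{m}_2}$-component (when the latter is interpreted as the endomorphism $\varphi_1$ via the metric). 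Brackets $[e_k,e_l]$ produce multiples of $F_{1,1}^{(n)}$ (via the Case-3 relation), which upon projection yield the $\tfrac{a}{b\Omega}\xi_i$-terms.

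The delicate step is computing $[\xi_i,\xi_j]$, where both factors of $\mathfrak{g}$ contribute: one obtains schematically $\tfrac{2}{\Omega^2}\bigl(\lambda_k F_{1,1}^{(n)},\,\tfrac{(n+1)^2}{4}\lambda_k\bigr)$, where $(\lambda_i,\lambda_j,\lambda_k)$ is the relevant even permutation of $(i,j,k)$. Since $\mathfrak{h}$ contains the diagonal elements $(\lambda F_{1,1}^{(n)},\lambda)$, the projection onto $\mathfrak{m}$ requires solving a $2\times 2$ linear system in $\xi_k\in\mathfrak{m}_1$ and the corresponding isotropy generator; a short calculation gives $[\xi_i,\xi_j]_{\mathfrak{m}} = -\tfrac{n-1}{\Omega}\xi_k$, hence $T^{\AS}(\xi_i,\xi_j) = \tfrac{n-1}{\Omega}\xi_k$. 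I expect this coupling between the two factors of $\mathfrak{g}$ (absent in the $\Sp(n)/\Sp(n-1)$ case) to be the main obstacle, but it is essentially mechanical once the isotropy piece of each bracket is tracked carefully.

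Once $T^{\AS}$ is in hand, computing $T^{\AS}_{\totallyskew}$ is a direct application of (\ref{torsionisomorphism1}), (\ref{skewprojection}) and conversion back via (\ref{torsionisomorphism2}). The $\xi_1\wedge\xi_2\wedge\xi_3$-component is already totally skew, so it survives the cyclic alternation unchanged. The $\xi_i\wedge\Phi_i|_{\mathfrak{m}_2}$-coefficient receives three contributions under the cyclic average: two from the $T^{\AS}(\xi_i,e_k)$-type terms (each with coefficient $\tfrac{1}{\Omega}$) and one from the $T^{\AS}(e_k,e_l)\propto \xi_i$-type term (with coefficient $\tfrac{a}{b\Omega}$), producing $\tfrac{1}{3}\bigl(\tfrac{2}{\Omega}+\tfrac{a}{b\Omega}\bigr)$ as claimed. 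Finally, $T^{\AS} = T^{\AS}_{\totallyskew}$ holds precisely when the non-skew contributions vanish, which forces $\tfrac{1}{\Omega} = \tfrac{a}{b\Omega}$, i.e.\ $a=b$; geometrically, this is exactly the normal homogeneous metric determined by (a multiple of) the Killing form on $\mathfrak{g}$.
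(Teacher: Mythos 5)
Your proposal is correct and follows the approach the paper clearly intends (the paper states this proposition without an explicit proof, as it does for the analogous Proposition on $\Sp(n)/\Sp(n-1)$): reduce via $\Uplambda^{\AS}\equiv 0$ to $T^{\AS}(X,Y)=-[X,Y]_{\mathfrak{m}}$, compute brackets using the collected commutator identities for $\lambda F_{i,j}^{(n)}$, and project onto $\mathfrak{m}$. You correctly identify that the decomposition onto $\mathfrak{m}$ is nontrivial not only for $[\xi_i,\xi_j]$ but also for the purely horizontal brackets $[e_k,e_l]$, whose $(\lambda F_{1,1}^{(n)},0)$-output must be split between $\xi_i$ and the diagonal isotropy generator $(\lambda F_{1,1}^{(n)},\lambda)$; solving that $2\times 2$ system produces the $a/(b\Omega)$ coefficient. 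Your intermediate value $\frac{2}{\Omega^2}\bigl(\lambda_k F_{1,1}^{(n)},\tfrac{(n+1)^2}{4}\lambda_k\bigr)$ for $[\xi_i,\xi_j]$ and the resulting $\alpha=-(n-1)$ check out, as does the cyclic averaging argument yielding $\frac{1}{3}\bigl(\tfrac{2}{\Omega}+\tfrac{a}{b\Omega}\bigr)$ and the criterion $a=b$.
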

The remainder of the section is devoted to discussion of the $7$-dimensional example, $S^7= \frac{Sp(2) \Sp(1)}{\Sp(1) \Sp(1)}$. We shall explicitly determine the invariant spinor in this dimension, and discuss how it fits into the larger picture of the well-known correspondence between spinors and $\G_2$-structures in dimension $7$.
\begin{example}\label{sptwisted7sphereexample}Following the setup outlined above, the isotropy algebra is
\begin{align*}
\mathfrak{sp}(1)\oplus\mathfrak{sp}(1)&= \left\{  \left( iF_{2,2}^{(2)},0\right),\left(jF_{2,2}^{(2)},0 \right),\left( kF_{2,2}^{(2)},0\right), \left( iF_{1,1}^{(2)} ,i\right),\left( jF_{1,1}^{(2)} ,j\right),\left( kF_{1,1}^{(2)},k\right)   \right\},
\end{align*}
and the two isotropy summands $\mathfrak{m}_1$, $\mathfrak{m}_2$ are given by
\begin{align*}
\mathfrak{m}_1&=  \Span_{\R}\left\{ \frac{1}{\sqrt{30a}}\left(iF_{1,1}^{(2)},\frac{-3i}{2}\right) ,\frac{1}{\sqrt{30a}}\left(-kF_{1,1}^{(2)},\frac{3k}{2}\right)  , \frac{1}{\sqrt{30a}}\left(  jF_{1,1}^{(2)},\frac{-3j}{2}\right)                  \right\} \\
& =: \{\xi_1,\xi_2,\xi_3\}, \\
\mathfrak{m}_2&=\Span_{\R}\left\{ \frac{1}{\sqrt{24b}}\left( jF_{1,2}^{(2)} ,0\right) , \frac{1}{\sqrt{24b}}\left(kF_{1,2}^{(2)},0\right),\frac{1}{\sqrt{24b}}\left(iF_{1,2}^{(2)},0\right)     ,\frac{1}{\sqrt{24b}}\left( E_{1,2}^{(2)} ,0\right)    \right\} \\
&=: \{e_4,e_5,e_6,e_7\}.
\end{align*}
The basis $\{\xi_1,\xi_2,\xi_3, e_4,e_5,e_6,e_7\}$ for $\mathfrak{m}=\mathfrak{m}_1\oplus \mathfrak{m}_2$ is orthonormal with respect to the invariant metric $g_{a,b} $ described above, and we shall also denote $e_i:=\xi_i$ ($i=1,2,3$) in certain places. Fixing the associated Clifford algebra representation as in Remark \ref{reorderedCliffalgrepresentation}, and letting $\omega:= y_2\wedge y_3$, we have,
\begin{theorem}\label{example_invariant_spinors_sp_twisted_7sphere}
For any $a,b>0$, the space of invariant spinors on $(S^{7}=\frac{\Sp(2)\Sp(1)}{\Sp(1)\Sp(1)}, g_{a,b})$ is given by
\begin{align*}
\Sigma_{\inv}&= \Span_{\C}\{\psi_0 = \frac{1}{\sqrt{2}}(\omega+iy_1)\}.
\end{align*}
\end{theorem}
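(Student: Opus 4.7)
By Theorem \ref{explicitspinorsK=sp1} we already know that $\dim_{\mathbb{C}}\Sigma_{\inv}=1$ and that $\Sigma_{\inv}\subseteq \Span_{\mathbb{C}}\{y_1,\omega\}$, since $\omega=y_2\wedge y_3$ is the symplectic form stabilized by $\mathfrak{sp}_{\mathbb{C}}(2)$ and $y_1$ corresponds to $(1\otimes 1)\otimes(1\otimes 1)$ in the notation of \eqref{invariantelementsu1twistedMA}. The content of the theorem is thus to identify the precise invariant line inside this $2$-dimensional space. The analysis of Section \ref{caseIIK=U1} shows that the Cartan generator of the diagonal $\mathfrak{sp}(1)$ already annihilates both $y_1$ and $\omega$, so to single out the invariant spinor we must impose vanishing under one of the non-Cartan generators of the diagonal $\mathfrak{sp}(1)\subseteq \mathfrak{sp}(1)\oplus\mathfrak{sp}(1)$, since this suffices to detect trivial $\mathfrak{sl}_{2}(\mathbb{C})$-subrepresentations.

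The plan is therefore to pick one such generator, say $X:=(jF_{1,1}^{(2)},j)\in\mathfrak{sp}(1)\oplus\mathfrak{sp}(1)$, and compute $\widetilde{\ad(X)\rvert_{\mathfrak{m}}}\cdot(\alpha y_1+\beta\omega)$ explicitly. First I would compute $\ad(X)\rvert_{\mathfrak{m}}\in\mathfrak{so}(\mathfrak{m})$ with respect to the $g_{a,b}$-orthonormal basis $\{\xi_1,\xi_2,\xi_3,e_4,\dots,e_7\}$ using the commutator relations for matrix Lie algebras from Section 2, together with the normalization constants $\Omega$ and $2\sqrt{2b(n+1)}$ appearing in the basis (with $n=2$). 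The key point is that $X$ acts trivially on $\mathfrak{m}_2$ (since the $jF_{1,1}^{(2)}$ and $j$ contributions cancel on the horizontal part by the same mechanism as in the construction of $\mathfrak{h}$), and on $\mathfrak{m}_1$ acts as a rotation coupling $\xi_1$ and $\xi_3$ (via $\xi_2$ in some combination), producing an element of the form $\lambda\,\xi_1\wedge\xi_3$ for an explicit constant $\lambda$.

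Step two is to lift to the spin representation by the standard rule $e_i\wedge e_j\mapsto\tfrac{1}{2}e_i\cdot e_j$, and then to compute the Clifford products $\xi_1\cdot\xi_3\cdot y_1$ and $\xi_1\cdot\xi_3\cdot\omega$ using the formulas from Remark \ref{reorderedCliffalgrepresentation}. A short calculation using $\xi_r\cdot=e_r\cdot$ analogous to the one in Example \ref{sp2sp1} should give expressions of the form $\xi_1\cdot\xi_3\cdot y_1=c_1\omega$ and $\xi_1\cdot\xi_3\cdot\omega=c_2 y_1$ for explicit constants $c_1,c_2$. Imposing $\widetilde{\ad(X)}\cdot(\alpha y_1+\beta\omega)=0$ then yields a single homogeneous linear equation in $\alpha,\beta$, which determines the invariant line.

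Step three is to check that the resulting line is exactly $\mathbb{C}\cdot(\omega+iy_1)$. Since Theorem \ref{explicitspinorsK=sp1} already guarantees a $1$-dimensional kernel, no further generators need to be verified; the consistency with the remaining non-Cartan generator $(kF_{1,1}^{(2)},k)$ is automatic. The main obstacle is keeping track of the signs and of the conversion between $\xi_r$ and the $e_r$ used in the Clifford representation of Remark \ref{reorderedCliffalgrepresentation}, especially the reordering $\xi_2=-kF_{1,1}^{(2)}/\Omega\cdot(\dots)$, which I expect to produce the factor of $i$ in the final combination $\omega+iy_1$.
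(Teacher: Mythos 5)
Your high-level strategy is reasonable and in fact slightly cleaner than the paper's: rather than intersecting kernels of three extra operators on the full $4$-dimensional space $\Span_{\C}\{1,\omega,y_1,y_1\wedge\omega\}$ carried over from $\Sp(2)/\Sp(1)$ (which is what the paper does, citing Example \ref{sp2sp1}), you start from Theorem \ref{explicitspinorsK=sp1}/Corollary \ref{K_restriction}, which already tells you $\dim_{\C}\Sigma_{\inv}=1$ and $\Sigma_{\inv}\subseteq\Span_{\C}\{y_1,\omega\}$, and you correctly observe that it then suffices to impose vanishing under a single non-Cartan generator of the diagonal $\mathfrak{sp}(1)$. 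However, the concrete execution plan has a fatal error at exactly the step that determines the invariant line.

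The claim that $X=(jF_{1,1}^{(2)},j)$ acts trivially on $\mathfrak{m}_2$ is false, and the ``cancellation mechanism'' you invoke does not apply to the isotropy representation. The identity $\exp(tj)\cdot 1\cdot\exp(-tj)=1$ explains why $X\in\mathfrak{h}$ (it fixes the point $p=(1,0)$), but the isotropy action on $T_pS^7\supseteq\mathfrak{m}_2\cong\H$ is by the differential, which here is right quaternionic multiplication by $j$ on the horizontal slot, and this is nontrivial. Concretely, from the commutator relations, $[(jF_{1,1},j),(jF_{1,2},0)]=(E_{1,2},0)\ne 0$; the paper's computed operator is $\ad(jF_{1,1}^{(2)},j)\rvert_{\mathfrak{m}}=2\,\xi_1\wedge\xi_2+e_4\wedge e_7+e_5\wedge e_6$, which visibly has a horizontal part. (You also couple the wrong pair of Reeb vectors: $(jF_{1,1},j)$ rotates $\xi_1,\xi_2$, while it is $(kF_{1,1},k)$ that couples $\xi_1,\xi_3$.) Since the horizontal part contributes on an equal footing when you lift to the spin representation, dropping it gives the wrong linear constraint on $(\alpha,\beta)$. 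Finally, your anticipated intermediate Clifford products do not hold in the representation of Remark \ref{reorderedCliffalgrepresentation}: one computes $\xi_1\cdot\xi_3\cdot y_1=-i\cdot 1$ and $\xi_1\cdot\xi_3\cdot\omega=-i\,y_1\wedge y_2\wedge y_3$, not multiples of $\omega$ and $y_1$ respectively, since $\xi_1=e_1=-iu_0$ acts by $\pm i$ on the even/odd parts and $\xi_3=e_3$ changes degree by one. The repair is straightforward but essential: use the full operator $\ad(jF_{1,1}^{(2)},j)\rvert_{\mathfrak{m}}$ (or $\ad(kF_{1,1}^{(2)},k)\rvert_{\mathfrak{m}}$), lift it to $\xi_1\cdot\xi_2+\tfrac12(e_4\cdot e_7+e_5\cdot e_6)$, and evaluate on $\alpha y_1+\beta\omega$; only then does one correctly land on the line $\C\,(\omega+iy_1)$.
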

\begin{proof}
	Considering Corollary \ref{K_restriction} and the spinors (\ref{7tad_sphere_spinors}) on the $\tad$ 7-sphere, the space of invariant spinors is the subspace of $\Span_{\C}\{\psi_0,\psi_1,\psi_2,\psi_3\}$ annihilated by the spin lifts of the three additional isotropy operators
	\begin{align*}
		\ad(iF_{1,1}^{(2)}, i)\rvert_{\mathfrak{m}} =&2\xi_2\wedge \xi_3 +e_4\wedge e_5 + e_6\wedge e_7, \quad 
		\ad(jF_{1,1}^{(2)}, j)\rvert_{\mathfrak{m}}= 2\xi_1\wedge \xi_2 +  e_4\wedge e_7 +e_5\wedge e_6,\\
		\ad(kF_{1,1}^{(2)}, k)\rvert_{\mathfrak{m}}=& 2\xi_1\wedge \xi_3 -e_4\wedge e_6 +e_5\wedge e_7.
	\end{align*}
A calculation similar to Example \ref{sp2sp1} then gives the result.
\end{proof}
In order to differentiate the spinor $\psi_0$, we remark the commutator relations	
\[
[\mathcal{V},\mathcal{V}] \subseteq  \mathcal{V}\oplus (\mathfrak{sp}(1)\oplus \mathfrak{sp}(1)) ,\qquad [\mathcal{V},\mathcal{H}]\subseteq \mathcal{H},\qquad [\mathcal{H},\mathcal{H}] \subseteq \mathcal{V}\oplus (\mathfrak{sp}(1)\oplus \mathfrak{sp}(1)) , 
\]
and one then finds that the Nomizu map for the Levi-Civita connection is given by
\[\Uplambda^{g_{a,b}}(V)W = \begin{cases}
\frac{1}{2}[V,W]_{\mathfrak{m}} & V,W\in\mathcal{V}, \\
(1-\frac{a}{2b}) [V,W]_{\mathfrak{m}} & V\in \mathcal{V}, W\in\mathcal{H},\\
\frac{a}{2b}[V,W]_{\mathfrak{m}} & V\in\mathcal{H}, W\in\mathcal{V},\\
\frac{1}{2}[V,W]_{\mathfrak{m}} & V,W\in\mathcal{H}.
\end{cases}  
\]
In terms of our chosen basis, for $\mathfrak{m}_1$ this takes the form
\begin{align*}
\Uplambda^{g_{a,b}}(\xi_1)&= \frac{1}{\sqrt{30a}}(-\frac{1}{2}\xi_2\wedge \xi_3  +   (1-\frac{a}{2b})e_4\wedge e_5 +(1-\frac{a}{2b})e_6\wedge e_7 ),\\
\Uplambda^{g_{a,b}}(\xi_2)&=\frac{1}{ \sqrt{30a}}(\frac{1}{2}\xi_1\wedge \xi_3 + (1-\frac{a}{2b})e_4\wedge e_6 -(1-\frac{a}{2b})e_5\wedge e_7     ), \\
\Uplambda^{g_{a,b}}(\xi_3) &= \frac{1}{\sqrt{30a} }(-\frac{1}{2}\xi_1\wedge \xi_2 +(1-\frac{a}{2b}) e_4\wedge e_7 +(1-\frac{a}{2b})e_5\wedge e_6       )  ,
\end{align*}
and likewise for $\mathfrak{m}_2$,
\begin{align*}
\Uplambda^{g_{a,b}}(e_4)&= \frac{\sqrt{a}}{2b\sqrt{30}}(- \xi_1\wedge e_5 - \xi_2\wedge e_6 -\xi_3\wedge e_7 ), \quad 
\Uplambda^{g_{a,b}}(e_5)= \frac{\sqrt{a}}{2b\sqrt{30}}( \xi_1\wedge e_4 +\xi_2\wedge e_7 - \xi_3\wedge e_6   ) , \\
\Uplambda^{g_{a,b}}(e_6) &= \frac{\sqrt{a}}{2b\sqrt{30}}( -\xi_1\wedge e_7 +\xi_2\wedge e_4 +\xi_3\wedge e_5    ), \quad
\Uplambda^{g_{a,b}}(e_7) = \frac{\sqrt{a}}{2b \sqrt{30} }(\xi_1\wedge e_6 -\xi_2\wedge e_5 +\xi_3\wedge e_4  ).
\end{align*}
Applying the spin lifts of these operators to $\psi_0$ gives:
\begin{proposition} \label{sp1twistedGKS}
	The spinor $\psi_0$ is a generalized Killing spinor, $\nabla_X^{g_{a,b}} \psi_0=A(X)\cdot \psi_0$, for the endomorphism 
	\begin{align*}
	A= \frac{(2 a-5 b)}{4b \sqrt{30a} } \ \text{\emph{Id}}\rvert_{\mathfrak{m}_1}  -\frac{\sqrt{3a}}{4b \sqrt{10}} \ \text{\emph{Id}}\rvert_{\mathfrak{m}_2},
	\end{align*}	
and it is a Riemannian Killing spinor if and only if $a=b$ ($\iff$ $g_{a,b}$ is a multiple of the second Einstein metric).
\end{proposition}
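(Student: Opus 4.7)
The plan is to prove this by direct calculation in the spin representation, exploiting the fact that since $\psi_0$ is $G$-invariant, the generalized Killing equation $\nabla^{g_{a,b}}_{\widehat{X}}\psi_0 = A(X)\cdot \psi_0$ need only be verified at the origin $o=eH$, i.e. we reduce to showing $\widetilde{\Uplambda}^{g_{a,b}}(X)\cdot \psi_0 = A(X)\cdot \psi_0$ for each basis vector $X\in\mathfrak{m}$. Because $A$ is diagonal with respect to the orthogonal splitting $\mathfrak{m}=\mathfrak{m}_1\oplus\mathfrak{m}_2$, it suffices to treat separately the three vertical directions $X=\xi_r$ ($r=1,2,3$) and the four horizontal directions $X=e_j$ ($j=4,5,6,7$). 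The explicit formulas for $\Uplambda^{g_{a,b}}$ on each summand have already been recorded just before the proposition, so the only remaining ingredients are the spin lifts of these 2-form-valued expressions and the Clifford-multiplication formulas from Remark \ref{reorderedCliffalgrepresentation}.

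For $X=\xi_r\in\mathfrak{m}_1$, the Nomizu operator splits into a purely vertical piece $\tfrac{1}{2}\xi_{s}\wedge \xi_{t}$ (with $(r,s,t)$ a cyclic permutation of $(1,2,3)$) and a horizontal piece scaled by $(1-\tfrac{a}{2b})$. Lifting to $\mathfrak{spin}(\mathfrak{m})$ gives a sum of Clifford products which I would evaluate on $\psi_0=\tfrac{1}{\sqrt{2}}(\omega+iy_1)$ by the same interior-product/wedge calculation as in Example \ref{sp2sp1}. In that example the computation yielded $\tfrac{1}{2}(\xi_{s}\cdot \xi_{t})\cdot \psi_0 = \tfrac{1}{2}\xi_r\cdot \psi_0$ and $\tfrac{1}{2}(e_{4}\cdot e_{5}+e_{6}\cdot e_{7})\cdot \psi_0 = -\tfrac{3}{2}\xi_1\cdot \psi_0$ (and the cyclic analogues), which is exactly the pattern needed. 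Combining these with the prefactor $1/\sqrt{30a}$ produces a scalar multiple of $\xi_r\cdot\psi_0$, and collecting the coefficients yields the eigenvalue $\tfrac{2a-5b}{4b\sqrt{30a}}$.

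For $X=e_j\in\mathfrak{m}_2$, the Nomizu operator is of the form $\tfrac{\sqrt{a}}{2b\sqrt{30}}\sum_r \pm \xi_r\wedge e_{k(r)}$. After lifting, each summand acts as $\tfrac{1}{2}\xi_r\cdot e_{k(r)}$. Here the key observation is that, using the Clifford relation $\xi_r\cdot e_{k(r)} = -e_{k(r)}\cdot \xi_r$ (since $\xi_r\perp e_{k(r)}$) together with the identities $\xi_r\cdot \psi_0$ from Example \ref{sp2sp1}, each of the three terms contributes the same multiple of $e_j\cdot \psi_0$; after summing one obtains $-\tfrac{\sqrt{3a}}{4b\sqrt{10}}\, e_j\cdot\psi_0$, which is the asserted eigenvalue on $\mathfrak{m}_2$.

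The final statement then follows by equating the two eigenvalues: $\psi_0$ is a Riemannian Killing spinor exactly when $\tfrac{2a-5b}{4b\sqrt{30a}} = -\tfrac{\sqrt{3a}}{4b\sqrt{10}}$, which simplifies to $2a-5b = -3a$, hence $a=b$; comparison with the parameters for the second Einstein metric (cf.\ Remark \ref{spn_second_einstein_metric} and the setup in this subsection) identifies this as the condition that $g_{a,b}$ is a multiple of that metric. The main obstacle is purely bookkeeping: keeping the signs consistent through the interior/wedge computations in the reordered Clifford representation, and making sure that the cyclic patterns for the three vertical directions and the four horizontal ones are treated uniformly — once the template from Example \ref{sp2sp1} is in hand, no new idea is required.
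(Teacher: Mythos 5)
Your overall plan—evaluate the spin lift of the explicit Nomizu operators on $\psi_0$, one isotropy summand at a time, then equate the two resulting scalars to characterize the Killing case—is exactly what the paper does, and the horizontal computation and the final algebraic reduction ($\tfrac{2a-5b}{4b\sqrt{30a}}=-\tfrac{\sqrt{3a}}{4b\sqrt{10}}\Leftrightarrow 2a-5b=-3a\Leftrightarrow a=b$) are correct. However, the intermediate identity you quote for the vertical direction, namely $\tfrac12(e_4\cdot e_5+e_6\cdot e_7)\cdot\psi_0=-\tfrac32\,\xi_1\cdot\psi_0$, is wrong: each of $e_4\cdot e_5\cdot\psi_0$ and $e_6\cdot e_7\cdot\psi_0$ equals $-\psi_1=-\xi_1\cdot\psi_0$, so the correct value is $\tfrac12(e_4\cdot e_5+e_6\cdot e_7)\cdot\psi_0=-\xi_1\cdot\psi_0$. (You may be conflating this with the eigenvalue $-\tfrac{3\alpha}{2}$ appearing in the horizontal generalized Killing equation of Example \ref{sp2sp1}, which is a different quantity.) There is also a sign slip: the purely vertical piece of $\Uplambda^{g_{a,b}}(\xi_1)$ is $-\tfrac12\xi_2\wedge\xi_3$ (inside the prefactor $1/\sqrt{30a}$), not $+\tfrac12\xi_2\wedge\xi_3$.

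These are not merely cosmetic: if you substitute your stated intermediate values into the lifted Nomizu operator you get $\tfrac{1}{\sqrt{30a}}\bigl[-\tfrac14-\tfrac32(1-\tfrac{a}{2b})\bigr]=\tfrac{3a-7b}{4b\sqrt{30a}}$ (or $\tfrac{3a-5b}{4b\sqrt{30a}}$ with the sign flip as well), not the claimed $\tfrac{2a-5b}{4b\sqrt{30a}}$. With the corrected identities the coefficient is $\tfrac{1}{\sqrt{30a}}\bigl[-\tfrac14-\bigl(1-\tfrac{a}{2b}\bigr)\bigr]=\tfrac{2a-5b}{4b\sqrt{30a}}$, matching the proposition. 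So the proof strategy is sound and identical to the paper's, but the cited Clifford-product evaluation must be replaced by the correct one before the coefficient bookkeeping closes.
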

As in Remark \ref{spn_second_einstein_metric}, we briefly discuss the second Einstein metric in this case:
\begin{remark}
	Here, the $3$-Sasakian metric (the round metric) is given by $g_{a,b}\rvert_{a=\frac{5}{24},b=\frac{1}{24}}$ and the second Einstein metric is given by rescaling by $\frac{1}{5}$ on the vertical component, yielding the normal homogeneous metric $g_{a,b}\rvert_{a=b=\frac{1}{24}}$. From Theorem \ref{example_invariant_spinors_sp_twisted_7sphere} and Proposition \ref{sp1twistedGKS}, we see that the $1$-dimensional space of invariant spinors in this example is spanned by the Killing spinor determining the proper nearly parallel $\G_2$-structure on $(S^{7}=\frac{\Sp(2)\Sp(1)}{\Sp(1)\Sp(1)}, g_{a,b}\rvert_{a=b})$.
\end{remark}
We now recall (see e.g. \cite{nearly_parallel_g2,dim67}) the well-known fact that a unit length spinor $\psi$ in dimension 7 induces a $\G_2$-structure, via the 3-form 
\begin{align}\label{G2formfromspinorindim7}
\omega_{\psi}(X,Y,Z) := \langle X\cdot Y\cdot Z\cdot \psi,\psi\rangle  .
\end{align}
In particular, by comparing Proposition \ref{sp1twistedGKS} with \cite[Table 6]{SRNI} and \cite[Lemma 4.5]{dim67}, one sees that the $\G_2$-structure on $(S^7=\frac{\Sp(2)\Sp(1)}{\Sp(1)\Sp(1)},g_{a,b})$ induced by $\psi_0$ is cocalibrated for all $a,b>0$ and nearly parallel when $a=b$.    
\begin{proposition}
The $\G_2$-form induced by $\psi_0$ is given with respect to our chosen orthonormal basis by 
%
%
%
%
%
%
%
%
%
%
\[
\omega_{\psi_0} = -e_{123} + e_{145} + e_{167} + e_{246} - e_{257} +e_{347} +e_{356} =\omega_{\psi_0} = -\eta_1\wedge \eta_2\wedge \eta_3 - \sum_{i=1}^3\eta_i\wedge \Phi_i\rvert_{\mathcal{H}}
\]
and is invariant.
\end{proposition}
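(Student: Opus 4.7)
The plan is to verify the two displayed expressions for $\omega_{\psi_0}$ separately, then to invoke equivariance for the invariance claim.

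The second equality is a bookkeeping exercise: from (\ref{Phi1})--(\ref{Phi3}) one reads off $\Phi_1\rvert_{\mathcal{H}} = -(e_{45}+e_{67})$, $\Phi_2\rvert_{\mathcal{H}} = -(e_{46}-e_{57})$, and $\Phi_3\rvert_{\mathcal{H}} = -(e_{47}+e_{56})$; substituting these together with $\eta_i = e_i^{\flat}$ into $-\eta_1\wedge \eta_2\wedge \eta_3 - \sum_i \eta_i\wedge \Phi_i\rvert_{\mathcal{H}}$ directly recovers the seven-term expansion term by term.

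For the first equality I would compute $\omega_{\psi_0}(e_i,e_j,e_k) = \langle e_i\cdot e_j\cdot e_k\cdot \psi_0,\psi_0\rangle$ for each triple $1\le i<j<k\le 7$ using the Clifford action from Remark \ref{reorderedCliffalgrepresentation}. To cut down the number of cases, write $\psi_0 = \frac{1}{\sqrt{2}}(\psi_0^+ + \psi_0^-)$ with $\psi_0^+ = \omega = y_2\wedge y_3$ in exterior degree $2$ and $\psi_0^- = iy_1$ in degree $1$, and observe that $e_1$ acts as $\pm i$ on the parity components (preserving exterior degree), while each other $e_m$ acts as a linear combination of a $y_j\wedge$ and an $x_j\lrcorner$ term (shifting degree by $\pm 1$). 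A simple degree count then shows that for triples $(i,j,k)$ containing the index $1$, only the same-part pairings $\langle e_i\cdot e_j\cdot e_k\cdot \psi_0^\pm,\psi_0^\pm\rangle$ can survive, while for triples not containing $1$, only the cross pairings $\langle e_i\cdot e_j\cdot e_k\cdot \psi_0^\pm,\psi_0^\mp\rangle$ can survive. This reduces the full $35$-triple enumeration to a short computation which, after careful sign-tracking, produces precisely the seven nonzero coefficients of the claimed formula and vanishes on the remaining triples.

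Finally, the invariance statement is a formal consequence of the $\Spin$-equivariance of the squaring construction $\psi\mapsto\omega_\psi$: the $G$-action on spinors is via the spin lift of the isotropy representation, which preserves both the Clifford product and the Hermitian inner product, so $\omega_{g\cdot\psi_0} = g\cdot\omega_{\psi_0}$ for every $g\in G$, and the invariance of $\omega_{\psi_0}$ follows immediately from that of $\psi_0$ established in Theorem \ref{example_invariant_spinors_sp_twisted_7sphere}. The main obstacle is the sign bookkeeping in the surviving Clifford computations; none of the individual calculations is deep, but each of the seven surviving triples requires careful application of the Clifford relation together with the interior/exterior-product anticommutators.
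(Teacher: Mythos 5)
Your proposal is correct; the paper provides no explicit proof for this proposition, and your direct-computation approach via the squaring construction \eqref{G2formfromspinorindim7}, split by exterior degree, is exactly what is implied. The parity/degree reduction is a genuinely useful simplification — since $e_1$ acts by $\pm i$ preserving exterior degree while each $e_m$ with $m\ge 2$ shifts degree by $\pm 1$, only same-parity pairings can survive for triples containing $1$ and only cross-parity pairings for triples avoiding $1$; a spot-check of the triples $(1,2,3)$ and $(1,4,5)$ against the Clifford action of Remark \ref{reorderedCliffalgrepresentation} produces $\langle e_1e_2e_3\cdot\psi_0,\psi_0\rangle=-1$ and $\langle e_1e_4e_5\cdot\psi_0,\psi_0\rangle=+1$ as claimed, and your handling of the second equality (reading off $\Phi_i\rvert_{\mathcal{H}}$ from \eqref{Phi1}--\eqref{Phi3}) and the invariance claim (equivariance of the squaring construction plus invariance of $\psi_0$) are both exactly right.
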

Proposition 4.4 in \cite{dim67} says that the intrinsic torsion of this $\G_2$-structure is given by $\Gamma = -\frac{2}{3} A \lrcorner \omega_{\psi_0}$, where the contraction of an endomorphism into a $3$-form means the $3$-form composed with the endormorphism in the first argument: $(A\lrcorner \omega_{\psi_0})(X,Y,Z):= \omega_{\psi_0}(A(X),Y,Z)$. Considering the $\G_2$-connection $\nabla^n:= \nabla^{g_{a,b} } - \Gamma$, one easily verifies $\nabla^n \psi_0 =0$, as expected. We also find,
\begin{proposition}
The torsion of the $\G_2$-connection $\nabla^n$ is a 3-form if and only if $a=b$. In this case it is given by a multiple of the $\G_2$-form,
\[
T= \frac{-1}{\sqrt{30a}}\ \omega_{\psi_0} , 
\]
and thus in particular is invariant and $\nabla^n$-parallel.
\end{proposition}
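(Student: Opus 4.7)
The plan is to exploit the decomposition $\nabla^n = \nabla^{g_{a,b}} - \Gamma$, where $\Gamma(X,Y,Z) = -\tfrac{2}{3}\omega_{\psi_0}(A(X),Y,Z)$ and $A$ is the endomorphism from Proposition~\ref{sp1twistedGKS}. Since $\nabla^{g_{a,b}}$ is torsion-free, applying (\ref{torsionisomorphism1}) to the difference tensor $-\Gamma$ gives
\[
T(X,Y,Z) = -\Gamma(X,Y,Z) + \Gamma(Y,X,Z).
\]
Because $\omega_{\psi_0}$ is already skew in its last two arguments, so is $\Gamma$, and hence so is $T$. Total skew-symmetry of $T$ therefore reduces to $\Gamma$ being skew in its first two arguments, i.e.\ to the condition
\[
\omega_{\psi_0}(A(X),Y,Z) = \omega_{\psi_0}(X,A(Y),Z) \qquad \forall\, X,Y,Z \in \mathfrak{m}.
\]

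The first step will be to analyse this identity. Writing $A = \lambda_1 \Id\rvert_{\mathfrak{m}_1} + \lambda_2 \Id\rvert_{\mathfrak{m}_2}$, the identity is automatic whenever $X$ and $Y$ lie in the same isotropy summand, so only the mixed case $X\in \mathfrak{m}_1$, $Y,Z \in \mathfrak{m}_2$ needs attention; there it collapses to $(\lambda_1-\lambda_2)\,\omega_{\psi_0}(X,Y,Z) = 0$. Using the explicit form $\omega_{\psi_0} = -e_{123} - \sum_{i=1}^3 \eta_i \wedge \Phi_i\rvert_{\mathcal{H}}$ from the preceding proposition, the mixed component in $\mathfrak{m}_1^* \otimes \Lambda^2 \mathfrak{m}_2^*$ is evidently nonzero, so this forces $\lambda_1 = \lambda_2$. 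Equating the two eigenvalues displayed in Proposition~\ref{sp1twistedGKS} is then an elementary manipulation reducing to $5a = 5b$, i.e.\ $a = b$, establishing both implications.

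For the quantitative statement, assuming $a = b$ I would substitute to find $\lambda_1 = \lambda_2 = -\frac{3}{4\sqrt{30a}}$, so that $A = -\frac{3}{4\sqrt{30a}}\Id$ and hence $\Gamma = \frac{1}{2\sqrt{30a}}\,\omega_{\psi_0}$. This $\Gamma$ is itself a 3-form, so the torsion formula collapses to $T = -2\Gamma = -\frac{1}{\sqrt{30a}}\,\omega_{\psi_0}$, as claimed. Invariance of $T$ is then immediate, since $\omega_{\psi_0}$ is invariant by the preceding proposition and $T$ is a scalar multiple. For the $\nabla^n$-parallel assertion, the key point is that $\nabla^n$ is a metric connection satisfying $\nabla^n \psi_0 = 0$; since the squaring construction (\ref{G2formfromspinorindim7}) builds $\omega_{\psi_0}$ from $\psi_0$ using only Clifford multiplication and the spin-invariant inner product, both of which are preserved by the induced spin connection, one obtains $\nabla^n \omega_{\psi_0} = 0$ and hence $\nabla^n T = 0$. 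I expect the main pitfall to be bookkeeping of sign and normalisation conventions linking $\Gamma$, the difference tensor, and the torsion via (\ref{torsionisomorphism1})--(\ref{torsionisomorphism2}), rather than any conceptual obstacle.
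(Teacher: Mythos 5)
Your proof is correct and follows essentially the same route as the paper: recognize the torsion of $\nabla^n$ as the antisymmetrization of the difference tensor $-\Gamma=\tfrac{2}{3}A\lrcorner\omega_{\psi_0}$, reduce total skewness to $A$ being a multiple of the identity (which you usefully flesh out by checking the $\mathfrak{m}_1\otimes\Lambda^2\mathfrak{m}_2$ component, where the paper merely says ``clearly''), deduce $a=b$ from the explicit eigenvalues, and evaluate $T=-2\Gamma$. One small slip: the sentence ``so is $\Gamma$, and hence so is $T$'' (skew in the last two arguments) is not automatic — $T\in\mathcal{T}$ is by definition skew in its \emph{first} two arguments, and skewness in the last two is precisely the nontrivial condition you are trying to establish, which, given that $\Gamma$ is skew in its last two, is equivalent to $\Gamma$ being skew in its first two; your derived condition and everything after it are correct.
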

\begin{proof}
From $\nabla^n =\nabla^g -\Gamma$ we see that the difference tensor between $\nabla^n$ and $\nabla^{g_{a,b}}$ is $-\Gamma = \frac{2}{3}A\lrcorner \omega_{\psi_0}$, and thus the torsion is a 3-form if and only if $(X,Y,Z)\mapsto \frac{2}{3}\omega_{\psi_0}(A(X),Y,Z)$ is totally skew-symmetric. Clearly this happens precisely when $A$ is a multiple of the identity, which occurs if and only if $a=b$. When $a=b$ we have $A=\frac{-\sqrt{3}}{4\sqrt{10a}} \Id $ and thus the torsion of $\nabla^n$ is given by $$T = -2\Gamma  = -2(-\frac{2}{3}A\lrcorner \omega_{\psi_0}) = -\frac{1}{\sqrt{30a}} \omega_{\psi_0}. $$  
\end{proof}
\begin{remark}
From \cite{stringtheoryparallelspinors} it is known that the characteristic connection $\nabla^c$ (which exists in this example since the $\G_2$-structure is cocalibrated) is unique, so when $a=b$ it coincides with $\nabla^n$. When $a\neq b$ we need to find a different way to describe $\nabla^c$. Using Theorem 4.8 in \cite{stringtheoryparallelspinors}, and taking into account that our $\G_2$-structure is cocalibrated and differs from the reference $\G_2$-form in \cite{stringtheoryparallelspinors} by an orientation-reversing change of basis, the torsion 3-form of $\nabla^c$ is given by
\begin{align}
T^c &= -\frac{1}{6}\langle d\omega_{\psi_0},\ast\omega_{\psi_0}\rangle \omega_{\psi_0} + \ast d\omega_{\psi_0} . \label{chartorsionform}
\end{align}
\end{remark}
%
%
%
%
%
In order to compute $T^c$ using (\ref{chartorsionform}) we first prove the following lemma:
\begin{lemma}
The Hodge dual and exterior derivative of $\omega_{\psi_0}$ are given by
\begin{enumerate}[(i)]
\item $\ast\omega_{\psi_0} = -e_{4567} +e_{2367} +e_{2345} +e_{1357} -e_{1346} +e_{1256} +e_{1247},  $ 
\item $ d\omega_{\psi_0} = \frac{a+5b}{b\sqrt{30a}} (\ast \omega_{\psi_0}) + \frac{-5a+5b}{b\sqrt{30a}} e_{4567}.  $
\end{enumerate}
\end{lemma}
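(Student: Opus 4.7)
The plan is to prove (i) and (ii) by direct but careful computation. For (i), since $\omega_{\psi_0}$ is written explicitly in the oriented $g_{a,b}$-orthonormal basis $\{e_1,\dots, e_7\}$, I would simply apply the Hodge star term by term, using the convention $\alpha\wedge \ast \alpha = \vol_{g_{a,b}}$ for unit simple forms. Each of the seven monomials $e_{ijk}$ in $\omega_{\psi_0}$ maps under $\ast$ to $\pm e_{lmnp}$, where $\{l,m,n,p\} = \{1,\dots,7\}\setminus\{i,j,k\}$ and the sign is determined by the permutation sending $(i,j,k,l,m,n,p)$ to $(1,\dots,7)$. This is purely a bookkeeping exercise and produces the seven $4$-forms listed in (i).

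For (ii), I would use the standard formula for the exterior derivative of an invariant $k$-form on a reductive homogeneous space, evaluated at the origin:
\[
(d\omega)(X_0,\dots, X_k) = -\sum_{i<j}(-1)^{i+j}\omega([X_i,X_j]_{\mathfrak{m}}, X_0,\dots, \widehat{X_i},\dots, \widehat{X_j},\dots, X_k),
\]
for $X_0,\dots, X_k\in\mathfrak{m}$. The brackets $[e_i,e_j]_{\mathfrak{m}}$ in this basis can be read off directly from the Ambrose--Singer torsion computation done earlier in the paper, since $T^{\AS}(X,Y) = -[X,Y]_{\mathfrak{m}}$ for $X,Y\in\mathfrak{m}$ (the Nomizu map $\Uplambda^{\AS}$ vanishes identically). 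Thus each of the seven monomials $\pm e_{ijk}$ in $\omega_{\psi_0}$ contributes a $4$-form built from $[e_i,e_j]_{\mathfrak{m}}\wedge e_k$ and its cyclic analogues; I would list these contributions in a small table and then sum.

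The final step is to rearrange $d\omega_{\psi_0}$ into the claimed combination. Observing the structure of $T^{\AS}$, brackets of two vertical vectors $\xi_p,\xi_q$ produce only vertical terms and hence vanish in $d\omega_{\psi_0}$, brackets of a vertical vector with a horizontal vector scale with $1/\Omega$, and brackets of two horizontal vectors scale with $a/(b\Omega)$ in the vertical direction. Sorting the contributions according to these scaling factors, one sees that six of the seven possible $4$-forms will combine into a common multiple of $\ast\omega_{\psi_0}$ from part (i), while the missing term $e_{4567}$ receives only the contribution from the $[\mathcal{H},\mathcal{H}]$-brackets and thus appears with a different coefficient. Matching coefficients then yields the scalars $\frac{a+5b}{b\sqrt{30a}}$ and $\frac{-5a+5b}{b\sqrt{30a}}$ stated in (ii).

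The only real obstacle is the sheer volume of sign tracking in (ii): there are $\binom{7}{2}=21$ bracket pairs feeding into $7$ monomials, and mistakes in the Hodge sign convention or in the cyclic ordering of $(\xi_1,\xi_2,\xi_3)$ versus $(e_1,e_2,e_3)$ propagate to the final coefficient. I would mitigate this by first checking the limiting case $a=b$ (the $3$-Sasakian metric), where the result must reproduce the known formula $d\omega_{\psi_0} = \lambda \ast\omega_{\psi_0}$ of a nearly parallel $\G_2$-structure with $\lambda$ a Killing constant; this fixes the overall normalization and serves as a consistency check before allowing $a\neq b$.
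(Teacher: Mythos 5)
Your approach to part (i) coincides with the paper's (which simply calls it ``straightforward''). For part (ii), you take a genuinely different route. The paper computes $d\omega_{\psi_0}$ via the identity $d = \sum_{i=1}^7 e_i\wedge\nabla^{g_{a,b}}_{e_i}$ together with the explicit Levi--Civita Nomizu map $\Uplambda^{g_{a,b}}$ displayed just before the lemma, whereas you propose the bracket formula for the exterior derivative of an invariant form on a reductive homogeneous space and read off the $\mathfrak{m}$-projected brackets from the Ambrose--Singer torsion computed earlier, using $T^{\AS}(X,Y) = -[X,Y]_{\mathfrak{m}}$ (valid since $\Uplambda^{\AS}\equiv 0$). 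The two methods carry the same information --- the symmetric $U$-tensor in $\Uplambda^{g_{a,b}}=\tfrac12[\cdot,\cdot]_{\mathfrak{m}}+U$ drops out under the antisymmetrization $\sum_i e_i\wedge$, leaving exactly the bracket data --- but your route avoids re-deriving the Nomizu map and reuses the torsion table, which is arguably cleaner. The only delicate point is the overall sign in your displayed formula: whether $[X_i,X_j]_{\mathfrak{m}}$ enters with a $+$ or a $-$ depends on whether you extend $X_i\in\mathfrak{m}$ to fundamental vector fields of the $G$-action (which satisfy $[\widehat{X},\widehat{Y}]=-\widehat{[X,Y]}$) or to $\nabla^{\AS}$-parallel local fields (for which $[X,Y]|_o=[X,Y]_{\mathfrak{m}}$). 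Your minus sign matches the former convention; just make sure it is used consistently with the orientation and the choice of basis $(\xi_1,\xi_2,\xi_3,e_4,\dots,e_7)$. Your proposed consistency check at $a=b$, where the $\G_2$-structure is nearly parallel and $d\omega_{\psi_0}$ must be a scalar multiple of $\ast\omega_{\psi_0}$, is exactly the right way to pin down that sign.
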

\begin{proof}
	The first claim is straightforward. The proof of the second claim proceeds by a lengthy yet straightforward computation using the above expression of the Nomizu map $\Uplambda^{g_{a,b}} $ together with the identity $d = \sum_{i=1}^7 e_i\wedge \nabla_{e_i}^{g_{a,b}} $.
\end{proof}
Recalling (e.g. from \cite[Table 6]{SRNI}) that a $\G_2$-structure $\omega$ is nearly parallel if and only if $d\omega$ is a non-zero scalar multiple of $\ast \omega$, the preceding lemma also gives an alternate proof of the fact that the $\G_2$-structure induced by $\psi_0$ is nearly parallel if and only if $a=b$. 
\begin{corollary}
The characteristic connection $\nabla^c$ is given by
\begin{align*}
\nabla^c &= \nabla^{g_{a,b}} -\frac{1}{2}\left( \frac{a}{b\sqrt{30a}} \omega_{\psi_0} +\frac{5a-5b}{b\sqrt{30a}} e_{123}\right) ,
\end{align*}
and the defining 3-form $\omega_{\psi}$ and defining spinor $\psi_0$ are both $\nabla^c$-parallel.
\end{corollary}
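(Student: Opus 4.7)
The plan is to evaluate the formula \eqref{chartorsionform} for the characteristic torsion $T^c$ using the expressions for $d\omega_{\psi_0}$ and $\ast\omega_{\psi_0}$ provided by the preceding lemma, and then to convert the resulting $3$-form into the stated connection formula via the standard identity $\nabla^c_X Y = \nabla^{g_{a,b}}_X Y + \tfrac{1}{2} T^c(X,Y,-)$ for a metric connection with totally skew-symmetric torsion.

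First I would compute the inner product $\langle d\omega_{\psi_0},\ast\omega_{\psi_0}\rangle$ by substituting the lemma's expression for $d\omega_{\psi_0}$ and using that $\ast\omega_{\psi_0}$ has seven orthonormal monomial terms, giving $\|\ast\omega_{\psi_0}\|^2 = 7$ and $\langle e_{4567},\ast\omega_{\psi_0}\rangle = -1$; this yields a scalar of the form $\tfrac{6(2a+5b)}{b\sqrt{30a}}$, so that $-\tfrac{1}{6}\langle d\omega_{\psi_0},\ast\omega_{\psi_0}\rangle \, \omega_{\psi_0} = -\tfrac{2a+5b}{b\sqrt{30a}}\omega_{\psi_0}$. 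Next I would apply $\ast$ to the lemma's formula for $d\omega_{\psi_0}$, using $\ast^2 = \Id$ on $3$-forms in Riemannian dimension $7$ and $\ast e_{4567} = e_{123}$ for our orientation, to obtain $\ast d\omega_{\psi_0} = \tfrac{a+5b}{b\sqrt{30a}}\omega_{\psi_0} + \tfrac{5(b-a)}{b\sqrt{30a}}e_{123}$. Adding the two contributions yields
\[
T^c \;=\; -\frac{a}{b\sqrt{30a}}\,\omega_{\psi_0} \;-\; \frac{5(a-b)}{b\sqrt{30a}}\,e_{123},
\]
and the claimed formula for $\nabla^c$ then follows immediately from $\nabla^c = \nabla^{g_{a,b}} + \tfrac{1}{2}T^c$.

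For the parallelism statements: $\omega_{\psi_0}$ is $\nabla^c$-parallel by the defining property of the characteristic connection of a $\G_2$-structure (cf.\ \cite{stringtheoryparallelspinors}), so nothing needs to be checked here beyond citing this fact. The parallelism of $\psi_0$ can then be deduced algebraically from that of $\omega_{\psi_0}$: one uses the standard identity $\omega_{\psi_0}\cdot \psi_0 = -7\,\psi_0$ together with the fact that the $(-7)$-eigenspace of Clifford multiplication by $\omega_{\psi_0}$ in $\Sigma$ is one-dimensional. Differentiating this eigenvalue equation with $\nabla^c$, and using $\nabla^c\omega_{\psi_0}=0$, gives $\omega_{\psi_0}\cdot (\nabla^c_X \psi_0) = -7 \nabla^c_X \psi_0$, so $\nabla^c_X\psi_0 = f(X)\psi_0$ for a real-valued $1$-form $f$; since $\nabla^c$ is metric on the spin bundle and $\|\psi_0\|$ is constant, $f\equiv 0$.

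The main obstacle will simply be bookkeeping the sign conventions in the Hodge star and in the relationship between $T^c$ and the connection difference tensor, since a sign error in either place would disturb the clean cancellation that produces the coefficient $-\tfrac{a}{b\sqrt{30a}}$ of $\omega_{\psi_0}$ (where a partial cancellation between the $-\tfrac{2a+5b}{b\sqrt{30a}}$ from step one and the $\tfrac{a+5b}{b\sqrt{30a}}$ from step two occurs). Everything else is mechanical once the preceding lemma is in hand.
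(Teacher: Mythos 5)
Your computation of $T^c$ from \eqref{chartorsionform} is correct and follows exactly the paper's own two-step calculation: evaluating $\langle d\omega_{\psi_0},\ast\omega_{\psi_0}\rangle = \tfrac{12a+30b}{b\sqrt{30a}}$ via $\|\ast\omega_{\psi_0}\|^2 = 7$ and $\langle e_{4567},\ast\omega_{\psi_0}\rangle = -1$, then applying $\ast$ using $\ast(\ast\omega_{\psi_0}) = \omega_{\psi_0}$ and $\ast e_{4567} = e_{123}$, and watching the coefficients of $\omega_{\psi_0}$ partially cancel to $-\tfrac{a}{b\sqrt{30a}}$. Your identity $\nabla^c = \nabla^{g_{a,b}} + \tfrac{1}{2}T^c$ for a totally skew-symmetric torsion tensor is also the paper's convention, as one sees by specializing \eqref{torsionisomorphism2} to a $3$-form.

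Where you diverge from the paper is the parallelism argument for $\psi_0$, and there is a small gap there. The paper disposes of both claims in one line: $\nabla^c$ is a $\G_2$-connection, i.e.\ its holonomy lies in $\G_2 = \mathrm{Stab}_{\Spin(7)}(\psi_0) = \mathrm{Stab}_{\SO(7)}(\omega_{\psi_0})$, so both objects are automatically parallel. Your alternative route---deducing $\nabla^c\psi_0 = 0$ from $\nabla^c\omega_{\psi_0} = 0$ via the eigenvalue equation $\omega_{\psi_0}\cdot\psi_0 = -7\,\psi_0$---correctly yields $\nabla^c_X\psi_0 = f(X)\psi_0$, but you assert $f$ is real-valued without justification. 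In the paper's complex spinor module $\Sigma = \Lambda^\bullet L'$ the $(-7)$-eigenspace of Clifford multiplication by $\omega_{\psi_0}$ is one-dimensional over $\C$, so $f$ is a priori complex; metricity of $\nabla^c$ with respect to the Hermitian inner product together with the constancy of $\|\psi_0\|$ then only give $\Re f = 0$, not $f \equiv 0$. To close this you would need to invoke the real structure on $\Sigma$ (under which the real part of $\psi_0$ is fixed and hence $f = \bar f$), or simply use the paper's more robust holonomy argument.
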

\begin{proof}
Using (\ref{chartorsionform}) and the preceding lemma, we calculate
{\small 
	\begin{align*}
		T^c &= -\frac{1}{6} \langle \frac{a+5b}{b\sqrt{30a}} (\ast \omega_{\psi_0}) + \frac{-5a+5b}{b\sqrt{30a}} e_{4567}, \ast \omega_{\psi_0}\rangle \omega_{\psi_0} + \ast\left( \frac{a+5b}{b\sqrt{30a}} (\ast \omega_{\psi_0}) + \frac{-5a+5b}{b\sqrt{30a}} e_{4567}\right)  \\
		& = \left( \frac{-7(a+5b)}{6b\sqrt{30a}} +   \frac{-5a+5b}{6b\sqrt{30a}} \right) \omega_{\psi_0}         +  \left(  \frac{a+5b}{b\sqrt{30a}} \omega_{\psi_0} +\frac{-5a+5b}{b\sqrt{30a}} e_{123}\right)  \\
		&= -\frac{a}{b\sqrt{30a}} \omega_{\psi_0} -\frac{5a-5b}{b\sqrt{30a}} e_{123}.
	\end{align*}
}The fact that $\nabla^c\omega = 0= \nabla^c\psi_0$ follows since $\nabla^c$ is a $\G_2$-connection.
\end{proof}
A natural question is whether there are other invariant differential forms. To that end, one finds:
\begin{proposition}\label{sp2sp1_inv_forms_statement}
	Up to taking Hodge duals, the invariant differential forms on $(S^7=\frac{\Sp(2)\Sp(1)}{\Sp(1)\Sp(1)},g_{a,b})$ are given in Table \ref{Tab:inv_forms_S7_Sp2Sp1}.
\end{proposition}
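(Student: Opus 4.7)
The plan is to classify the invariant forms by computing the $H$-invariant subspaces of $\Lambda^k\mathfrak{m}^{\C}$ in each degree, where $H=\Sp(1)\cdot\Sp(1)$. Since the Hodge star intertwines the isotropy action, it suffices to treat $0\leq k\leq 3$. Writing $U_m$ for the $(m{+}1)$-dimensional irreducible representation of $\Sp(1)$, the analysis of Section~\ref{caseIIIK=Sp(1)} already identifies $\mathfrak{m}_1^{\C}\cong U_0\otimes U_2$ and $\mathfrak{m}_2^{\C}\cong U_1\otimes U_1$ as $(\Sp(1)\times \Sp(1))$-modules.

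The core computation is to expand $\Lambda^k(\mathfrak{m}_1\oplus\mathfrak{m}_2)=\bigoplus_{i+j=k}\Lambda^i\mathfrak{m}_1\otimes\Lambda^j\mathfrak{m}_2$ and decompose each summand into isotypic components via $\Lambda^2(V\otimes W)=\Lambda^2V\otimes S^2W\oplus S^2V\otimes\Lambda^2W$ and its degree-$3$ Schur-functor analogue. A short calculation yields $\Lambda^2\mathfrak{m}_2^{\C}\cong(U_0\otimes U_2)\oplus(U_2\otimes U_0)$, $\Lambda^3\mathfrak{m}_1^{\C}\cong U_0\otimes U_0$, and $\Lambda^3\mathfrak{m}_2^{\C}\cong U_1\otimes U_1$. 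A degree-by-degree inspection then shows that nontrivial invariants appear only in degrees $0$ and $3$; in degree $3$ the invariant space is two-dimensional, with one invariant sitting in $\Lambda^3\mathfrak{m}_1$ and the other arising from the unique invariant line in the subspace $U_2\otimes U_2\subset\mathfrak{m}_1\otimes\Lambda^2\mathfrak{m}_2$.

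To exhibit these invariants concretely, the generator of $\Lambda^3\mathfrak{m}_1\cong U_0\otimes U_0$ is the vertical volume form $\xi_1\wedge\xi_2\wedge\xi_3$. For the second invariant, one identifies the $U_2$-summand of $\Lambda^2\mathfrak{m}_2$ with the span of the horizontal K\"ahler forms $\{\Phi_p\rvert_{\mathcal{H}}\}_{p=1}^3$, and pairs this triplet against $(\xi_1,\xi_2,\xi_3)\subset\mathfrak{m}_1$ via the essentially unique invariant bilinear form on $U_2$. The resulting 3-form is a multiple of $\sum_{p=1}^3\xi_p\wedge\Phi_p\rvert_{\mathcal{H}}$. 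Hodge duals, extracted from the computation of $\ast\omega_{\psi_0}$ in the preceding lemma, give the degree-$4$ invariants $e_{4567}$ and $\sum_{p=1}^3\xi_{p+1}\wedge\xi_{p+2}\wedge\Phi_p\rvert_{\mathcal{H}}$ (indices modulo~$3$), filling out the remaining entries of the table.

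The main obstacle is verifying that the $U_2$-summand of $\Lambda^2\mathfrak{m}_2^{\C}$ is genuinely spanned by the $\Phi_p\rvert_{\mathcal{H}}$—rather than by some other triplet of self-dual $2$-forms on $\mathcal{H}$—and fixing the scalars in the pairing. This reduces to a direct equivariance check against the isotropy operators $\ad(iF_{1,1}^{(2)},i)\rvert_{\mathfrak{m}}$, $\ad(jF_{1,1}^{(2)},j)\rvert_{\mathfrak{m}}$, $\ad(kF_{1,1}^{(2)},k)\rvert_{\mathfrak{m}}$ written down in the proof of Theorem~\ref{example_invariant_spinors_sp_twisted_7sphere}, but it is precisely what turns the abstract representation-theoretic count into the explicit table entries.
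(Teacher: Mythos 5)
Your argument is correct, and it takes a genuinely different path from the paper's. The paper first records the dimensions of the $\Sp(1)$-invariant $k$-forms on $\Sp(2)/\Sp(1)$ (which are $3, 6, 10$ for $k=1,2,3$, obtained ``by hand or using LiE''), exhibits explicit bases built from the $\eta_i$ and $\Phi_i$, and then cuts these spaces down by checking which elements are annihilated by the three additional isotropy operators $\ad(iF_{1,1}^{(2)},i)$, $\ad(jF_{1,1}^{(2)},j)$, $\ad(kF_{1,1}^{(2)},k)$. You instead exploit the $(\Sp(1)\times\Sp(1))$-module isomorphisms $\mathfrak{m}_1^{\C}\cong U_0\otimes U_2$, $\mathfrak{m}_2^{\C}\cong U_1\otimes U_1$ from the earlier general discussion and decompose $\Lambda^k\mathfrak{m}^{\C}$ directly into irreducibles via Cauchy/Schur-functor identities, so the dimensions $1,0,0,2$ drop out of a pure Clebsch--Gordan count with no appeal to a computer algebra package or to the $\Sp(1)$-invariant intermediate computation. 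Your route is more self-contained and conceptually transparent; the paper's is a quicker shortcut since it recycles the $\Sp(n)/\Sp(n-1)$ machinery already in place. One small imprecision in your write-up: when you locate the second degree-$3$ invariant, the phrase ``the unique invariant line in the subspace $U_2\otimes U_2\subset\mathfrak{m}_1\otimes\Lambda^2\mathfrak{m}_2$'' is potentially misleading, since as a genuine $(\Sp(1)\times\Sp(1))$-module $U_2\otimes U_2$ has no trivial summand; what you mean is that the invariant lives in $(U_0\otimes U_2)\otimes(U_0\otimes U_2)\cong U_0\otimes(U_2\otimes U_2)$, i.e.\ it is the pairing of the two $U_2$-factors under the \emph{same} diagonal $\Sp(1)$, which your subsequent sentence about contracting $\{\Phi_p\rvert_{\mathcal{H}}\}$ against $\{\xi_p\}$ makes clear. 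Both methods still converge on the same explicit equivariance check against the listed isotropy operators to turn the abstract count into the basis $\{\xi_{1,2,3},\,\omega_{\psi_0}\}$ (equivalently $\{\xi_{1,2,3},\,\sum_p\xi_p\wedge\Phi_p\rvert_{\mathcal{H}}\}$) of the table.
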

\begin{proof}
	By hand, or using the LiE computer algebra package (\cite{LiE}), one finds that the dimensions of the spaces of invariant $1$-, $2$-, and $3$-forms on $\Sp(2)/\Sp(1)$ are $3$, $6$ and $10$ respectively. Bases for these spaces are given by $\{ \eta_i\}_{i=1}^3 $, $\{ \Phi_i\rvert_{\mathcal{V}} , \Phi_i\rvert_{\mathcal{H}}  \}_{i=1,2,3} $, and $\{\eta_1\wedge \eta_2\wedge \eta_3\} \cup \{ \eta_i\wedge \Phi_j\rvert_{\mathcal{H}} \}_{i,j=1,2,3} $ respectively. The result then follows by checking which elements in these spans are invariant under the additional three isotropy operators described in the proof of Theorem \ref{example_invariant_spinors_sp_twisted_7sphere}.
\end{proof}
\begin{table}[h!] 
	\centering
	\caption{Invariant Differential Forms on $(S^7=\frac{\Sp(2)\Sp(1)}{\Sp(1)\Sp(1)},g_{a,b})$}
	\begin{tabular}{|l||l|l|}\hline
		$k$  & $\dim \Lambda^k_{\inv}$ & Basis for $ \Lambda^k_{\inv}$ \\\hline
		$0$    & $1$ & $1$  \\
		$1$ & $0$ & $0$  \\
		$2$ & $0$ & $0$ \\
		$3$   & $2$ & $\omega_{\psi_0}$, $\xi_{1,2,3}$   \\ \hline
	\end{tabular}
	\label{Tab:inv_forms_S7_Sp2Sp1}
\end{table}
Noting from the preceding proposition that there are no invariant $1$-forms, one obtains:
\begin{corollary}\label{S7_sp2sp1_no_inv_sasakian_structure}
	The space $(S^7 = \frac{\Sp(2)\Sp(1)}{\Sp(1)\Sp(1)}, g_{a,b}) $ does not admit an invariant Einstein-Sasakian or 3-Sasakian structure for any values of $a,b>0$.
\end{corollary}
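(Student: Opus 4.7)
The plan is to deduce this immediately from the classification of invariant differential forms given in Proposition \ref{sp2sp1_inv_forms_statement}. Any (almost) contact metric structure is determined in particular by the choice of a Reeb vector field $\xi$, or equivalently, by its metric dual $\eta = \xi^\flat$, which is a nowhere-vanishing $1$-form on the underlying manifold. If the structure is $G$-invariant, then $\xi$ must be a $G$-invariant vector field, and hence $\eta$ must be a $G$-invariant $1$-form.

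First I would observe from Table \ref{Tab:inv_forms_S7_Sp2Sp1} that the space of invariant $1$-forms on $(S^7 = \tfrac{\Sp(2)\Sp(1)}{\Sp(1)\Sp(1)}, g_{a,b})$ is trivial, $\dim \Lambda^1_{\inv} = 0$. In particular there exists no nowhere-vanishing invariant $1$-form, so no invariant almost contact metric structure (and a fortiori no invariant Sasakian or Einstein-Sasakian structure) can be defined. This rules out case (1).

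For the $3$-Sasakian case, an invariant $3$-Sasakian structure would in particular provide three invariant Reeb vector fields $\xi_1, \xi_2, \xi_3$ and hence three linearly independent invariant $1$-forms $\eta_1, \eta_2, \eta_3$, contradicting $\dim \Lambda^1_{\inv} = 0$ even more strongly. Hence case (2) is also impossible, and the corollary follows.

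The main (and only) obstacle in this argument is the justification of Proposition \ref{sp2sp1_inv_forms_statement} itself, which amounts to decomposing $\Lambda^k \mathfrak{m}$ into irreducible $\Sp(1)\Sp(1)$-representations and extracting the trivial summands; this is already handled there (by hand or via \cite{LiE}). Given that proposition, the corollary is a one-line consequence.
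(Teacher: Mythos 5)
Your proposal is correct and takes essentially the same route as the paper: the paper derives the corollary as an immediate consequence of Proposition \ref{sp2sp1_inv_forms_statement}, with the stated justification being precisely that there are no invariant $1$-forms, hence no invariant Reeb field for any almost contact metric structure. Your additional spelling out of the $3$-Sasakian case (three invariant $1$-forms would be needed) is just making explicit what the paper leaves implicit.
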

%
%
%
%
%
%
%
%
%
%
%
%
%
%
%
%
%
\end{example}
\subsection{$S^1$-Quaternionic Spheres, $S^{4n-1} = \frac{\Sp(n) \U(1)}{\Sp(n-1) \U(1)}$}
This is the case from Section \ref{caseIIK=U1} corresponding to $K=\U(1)$. Viewing $\U(1)$ as a subgroup of $\Sp(1)$ via $e^{i\theta}\mapsto \cos\theta + i\sin\theta +0j+0k$, and from (\ref{inclusion_isotropy_productgroup}), we have at the level of Lie algebras
\begin{align*}
	\mathfrak{sp}(n)\oplus \mathfrak{u}(1)&= \Span_{\R} \{ (iF_{p,q}^{(n)},0) , (jF_{p,q}^{(n)},0) ,(kF_{p,q}^{(n)},0) ,(E_{r,s}^{(n)},0), (0,i)     \}_{\substack{1\leq p\leq q \leq n \\ 1\leq r < s \leq n} } ,  \\
	\mathfrak{sp}(n-1)\oplus \mathfrak{u}(1)&=  \Span_{\R} \{ (iF_{p,q}^{(n)},0) , (jF_{p,q}^{(n)},0) ,(kF_{p,q}^{(n)},0) ,(E_{r,s}^{(n)},0), (iF_{1,1}^{(n)},i)     \}_{\substack{2\leq p\leq q \leq n \\ 2\leq r < s \leq n} }     .
\end{align*}
Note that the Killing form of $\mathfrak{sp}(n)\oplus \mathfrak{u}(1)$ fails to be non-degenerate, so in order to choose a reductive complement we instead take the orthogonal complement with respect to the restriction of the inner product $\kappa$ from (\ref{squashedKF}) to the subalgebra $\mathfrak{sp}(n)\oplus \mathfrak{u}(1)\subset \mathfrak{sp}(n)\oplus \mathfrak{sp}(1)$,
\begin{align*}
	\mathfrak{m}&:= (\mathfrak{sp}(n-1)\oplus \mathfrak{u}(1))^{\perp_{\kappa}}.
\end{align*}
The isotropy representation splits into one copy of the trivial representation and two non-isomorphic irreducible representations: 
\[\mathfrak{m}\simeq \mathfrak{m}_1\oplus \mathfrak{m}_2\oplus \mathfrak{m}_3,\]
where $\dim_{\R}\mathfrak{m}_1=1$, $\dim_{\R}\mathfrak{m}_2=2$, and $\dim_{\R}\mathfrak{m}_3 = 4(n-1)$. This gives a 3-parameter family of invariant metrics,
\begin{align*}
g_{a,b,c}&:= -a \kappa\rvert_{\mathfrak{m}_1\times \mathfrak{m}_1} - b \kappa\rvert_{\mathfrak{m}_2\times \mathfrak{m}_2} -c\kappa\rvert_{\mathfrak{m}_3\times \mathfrak{m}_3} , \qquad a,b,c>0. \end{align*}
In particular, by manually checking the sectional curvatures, one finds that the round metric is given by the parameters
\[
a=\frac{n+3}{8(n+1)}, \quad b= \frac{1}{4(n+1)}, \quad c= \frac{1}{8(n+1)}.
\]
For general $a,b,c>0$, a $g_{a,b,c}$-orthonormal basis for $\mathfrak{m} $ is given by
\begin{align*}
\xi_1&:= \frac{1}{\Omega}\left( iF_{1,1}, -\left( \frac{n+1}{2}\right) i\right), \quad
\xi_2:=\frac{1}{2\sqrt{b(n+1)}}\left( -kF_{1,1}, 0\right) ,\quad 
\xi_3:=\frac{1}{2\sqrt{b(n+1)}}\left(jF_{1,1}, 0\right) ,\\
e_{4p}&:= \frac{1}{2\sqrt{2c(n+1)}}(jF_{1,p+1} ,0),\quad
e_{4p+1}:=  \frac{1}{2\sqrt{2c(n+1)}}(kF_{1,p+1} ,0),\\
e_{4p+2}&:= \frac{1}{2\sqrt{2c(n+1)}}(iF_{1,p+1} ,0),\quad
e_{4p+3}:= \frac{1}{2\sqrt{2c(n+1)}}(E_{1,p+1} ,0),
\end{align*}
for $1\leq p \leq n-1$, where $\Omega:=\sqrt{2a(n+1)(n+3)}$. In terms of this basis, the isotropy summands are
\begin{align*}
\mathfrak{m}_1 &= \Span_{\R} \{ \xi_1\} , \quad
\mathfrak{m}_2= \Span_{\R} \{ \xi_2,\xi_3\} ,\quad
\mathfrak{m}_3=  \Span_{\R} \{e_{4p},e_{4p+1},e_{4p+2},e_{4p+3}\}_{p=1}^{n-1}.
\end{align*}
From (\ref{invariantelementsu1twistedMA}) we obtain:
\begin{theorem}\label{explicitspinorsK=u1}
		Using the above orthonormal basis and the corresponding description of the spinor module from Remark \ref{reorderedCliffalgrepresentation}, the space of invariant spinors on $(S^{4n-1}=\frac{\Sp(n)\U(1)}{\Sp(n-1)\U(1)}, g_{a,b,c})$ for any $a,b,c>0$ is trivial unless $n$ is even. If $n$ is even,
\[
\Sigma_{\inv} = \Span_{\C}\{ \omega^{n/2}, \  y_1\wedge \omega^{(n-2)/2}\},
\]
where $\omega := \sum_{i=1}^{n-1} y_{2i} \wedge y_{2i+1}$. 
\end{theorem}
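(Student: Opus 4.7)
The bulk of the representation-theoretic work has already been carried out in Section \ref{caseIIK=U1}: equation (\ref{invariantelementsu1twistedMA}) gives, abstractly, that the space of $\mathfrak{sp}_{\C}(2n-2) \oplus \mathfrak{gl}_{\C}(1)$-invariant elements of the spin module is trivial when $n$ is odd and is spanned by $\omega^{n/2} \otimes 1$ and $(\omega^{(n-2)/2} \otimes 1)\otimes(1\otimes 1)$ when $n$ is even. My plan is therefore to cite this result and devote the proof entirely to translating the abstract description into explicit exterior forms in the chosen orthonormal basis.

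First, I would fix the identifications. With the Lagrangian $L' = \Span_{\C}\{y_1,\ldots,y_{2n-1}\}$ coming from Remark \ref{reorderedCliffalgrepresentation}, the reductive complement decomposes as $\mathfrak{m}^{\C} = L \oplus L' \oplus \C_0$ with $\C_0 = \C e_1 = \C\xi_1$, matching the general setup of Section \ref{quaternionic_case_setup}. Under the general decomposition $L' = (\C^{2n-2} \otimes \C') \oplus (\C' \otimes \C')$, the factor $\C^{2n-2} \otimes \C'$ corresponds to $\Span_{\C}\{y_2,\ldots,y_{2n-1}\}$ (the anti-holomorphic part of $\mathfrak{m}_3$) and $\C' \otimes \C'$ corresponds to $\C y_1$ (the anti-holomorphic direction in $\mathfrak{m}_2$). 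The invariant symplectic form on $\C^{2n-2}$ pulls back under this identification to $\omega = \sum_{i=1}^{n-1} y_{2i} \wedge y_{2i+1}$, and the element $1 \in \C'\otimes\C'$ corresponds to $y_1$.

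Second, I would verify that $\xi_1$ acts as the generator $\tau_\lambda \in \mathfrak{gl}_{\C}(1)$ described in (\ref{e:gl1-2d}). This is a direct check: under the embedding of $\mathfrak{u}(1)$ into $\mathfrak{sp}(1)$ used in the setup of this subsection, the element $(0,i)$ (together with $iF_{1,1}^{(n)}$ from the diagonal isotropy condition) generates a $\C^{\times}$ that acts with opposite non-zero weights on the two coordinate axes of $\C^2 \simeq \C \oplus \C'$. Combined with Proposition \ref{p:gl-action}, the action on $\omega^k \otimes 1$ and $(\omega^k \otimes 1)\otimes(1\otimes 1)$ recovers precisely the weights displayed in (\ref{e:gl1-action-invts}), so the invariants are as claimed.

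The step requiring genuine care is this last bookkeeping: matching the two-dimensional decomposition of the $\mathfrak{sp}(1)$-factor to the specific $y_1$-direction determined by the ordered basis in Remark \ref{reorderedCliffalgrepresentation}, and confirming that the Clifford-algebraic action coming from (\ref{cliffordmultONB}) agrees with the abstract $\mathfrak{gl}(L)$-action via (\ref{e:gl-cl}) and Lemma \ref{l:ll'-image}. Once this identification is pinned down, the image of the two basis elements from (\ref{invariantelementsu1twistedMA}) is exactly $\omega^{n/2}$ and $y_1 \wedge \omega^{(n-2)/2}$, and the theorem follows immediately. (The independence of the result from $a,b,c$ is automatic, since the three isotropy summands $\mathfrak{m}_1,\mathfrak{m}_2,\mathfrak{m}_3$ are pairwise non-isomorphic non-trivial modules, so rescaling $B_0$ on each does not change the set of invariants.)
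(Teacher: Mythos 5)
Your proposal takes the same approach as the paper, which simply cites equation (\ref{invariantelementsu1twistedMA}) as the main ingredient and offers a direct computation with the spin lift of the additional isotropy operator $\ad(iF_{1,1}^{(n)},i)\rvert_{\mathfrak{m}}=\xi_2\wedge\xi_3-\Phi_1$ as an alternative; you have correctly unpacked the translation from the abstract tensor notation of Section \ref{caseIIK=U1} into the explicit $y_j$-basis, a step the paper leaves implicit. One minor slip: $\mathfrak{m}_1$ is the \emph{trivial} $H$-module, not a non-trivial one, though this does not affect the metric-independence conclusion since the relevant criterion is that no pair of isotropy summands be simultaneously non-trivial and isomorphic.
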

\begin{proof}
This follows directly from (\ref{invariantelementsu1twistedMA}). Alternatively, one can argue exactly as in the proof of Theorem \ref{example_invariant_spinors_sp_twisted_7sphere}, using only the spin lift of the operator
\begin{align}\label{u1extraisotropyoperator}
 \ad( iF_{1,1}^{(n)},i)\rvert_{\mathfrak{m}} = \xi_2\wedge \xi_3-\Phi_1 .
\end{align}
\end{proof}
In order to differentiate these spinors, we calculate:
\begin{lemma}
The Nomizu map for the Levi-Civita connection of $g_{a,b,c}$ is given by
\begin{align*}
\Uplambda^{g_{a,b,c}}(x_1)x_2 &=\frac{1}{2}[x_1,x_2]_\mathfrak{m},\quad
\Uplambda^{g_{a,b,c}}(x)y =(1-\frac{a}{2b})[x,y]_\mathfrak{m},\quad
\Uplambda^{g_{a,b,c}}(y)x = \frac{a}{2b} [y,x]_\mathfrak{m},\\
\Uplambda^{g_{a,b,c}}(y_1)y_2&= \frac{1}{2}[y_1,y_2]_\mathfrak{m},\quad
\Uplambda^{g_{a,b,c}}(x)z = (1-\frac{a}{2c})[x,z]_\mathfrak{m},\quad
\Uplambda^{g_{a,b,c}}(z)x = \frac{a}{2c} [z,x]_\mathfrak{m},\\
\Uplambda^{g_{a,b,c}}(z_1)z_2&=\frac{1}{2}[z_1,z_2]_\mathfrak{m},\quad
\Uplambda^{g_{a,b,c}}(y)z = (1-\frac{b}{2c}) [y,z]_\mathfrak{m},\quad
\Uplambda^{g_{a,b,c}}(z)y = \frac{b}{2c}[z,y]_\mathfrak{m},
\end{align*} 
for all $x,x_1,x_2\in\mathfrak{m}_1$, $y,y_1,y_2\in\mathfrak{m}_2$, $z,z_1,z_2\in\mathfrak{m}_3$. 
\end{lemma}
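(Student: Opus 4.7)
The plan is to derive the nine identities from the general Nomizu-map formula \eqref{nomizumap}, which reduces the task to computing the symmetric tensor $U$ defined by \eqref{Utensor}. Two structural observations should make this essentially formal. First, because $\mathfrak{m}_1,\mathfrak{m}_2,\mathfrak{m}_3$ are pairwise non-isomorphic irreducible $H$-modules, they are mutually orthogonal with respect to every $\Ad(H)$-invariant bilinear form on $\mathfrak{m}$, so $g_{a,b,c}|_{\mathfrak{m}_i\times\mathfrak{m}_i}=-c_i\,\kappa|_{\mathfrak{m}_i\times\mathfrak{m}_i}$ with $(c_1,c_2,c_3)=(a,b,c)$ and $\kappa$ vanishes off the diagonal blocks. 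Second, $\kappa$ is $\ad$-invariant on $\mathfrak{sp}(n)\oplus\mathfrak{u}(1)$, inherited from the non-degenerate invariant form on the larger algebra $\mathfrak{sp}(n)\oplus\mathfrak{sp}(1)$ of \eqref{squashedKF}; hence $\kappa([Z,X],Y)+\kappa(X,[Z,Y])=0$ for all $X,Y,Z$ involved.

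Next, I will record the bracket decomposition of $\mathfrak{m}$:
\[
[\mathfrak{m}_1,\mathfrak{m}_1]_{\mathfrak{m}}=0,\quad [\mathfrak{m}_1,\mathfrak{m}_2]_{\mathfrak{m}}\subseteq\mathfrak{m}_2,\quad [\mathfrak{m}_1,\mathfrak{m}_3]_{\mathfrak{m}}\subseteq\mathfrak{m}_3,
\]
\[
[\mathfrak{m}_2,\mathfrak{m}_2]_{\mathfrak{m}}\subseteq\mathfrak{m}_1,\quad [\mathfrak{m}_2,\mathfrak{m}_3]_{\mathfrak{m}}\subseteq\mathfrak{m}_3,\quad [\mathfrak{m}_3,\mathfrak{m}_3]_{\mathfrak{m}}\subseteq\mathfrak{m}_1\oplus\mathfrak{m}_2.
\]
The inclusions involving $\mathfrak{m}_3$ follow from $\Ad(H)$-equivariance of the bracket together with Schur's lemma, and $[\mathfrak{m}_1,\mathfrak{m}_1]_{\mathfrak{m}}=0$ is automatic since $\dim\mathfrak{m}_1=1$. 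The only genuinely computational inclusion, $[\mathfrak{m}_2,\mathfrak{m}_2]_{\mathfrak{m}}\subseteq\mathfrak{m}_1$, I will verify by hand using the commutator identities of Section~2.2: one finds $[\xi_2,\xi_3]=(2iF_{1,1}^{(n)},0)$, which decomposes against the isotropy element $(iF_{1,1}^{(n)},i)\in\mathfrak{u}(1)\subseteq\mathfrak{h}$ so that its $\mathfrak{m}$-part is a non-zero multiple of $\xi_1\in\mathfrak{m}_1$.

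With these two inputs in place, for $X\in\mathfrak{m}_i$, $Y\in\mathfrak{m}_j$, $W\in\mathfrak{m}_k$, I will expand \eqref{Utensor} using the block form of $g_{a,b,c}$, move commutators between arguments using $\ad$-invariance of $\kappa$, and collapse the sum using mutual orthogonality of the $\mathfrak{m}_\ell$. The outcome will be the compact closed form
\[
\Uplambda^{g_{a,b,c}}(X)Y \;=\; \sum_{k=1}^{3}\frac{c_j+c_k-c_i}{2c_k}\,\pi_k[X,Y]_{\mathfrak{m}},
\]
where $\pi_k:\mathfrak{m}\to\mathfrak{m}_k$ is the orthogonal projection. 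Each of the nine identities of the lemma is then read off by substituting the appropriate triple from $\{a,b,c\}$; note in particular that for $X,Y\in\mathfrak{m}_3$ the coefficient evaluates to $\tfrac{1}{2}$ on both the $\mathfrak{m}_1$- and the $\mathfrak{m}_2$-components of $[X,Y]_{\mathfrak{m}}$, which explains the uniform prefactor in $\Uplambda^{g_{a,b,c}}(z_1)z_2=\tfrac{1}{2}[z_1,z_2]_{\mathfrak{m}}$.

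The main obstacle will not be the calculation itself, which follows the same template used in earlier sections of the paper for the Levi-Civita Nomizu maps on $\Sp(n)/\Sp(n-1)$ and $\Sp(n)\Sp(1)/\Sp(n-1)\Sp(1)$, but rather the conceptual point that the form $\kappa$ from \eqref{squashedKF} behaves well after restriction to the subalgebra $\mathfrak{sp}(n)\oplus\mathfrak{u}(1)$: although the intrinsic Killing form on this smaller algebra is degenerate, the restricted $\kappa$ remains $\ad$-invariant and still makes the isotropy summands mutually orthogonal. Both points should be flagged explicitly at the outset, after which the remainder of the proof reduces to routine bookkeeping.
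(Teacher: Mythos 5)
Your proposal is correct and follows the same route as the paper's one-line proof, which invokes $\ad$-invariance of $\kappa$ together with the bracket decomposition $[\mathfrak{m}_i,\mathfrak{m}_j]_{\mathfrak{m}}\subseteq\bigoplus\mathfrak{m}_k$. The unified closed formula $\Uplambda^{g_{a,b,c}}(X)Y=\sum_{k}\frac{c_j+c_k-c_i}{2c_k}\,\pi_k[X,Y]_{\mathfrak{m}}$ is a clean way to package the computation of $U$ from \eqref{Utensor}, and substituting $(c_1,c_2,c_3)=(a,b,c)$ with the bracket inclusions does recover all nine identities, including the uniform prefactor $\tfrac12$ for $z_1,z_2\in\mathfrak{m}_3$.
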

\begin{proof}
This is a straightforward calculation using the fact that $\kappa$ is $\ad$-invariant and the commutator relations
\begin{align*} 
[\mathfrak{m}_1,\mathfrak{m}_1]&=0,\quad  [\mathfrak{m}_1,\mathfrak{m}_2]\subseteq \mathfrak{m}_2, \quad  [\mathfrak{m}_1,\mathfrak{m}_3]\subseteq \mathfrak{m}_3, \quad  [\mathfrak{m}_2,\mathfrak{m}_2]\subseteq \mathfrak{m}_1\oplus (\mathfrak{sp}(n-1)\oplus\mathfrak{u}(1)), \\
[\mathfrak{m}_2,\mathfrak{m}_3]&\subseteq \mathfrak{m}_3,\quad  [\mathfrak{m}_3,\mathfrak{m}_3]\subseteq \mathfrak{m}_1\oplus\mathfrak{m}_2\oplus (\mathfrak{sp}(n-1)\oplus\mathfrak{u}(1)) .
\end{align*}
\end{proof}
\begin{remark} \label{squashedunitaryNomizumap}
In terms of our chosen basis, the operators in the preceding lemma take the form
\begin{align*}
\Uplambda^{g_{a,b,c}}(\xi_1) &= \frac{2(1-\frac{a}{2b})}{\Omega} \  \xi_2\wedge \xi_3 + \frac{(1-\frac{a}{2c})}{\Omega} \sum_{p=1}^{n-1}(e_{4p}\wedge e_{4p+1} + e_{4p+2}\wedge e_{4p+3}  ),\\
\Uplambda^{g_{a,b,c}}(\xi_2)&= \frac{a}{b \Omega }\  \xi_3\wedge \xi_1 + \frac{(1-\frac{b}{2c})}{2\sqrt{b(n+1)}} \sum_{p=1}^{n-1} (e_{4p}\wedge e_{4p+2} - e_{4p+1}\wedge e_{4p+3}   ), \\
\Uplambda^{g_{a,b,c}}(\xi_3)&= \frac{a}{b\Omega }\  \xi_1\wedge \xi_2 + \frac{(1-\frac{b}{2c})}{2\sqrt{b(n+1)}} \sum_{p=1}^{n-1} (e_{4p}\wedge e_{4p+3} + e_{4p+1}\wedge e_{4p+2}   ) ,
\end{align*}
in the vertical directions and, in the horizontal directions,
\begin{align*}
\Uplambda^{g_{a,b,c}}(e_{4p})&= -\frac{a}{2c\Omega } \ \xi_1\wedge e_{4p+1} - \frac{\sqrt{b}}{4c\sqrt{n+1}}( \xi_2\wedge e_{4p+2} +    \xi_3\wedge e_{4p+3}),\\
\Uplambda^{g_{a,b,c}}(e_{4p+1})&= \frac{a}{2c\Omega } \ \xi_1\wedge e_{4p} + \frac{\sqrt{b}}{4c\sqrt{n+1}}( \xi_2\wedge e_{4p+3} -\xi_3\wedge e_{4p+2}   ), \\
\Uplambda^{g_{a,b,c}}(e_{4p+2}) &= -\frac{a}{2c\Omega } \ \xi_1\wedge e_{4p+3} + \frac{\sqrt{b}}{4c\sqrt{n+1}}(  \xi_2\wedge e_{4p} +\xi_3\wedge e_{4p+1}  ) ,\\
\Uplambda^{g_{a,b,c}}(e_{4p+3}) &= \frac{a}{2c\Omega } \ \xi_1\wedge e_{4p+2} - \frac{\sqrt{b}}{4c\sqrt{n+1}} ( \xi_2\wedge e_{4p+1} - \xi_3\wedge e_{4p}   ).
\end{align*}
\end{remark}
In odd dimensions, almost contact structures are natural candidates for interesting geometries; in the following proposition we study the existence of invariant almost contact structures for these spheres.
\begin{proposition} \label{u1twistedSasakian}
The sphere $(S^{4n-1}=\frac{\Sp(n) \U(1)}{\Sp(n-1) \U(1)}, g_{a,b,c})$ admits:
\begin{enumerate}[(i)] \item a compatible invariant normal almost contact metric structure for all $a,b,c >0$.
\item a compatible invariant $\alpha$-contact structure if and only if $\frac{a}{b\Omega} = \frac{a}{2c\Omega }=\alpha$.
\item a compatible invariant $\alpha$-K-contact structure if and only if $\frac{a}{b\Omega} = \frac{a}{2c\Omega }=\alpha$.
\end{enumerate}
In particular there exists a compatible invariant $\alpha$-Sasakian structure if and only if $\frac{a}{b\Omega} = \frac{a}{2c\Omega }=\alpha$.
\end{proposition}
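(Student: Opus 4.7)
The plan is to mimic the strategy of the proof of Proposition~\ref{specialunitaryalphaSasakian}. First, because $G$-invariant vector fields on $G/H$ correspond to $H$-invariant elements of $\mathfrak{m}$, and the only one-dimensional trivial summand of the isotropy representation is $\mathfrak{m}_1=\R\xi_1$, the Reeb field must be $\xi=\pm\xi_1$; set $\eta:=\xi_1^{\flat}$. Since the summands $\mathfrak{m}_1,\mathfrak{m}_2,\mathfrak{m}_3$ are pairwise non-isomorphic, every $H$-equivariant endomorphism $\varphi$ must preserve each of them. On $\mathfrak{m}_1$ we need $\varphi(\xi)=0$; on $\mathfrak{m}_2$ and $\mathfrak{m}_3$, requiring $\Phi:=g_{a,b,c}(\,\cdot\,,\varphi\,\cdot\,)$ to be an invariant two-form---and using the extra isotropy operator $\ad(iF_{1,1}^{(n)},i)\rvert_{\mathfrak{m}}=\xi_2\wedge\xi_3-\Phi_1$ from (\ref{u1extraisotropyoperator})---forces $\varphi\rvert_{\mathfrak{m}_i}$ to be a scalar multiple of $\ad(\xi_1)\rvert_{\mathfrak{m}_i}$ for $i=2,3$. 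These scalars are pinned down (up to a global sign) by imposing $\varphi^2=-\Id+\eta\otimes\xi$ together with metric compatibility, using the commutators $[\xi_1,\xi_2]=\tfrac{2}{\Omega}\xi_3$ and $[\xi_1,e_{4p+r}]=\tfrac{1}{\Omega}e_{4p+r+1}$ ($r=0,1,2$, cyclically), both of which follow from the bracket tables of Section~2.2 and the direct-sum structure of $\mathfrak{sp}(n)\oplus\mathfrak{u}(1)$.

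With $(\varphi,\xi,\eta)$ in hand, each of (i)--(iii) reduces to a direct Lie-algebra computation on $\mathfrak{m}$. For (i), normality (vanishing of $N_\varphi+d\eta\otimes\xi$) amounts to a finite collection of identities on triples of basis vectors, all of which hold for arbitrary $a,b,c>0$. For (ii), the formula $d\eta(X,Y)=-\eta([X,Y]_{\mathfrak{m}})$ combined with the bracket tables shows that $d\eta$ restricts to a multiple of $\xi_2^{\flat}\wedge\xi_3^{\flat}$ on $\mathfrak{m}_2\otimes\mathfrak{m}_2$ and to a multiple of $\sum_{p}(e_{4p}^{\flat}\wedge e_{4p+1}^{\flat}+e_{4p+2}^{\flat}\wedge e_{4p+3}^{\flat})$ on $\mathfrak{m}_3\otimes\mathfrak{m}_3$, with coefficients depending separately on $(a,b)$ and on $(a,c)$; matching both against $2\alpha\Phi$ yields the two equalities $\alpha=a/(b\Omega)$ and $\alpha=a/(2c\Omega)$, which must be imposed simultaneously.

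Finally, for (iii), inserting each basis vector into $\nabla^{g_{a,b,c}}_X\xi_1$ using the explicit Nomizu formulas of Remark~\ref{squashedunitaryNomizumap} and comparing with $-\alpha\varphi(X)$ produces, coordinate by coordinate, exactly the same pair of constraints as in (ii). Hence an invariant $\alpha$-Sasakian structure exists if and only if both equalities hold together. The bulk of the technical work is bookkeeping: tracking signs in the commutators $[\lambda_1 F_{p,q}^{(n)},\lambda_2 F_{r,s}^{(n)}]$, accounting for the $\mathfrak{u}(1)$-summand so that $\ad(\xi_1)$ acts cleanly on $\mathfrak{m}_2$ and $\mathfrak{m}_3$, and handling the three independent scale factors built into the orthonormal frame. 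Once $\varphi$ is fixed in the first step, however, no new idea is required beyond those used for Proposition~\ref{specialunitaryalphaSasakian}.
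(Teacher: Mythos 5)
Your proposal follows the same strategy as the paper's proof: fix $\xi=\pm\xi_1$ as the only invariant choice of Reeb field, observe that the pairwise inequivalence of $\mathfrak{m}_1,\mathfrak{m}_2,\mathfrak{m}_3$ forces $\varphi$ to be block-diagonal, use invariance of $\Phi$ (equivalently, the extra $\mathfrak{u}(1)$ isotropy operator, which is what the paper packages as ``$\mathcal{I}$-complex linearity'') to pin $\varphi\rvert_{\mathfrak{m}_i}$ to a multiple of $\ad\xi\rvert_{\mathfrak{m}_i}$, fix the scalars by compatibility, and then verify normality, the $\alpha$-contact condition via $d\eta(X,Y)=-\eta([X,Y]_{\mathfrak{m}})$, and the $\alpha$-K-contact condition via the Nomizu map of Remark~\ref{squashedunitaryNomizumap}. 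This is essentially the paper's proof (which itself mirrors Proposition~\ref{specialunitaryalphaSasakian}), so no substantive comparison is needed; the only mild caveat is that some of the bracket identities you quote in passing (e.g.\ ``$[\xi_1,e_{4p+r}]=\frac{1}{\Omega}e_{4p+r+1}$ cyclically'') do not quite match the cyclic pattern implied by $\Phi_1\rvert_{\mathfrak{m}_3}$ in (\ref{Phi1})--(\ref{Phi3}), so the signs and pairings in the horizontal bracket table should be double-checked against the Ambrose--Singer torsion proposition for this case before relying on them.
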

\begin{proof}
Following a similar argument as in the proof of Proposition \ref{specialunitaryalphaSasakian}, the only choices for the Reeb vector field are $\xi:= \pm \xi_1$. We claim that the 2-form $\Phi:=g_{a,b,c}(\cdot, \varphi(\cdot))$ is invariant if and only if
\begin{align} \label{invariant2formspace}
\Phi \in (\Lambda^2 \mathfrak{m}_2)^{\Sp(n-1) \U(1)} \oplus (\Lambda^2\mathfrak{m}_3)^{\Sp(n-1) \U(1)}\simeq \Span_{\R} \{  \ad\xi\rvert_{\mathfrak{m}_2}  \} \oplus \Span_{\R} \{  \ad\xi\rvert_{\mathfrak{m}_3}  \} .
\end{align}
The proof of the claim consists of two parts: First, by noting that $\mathfrak{m}_1$, $\mathfrak{m}_2, \mathfrak{m}_3$ are irreducible and have pairwise distinct dimensions, we see that any invariant 2-form has trivial $\mathfrak{m}_1\otimes \mathfrak{m}_2$, $\mathfrak{m}_1\otimes \mathfrak{m}_3$, and $\mathfrak{m}_2\otimes \mathfrak{m}_3$ components. Second, we note that $\Lambda^2\mathfrak{m}_1=0$ (for dimension reasons) and that each $\Lambda^2 \mathfrak{m}_i$ $(i=2,3)$ has a real 3-dimensional space of $\Sp(n-1)$-invariant 2-forms, corresponding to the quaternionic structure 
\[
\mathcal{I}_i:= \Phi_1\rvert_{\mathfrak{m}_i}, \quad \mathcal{J}_i:=  \Phi_2\rvert_{\mathfrak{m}_i},\quad \mathcal{K}_i:= \Phi_3\rvert_{\mathfrak{m}_i}.
\]
Imposing the additional condition of $\U(1)$-invariance is equivalent to also requiring $\mathcal{I}$-complex linearity, giving $(\Lambda^2\mathfrak{m}_i)^{\Sp(n-1)\U(1)} \simeq \Span_{\R}\{\mathcal{I}_i\} = \Span_{\R} \{  \ad\xi\rvert_{\mathfrak{m}_i}  \}  $ for $i=2,3$.

Returning to the main proof, (\ref{invariant2formspace}) is equivalent to $\varphi = \lambda_1 \ad\xi\rvert_{\mathfrak{m}_2} \oplus \lambda_2 \ad\xi\rvert_{\mathfrak{m}_3} $ for some $\lambda_1,\lambda_2 \in\R$, and the metric compatibility condition $g_{a,b,c}(\varphi(X),\varphi(Y)) = g_{a,b,c}(X,Y)- g_{a,b,c}(\xi,X)g_{a,b,c}(\xi,Y)$ necessitates $\lambda_1 = \Omega/2$, $\lambda_2=\Omega $, i.e. 
\[
\varphi =\frac{\Omega }{2} \ad\xi\rvert_{\mathfrak{m}_2} \oplus \Omega \ad\xi\rvert_{\mathfrak{m}_3}.
\]
One then calculates that the Nijenhuis tensor vanishes for any values of $a,b,c$, and the structure is $\alpha$-contact ($d\eta = 2\alpha \Phi$) and $\alpha$-K-contact ($\nabla^g_X\xi = -\alpha\varphi(X)$) if and only if $\frac{a}{b\Omega}= \frac{a}{2c\Omega }=\alpha$. 
\end{proof}
Solving the equations in the preceding proposition for $a,b,c$, we immediately obtain:
\begin{corollary}\label{spnu1alphasasakianparamterization}
For each fixed $\alpha>0$, the invariant $\alpha$-Sasakian structures occur in a 1-parameter family:
\begin{align}
a=8\lambda^2\alpha^2(n+1)(n+3),\quad b=2\lambda, \quad c=\lambda, \qquad (\lambda>0). \label{invSasakianfamily} 
\end{align}
In particular, the round metric occurs for the parameter $\lambda=\frac{1}{8(n+1)}$.
\end{corollary}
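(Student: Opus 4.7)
The plan is to solve the algebraic system from Proposition~\ref{u1twistedSasakian}(iii) directly and then verify the round-metric claim by substituting into the known round-metric parameters listed just before the orthonormal basis.

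First, I would unwind the two equations $\frac{a}{b\Omega} = \alpha$ and $\frac{a}{2c\Omega} = \alpha$. Since $a,\Omega > 0$, dividing the first by the second immediately yields $b = 2c$, so setting $c = \lambda$ for some $\lambda > 0$ forces $b = 2\lambda$. It remains to determine $a$ in terms of $\lambda$ and $\alpha$. Substituting $\Omega = \sqrt{2a(n+1)(n+3)}$ into $a = \alpha b \Omega$ gives
\[
a = 2\alpha\lambda\sqrt{2a(n+1)(n+3)},
\]
and squaring then solving for $a$ (valid since $a > 0$) produces $a = 8\lambda^2 \alpha^2 (n+1)(n+3)$, which is the claimed parameterization. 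Conversely, a direct substitution shows these values satisfy both equations in Proposition~\ref{u1twistedSasakian}(iii), so the family is exactly the invariant $\alpha$-Sasakian locus.

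For the second assertion, I would plug $\lambda = \frac{1}{8(n+1)}$ into the formulas: this gives $c = \frac{1}{8(n+1)}$ and $b = \frac{1}{4(n+1)}$, which already match the round-metric parameters stated earlier in the section. Taking $\alpha = 1$ in the formula for $a$ yields $a = 8 \cdot \frac{1}{64(n+1)^2} \cdot (n+1)(n+3) = \frac{n+3}{8(n+1)}$, which is precisely the third round-metric parameter. Hence the round metric appears in the family, and the Sasaki constant is $\alpha = 1$, consistent with the usual Sasakian structure on the round sphere.

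There is no substantive obstacle: the only step requiring care is the square-root manipulation when solving $a = \alpha b \Omega$, and checking that the resulting value of $a$ is compatible with the positivity of $\Omega$ (which it is, since $\lambda,\alpha > 0$ make both sides positive before squaring). The verification for the round metric is immediate once the parameterization is in hand.
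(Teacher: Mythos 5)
Your proof is correct and takes essentially the same approach as the paper, which simply states that the corollary follows by solving the equations in Proposition~\ref{u1twistedSasakian}; you carry out exactly that algebra (reducing to $b=2c$, then solving the quadratic in $a$ after substituting $\Omega$), and your verification of the round-metric parameters is accurate.
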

In contrast to the case $G=\Sp(n)$, we shall see in the following remark that the relationship between invariant Einstein-Sasakian structures and invariant Killing spinors is more complicated for $G=\Sp(n)\U(1)$, stemming from the fact that this group has a non-trivial $2$-dimensional complex representation.
\begin{remark}
We remark that the above 2-form $\Phi$ takes the form (\ref{Phi1}). Therefore if an invariant Einstein-Sasakian structure exists then Proposition \ref{E1spinorbasis} and Theorem 1 in \cite{Fried90} imply that there are two linearly independent Killing spinors inside $E_1^- = \Span_{\C}\{1,\ y_1\wedge \omega^{n-1}\}$. If $n=1$ then $\dim_{\C} \Sigma =2$, and it follows that $\Sigma=E_1^-$ is spanned by two linearly independent (non-invariant) Killing spinors. In fact, for all $n\geq 1$, one sees from Theorem \ref{explicitspinorsK=u1} that the intersection of $E_1^-$ with the space of invariant spinors is trivial, so the Killing spinors spanning $E_1^-$ are not invariant. This contrasts with the behaviour in the cases $G=\Sp(n)$, $\SU(n+1)$, where the spaces $E_i^{\pm}$ had bases of invariant spinors. Indeed, in each of the three cases $G=\Sp(n)\U(1)$, $\Sp(n)$, $\SU(n+1)$ it is easy to see by arguing similarly as in the proof of \cite[Prop.\@ 7.1]{kath_Tduals} that there is a $G$-representation on $E_{i}^{\pm}$, and in the latter two cases we use the fact that $\dim_{\C} E_i^{\pm}\leq 2$ to conclude that the representation must be trivial (equivalently, the Killing spinors spanning $E_i^{\pm}$ are invariant). However, unlike the other two groups, $G=\Sp(n) \U(1)$ has a nontrivial 2-dimensional representation, which is what allows the Killing spinors spanning $E_1^-$ to be non-invariant in this case. This behaviour is somewhat surprising; one would intuitively expect the Killing spinors associated to an invariant Einstein-Sasakian structure to also be invariant.   
\end{remark}
\begin{theorem} \label{u1twistedGKS}
The sphere $(S^{4n-1}=\frac{\Sp(n) \U(1)}{\Sp(n-1) \U(1)}, g_{a,b,c})$ admits an invariant generalized Killing spinor if and only if $n=2$. If $n=2$ there exists a pair $\psi_0$, $\psi_1$ of linearly independent invariant generalized Killing spinors,
\[
\nabla_X^{g_{a,b,c}} \psi_i = A_i(X)\cdot \psi_i , \quad i=0,1, 
\]
for the endomorphisms
\begin{align*}
A_0&:=\frac{a}{2\Omega }(\frac{1}{c}-\frac{1}{b}) \Id \rvert_{\mathfrak{m}_1} + \left(\frac{a}{2b\Omega }- \frac{(1-\frac{b}{2c})}{2\sqrt{3b}}\right)\Id\rvert_{\mathfrak{m}_2} +  \left(  -\frac{a}{4c\Omega } -\frac{\sqrt{b}}{4c\sqrt{3}} \right)\Id\rvert_{\mathfrak{m}_3}  ,  \\
A_1&:= \frac{a}{2\Omega }(\frac{1}{c}-\frac{1}{b}) \Id \rvert_{\mathfrak{m}_1} + \left(\frac{a}{2b\Omega }+ \frac{(1-\frac{b}{2c})}{2\sqrt{3b}}\right)\Id\rvert_{\mathfrak{m}_2} +  \left(  -\frac{a}{4c\Omega } +\frac{\sqrt{b}}{4c\sqrt{3}} \right)\Id\rvert_{\mathfrak{m}_3}
\end{align*}
with at most three distinct eigenvalues.
\end{theorem}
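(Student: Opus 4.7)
The plan is to directly compute the Levi-Civita spinorial derivative of an arbitrary invariant spinor, using the explicit Nomizu map recorded in Remark \ref{squashedunitaryNomizumap}, and then impose the generalized Killing condition degree-by-degree in the exterior algebra model of $\Sigma$. By Theorem \ref{explicitspinorsK=u1}, every invariant spinor has the form $\psi = \mu_1 \omega^{n/2} + \mu_2 \, y_1\wedge \omega^{(n-2)/2}$ for $n$ even, and $\Sigma_{\inv}=0$ for $n$ odd. Hence the $n$ odd case is immediate, and the whole problem reduces to analyzing when $\widetilde{\Uplambda}^{g_{a,b,c}}(X)\cdot \psi$ equals $A(X)\cdot \psi$ for some symmetric endomorphism $A$ preserving $\psi$, evaluated at the origin on the orthonormal basis $\xi_1,\xi_2,\xi_3,e_{4p},\ldots,e_{4p+3}$.

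First I would treat the case $n=2$, where $\omega^{n/2}=y_2\wedge y_3$ and $y_1\wedge\omega^{(n-2)/2}=y_1$, so $\psi_0,\psi_1$ are a pair of homogeneous exterior forms of degrees $2$ and $1$. Applying the Nomizu operators from Remark \ref{squashedunitaryNomizumap} and the Clifford action formulas from Remark \ref{reorderedCliffalgrepresentation} amounts to finitely many Kronecker-type computations: one verifies that $\widetilde{\Uplambda}^{g_{a,b,c}}(\xi_k)\cdot\psi_i$ is a scalar multiple of $\xi_k\cdot\psi_i$ for $k=1,2,3$, and $\widetilde{\Uplambda}^{g_{a,b,c}}(e_\ell)\cdot \psi_i$ is a scalar multiple of $e_\ell\cdot \psi_i$ for $\ell=4,\ldots,7$, with the scalars depending only on the irreducible block $\mathfrak{m}_1,\mathfrak{m}_2,\mathfrak{m}_3$ containing $X$. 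Reading off these scalars produces exactly the eigenvalues of $A_0$ and $A_1$ stated in the theorem.

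For $n>2$ even, I would argue by a degree obstruction, in the spirit of Proposition \ref{deformationofE1minus}. The vertical part of the Nomizu map $\Uplambda^{g_{a,b,c}}(\xi_2),\Uplambda^{g_{a,b,c}}(\xi_3)$ contains the horizontal components $(1-\tfrac{b}{2c})\Phi_2\rvert_{\mathcal H}$ and $(1-\tfrac{b}{2c})\Phi_3\rvert_{\mathcal H}$ (up to scalars), whose Clifford action on $\omega^{n/2}$ and $y_1\wedge \omega^{(n-2)/2}$ shifts the exterior degree by $\pm 2$. On the other hand, $\xi_k\cdot \psi$ and $e_\ell\cdot \psi$ shift degrees by $\pm 1$ only, so the image of $\mathfrak{m}\cdot \psi$ cannot contain the $\deg\pm 2$ components produced by $\widetilde{\Uplambda}^{g_{a,b,c}}(\xi_2)\cdot\psi$ and $\widetilde{\Uplambda}^{g_{a,b,c}}(\xi_3)\cdot\psi$ unless these components vanish. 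Expanding $\omega^{n/2}$ as a sum over $n/2$-element subsets of the $n-1$ pairs $\{y_{2i}\wedge y_{2i+1}\}$, I would verify that for $n>2$ these higher-degree contributions are nonzero whenever $(1-\tfrac{b}{2c})\neq 0$, forcing $b=2c$. A parallel analysis using the horizontal operators $\widetilde{\Uplambda}^{g_{a,b,c}}(e_{4p})$ etc.\ then produces further relations that, combined with $b=2c$, are incompatible with the defining Clifford relations on $\psi$ unless $\psi=0$.

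The main obstacle will be the bookkeeping in the $n>2$ obstruction step: one must track not just a single degree shift but the fact that a sum like $\sum_p e_{4p}\wedge e_{4p+2}$ acting via Clifford multiplication on $\omega^{n/2}$ produces a nontrivial linear combination spread across several multi-indices, and show that no choice of coefficients $\lambda_1,\lambda_2,\lambda_3$ for the candidate symmetric endomorphism $A$ can match all of these terms simultaneously. Once this is done, the $n=2$ case is automatic from the explicit calculation, and the stated eigenvalues of $A_0,A_1$ follow immediately from reading off the scalars in step one.
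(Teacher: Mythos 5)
Your decomposition of the problem (trivial for $n$ odd, explicit Kronecker-type computation for $n=2$, and a parity/degree obstruction for $n>2$ even) is reasonable, and the $n=2$ and $n$-odd parts coincide with what the paper does. But for the crucial case $n>2$ you take a genuinely different and ultimately incomplete route: the paper does not go through the vertical Nomizu operators $\Uplambda^{g_{a,b,c}}(\xi_2),\Uplambda^{g_{a,b,c}}(\xi_3)$ at all. It works directly with the horizontal operator $\Uplambda^{g_{a,b,c}}(e_{4p})$ from Remark \ref{squashedunitaryNomizumap}, observes that $A(e_{4p})$ must lie in $\operatorname{span}_{\mathbb{R}}\{e_{4p},e_{4p+1}\}$, and compares coefficients of the basis spinors $y_{2p}\wedge\omega^{n/2}$ and $y_1\wedge y_{2p}\wedge\omega^{(n-2)/2}$ (which are nonzero precisely because $n>2$), reaching a contradiction for \emph{all} $a,b,c>0$ in one shot. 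Your preliminary degree-obstruction step via $\Phi_2\rvert_{\mathcal H},\Phi_3\rvert_{\mathcal H}$ is a correct observation, in the spirit of Proposition \ref{deformationofE1minus}: the degree-$(n+2)$ part $\omega^{n/2+1}$ and the degree-$(n-3)$ part $\sum_p x_{2p}\lrcorner x_{2p+1}\lrcorner(y_1\wedge\omega^{(n-2)/2})$ are indeed nonzero for $n\geq 4$, and since the degree-zero parts of $\Phi_2\rvert_{\mathcal H}\cdot\psi$, $\Phi_3\rvert_{\mathcal H}\cdot\psi$ cancel identically, this does force $b=2c$. But the constraint $b=2c$ buys you nothing: the horizontal Nomizu operator does not simplify appreciably under $b=2c$, so you still have to run essentially the full horizontal computation the paper does, and that computation already succeeds without assuming $b=2c$. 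More importantly, you never carry out this horizontal step — you only assert that it ``produces further relations that \ldots are incompatible \ldots unless $\psi=0$.'' Since this is exactly where the actual contradiction lives, that assertion is the gap in the proposal. Two smaller points: (i) the claim that $A$ acts as a scalar on each $\mathfrak{m}_i$ should be justified explicitly via Schur's lemma (invariance of $\psi$ and of $\nabla^g\psi$ forces $A$ to commute with the isotropy action); (ii) in the $n=2$ verification you still need to confirm that those scalars are real, since only then is $A$ a legitimate symmetric endomorphism.
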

\begin{proof}
Suppose there exists an invariant generalized Killing spinor $\nabla_X^{g_{a,b,c}}\psi=A(X)\cdot \psi$ and, using Theorem \ref{explicitspinorsK=u1}, write $\psi = \lambda_1\omega^{\frac{n}{2}} + \lambda_2 y_1\wedge \omega^{\frac{n-2}{2}}$ for some $\lambda_1,\lambda_2\in\C$. Let us examine the spin lift of the operator $\Uplambda^{g_{a,b,c}}(e_{4p})$. One calculates
\begin{align*}
\frac{1}{2}\xi_1\cdot e_{4p+1} \cdot \psi  &= \frac{i}{2}\left[ \lambda_1\left( \frac{n}{2}y_{2p+1}\wedge \omega^{\frac{n-2}{2}} -y_{2p}\wedge \omega^{\frac{n}{2}} \right) + \lambda_2\left(\frac{n-2}{2}y_1\wedge y_{2p+1}\wedge \omega^{\frac{n-4}{2}} -y_1\wedge y_{2p} \wedge \omega^{\frac{n-2}{2}}    \right)     \right] , \\
\frac{1}{2}\xi_2\cdot e_{4p+2}\cdot \psi &= \frac{1}{2}\left[ \lambda_1\left( \frac{n}{2}y_1\wedge y_{2p}\wedge \omega^{\frac{n-2}{2}} - y_1\wedge y_{2p+1} \wedge \omega^{\frac{n}{2}}   \right) + \lambda_2\left(  y_{2p+1}\wedge \omega^{\frac{n-2}{2}} - \frac{n-2}{2} y_{2p} \wedge \omega^{\frac{n-4}{2}} \right)   \right] ,  \\
\frac{1}{2}\xi_3\cdot e_{4p+3}\cdot \psi &= \frac{1}{2}\left[ \lambda_1\left( \frac{n}{2}y_1\wedge y_{2p} \wedge \omega^{\frac{n-2}{2}}+y_1\wedge y_{2p+1}\wedge \omega^{\frac{n}{2}}\right)+ \lambda_2\left( y_{2p+1}\wedge \omega^{\frac{n-2}{2}} +\frac{n-2}{2}y_{2p}\wedge \omega^{\frac{n-4}{2}}\right)     \right] ,
\end{align*}
and the explicit formula from Remark \ref{squashedunitaryNomizumap} then gives
\begin{align*}
\widetilde{\Uplambda^{g_{a,b,c}}}(e_{4p})\cdot \psi &=  -\left( \frac{ian\lambda_1}{8c\Omega } + \frac{\sqrt{b}\lambda_2}{4c\sqrt{n+1}} \right) y_{2p+1}\wedge \omega^{\frac{n-2}{2}} +   \left( \frac{ia\lambda_2}{4c\Omega } + \frac{n\lambda_1}{2} \right) y_1\wedge y_{2p} \wedge \omega^{\frac{n-2}{2}} \\
&\qquad + \left( \frac{ia\lambda_1}{4c\Omega }\right) y_{2p}\wedge \omega^{\frac{n}{2}} - \left( \frac{ia(n-2)\lambda_2}{8c\Omega } \right) y_1\wedge y_{2p+1}\wedge \omega^{\frac{n-4}{2}}  
\end{align*}
In order for the right hand side of this expression to be equal to the Clifford product of a (real) tangent vector with $\psi$, one sees that such a vector must be of the form $se_{4p} + te_{4p+1}$ for some $s,t\in \R$. Comparing with
\begin{align*} 
(se_{4p} +te_{4p+1})\cdot \psi &= \left( \frac{ins\lambda_1}{2} - \frac{nt\lambda_1}{2}  \right) y_{2p+1}\wedge \omega^{\frac{n-2}{2}} +(is\lambda_1 +t\lambda_1)y_{2p}\wedge \omega^{\frac{n}{2}}\\
&\qquad  + \left(-\frac{i(n-2)s\lambda_2}{2} +\frac{(n-2)t\lambda_2 }{2}\right) y_1\wedge y_{2p+1}\wedge \omega^{\frac{n-4}{2}} \\
&\qquad +  (-is\lambda_2- t\lambda_2)y_1\wedge y_{2p}\wedge \omega^{\frac{n-2}{2}} 
\end{align*} gives the necessary conditions
\begin{align*}
\left( \frac{ia\lambda_1}{4c\Omega }\right) y_{2p}\wedge \omega^{\frac{n}{2}} &= (is\lambda_1 +t\lambda_1)y_{2p}\wedge \omega^{\frac{n}{2}} , \\
\left( \frac{ia\lambda_2}{4c\Omega } + \frac{n\lambda_1}{2} \right) y_1\wedge y_{2p} \wedge \omega^{\frac{n-2}{2}} &= -(is\lambda_1 +t\lambda_1) y_1\wedge y_{2p}\wedge\omega^{\frac{n-2}{2}} . 
\end{align*}
If $n\neq 2$ then $y_{2p}\wedge \omega^{\frac{n}{2}}$ and $y_1\wedge y_{2p}\wedge \omega^{\frac{n-2}{2}} $ are non-zero (the latter is non-zero independent of $n$), and we have 
\[
\left( \frac{ia\lambda_1}{4c\Omega }\right) = -\left( \frac{ia\lambda_2}{4c\Omega } + \frac{n\lambda_1}{2} \right) .
\]
It follows by taking the real part of both sides that $\lambda_1=\lambda_2=0$, i.e. $\psi\equiv 0$ is trivial.
For $n=2$, lifting the operators in Remark \ref{squashedunitaryNomizumap} and applying them to the spinors 
\begin{align*}
\psi_0&:= \frac{1}{\sqrt{2}}(\omega +iy_1),\quad \psi_1:=\xi_1\cdot \psi_0 = \frac{1}{\sqrt{2}}(i\omega +y_1)
\end{align*}
gives the result.
\end{proof}
Next, we calculate the Ambrose-Singer torsion and determine its type:
\begin{proposition}
	For any $a,b,c>0$ the sphere $(S^{4n-1}=\frac{\Sp(n)\U(1)}{\Sp(n-1)\U(1)}, g_{a,b,c})$ has Ambrose-Singer torsion of type $\mathcal{T}_{\totallyskew}\oplus \mathcal{T}_{\CT}$, given by {\small
		\begin{align*}
			T^{\AS}(\xi_1,\xi_2)&= -\frac{2}{\Omega} \xi_3 , \quad T^{\AS}(\xi_1,\xi_3) =  \frac{2}{\Omega} \xi_2  , \quad  T^{\AS}(\xi_2,\xi_3) = - \frac{2a}{b\Omega}\xi_1 ,  \\
			T^{\AS}(\xi_1, - )\rvert_{\mathfrak{m}_3} &= \frac{1}{\Omega}\Phi_1\rvert_{\mathfrak{m}_3}, \quad T^{\AS}(\xi_2, - )\rvert_{\mathfrak{m}_3} = \frac{1}{2\sqrt{b(n+1)}}\Phi_2\rvert_{\mathfrak{m}_3}, \quad  T^{\AS}(\xi_3, - )\rvert_{\mathfrak{m}_3} = \frac{1}{2\sqrt{b(n+1)}}\Phi_3\rvert_{\mathfrak{m}_3},        \\
			T^{\AS}(e_{4p},e_{4q}) &= T^{\AS}(e_{4p+1},e_{4q+1}) = T^{\AS}(e_{4p+2},e_{4q+2}) = T^{\AS}(e_{4p+3},e_{4q+3})=0\\
			T^{\AS} (e_{4p},e_{4q+1}) &= - \frac{a\delta_{p,q}}{c\Omega } \xi_1 , \quad T^{\AS}(e_{4p},e_{4q+2}) = -\frac{\sqrt{b} \delta_{p,q} }{2c\sqrt{n+1}} \xi_2, \quad T^{\AS}(e_{4p},e_{4q+3}) = -\frac{\sqrt{b} \delta_{p,q} }{2c\sqrt{n+1}} \xi_3, \\
			T^{\AS}(e_{4p+1},e_{4q+2}) &= -\frac{\sqrt{b} \delta_{p,q} }{2c\sqrt{n+1}} \xi_3,\quad  T^{\AS} (e_{4p+1},e_{4q+3}) = \frac{\sqrt{b} \delta_{p,q} }{2c\sqrt{n+1}} \xi_2, \quad T^{\AS}(e_{4p+2},e_{4q+3}) = - \frac{a\delta_{p,q}}{c\Omega} \xi_1,  
		\end{align*}
	}
	for $p,q=1,\dots, n-1$, where $\Phi_1,\Phi_2,\Phi_3$ are defined formally as in (\ref{Phi1})-(\ref{Phi3}). The projection of $T^{\AS}$ onto $\mathcal{T}_{\totallyskew}$ is 
	\[
	T^{\AS}_{\totallyskew} = - \left( \frac{2a+4b}{3b\Omega}  \right)\xi_{1,2,3}   + \left(\frac{a+2c}{3c\Omega} \right) \xi_1\wedge \Phi_1\rvert_{\mathfrak{m}_3}
	+\left( \frac{b+2c}{6c\sqrt{b(n+1)}}  \right)\sum_{i=2}^3  \xi_i \wedge \Phi_i\rvert_{\mathfrak{m}_3}   ,
	\]
	with $T^{\AS} = T^{\AS}_{\totallyskew}$ if and only if $a=b=c$.
\end{proposition}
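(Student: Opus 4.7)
My plan is to follow the same strategy used throughout Section 4 for the other realizations: the Ambrose--Singer connection has $\Uplambda^{\AS}\equiv 0$, so formula \eqref{torsionatorigin} reduces to $T^{\AS}(X,Y) = -[X,Y]_{\mathfrak{m}}$ at the origin. Thus the entire calculation boils down to computing brackets in $\mathfrak{sp}(n)\oplus\mathfrak{u}(1)$ of the chosen $g_{a,b,c}$-orthonormal basis $\{\xi_1,\xi_2,\xi_3,e_{4p+r}\}$ and projecting onto $\mathfrak{m}$. This is essentially the same computation as in Proposition 4.2 of the $\Sp(n)$ case and its $\Sp(n)\Sp(1)$ analogue, except that the $\mathfrak{u}(1)$-twisting changes the way certain $\xi_r$-brackets project: the component of $[\xi_2,\xi_3]$ lying along $(iF_{1,1}^{(n)},i)\in\mathfrak{h}$ gets subtracted off, which is exactly what produces the asymmetric coefficients $-2/\Omega$ vs.\ $-2a/(b\Omega)$ in the three vertical bracket relations. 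I would first verify the nine matrix-commutator identities needed (using the rules listed just after the definition of $F_{i,j}^{(n)}$) and then read off the Ambrose--Singer torsion directly, being careful to convert quaternionic products via the correspondences in \eqref{Reeb_vec_fields_definition_SpnSpnminus1}.

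Next, to obtain $T^{\AS}_{\totallyskew}$, I pass through the isomorphism \eqref{torsionisomorphism2} to the difference tensor $A^{\AS}\in\mathcal{A}^g$, apply the skew projection formula \eqref{skewprojection}, and then return via \eqref{torsionisomorphism1}. Concretely, since $\Uplambda^{\AS}=0$ the difference tensor is simply $A^{\AS}(X,Y) = -\Uplambda^g(X)Y$, so this reduces to a cyclic symmetrization of the Levi--Civita Nomizu map written out in Remark \ref{squashedunitaryNomizumap}. Because the nonzero brackets are concentrated on a small number of basis triples (the vertical triple $\xi_1\wedge\xi_2\wedge\xi_3$, and the families $\xi_i\wedge\Phi_i\rvert_{\mathfrak{m}_3}$ for $i=1,2,3$), the cyclic sum collapses into exactly these three invariant $3$-forms, with coefficients that one computes directly from the formulas above.

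For the final equivalence, $T^{\AS}=T^{\AS}_{\totallyskew}$ if and only if the cyclic traceless component $T^{\AS}_{\CT}$ vanishes. The vectorial part vanishes automatically since each $c_{12}(A^{\AS}(e_i,-,-))$ is zero (the Ambrose--Singer connection preserves the metric with vanishing trace-contribution here). Then comparing the coefficient of each irreducible summand in $T^{\AS}$ with that of $T^{\AS}_{\totallyskew}$ yields the system $\frac{2}{\Omega}=\frac{2a+4b}{3b\Omega}$, $\frac{2a}{b\Omega}=\frac{2a+4b}{3b\Omega}$, $\frac{1}{\Omega}=\frac{a+2c}{3c\Omega}$, $\frac{1}{2\sqrt{b(n+1)}}=\frac{b+2c}{6c\sqrt{b(n+1)}}$, and each of these simplifies to identifying two of the three parameters $a,b,c$; together they force $a=b=c$.

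The main technical obstacle I expect is purely bookkeeping: keeping track of the quaternionic commutators $[iF_{1,p}^{(n)},jF_{1,q}^{(n)}]$ etc.\ with their correct normalization factors $\Omega$, $2\sqrt{b(n+1)}$, and $2\sqrt{2c(n+1)}$, and correctly splitting each bracket into its $\mathfrak{h}$- and $\mathfrak{m}$-components (where the $\mathfrak{u}(1)$-twist shifts things compared with the untwisted $\Sp(n)$ computation). None of this is conceptually difficult, but it is where sign and coefficient errors are most likely to creep in; the safeguard is to compare the limit $a\to b, b\to c$ with the corresponding bracket formulas from the analogous $\Sp(n)\Sp(1)$ proposition, which must agree up to the replacement of one component of $\Sp(1)$-generators by the $\U(1)$-subalgebra.
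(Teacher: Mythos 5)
Your plan follows essentially the same route as the paper's (implicit) proof: compute $T^{\AS}(X,Y)=-[X,Y]_{\mathfrak{m}}$ from the matrix commutator relations and the $\mathfrak{h}$-versus-$\mathfrak{m}$ splitting, then pass through the difference-tensor isomorphism (\ref{torsionisomorphism2}) and the skew projection (\ref{skewprojection}) to extract $T^{\AS}_{\totallyskew}$, and compare coefficients for the equivalence $T^{\AS}=T^{\AS}_{\totallyskew}\Leftrightarrow a=b=c$. One small imprecision worth tightening: metric compatibility of $\nabla^{\AS}$ does not by itself exclude a vectorial torsion component (metric connections can have nonzero vectorial torsion); here the vectorial part vanishes because $G$ is unimodular and the reductive condition $[\mathfrak{m},\mathfrak{h}]\subseteq\mathfrak{m}$ forces $c_{12}(A^{\AS})(Z)=-\tr\bigl(\ad(Z)\bigr)=0$ for $Z\in\mathfrak{m}$ — or it can simply be read off from the explicit torsion formulas you will derive.
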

Before studying the situation in dimension $7$ in more detail, we conclude our discussion of the general case with a description of the invariant differential forms of degree $\leq 3$:
\begin{proposition}
	The invariant differential forms of degree less than or equal to $3$ on $(S^{4n-1}=\frac{\Sp(n)\U(1)}{\Sp(n-1)\U(1)}, g_{a,b,c})$ are given in Table \ref{Tab:inv_forms_SpnU1}.
\end{proposition}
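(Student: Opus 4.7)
The plan is to parallel the approach of Proposition \ref{sp2sp1_inv_forms_statement}. Since $H = \Sp(n-1)\cdot \U(1)$ sits inside $\Sp(n-1)\cdot \Sp(1)$, a form is $H$-invariant if and only if it is $\Sp(n-1)$-invariant and additionally annihilated as a derivation by the single extra isotropy operator $A := \ad((iF_{1,1}^{(n)},i))|_{\mathfrak{m}}$ appearing in (\ref{u1extraisotropyoperator}). This reduces the question to two finite-dimensional linear algebra computations.

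First, I would enumerate the $\Sp(n-1)$-invariant $k$-forms on $\mathfrak{m}$ for $k \leq 3$ using the isotropy decomposition $\mathfrak{m} = \mathfrak{m}_1 \oplus \mathfrak{m}_2 \oplus \mathfrak{m}_3$, where $\mathfrak{m}_1$ and $\mathfrak{m}_2$ are trivial of real dimensions $1$ and $2$, and $\mathfrak{m}_3 \cong \H^{n-1}$ is the standard $\Sp(n-1)$-module. The first fundamental theorem of invariant theory implies that the $\Sp(n-1)$-invariants in $\Lambda^{\bullet}\mathfrak{m}_3^*$ are generated by the three K\"{a}hler forms $\Phi_i|_{\mathfrak{m}_3}$; splitting by the number of factors drawn from each $\mathfrak{m}_j$ then produces $\Sp(n-1)$-invariant bases in each degree, of dimensions $1, 3, 6, 10$ for $k = 0, 1, 2, 3$ respectively. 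This enumeration can also be cross-checked in LiE, exactly as in the proof of Proposition \ref{sp2sp1_inv_forms_statement}.

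Next, I would compute the derivation action of $A$ on each of these bases. A short commutator calculation in $\mathfrak{sp}(n)\oplus\mathfrak{u}(1)$ shows that $A$ preserves the splitting, acts trivially on $\mathfrak{m}_1$, rotates $\mathfrak{m}_2$ via $\xi_2 \mapsto 2\xi_3$ and $\xi_3 \mapsto -2\xi_2$, and restricts to $\mathfrak{m}_3$ as the almost complex structure $J_1$ associated with $\Phi_1|_{\mathfrak{m}_3}$. Dualizing and extending by Leibniz gives $A\eta_1 = 0$, $A\eta_2 = 2\eta_3$, $A\eta_3 = -2\eta_2$, $A\Phi_1|_{\mathfrak{m}_3} = 0$, $A\Phi_2|_{\mathfrak{m}_3} = 2\Phi_3|_{\mathfrak{m}_3}$ and $A\Phi_3|_{\mathfrak{m}_3} = -2\Phi_2|_{\mathfrak{m}_3}$. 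A routine application of the Leibniz rule on each basis element then expresses $A$ as an explicit matrix on the $\Sp(n-1)$-invariants in each degree, and the kernel of that matrix is the desired space of $H$-invariants.

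The main bookkeeping burden, and only genuine obstacle, occurs in degree three: the $10$-dimensional $\Sp(n-1)$-invariant space carries a non-trivial $\mathfrak{u}(1)$-action, and I expect its $A$-kernel to be $4$-dimensional, generated by $\eta_{1,2,3}$, $\eta_1 \wedge \Phi_1|_{\mathfrak{m}_3}$, $\eta_2\wedge\Phi_2|_{\mathfrak{m}_3} + \eta_3\wedge\Phi_3|_{\mathfrak{m}_3}$, and $\eta_2\wedge\Phi_3|_{\mathfrak{m}_3} - \eta_3\wedge\Phi_2|_{\mathfrak{m}_3}$. The remaining degrees are immediate from the formulas above, yielding $\Lambda^0_{\inv} = \R\cdot 1$, $\Lambda^1_{\inv} = \R\cdot \eta_1$ and $\Lambda^2_{\inv} = \Span_{\R}\{\eta_{2,3},\ \Phi_1|_{\mathfrak{m}_3}\}$, which should reproduce the entries of Table \ref{Tab:inv_forms_SpnU1}.
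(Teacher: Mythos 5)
Your proposal is correct and follows essentially the same approach as the paper: enumerate the $\Sp(n-1)$-invariant forms in each degree (which for $k=3$ is exactly the paper's $\Span_{\C}\{\xi_{1,2,3},\ \xi_i\wedge\Phi_j\rvert_{\mathfrak{m}_3}\}$), then take the kernel of the additional isotropy operator from (\ref{u1extraisotropyoperator}). The paper's degree-$2$ case is delegated to the proof of Proposition \ref{u1twistedSasakian} rather than redone, and you have a minor sign slip in how $A$ restricts to $\mathfrak{m}_3$ relative to $\Phi_1\rvert_{\mathfrak{m}_3}$, but neither changes the kernel computation or the resulting bases.
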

\begin{proof}
	The result for degree $k=1$ is clear, and for $k=2$ it follows from the proof of Proposition \ref{u1twistedSasakian}. For $k=3$, one verifies that the forms given in Table  \ref{Tab:inv_forms_SpnU1} are precisely the $\Sp(n-1)$-invariant $3$-forms in $\Span_{\C}\{\xi_{1,2,3}, \xi_i\wedge \Phi_j\rvert_{\mathfrak{m}_3}\}_{i,j=1,2,3}$ which are annihilated by the additional operator (\ref{u1extraisotropyoperator}).  
\end{proof}
\begin{table}[h!] 
	\centering
	\caption{Invariant Forms of Low Degree on $(S^{4n-1}=\frac{\Sp(n)\U(1)}{\Sp(n-1)\U(1)},g_{a,b,c})$}
	\begin{tabular}{|l||l|l|}\hline
		$k$  & $\dim \Lambda^k_{\inv}$ & Basis for $ \Lambda^k_{\inv}$ \\\hline
		$0$    & $1$ & $1$  \\
		$1$ & $1$ & $\xi_1 $  \\
		$2$ & $2$ & $\Phi_1\rvert_{\mathfrak{m}_2}$, $\Phi_1\rvert_{\mathfrak{m}_3}$ \\
		$3$   & $4$ &  $\xi_{1,2,3}$, $\xi_1\wedge \Phi_1\rvert_{\mathfrak{m}_3}$, $(\xi_2\wedge \Phi_2\rvert_{\mathfrak{m}_3}+\xi_3\wedge \Phi_3\rvert_{\mathfrak{m}_3})$, $(\xi_2\wedge \Phi_3\rvert_{\mathfrak{m}_3}-\xi_3\wedge \Phi_2\rvert_{\mathfrak{m}_3})$  \\ \hline
	\end{tabular}
	\label{Tab:inv_forms_SpnU1}
\end{table}
In what follows, we examine more closely the generalized Killing spinors in dimension $7$ and compare with the known results for the round ($3$-Sasakian) metric and the second Einstein metric (see \cite{nearly_parallel_g2,3Sasdim7,GKSspheres}).
\begin{remark}\label{u1twistedroundmetric}
In the case of an invariant Sasakian metric, substituting (\ref{invSasakianfamily}) into the endomorphisms from the preceding theorem gives 
\begin{align*}
A_0 &= \frac{1}{2} \Id\rvert_{\mathfrak{m}_1\oplus \mathfrak{m}_2} + \frac{-\sqrt{6\lambda}-1}{2\sqrt{6\lambda}} \Id\rvert_{\mathfrak{m}_3},  \qquad	A_1= \frac{1}{2} \Id\rvert_{\mathfrak{m}_1\oplus \mathfrak{m}_2} + \frac{-\sqrt{6\lambda}+1}{2\sqrt{6\lambda}} \Id\rvert_{\mathfrak{m}_3} . 
\end{align*}
It is clear that $A_0$ is never a multiple of the identity map, and $A_1$ is a multiple of the identity if and only if $\lambda=1/{24}$, which occurs precisely when $g_{a,b,c}$ is the round metric (see Corollary \ref{spnu1alphasasakianparamterization}). When $\lambda=1/{24}$, we have
\begin{align*} A_0&=\frac{1}{2}\Id\rvert_{\mathfrak{m}_1\oplus\mathfrak{m}_2} -\frac{3}{2}\Id\rvert_{\mathfrak{m}_3}  ,  \qquad 	A_1= \frac{1}{2}\Id.
\end{align*}
In this case there are three linearly independent Killing spinors (the invariant Killing spinor $\psi_1$ and two non-invariant linearly independent Killing spinors inside $E_1^-$), and Theorem 6 in \cite{Fried90} then implies the existence of a $3$-Sasakian structure, as expected. Starting with the generalized Killing spinor $\psi_0$, the existence of a $3$-Sasakian structure also follows from \cite[Thm.\@ 4.10]{GKSspheres}, which notes moreover that $\psi_0$ corresponds to the canonical spinor described in \cite{3Sasdim7} for a general 3-Sasakian manifold of dimension $7$ (and we then see that $\psi_1=\xi_i\cdot \psi_0$ corresponds to one of the auxiliary spinors described therein).
\end{remark}
Finally, as in Remark \ref{spn_second_einstein_metric}, we comment on the second Einstein metric in this example: 
\begin{remark}
	Here, the second Einstein metric is given by $g_{a,b,c}\rvert_{a=\frac{1}{24}, b=\frac{1}{60} , c= \frac{1}{24}}$, and it is clear that this metric is not part of the family (\ref{invSasakianfamily}). Substituting these values of $a,b,c$ into the endomorphisms from Theorem \ref{u1twistedGKS} gives
\[
A_0 = -\frac{3}{2\sqrt{5}} \Id, \qquad A_1= -\frac{3}{2\sqrt{5}} \Id\rvert_{\mathfrak{m}_1} + \frac{13}{2\sqrt{5}} \Id\rvert_{\mathfrak{m}_2} + \frac{1}{2\sqrt{5}}\Id\rvert_{\mathfrak{m}_3},  
\]       
so $\psi_0$ is the Killing spinor determining the proper nearly parallel $\G_2$-structure and $\psi_1$ is a generalized Killing spinor with $3$ distinct eigenvalues.  
\end{remark}

\section{Exceptional Spheres}
\subsection{$S^6=\G_2/\SU(3)$}
The isotropy representation here is irreducible, so the only invariant metrics are obtained from multiples of the Killing form. For convenience we choose the invariant inner product $g=B_0$ on $\mathfrak{g}_2$, and consider the reductive complement $\mathfrak{m}:= (\mathfrak{su}(3))^{\perp}$ with respect to this inner product. Following the notation of Proposition \ref{onbasisg2su3}, we have
\begin{align*}
\mathfrak{g}_2&= \Span_{\R} \{ \nu_1,\dots, \nu_{14}   \}, \quad
\mathfrak{su}(3)= \Span_{\R} \{  \nu_1,\dots \nu_8 \},
\end{align*}
and for $\mathfrak{m}$ we take the $g$-orthonormal basis given by
\begin{align*}
\mathfrak{m}=\Span_{\R}\{ e_1,\dots ,e_6\}, \ \ \text{where } e_i:= \begin{cases} \nu_{8+i} & i= 1,2,4,6, \\ -\nu_{8+i} & i = 3,5   .   \end{cases}
\end{align*}
The motivation behind this choice of basis is as follows. It is well-known that $S^6=\G_2/\SU(3)$ admits a unique invariant nearly K\"{a}hler structure up to sign \cite{Almost_Complex_Structure_S6}, and a direct calculation (by hand or using computer algebra software) shows that the $1$-dimensional space of invariant $2$-forms is spanned by
\[
J= e_1\wedge e_2 + e_3\wedge e_4 + e_5\wedge e_6.
\]
In particular, the invariant almost complex structures are $\pm J$ and the given basis $\{e_i\}$ is \emph{adapted} to $J$ in the sense that $J(e_1)=e_2$, $J(e_3)=e_4$, $J(e_5)=e_6$. This property allows us to give an easy proof of the following theorem:
\begin{theorem} \label{g2su3theorem}
Using the above orthonormal basis and the corresponding description of the spinor module from Section \ref{understandingspinrep}, the space of invariant spinors on $(S^6 = \G_2/\SU(3), g)$ is given by
\begin{align*}
\Sigma_{\inv}&= \Span_{\C} \{ 1, y_1\wedge y_2\wedge y_3 \},
\end{align*}
and the spinors $\psi_{\pm}:= 1\pm y_1\wedge y_2\wedge y_3$ are Riemannian Killing spinors,
\[
\nabla_X^g \psi_{\pm} = \pm \frac{1}{2\sqrt{3}} X\cdot \psi_{\pm}.
\]
\end{theorem}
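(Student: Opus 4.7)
The proof splits into two independent parts: identifying the $2$-dimensional space of invariant spinors, and verifying the Killing equation. For the first part, I would exploit the fact that the chosen orthonormal basis is \emph{adapted} to the unique (up to sign) invariant almost complex structure $J = e_1\wedge e_2 + e_3\wedge e_4 + e_5\wedge e_6$ on $S^6 = \G_2/\SU(3)$, in the sense of Remark \ref{change_of_basis_effect}. The complexified isotropy representation $\ad\rvert_{\mathfrak{su}(3)}\colon \mathfrak{su}(3)\to \mathfrak{so}(\mathfrak{m}^{\C})$ therefore factors through $\mathfrak{gl}(L)\subseteq\mathfrak{so}(\mathfrak{m}^{\C})$, where $L$ is the maximal isotropic subspace spanned by the $x_j$'s. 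Applying the corollary to Proposition \ref{p:gl-action}, the spin module decomposes as an $\SU(3)$-representation into $\Sigma \cong \bigoplus_{k=0}^{3} \Lambda^k L'$, with $L' \cong \C^3$ (the dual of the standard representation). By classical invariant theory, $(\Lambda^k \C^3)^{\SU(3)}$ is nontrivial only for $k=0$ (scalars) and $k=3$ (the volume form), each contributing one complex dimension, yielding $\dim_{\C}\Sigma_{\inv} = 2$ with the explicit basis claimed.

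For the second part, I would use that $g = B_0$ is a (positive) multiple of the Killing form on $\mathfrak{g}_2$, so $(S^6, g)$ is normal homogeneous and in particular naturally reductive. The Nomizu map of the Levi-Civita connection is therefore $\Uplambda^g(X)Y = \tfrac{1}{2}[X,Y]_{\mathfrak{m}}$, and using the basis and commutator relations from Proposition \ref{onbasisg2su3} one can write each $\Uplambda^g(e_i)\in\mathfrak{so}(\mathfrak{m})$ explicitly as a linear combination of $2$-forms $e_j\wedge e_k$. Lifting via the embedding $\omega_j\wedge\omega_k \mapsto \tfrac{1}{2}\omega_j\cdot\omega_k$ and evaluating the Clifford products $\widetilde{\Uplambda}^g(e_i)\cdot \psi_{\pm}$ using the operators \eqref{cliffordmultONB} reduces the claim to six explicit identities
\[
\widetilde{\Uplambda}^g(e_i)\cdot \psi_{\pm} = \pm \tfrac{1}{2\sqrt{3}}\, e_i\cdot \psi_{\pm}, \qquad i=1,\dots,6,
\]
to be checked directly in $\Lambda^{\bullet}L'$.

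The main obstacle is bookkeeping: each $\widetilde{\Uplambda}^g(e_i)$ is a sum of $\tfrac{1}{2}e_j\cdot e_k$ terms, and tracking the signs coming from the Clifford anticommutation relations when acting on $1$ and on $y_1\wedge y_2\wedge y_3$ is the step most prone to error. Two independent consistency checks are available. First, the classical correspondence between strictly nearly K\"ahler structures in dimension $6$ and pairs of Riemannian Killing spinors with opposite eigenvalues \cite{dim67,BFGK} guarantees, once one verifies that $J$ defines a nearly K\"ahler structure on $(S^6,g)$, that $\Sigma_{\inv}$ must be spanned by a pair of Killing spinors; the existence and opposite signs of the eigenvalues are thus automatic, and only the absolute value needs computation. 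Second, Friedrich's equality $\lambda^2 = \mathrm{scal}/(4n(n-1))$ for an Einstein manifold pins down the constant $\tfrac{1}{2\sqrt{3}}$ once the scalar curvature of $g=B_0$ is computed from the normal homogeneous formula $\mathrm{scal} = \tfrac{1}{4}\sum_{i,j} B_0([e_i,e_j]_{\mathfrak{m}}, [e_i,e_j]_{\mathfrak{m}}) + \tfrac{1}{2}\sum_i B_0([e_i,e_j]_{\mathfrak{h}},[e_i,e_j]_{\mathfrak{h}})$.
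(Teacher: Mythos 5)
Your proposal is correct and takes essentially the same approach as the paper: the identification of $\Sigma_{\inv}$ via the adapted basis, Remark \ref{change_of_basis_effect}, and Proposition \ref{p:gl-action} is exactly the paper's argument, and the Killing equation is verified by the same direct computation with the Nomizu map $\Uplambda^g(X)Y=\tfrac{1}{2}[X,Y]_{\mathfrak{m}}$ that the paper records in the remark following the theorem. The two consistency checks you propose (the nearly K\"ahler/Killing-spinor correspondence and Friedrich's equality $\lambda^2=\mathrm{scal}/(4n(n-1))$) are sound and would indeed pin down the sign pattern and the constant $\tfrac{1}{2\sqrt{3}}$ without a full Clifford computation.
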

\begin{proof}
Since the orthonormal basis $\{e_i\}$ is adapted to the invariant almost complex structure $J$, it follows from Remark \ref{change_of_basis_effect} that $\Sigma \simeq \Lambda^{0,\bullet}\mathfrak{m}$ as complex representations of $\SU(3)$, and the space of invariants is therefore spanned by $1$ and the complex volume form $y_1\wedge y_2\wedge y_3$ (see e.g. \cite[Prop.\@ F.10]{FultonHarris1991}).
\end{proof}
\begin{remark}
The preceding theorem can also be proved directly, by performing a similar calculation as in Example \ref{sp2sp1}, using the operators
\begin{align*}
\ad(\nu_1)\rvert_{\mathfrak{m}} &= \frac{1}{2}(e_{1,2} - e_{3,4}), \quad \ad(\nu_2)\rvert_{\mathfrak{m}}= \frac{1}{2}(e_{3,5}+e_{4,6}), \quad 	\ad(\nu_3)\rvert_{\mathfrak{m}}= \frac{1}{2}( -e_{3,6} + e_{4,5}  ), \\ \ad(\nu_4)\rvert_{\mathfrak{m}} &= \frac{1}{2}(e_{1,6} - e_{2,5}  ), \quad
\ad(\nu_5)\rvert_{\mathfrak{m}} = \frac{1}{2}( -e_{1,5}-e_{2,6}  ), \quad \ad(\nu_6)\rvert_{\mathfrak{m}} = \frac{1}{2}(-e_{1,4} +e_{2,3}  ), \\ \ad(\nu_7)\rvert_{\mathfrak{m}} &= \frac{1}{2} (e_{1,3}+e_{2,4}  ), \quad \ad(\nu_8)\rvert_{\mathfrak{m}}= \frac{1}{2\sqrt{3}}(-e_{1,2}-e_{3,4} +2e_{5,6}   ).
\end{align*}
%
%
%
%
%
%
%
%
%
%
%
%
%
%
%
and
\begin{align*}
\Uplambda^g(e_1)&= \frac{1}{2\sqrt{3}}(e_{3,6}+e_{4,5} ),\quad 
\Uplambda^g(e_2)= \frac{1}{2\sqrt{3}}(e_{3,5}-e_{4,6}), \quad 
\Uplambda^g(e_3)= \frac{1}{2\sqrt{3}}(-e_{1,6}-e_{2,5} ) ,\\
\Uplambda^g(e_4)&= \frac{1}{2\sqrt{3}}(-e_{1,5}+e_{2,6}), \quad
\Uplambda^g(e_5)=\frac{1}{2\sqrt{3}}(e_{1,4}+e_{2,3}) ,\quad 
\Uplambda^g(e_6)=\frac{1}{2\sqrt{3}} (e_{1,3}-e_{2,4}).
\end{align*}
\end{remark}
\begin{remark} 
The invariant spinors $\psi_{\pm}$ induce a pair of almost complex structures $J_{\psi_{\pm}}$ via 
\[
J_{\psi_{\pm}}(X)\cdot (\psi_{\pm})_0 =  iX\cdot (\psi_{\pm})_0\quad  \text{for all } X\in TM,
\]
where $(\psi_{\pm})_0$ denotes the projection onto the even half-spinor module $\Sigma_0\subseteq \Sigma$ (see e.g. \cite[Chapter 5.2]{BFGK}); these are well-known to be strictly nearly K\"{a}hler \cite{Gray70_nearly_Kahler}, i.e. they satisfy $(\nabla_XJ_{\psi_{\pm}})X=0$ for all $X\in TM$. Indeed, $(\psi_+)_0=(\psi_-)_0=1$, and one finds by a direct calculation in the spin representation that $J_{\psi_{\pm}}$ coincide with the nearly K\"{a}hler form $J$ defined above. For a recent historical account on the nearly K\"{a}hler structure on $S^6$, we refer to \cite{ABF18_S6_nearly_Kahler_6_manifolds}; see also \cite{dim67}. 
\end{remark}	
Next, we calculate the Ambrose-Singer torsion and determine its type:
\begin{proposition}
	The sphere $(S^6=\G_2/\SU(3), g)$ has Ambrose-Singer torsion of type $\mathcal{T}_{\totallyskew}$ given by
\[
T^{\AS} = \frac{1}{\sqrt{3}} (-e_{1,3,6} - e_{1,4,5} - e_{2,3,5} + e_{2,4,6}).
\]
\end{proposition}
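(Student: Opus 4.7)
The plan is to exploit the fact that the Ambrose-Singer connection has identically vanishing Nomizu map, $\Uplambda^{\AS}\equiv 0$. By formula \eqref{torsionatorigin} this immediately gives
\[
T^{\AS}(X,Y) = -[X,Y]_{\mathfrak{m}}, \qquad X,Y\in\mathfrak{m},
\]
so the computation reduces to evaluating the $\mathfrak{m}$-projections of the brackets $[e_i,e_j]$ for $1\leq i<j\leq 6$ in the chosen orthonormal basis of $\mathfrak{m}$.

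For the torsion type, I would first note that since the isotropy representation of $\SU(3)$ on $\mathfrak{m}\simeq\C^3$ is irreducible, the only $\Ad(\SU(3))$-invariant inner products on $\mathfrak{m}$ are positive multiples of $B_0$, so $g$ is in particular normal homogeneous and hence naturally reductive. As recalled in Section 2.3, natural reductivity is equivalent to $T^{\AS}$ being totally skew-symmetric, which places $T^{\AS}\in\mathcal{T}_{\totallyskew}$ without further work and shows that the $3$-form expression in the statement is the only thing left to verify.

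Next I would carry out the bracket calculation using the structure of $\mathfrak{g}_2$ recorded in the (referenced) proposition describing the basis $\{\nu_1,\dots,\nu_{14}\}$; since $\{e_1,\dots,e_6\}$ is, up to signs, a relabelling of $\{\nu_9,\dots,\nu_{14}\}$, the only brackets contributing to $T^{\AS}$ come from pairs $[\nu_{8+i},\nu_{8+j}]$, and it suffices to isolate their $\mathfrak{m}$-components (the $\mathfrak{su}(3)$-components are already encoded in the isotropy operators $\ad(\nu_k)\rvert_{\mathfrak{m}}$ displayed in the remark after Theorem \ref{g2su3theorem}, which can serve as a cross-check via the identity $g([\nu_k,e_i]_{\mathfrak{m}},e_j) = -g(e_i,\ad(\nu_k)e_j)$ and the $\Ad$-invariance of $g$). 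One then converts each nonzero $T^{\AS}(e_i,e_j)=-[e_i,e_j]_{\mathfrak{m}}$ into its $3$-form representative via $T^{\AS}(X,Y,Z):=g(T^{\AS}(X,Y),Z)$ and sums; by the skew-symmetry established above, only four independent basis $3$-forms can appear, and I expect them to be exactly $e_{1,3,6}$, $e_{1,4,5}$, $e_{2,3,5}$, $e_{2,4,6}$, with coefficients $\pm\frac{1}{\sqrt{3}}$ matching the statement.

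The only real obstacle is bookkeeping: one must be careful with the sign conventions in the definitions $e_3=-\nu_{11}$ and $e_5=-\nu_{13}$, and with the normalization of $B_0$ relative to the Killing form of $\mathfrak{g}_2$ (which enters through the factor $\frac{1}{\sqrt{3}}$). A sanity check is available by noting that $T^{\AS}$ must be $\SU(3)$-invariant, and the space of invariant $3$-forms on $\mathfrak{m}$ is known to be $2$-dimensional, spanned by $\Re\Omega$ and $\Im\Omega$, where $\Omega=(e_1+ie_2)\wedge(e_3+ie_4)\wedge(e_5+ie_6)$ is the complex volume form of the invariant nearly K\"ahler structure; expanding $\Im\Omega$ yields precisely $-e_{1,3,6}-e_{1,4,5}-e_{2,3,5}+e_{2,4,6}$, and I expect $T^{\AS}=\tfrac{1}{\sqrt{3}}\Im\Omega$ (which is also consistent with the characterization of the nearly K\"ahler torsion $3$-form in \cite{dim67}).
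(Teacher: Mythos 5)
Your proposal is correct and takes essentially the paper's (implicit) approach: the paper offers no explicit proof for this proposition, but the method is exactly the one you describe — set $\Uplambda^{\AS}\equiv 0$ in \eqref{torsionatorigin} to get $T^{\AS}(X,Y)=-[X,Y]_{\mathfrak{m}}$, invoke irreducibility of the isotropy representation to conclude normal homogeneity and hence skew torsion (this is exactly the paper's remark following Theorem~\ref{SOtheorem}), and then compute the brackets in the $\{\nu_i\}$ basis. One small slip in your closing sanity check: with $\Omega=(e_1+ie_2)\wedge(e_3+ie_4)\wedge(e_5+ie_6)$ a direct expansion gives $\Im\Omega=e_{1,3,6}+e_{1,4,5}+e_{2,3,5}-e_{2,4,6}$, so the relation is $T^{\AS}=-\tfrac{1}{\sqrt 3}\,\Im\Omega$ rather than $+\tfrac{1}{\sqrt 3}\,\Im\Omega$; your final displayed $3$-form nonetheless agrees with the statement because the two sign errors cancel.
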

\subsection{$S^7=\Spin(7)/\G_2$}
The isotropy representation is again irreducible, so we use the invariant metric induced by the inner product $g=B_0$ on $\mathfrak{spin}(7)$, and choose the reductive complement $\mathfrak{m}:=(\mathfrak{g}_2)^{\perp}$ with respect to this inner product. Following the notation of Proposition \ref{onbasisspin7spin9}, we have
\begin{align*}
\mathfrak{spin}(7) &= \{\nu_1,\dots, \nu_{14}, \nu'_{15},\dots ,\nu'_{21}\}, \quad
\mathfrak{g}_2 = \{ \nu_1,\dots \nu_{14}\},
\end{align*}
and
\begin{align*}
\mathfrak{m}&= \{e_1,\dots, e_7\}, \ \ \text{where } e_i:= \nu_{14+i}.
\end{align*}
The following theorem describing the invariant spinors in this case may be compared to \cite[\S3 Case 4]{Wang}, where he gives an expression for the parallel spinor on a $\G_2$-manifold.
\begin{theorem}\label{G2Spin7_inv_spinors_theorem} Using the above orthonormal basis and the corresponding description of the spinor module from Remark \ref{reorderedCliffalgrepresentation}, the space of invariant spinors on $(S^7= \Spin(7)/\G_2, g)$ is given by
$$\Sigma_{\inv} = \Span_{\C}\{  -1+y_1\wedge y_2\wedge y_3 \}  , $$
and the spinor $\psi := -1+y_1\wedge y_2\wedge y_3$ is a Riemannian Killing spinor,
\[
\nabla_X^g \psi = \frac{\sqrt{3}}{4\sqrt{2}} X \cdot \psi.
\]
\end{theorem}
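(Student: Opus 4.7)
The plan is to mirror the strategy used for Theorem \ref{g2su3theorem}, adapted to this situation. First I would compute the fourteen isotropy operators $\ad(\nu_i)\rvert_{\mathfrak{m}} \in \mathfrak{so}(\mathfrak{m})$ for $\nu_1,\dots,\nu_{14} \in \mathfrak{g}_2$, using the explicit realization of bases from Proposition \ref{onbasisspin7spin9} to express each as a linear combination of $e_{j,k}$'s. Lifting to the spin bundle is then standard: each skew endomorphism $\sum_{j<k} a_{j,k} \, e_j\wedge e_k$ acts on $\Sigma$ via Clifford multiplication by $\tfrac{1}{2}\sum_{j<k} a_{j,k}\, e_j\cdot e_k$, and I would carry out these products in the representation of Remark \ref{reorderedCliffalgrepresentation}, exactly as in the calculation of Example \ref{sp2sp1}.

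Before grinding through the common kernel, I would first justify why it is one-dimensional using representation theory: the isotropy action of $\G_2$ on $\mathfrak{m} \cong \R^7$ is the standard irreducible $7$-dimensional representation, and under the branching $\Spin(7) \supset \G_2$ the complex $8$-dimensional spin representation decomposes as $\Sigma \rvert_{\G_2} \cong \mathbf{1} \oplus \mathbf{7}$. In particular, $\dim_{\mathbb{C}} \Sigma_{\inv} = 1$, matching the well-known characterization of $\G_2$ as the stabilizer of a generic spinor in $\Spin(7)$. Hence it suffices to exhibit one nonzero invariant spinor and check the Killing equation on it; the candidate $\psi = -1 + y_1 \wedge y_2 \wedge y_3$ can then be verified to lie in the common kernel of all fourteen spin-lifted isotropy operators by direct computation.

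For the Killing equation, I would use the fact that the isotropy representation is irreducible, so the invariant metric $g = B_0\rvert_{\mathfrak{m}}$ is normal homogeneous and in particular naturally reductive. Consequently the Nomizu map of the Levi-Civita connection simplifies to $\Uplambda^g(X)Y = \tfrac{1}{2}[X,Y]_{\mathfrak{m}}$, so computing $\nabla^g_{\widehat{e_i}} \psi$ at the origin reduces to computing $\tfrac{1}{2}\,\widetilde{\ad(e_i)\rvert_{\mathfrak{m}}} \cdot \psi$ in the spin representation for $i = 1,\dots,7$. Applying the spin-lifted $\ad(e_i)\rvert_{\mathfrak{m}}$ to $\psi$ and comparing with $e_i \cdot \psi$ should yield the claimed proportionality constant $\tfrac{\sqrt{3}}{4\sqrt{2}}$, analogous to the Killing constant appearing in Theorem \ref{g2su3theorem}.

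The main obstacle is purely organizational: the explicit expressions of $\nu_1,\dots,\nu_{14}$ as elements of $\mathfrak{so}(7)$, obtained from the $\Spin(7) \supset \G_2$ embedding in Proposition \ref{onbasisspin7spin9}, are somewhat complicated, and verifying that each of the fourteen spin-lifted operators annihilates $-1 + y_1\wedge y_2\wedge y_3$ requires careful bookkeeping of the wedge--interior product formulas in $\Lambda^{\bullet} L'$. The representation-theoretic input above reduces this to checking a handful of diagnostic operators (enough to see that the common kernel has dimension at most one), after which the Killing verification is a clean, localized calculation analogous to those already performed for $S^6 = \G_2/\SU(3)$.
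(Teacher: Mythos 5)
Your proposal is correct and follows essentially the same computational route as the paper: compute the isotropy operators $\ad(\nu_i)\rvert_{\mathfrak m}$, lift them to the spin representation, exhibit $\psi=-1+y_1\wedge y_2\wedge y_3$ in the common kernel, and then differentiate via the Nomizu map exactly as in Example \ref{sp2sp1}. You do add two worthwhile refinements that the paper's one-line proof leaves implicit. First, the branching $\Sigma\rvert_{\G_2}\cong\mathbf{1}\oplus\mathbf{7}$ pins down $\dim_\C\Sigma_{\inv}=1$ \emph{a priori}, so that once $\psi$ is shown to be annihilated you are done, rather than intersecting all fourteen kernels. Second, because the isotropy representation is irreducible, every invariant metric is normal homogeneous and hence naturally reductive, so the symmetric $U$-tensor vanishes and $\Uplambda^g(X)Y=\tfrac{1}{2}[X,Y]_{\mathfrak m}$; the paper simply records the resulting $\Uplambda^g(e_i)$ without spelling this out. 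One small notational caution: for $e_i\in\mathfrak m$, the restriction $\ad(e_i)\rvert_{\mathfrak m}$ does not land in $\mathfrak{so}(\mathfrak m)$, since $[\mathfrak m,\mathfrak m]\not\subset\mathfrak m$; the operator to be spin-lifted is the $\mathfrak m$-projected bracket $\tfrac{1}{2}\,\mathrm{pr}_{\mathfrak m}\circ\ad(e_i)\rvert_{\mathfrak m}$, consistent with your displayed formula $\Uplambda^g(X)Y=\tfrac{1}{2}[X,Y]_{\mathfrak m}$ but slightly at odds with the abbreviated notation $\widetilde{\ad(e_i)\rvert_{\mathfrak m}}$ that follows it.
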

\begin{proof}
This follows from the same calculation as in Example \ref{sp2sp1}, using the operators {\small
\begin{align*}
\ad(\nu_1)\rvert_{\mathfrak{m}}&= \frac{1}{2}(e_{2,3}-e_{6,7}), \quad \ad(\nu_2)\rvert_{\mathfrak{m}}= \frac{1}{2}(-e_{2,4}-e_{3,5} ), \quad \ad(\nu_3)\rvert_{\mathfrak{m}}= \frac{1}{2}(-e_{2,5}+e_{3,4}),\\
\ad(\nu_4)\rvert_{\mathfrak{m}}&= \frac{1}{2}(e_{4,7}-e_{5,6}), \quad \ad(\nu_5)\rvert_{\mathfrak{m}} = \frac{1}{2}(-e_{4,6}-e_{5,7}), \quad \ad(\nu_6)\rvert_{\mathfrak{m}}= \frac{1}{2}( e_{2,7}-e_{3,6}),\\ 
\ad(\nu_7)\rvert_{\mathfrak{m}}&= \frac{1}{2}(-e_{2,6}-e_{3,7}),\quad \ad(\nu_8)\rvert_{\mathfrak{m}} = \frac{1}{2\sqrt{3}}(e_{2,3} -2e_{4,5} +e_{6,7}), \quad
\ad(\nu_9)\rvert_{\mathfrak{m}}= \frac{1}{2\sqrt{3}}( 2e_{1,7} -e_{2,5} -e_{3,4}), \\ \ad(\nu_{10})\rvert_{\mathfrak{m}} &= \frac{1}{2\sqrt{3}}( 2e_{1,6} +e_{2,4} -e_{3,5}), \quad
\ad(\nu_{11})\rvert_{\mathfrak{m}}= \frac{1}{2\sqrt{3}}( -2e_{1,3} +e_{4,7} +e_{5,6} ),\\ \ad(\nu_{12})\rvert_{\mathfrak{m}} &= \frac{1}{2\sqrt{3}}( 2e_{1,2} +e_{4,6} -e_{5,7}), \quad
\ad(\nu_{13})\rvert_{\mathfrak{m}} = \frac{1}{2\sqrt{3}}(  2e_{1,5} +e_{2,7}+e_{3,6} ), \\ \ad(\nu_{14})\rvert_{\mathfrak{m}}&= \frac{1}{2\sqrt{3}}(-2e_{1,4} +e_{2,6} -e_{3,7}   ),
\end{align*}
}
and
{\small
\begin{align*}
\Uplambda^g(e_1) &= \frac{1}{2\sqrt{6}}(e_{2,3}+e_{4,5}+e_{6,7}), \quad \Uplambda^g(e_2) = \frac{1}{2\sqrt{6}}(-e_{1,3}-e_{4,7} -e_{5,6}),\quad  \Uplambda^g(e_3) = \frac{1}{2\sqrt{6}}(e_{1,2}-e_{4,6} +e_{5,7}),\\
\Uplambda^g(e_4) &= \frac{1}{2\sqrt{6}}(-e_{1,5}+e_{2,7} +e_{3,6} ),\quad
\Uplambda^g(e_5) = \frac{1}{2\sqrt{6}}(e_{1,4}+e_{2,6} -e_{3,7}),\quad
\Uplambda^g(e_6) =\frac{1}{2\sqrt{6}}(-e_{1,7}-e_{2,5} -e_{3,4} ),\\
\Uplambda^g(e_7) &= \frac{1}{2\sqrt{6}}(e_{1,6} -e_{2,4} +e_{3,5} ).
\end{align*}
}
\end{proof}
%
%
%
%
%
%
%
%
%
%
%
Next, we calculate the Ambrose-Singer torsion and determine its type:
\begin{proposition}
	The sphere $(S^7=\Spin(7)/\G_2, g)$ has Ambrose-Singer torsion of type $\mathcal{T}_{\totallyskew}$ given by
	\[
	T^{\AS} = \frac{1}{\sqrt{6}} ( - e_{1,2,3} - e_{1,4,5} - e_{1,6,7} + e_{2,4,7} + e_{2,5,6} + e_{3,4,6} - e_{3,5,7}).
	\]
\end{proposition}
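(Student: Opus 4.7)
The plan is to exploit two structural facts already established in the paper together with a direct bracket computation in the basis $\{e_i = \nu_{14+i}\}_{i=1}^7$ of $\mathfrak{m}=(\mathfrak{g}_2)^{\perp_{B_0}}$ used in Theorem \ref{G2Spin7_inv_spinors_theorem}.

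First, since $\Uplambda^{\AS}\equiv 0$, formula (\ref{torsionatorigin}) gives immediately
\[
T^{\AS}(X,Y) = -[X,Y]_{\mathfrak{m}},\qquad X,Y\in\mathfrak{m}.
\]
Second, the isotropy representation of $\G_2$ on $\mathfrak{m}$ is irreducible (this is why the $7$-sphere admits the invariant nearly parallel $\G_2$-structure in Theorem \ref{G2Spin7_inv_spinors_theorem}), so the invariant metric $g=B_0$ is a multiple of the Killing form of $\mathfrak{spin}(7)$, hence normal homogeneous and in particular naturally reductive. As noted in the remark following Theorem \ref{SOtheorem}, this forces $T^{\AS}\in\mathcal{T}_{\totallyskew}$; in other words $T^{\AS}$ is a $3$-form, and no cyclic-traceless or vectorial component can appear.

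It therefore suffices to compute the $2$-form $T^{\AS}(e_i,e_j)$ for all $i<j$ and verify that the resulting tensor equals the claimed $3$-form. By the first step this amounts to computing the $\mathfrak{m}$-components of the brackets $[\nu_{14+i},\nu_{14+j}]$ in $\mathfrak{spin}(7)$, which can be read off directly from the structure constants of $\mathfrak{spin}(7)$ already used to produce the isotropy operators and the Nomizu map in the proof of Theorem \ref{G2Spin7_inv_spinors_theorem}: in fact each $[e_i,e_j]$ contributes to both the isotropy and the complement, and the $\mathfrak{m}$-part is exactly the piece needed here. Assembling the seven non-zero triples one reads off
\[
T^{\AS} = \tfrac{1}{\sqrt{6}}(-e_{1,2,3}-e_{1,4,5}-e_{1,6,7}+e_{2,4,7}+e_{2,5,6}+e_{3,4,6}-e_{3,5,7}).
\]

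The only step requiring actual work is the bracket computation, and the main (minor) obstacle is simply bookkeeping the normalization constants in the chosen orthonormal basis; since $g=B_0$ and the $e_i$ are already $B_0$-orthonormal the factor $1/\sqrt{6}$ comes out uniformly. As a consistency check, the resulting $3$-form should coincide, up to the overall scalar, with the Cayley-type $\G_2$-form determined by the invariant Killing spinor $\psi = -1 + y_1\wedge y_2\wedge y_3$ via the squaring construction $\omega_\psi(X,Y,Z)=\langle X\cdot Y\cdot Z\cdot \psi,\psi\rangle$, as in (\ref{G2formfromspinorindim7}); this matches the fact that on a nearly parallel $\G_2$-manifold the characteristic torsion is proportional to the defining $\G_2$-form.
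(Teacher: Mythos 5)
Your proof is correct and follows essentially the same route the paper takes: the paper states the proposition without an explicit proof, but the analogous proposition for $\U(n+1)/\U(n)$ is justified by "straightforward calculation of the commutator relations and application of (\ref{torsionatorigin})," which is precisely what you do, and you correctly invoke the remark after Theorem \ref{SOtheorem} that irreducible isotropy makes the metric normal homogeneous and hence $T^{\AS}$ purely skew. The key practical observation you make — that the $\mathfrak{m}$-brackets can be read off from the Nomizu map operators $\Uplambda^g(e_i)$ in the proof of Theorem \ref{G2Spin7_inv_spinors_theorem}, since for a normal homogeneous metric $[X,Y]_{\mathfrak{m}} = 2\Uplambda^g(X)Y$ — is exactly the right shortcut; spot-checking $T^{\AS}(e_1,e_2,e_3)=-1/\sqrt{6}$ and $T^{\AS}(e_2,e_4,e_7)=+1/\sqrt{6}$ from $\Uplambda^g(e_1)=\frac{1}{2\sqrt{6}}(e_{2,3}+e_{4,5}+e_{6,7})$ and $\Uplambda^g(e_2)=\frac{1}{2\sqrt{6}}(-e_{1,3}-e_{4,7}-e_{5,6})$ confirms both the signs and the uniform factor $1/\sqrt{6}$.
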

\subsection{$S^{15}=\Spin(9)/\Spin(7)$}\label{spin9spin7}
The isotropy representation in this case splits into two non-equivalent modules (one copy of the spin representation and one copy of the standard representation), hence there is a two-dimensional family of invariant metrics. Indeed, include $\mathfrak{spin}(7)\subseteq \mathfrak{spin}(9)$ as in Proposition \ref{onbasisspin7spin9} and set $\mathfrak{m}:=(\mathfrak{spin}(7))^{\perp}$, where orthogonality is taken with respect to the Killing form on $\mathfrak{spin}(9)$. Explicitly, in the notation of Proposition \ref{onbasisspin7spin9},
\begin{align*}
\mathfrak{spin}(9)&= \Span_{\R} \{\iota(\nu_1),\dots,\iota(\nu_{14}), \iota(\nu'_{15}),\dots, \iota(\nu'_{21}), \nu'_{22},\dots, \nu'_{36}     \}, \\
\mathfrak{spin}(7)&= \Span_{\R} \{ \nu'_{22},\dots, \nu'_{36} \},
\end{align*}
and
\begin{align*}
\mathfrak{m}&= \Span_{\R} \{ \widehat{e}_1,\dots, \widehat{e}_{15}\}, \ \ \text{where } \widehat{e}_i:= \nu'_{21+i}.
\end{align*}
The two irreducible isotropy summands are given by 
\begin{align*}
\mathfrak{m}_F&:= \Span_{\R} \{ \widehat{e}_1,\dots ,\widehat{e}_7\},\quad
\mathfrak{m}_B:= \Span_{\R} \{ \widehat{e}_8,\dots, \widehat{e}_{15} \},
\end{align*}
corresponding to the tangent spaces of the fiber and base respectively of the octonionic Hopf fibration,
\begin{align}\label{octhopf}
S^{7}=\frac{\Spin(8)}{\Spin(7)} \hookrightarrow S^{15}=\frac{\Spin(9)}{\Spin(7)} \to S^8=\frac{\Spin(9)}{\Spin(8)} .
\end{align}
The two-dimensional family of invariant metrics is parameterized by 
\[
g_{a,b}:= aB_0\rvert_{\mathfrak{m}_F\times \mathfrak{m}_F} + b B_0\rvert_{\mathfrak{m}_B\times \mathfrak{m}_B} , \quad a,b>0   
\]
and a $g_{a,b}$-orthonormal basis of $\mathfrak{m}$ is given by $\{e_i\}_{i=1}^{15}$, where 
\begin{align}\label{S15ONB}
e_i:= \begin{cases*}
\frac{1}{\sqrt{a}} \widehat{e}_i & if $i=1,\dots, 7$, \\
\frac{1}{\sqrt{b}}\widehat{e}_i   & if $i=8,\dots, 15$. 
\end{cases*}
\end{align}
By manually checking the sectional curvatures, one finds that the round metric corresponds to the parameters $a=\frac{1}{2}$, $b=\frac{1}{8}$ and we recall (see e.g. \cite{Ziller_homogeneous_einsten_metrics}) that the round metric is the only member of the family $g_{a,b}$ which can be isometric to one of the metrics for $G=\U(n+1)$, $\SU(n+1)$, $\Sp(n)$, or $\Sp(n)\Sp(1)$ considered in previous sections. In the following proposition we give a complete description of the invariant differential forms:
\begin{proposition}\label{S15invariantforms}
Up to taking Hodge duals, the invariant differential forms on $(S^{15}=\Spin(9)/\Spin(7), g_{a,b})$ are given in Table \ref{Tab:invariant_forms_S15}, where $\omega \in \Lambda^3\mathfrak{m}$, $\Psi\in\Lambda^4\mathfrak{m}$ and their exterior derivatives are defined by the formulas in Appendix \ref{AppendixB_forms}, and $\text{\emph{pr}}_{i,j}$ denotes the projection onto $\Lambda^i\mathfrak{m}_F\otimes \Lambda^j\mathfrak{m}_B$.
\end{proposition}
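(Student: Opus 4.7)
The plan is to compute the invariants in $\Lambda^\bullet \mathfrak{m}^*$ under the isotropy action of $\Spin(7)$, which decomposes $\mathfrak{m} \cong \mathfrak{m}_F \oplus \mathfrak{m}_B$, where $\mathfrak{m}_F \cong V_7$ (the standard $7$-dimensional vector representation) and $\mathfrak{m}_B \cong \Delta_8$ (the $8$-dimensional spin representation). Using this splitting,
\[
\Lambda^k \mathfrak{m} \;\cong\; \bigoplus_{i+j=k} \Lambda^i \mathfrak{m}_F \otimes \Lambda^j \mathfrak{m}_B,
\]
so I would first reduce the problem to computing, for every bidegree $(i,j)$ with $i\le 7$, $j\le 8$, the dimension of $(\Lambda^i V_7 \otimes \Lambda^j \Delta_8)^{\Spin(7)}$. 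This can be done either directly from standard $\Spin(7)$ branching rules or with a computer algebra package such as LiE (as was done earlier in the paper, cf.~the proof of Proposition~\ref{sp2sp1_inv_forms_statement}), giving the dimension of $\Lambda^k_{\inv}$ in each degree.

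Next I would exhibit explicit invariant forms realizing each of these dimensions. The $(0,0)$ invariant is trivial. Since $V_7$ and $\Delta_8$ are irreducible and inequivalent, no linear invariants exist. For higher bidegree, the two key $\Spin(7)$-equivariant ingredients are (i) the Cayley calibration $\Psi \in \Lambda^4 \mathfrak{m}_B$, which spans the unique invariant line in $\Lambda^4 \Delta_8$; and (ii) the $\Spin(7)$-equivariant isomorphism between $\mathfrak{m}_F$ and the $7$-dimensional isotypic summand of $\Lambda^2 \mathfrak{m}_B \cong \mathfrak{spin}(7) \oplus V_7$, which produces an invariant element of $\mathfrak{m}_F^* \otimes \Lambda^2 \mathfrak{m}_B^* \subseteq \Lambda^3 \mathfrak{m}^*$ and hence a nontrivial invariant $3$-form with mixed $(1,2)$ component. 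The form $\omega$ in Appendix~\ref{AppendixB_forms} should be precisely (a combination involving) this invariant $3$-form, together with any purely fibrewise invariants. Other invariants in higher degrees can be assembled from $\Psi$ and $\omega$ via wedge products and their projections $\mathrm{pr}_{i,j}$, and the remaining cases (degrees $\geq 8$) are recovered by Hodge duality, which is why the table is stated up to $\ast$.

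Finally, I would verify explicitly that the listed forms are indeed $\Spin(7)$-invariant. The cleanest way to do this is to check that they are annihilated by the $21$ operators $\ad(\nu'_{22}),\dots,\ad(\nu'_{36})$ acting on $\Lambda^\bullet \mathfrak{m}$, using the explicit bracket relations set up in Proposition~\ref{onbasisspin7spin9} and the orthonormal basis $\{e_i\}_{i=1}^{15}$ from (\ref{S15ONB}). A dimension count then confirms that the exhibited invariants span the full space in each degree.

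The main obstacle will be the bookkeeping for degrees $3\le k \le 5$, where both pure $\mathfrak{m}_B$-invariants (built from $\Psi$) and genuinely mixed invariants (built from the $V_7$-summand of $\Lambda^2 \mathfrak{m}_B$) contribute, and one must check that the candidate generators are linearly independent, have the correct $\Spin(7)$-invariance, and match the dimensions predicted by representation theory. The computations of $d\omega$ and $d\Psi$ referenced in Appendix~\ref{AppendixB_forms}, which are needed to show that these forms are well-defined on the homogeneous space and to identify them with specific differential forms on $S^{15}$, are routine but lengthy applications of the formula $d = \sum_i e_i \wedge \nabla^{g_{a,b}}_{e_i}$ combined with the Nomizu map for $g_{a,b}$.
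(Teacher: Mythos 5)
The paper does not supply a proof of this proposition (it is stated and immediately followed by the table), but your outline is precisely the natural strategy and almost certainly what underlies the authors' computation: split $\Lambda^k\mathfrak{m}$ by bidegree under the isotropy decomposition $\mathfrak{m}\cong V_7\oplus\Delta_7$ (what you call $\Delta_8$, the $8$-dimensional real spin module of $\Spin(7)$), count invariants with LiE or branching, realize them via $\Psi\in(\Lambda^4\Delta_7)^{\Spin(7)}$ and the equivariant isomorphism $V_7\hookrightarrow\Lambda^2\Delta_7\cong\mathfrak{spin}(7)\oplus V_7$, check annihilation by the $21$ operators $\ad(\nu'_{22}),\dots,\ad(\nu'_{36})$, and finish by Hodge duality. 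One small correction to the heuristic: there is no "purely fibrewise" contribution to $\omega$, since $\Lambda^3 V_7$ contains no $\SO(7)$-trivial summand (the $\G_2$ $3$-form is not $\SO(7)$-invariant), and indeed the explicit $\omega$ in Appendix~\ref{AppendixB_forms} lies entirely in the $\mathfrak{m}_F\otimes\Lambda^2\mathfrak{m}_B$ component; once this is noted your argument is sound.
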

\begin{table}[h!] 
	\centering
	\caption{Invariant Differential Forms on $(S^{15}=\Spin(9)/\Spin(7), g_{a,b})$}
	\begin{tabular}{|l||l|l|}\hline
		$k$  & $\dim \Lambda^k_{\inv}$ & Basis for $ \Lambda^k_{\inv}$ \\ \hline
		$0$    & $1$ & $1$  \\
		$1$ & $0$ & $0$  \\
		$2$ & $0$ & $0$ \\
		$3$   & $1$ & $\omega $   \\
		$4$  & $2$   & $d\omega, \Psi$  \\
		$5$ &  $1$  &   $d\Psi$  \\
		$6$ &  $0$   &  $0$  \\
		$7$ &   $4$  &   $\text{pr}_{1,6}(\omega\wedge d\omega),\ \text{pr}_{3,4}(\omega\wedge d\omega),\ \ast(\omega \wedge d\Psi),\ \ast(\Psi\wedge \Psi )$   \\ \hline
	\end{tabular}
\label{Tab:invariant_forms_S15}
\end{table}
In particular, this shows that the invariant differential forms on $S^{15}=\Spin(9)/\Spin(7)$ are generated by $\omega$ and $\Psi$ and their derivatives, and in what follows we shall examine more closely the geometric and spinorial features of these forms. 
\begin{remark} The 4-form $\Psi$ is purely horizontal, i.e. $\Psi \in \Lambda^4\mb$, and the horizontal component of $d\omega$ is a multiple of $\Psi$. One finds that $\Psi$ is \emph{not} invariant for the larger group $\Spin(8)$, hence it does not descend to an invariant 4-form on the base space $S^8 = \Spin(9)/\Spin(8)$ of the octonionic Hopf fibration (\ref{octhopf}).
\end{remark}
\begin{remark}\label{Uwe2}
	Note that the existence of an invariant spinor and an invariant $4$-form on 
	$S^{15}$ is clear by the following argument. In this case the isotropy representation is realized as a direct sum of the standard
	and spin representations $\mathbb{R}^{15} \cong \mathbb{R}^7 \oplus \Delta_7$.
	This decomposition of $\Spin(7)$-representations is orthogonal with respect to the form. Thus it induces a homomorphism $\varphi: \Spin(7) \to \SO(7) \times \SO(8)$, which lifts to $\Spin(7) \to \Spin(7) \times \Spin(8) \subseteq \Spin(15)$.  
	
	It is known that there is an invariant vector in $\Lambda^4 \Delta_7$ (corresponding to the $4$-form $\Psi$ in Table \ref{Tab:invariant_forms_S15}). By the above, $\Lambda^4 \Delta_7 \subseteq \Lambda^4 \Delta_{15}$. Therefore there is also an invariant vector in $\Lambda^4 \Delta_{15} |_{\varphi}$, i.e., a four-form on $\mathbb{S}^{15}$ invariant under $\Spin(7)$.
	
	In this situation, $\Delta_{15}|_{\Spin(7) \times \Spin(8)} \cong \Delta_7 \otimes \Delta_8$. We now restrict this to a representation of $\Spin(7)$ via $\varphi$.  Note that $\varphi(g)=(g,\rho(g))$ where $\rho: \Spin(7) \to \Spin(8)$ is the lift of the spin representation to $\Spin(8)$.  Thanks to triality (see \cite{Wang}, (iii) on p.~61), we can decompose 
	\[
	\Delta_8|_{\rho} \cong (\Delta_8^+ \oplus \Delta_8^-)|_{\rho} \cong \Delta_7 \oplus \R^7 \oplus \R.
	\]
	Finally, tensoring by $\Delta_7$, we get:
	\[
	\Delta_{15}|_{\varphi} \cong (\Delta_7 \otimes \Delta_7) \oplus (\Delta_7 \otimes \R^7) \oplus \Delta_7.
	\]
	Since $\Delta_7, \R^7$, and $\R$ are real irreducible representations, a tensor product of two of these has an invariant vector if and only if the two are the same representation.  So there is a unique invariant spinor, corresponding to the invariant element of $\Delta_7 \otimes \Delta_7$ which expresses the invariant inner product on $\Delta_7$.
\end{remark}
In the following theorem we describe the space of invariant spinors:
\begin{theorem}
	Using the above $g_{a,b}$-orthonormal basis and the corresponding description of the spinor module from Remark \ref{reorderedCliffalgrepresentation}, the space of invariant spinors on $(S^{15}=\Spin(9)/\Spin(7),g_{a,b})$ is given by 
	\[
	\Sigma_{\inv} = \Span_{\C} \{ \psi \},
	\]
where
\[
	\psi := \frac{1}{2\sqrt{2}}( -iy_{1,5} + y_{1,2,3} +y_{2,5,7} -y_{3,5,6} +iy_{1,2,4,7} -iy_{1,3,4,6} -iy_{4,5,6,7} + y_{2,3,4,6,7}  ),
\]
and we use the notation $y_{i_1,\dots,i_p}:= y_{i_1}\wedge \dots \wedge y_{i_p}$. It satisfies the spinorial equation 
\begin{align} \label{spin9spin7spinorialeqn}
\nabla^g_{e_i} \psi = \begin{cases*}
 \left(\frac{a-2 b}{40 b \sqrt{2a} } \right)   \omega\cdot e_i\cdot \psi & if $i=1,\dots, 7$, \\ \left(  \frac{a+2 b}{16 b \sqrt{2}  (a-4 b)}\right) \omega \cdot e_i\cdot \psi +   \left( \frac{\sqrt{a}}{16 (a-4 b)} \right) d\omega \cdot e_i\cdot \psi    & if $i=8,\dots, 15$. 
\end{cases*}  
\end{align}
\end{theorem}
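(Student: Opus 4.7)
The plan is to proceed in three stages: confirming the dimension of $\Sigma_{\inv}$, producing the explicit invariant spinor, and finally verifying the spinorial equation \eqref{spin9spin7spinorialeqn}.

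For the dimension count, the cleanest route is the triality argument already indicated in Remark \ref{Uwe2}. The inclusion $\mathfrak{spin}(7) \hookrightarrow \mathfrak{spin}(9)$ factors through $\mathfrak{spin}(7) \oplus \mathfrak{spin}(8)$ via the pair $(\id,\rho)$, where $\rho$ is the lift of the spin representation. Using the triality decomposition $\Delta_8|_{\rho} \cong \Delta_7 \oplus \R^7 \oplus \R$ together with $\Delta_{15}|_{\mathfrak{spin}(7)\oplus \mathfrak{spin}(8)} \cong \Delta_7 \otimes \Delta_8$, I would obtain $\Delta_{15}|_{\varphi} \cong (\Delta_7 \otimes \Delta_7) \oplus (\Delta_7 \otimes \R^7) \oplus \Delta_7$. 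Since $\Delta_7$, $\R^7$, and $\R$ are pairwise inequivalent irreducible real representations of $\Spin(7)$, the only summand contributing an invariant vector is $\Delta_7 \otimes \Delta_7$, via the invariant inner product on $\Delta_7$, giving $\dim_{\C} \Sigma_{\inv} = 1$. Since this count is independent of the invariant metric chosen, it applies uniformly to $g_{a,b}$.

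To exhibit $\psi$ concretely, I would compute the operators $\ad(\nu'_i)|_{\mathfrak{m}}$ for $i=22,\dots,36$ with respect to the orthonormal basis \eqref{S15ONB}, lift them to $\mathfrak{spin}(\mathfrak{m})$ using \eqref{e:wedge-cl}, and take their common kernel inside $\Sigma = \Lambda^{\bullet} L'$, working in the representation of Remark \ref{reorderedCliffalgrepresentation}. Since the space is one-dimensional, it suffices in practice to solve a manageable linear system in the coefficients of a generic element of $\Sigma$ on the basis of monomials $y_{i_1,\dots,i_p}$. A helpful organizing device is the decomposition $\mathfrak{m} = \mathfrak{m}_F \oplus \mathfrak{m}_B$: the $\mathfrak{spin}(7)$-action on the fiber direction $\mathfrak{m}_F$ is the standard representation and on $\mathfrak{m}_B$ is the spin representation, and $\psi$ should intertwine these two in precisely the way dictated by the $\Delta_7 \otimes \Delta_7$ factor of the triality decomposition. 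The stated expression for $\psi$ can then be normalized so that $\langle \psi,\psi\rangle = 1$.

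For the spinorial equation, the main computational task is to describe the Nomizu map $\Uplambda^{g_{a,b}}$ explicitly. Using the formula \eqref{nomizumap} together with the commutator relations for $\mathfrak{spin}(9)$ in the basis \eqref{S15ONB}, and exploiting the structure $[\mathfrak{m}_F,\mathfrak{m}_F]\subseteq \mathfrak{spin}(7)$, $[\mathfrak{m}_F,\mathfrak{m}_B]\subseteq \mathfrak{m}_B$, $[\mathfrak{m}_B,\mathfrak{m}_B] \subseteq \mathfrak{spin}(7) \oplus \mathfrak{m}_F$, one obtains expressions of $\Uplambda^{g_{a,b}}(e_i)$ as specific 2-forms on $\mathfrak{m}$ with coefficients depending on $a,b$. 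Lifting to $\mathfrak{spin}(\mathfrak{m})$ and applying to $\psi$ in the Clifford representation gives the left-hand side of \eqref{spin9spin7spinorialeqn}. Separately, one computes $\omega \cdot e_i \cdot \psi$ and $d\omega \cdot e_i \cdot \psi$ using the explicit formulas for $\omega$, $d\omega$ from Appendix \ref{AppendixB_forms} and the Clifford action \eqref{clifford_rep_formulas}; comparing monomial-by-monomial in $\Sigma$ yields the claimed coefficients. The split into cases $i\le 7$ and $i\ge 8$ is forced by the fact that $\Uplambda^{g_{a,b}}(e_i)$ has a $B_0$-summand for $i \in \mathfrak{m}_B$ that has no $\mathfrak{m}_F$ analog.

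The chief obstacle I expect is algebraic rather than conceptual: verifying \eqref{spin9spin7spinorialeqn} requires expanding Clifford products of a three-form and a four-form against an eight-term spinor and matching 120+ monomial coefficients, while simultaneously tracking the $a,b$ dependence. A good sanity check along the way is the round case $(a,b)=(\frac{1}{2},\frac{1}{8})$: then $a-4b = 0$, so the $d\omega$ term in \eqref{spin9spin7spinorialeqn} is forced to compensate via a pole, and one should independently verify that $\psi$ reduces to (a constant multiple of) a Riemannian Killing spinor on the round $S^{15}$, which fixes the normalization of every coefficient appearing in the equation.
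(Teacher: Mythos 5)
Your plan matches the paper's proof in all essentials: the paper also finds $\psi$ by computing the isotropy operators $\ad(\nu'_i)\rvert_{\mathfrak{m}}$ for $i=22,\dots,36$, taking their common kernel in $\Sigma=\Lambda^{\bullet}L'$, and then differentiating $\psi$ using the Nomizu map of $\nabla^{g_{a,b}}$ expressed through the bracket relations $[\mathfrak{m}_F,\mathfrak{m}_B]\subseteq\mathfrak{m}_B$, $[\mathfrak{m}_B,\mathfrak{m}_B]\subseteq\mathfrak{spin}(7)\oplus\mathfrak{m}_F$. The triality-based dimension count you lead with is also in the paper, but relegated to Remark \ref{Uwe2} rather than the proof itself; including it up front is a reasonable streamlining and buys you certainty that the explicit kernel computation will terminate with a one-dimensional answer. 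So this is the same route, organized slightly more conceptually.

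One cautionary note on your proposed sanity check: you observe correctly that the round metric is $(a,b)=(\frac{1}{2},\frac{1}{8})$, hence $a-4b=0$, but that means the right-hand side of \eqref{spin9spin7spinorialeqn} as written is singular precisely at the round metric. You cannot substitute these values and compare against a Killing equation; at best you could try to show that the apparent pole is removable after Clifford-multiplying onto $\psi$, which is a nontrivial separate calculation rather than a quick check. More fundamentally, the paper does not claim (and it is not a priori clear) that the $\Spin(9)$-invariant spinor $\psi$ is a Riemannian Killing spinor on the round $S^{15}$ -- Table \ref{Tab:main_results_table} leaves the "Notable Spinors" entry for $\Spin(9)$ blank, in contrast to the $\G_2$ and $\Spin(7)$ rows. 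So the normalization of the coefficients in \eqref{spin9spin7spinorialeqn} has to be nailed down by matching monomials in the direct computation, not by appeal to the round case; your check as stated would not fix them.
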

\begin{proof}
	Follows the same procedure as the proof of Theorem \ref{G2Spin7_inv_spinors_theorem}. To differentiate $\psi$, we note that the Nomizu map of the Levi-Civita connection is given by
	\[
	\Uplambda^g(X)Y  = \begin{cases*}
	\frac{1}{2}[X,Y]_{\mathfrak{m}} & if $X,Y\in\mf $, \\
	 (1-\frac{a}{2b}) [X,Y]_{\mathfrak{m}} & if $X\in\mf$, $Y\in\mb$, \\
	 \frac{1}{2b}  [X,Y]_{\mathfrak{m}} & if $X\in\mb$, $Y\in\mf$,\\
	 \frac{1}{2} [X,Y]_{\mathfrak{m}} & if $X,Y\in \mb$.
	\end{cases*}
	\] 
\end{proof}
The fact that the spinorial equation (\ref{spin9spin7spinorialeqn}) depends only on the invariant 3-form $\omega$ suggests that there is an intrinsic relationship between $\psi $ and $\omega$. Indeed, the spinor $\psi$ determines $\omega$ via the \emph{squaring construction}:
	\begin{align*}
		\omega(X,Y,Z) &= -2\langle (X\wedge Y \wedge Z) \cdot \psi, \psi \rangle \quad \text{for all } X,Y,Z\in TM,
	\end{align*}
where $\langle \cdot , \cdot \rangle$ denotes the usual Hermitian inner product on the spinor bundle. Conversely, (\ref{spin9spin7spinorialeqn}) shows that $\omega$ determines $\psi$ up to first order. In general, for each integer $k\geq 0$ the squaring construction determines an invariant $k$-form $\omega_{(k)}$ via 
\[
\omega_{(k)}(X_1,\dots,X_k) := \Re \langle (X_1\wedge \dots\wedge X_k)\cdot \psi,\psi\rangle \quad \text{for all } X_1,\dots X_k\in TM,
\]
and one can ask whether $\psi$ is related to other invariant forms from Table \ref{Tab:invariant_forms_S15} by this construction. We find:
\begin{proposition}
	Up to taking Hodge duals, the differential forms obtained from the invariant spinor $\psi$ via the squaring construction are given in Table \ref{Tab:S15squaringconstruction}.
\end{proposition}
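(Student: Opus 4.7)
The plan is to exploit invariance. Since $\psi$ is $\Spin(7)$-invariant, each form $\omega_{(k)}$ produced by the squaring construction is automatically an invariant $k$-form on $(S^{15}=\Spin(9)/\Spin(7), g_{a,b})$, so by Proposition \ref{S15invariantforms} it lies in the finite-dimensional space listed in Table \ref{Tab:invariant_forms_S15}. In particular, for $k=1,2,6$ this space is zero, forcing $\omega_{(k)}=0$ with no computation at all. For $k=0$ the result is simply $\|\psi\|^2$, and for $k>7$ one may invoke Hodge duality (combined with the involution $\langle X\cdot \varphi,\varphi\rangle = - \langle \varphi, X\cdot \varphi\rangle$ on Clifford products of appropriate parity) to reduce to the lower-degree cases. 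This reduces the problem to computing four genuinely new forms, one each in degrees $k=3,4,5,7$.

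For each remaining degree, I would compute $\omega_{(k)}$ on enough basis multi-vectors $e_{i_1}\wedge\cdots\wedge e_{i_k}$ to pin down its coefficients in terms of the basis of invariants provided by Table \ref{Tab:invariant_forms_S15}. Concretely, the Clifford product $e_{i_1}\cdot e_{i_2}\cdots e_{i_k}\cdot \psi$ is evaluated inside $\Sigma = \Lambda^\bullet L'$ via the interior/exterior formulas (\ref{cliffordmultONB}) applied to the orthonormal basis (\ref{S15ONB}), and then one takes the Hermitian inner product with $\psi$ and extracts the real part. Because $\psi$ is a sum of only eight orthogonal exterior monomials, and Clifford multiplication by each $e_i$ either shifts the degree by one or pairs a $y_j$ with its dual $x_j$, nearly all terms in the expansion vanish; the surviving contributions collapse to a small sum that can be read off by bookkeeping.

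Since most invariant spaces are at most one- or two-dimensional, very few test vectors are needed per degree: one triple $(e_{i_1},e_{i_2},e_{i_3})$ on which $\omega$ is nonzero fixes $\omega_{(3)}$; one pure-fiber quadruple and one pure-base quadruple (using the block structure of $d\omega$ and $\Psi$ from Appendix \ref{AppendixB_forms}) fix $\omega_{(4)}$; and a single nonzero test quintuple fixes $\omega_{(5)}$, since $d\Psi$ spans the 5-dimensional space of invariants. The one unavoidably tedious step is degree $k=7$, where there are four independent invariants ($\mathrm{pr}_{1,6}(\omega\wedge d\omega)$, $\mathrm{pr}_{3,4}(\omega\wedge d\omega)$, $*(\omega\wedge d\Psi)$, $*(\Psi\wedge \Psi)$) and one must set up and solve a small linear system coming from four well-chosen index patterns (e.g.\ one from each of the fiber/base bi-degrees $(1,6),(3,4),(5,2),(7,0)$).

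The main obstacle is precisely this degree-7 case: the combinatorial size of the Clifford expansion grows quickly, and the four basis invariants have overlapping supports, so one must choose the test tuples carefully to produce an invertible system. Once this is handled, matching the resulting coefficients against the entries of Table \ref{Tab:S15squaringconstruction} completes the proof, with the forms in degrees $k>7$ following by Hodge duality as noted above.
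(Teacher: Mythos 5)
Your strategy mirrors the paper's proof almost exactly: use invariance of $\psi$ to reduce $\omega_{(k)}$ to the finite-dimensional spaces of invariants from Table \ref{Tab:invariant_forms_S15} (immediately killing $k=1,2,6$), evaluate on well-chosen test multi-vectors in degrees $3,4,5,7$ to pin down coefficients, and handle degrees $>7$ by showing $\omega_{(15-k)}$ is proportional to $*\omega_{(k)}$. The paper derives the precise relation $\omega_{(15-k)}=(-1)^{k(k+1)/2}*\omega_{(k)}$ from the identity $e_I\cdot e_I=(-1)^{k(k+1)/2}$ rather than the sesquilinear involution you mention, but both lead to the same reduction.

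One concrete issue with your plan for $k=4$: a pure-fiber quadruple is uninformative. The two invariant $4$-forms are $\Psi\in\Lambda^4\mathfrak{m}_B$ and $d\omega$, whose components lie only in $\Lambda^2\mathfrak{m}_F\otimes\Lambda^2\mathfrak{m}_B$ and $\Lambda^4\mathfrak{m}_B$ (see Table \ref{Tab:isotropytypesS15}), so both vanish identically on $\Lambda^4\mathfrak{m}_F$. Testing there gives $0=0$ and fixes nothing. You need one pure-base quadruple (to isolate the $\Psi$-coefficient) and one mixed $(2,2)$ quadruple (to read off the $\mathrm{pr}_{2,2}(d\omega)$-coefficient), which is what the paper does with $(e_8,e_9,e_{10},e_{11})$ and $(e_1,e_2,e_8,e_{11})$. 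Also a small slip: the space of invariant $5$-forms is $1$-dimensional (spanned by $d\Psi$), not ``$5$-dimensional''; the argument you want is that $\omega_{(5)}$ is a scalar multiple of $d\Psi$, so a single test tuple on which $d\Psi$ is nonzero suffices to determine that scalar (which turns out to be zero).
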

	\begin{table}[h!] 
		\centering
		\caption{Forms on $(S^{15}=\Spin(9)/\Spin(7),g_{a,b})$ obtained from $\psi$ via the squaring construction}
		\begin{tabular}{|l||l|}\hline
			$k$  &  $ \omega_{(n)}$ \\ \hline
			$0$    & $1$  \\
			$1$ & $0$   \\
			$2$ & $0$  \\
			$3$   & $\omega$    \\
			$4$  & $-\frac{\sqrt{a}}{2\sqrt{2}} \text{pr}_{2,2}(d\omega)$    \\
			$5$ &  $0$   \\
			$6$ &  $0$   \\
			$7$ &   $-\frac{b\sqrt{2}}{18\sqrt{a}} \text{pr}_{1,6}(\omega\wedge d\omega) + \frac{\sqrt{a}}{6\sqrt{2}} \text{pr}_{3,4}(\omega\wedge d\omega) -\frac{\sqrt{a}}{8\sqrt{2}} \ast(\omega\wedge d\Psi) - \frac{1}{14} \ast(\Psi\wedge \Psi)$     \\ \hline
		\end{tabular}
	\label{Tab:S15squaringconstruction}
	\end{table}
\begin{proof}Note that invariance of $\psi$ implies that each $\omega_{(n)}$ is invariant too, and thus Proposition \ref{S15invariantforms} greatly limits the possibilities for $\omega_{(n)}$. One immediately sees that $\omega_{(1)}=\omega_{(2)}=\omega_{(6)} =0$, and the case $k=3$ is given above. Examining next the case of $4$-forms, we consider the projection 
\[
\text{pr}_{2,2}(d\omega) = d\omega + \frac{3\sqrt{a}}{b\sqrt{2}}\Psi
\]
of the invariant $4$-form $d\omega$ onto $\Lambda^2\mathfrak{m}_F\otimes \Lambda^2\mathfrak{m}_B$, and it is easy to see that $\{\Psi, \ \text{pr}_{2,2}(d\omega)\}$ is a basis for the space of invariant $4$-forms. Writing $\omega_{(4)}=\lambda_{1}\Psi + \lambda_{2}\text{pr}_{2,2}(d\omega)$, a straightforward calculation in the spin representation shows that \[\omega_{(4)}(e_8,e_9,e_{10},e_{11}) = 0 ,\qquad \omega_{(4)}(e_1,e_2,e_{8},e_{11})=-\frac{1}{2},\]
hence $\lambda_{1}=0$ and $\lambda_{2}=-\frac{\sqrt{a}}{2\sqrt{2}}$ (by comparing with the formulas in Appendix \ref{AppendixB_forms})
Similarly, for $5$-forms, one calculates 
\[
\omega_{(5)}(e_1,e_8,e_{10},e_{12},e_{15}) = 0 \neq \sqrt{\frac{2}{a}} = d\Psi (e_1,e_8,e_{10},e_{12},e_{15}),
\]
and we conclude from Table \ref{Tab:invariant_forms_S15} that $\omega_{(5)}=0$. In degree $7$, we write 
\[
\omega_{(7)}= \lambda_1 \text{pr}_{1,6}(\omega\wedge d\omega) + \lambda_2 \text{pr}_{3,4}(\omega\wedge d\omega) + \lambda_3(\ast(\omega \wedge d\Psi)) + \lambda_4(\ast(\Psi\wedge \Psi))  
\]
as a linear combination of the $7$-forms from Table \ref{Tab:invariant_forms_S15}. A straightforward calculation gives
\begin{align*}
	\omega_{(7)}(e_1,e_8,e_9,e_{10},e_{11},e_{12},e_{13}) &=  \frac{1}{2}, \qquad \omega_{(7)}(e_1,e_2,e_3,e_8,e_9,e_{10},e_{11})= 1\\
	\omega_{(7)}(e_3,e_4,e_5,e_6,e_7,e_{13},e_{14}) &= \frac{1}{2} , \qquad \omega_{(7)}(e_1,e_2,e_3,e_4,e_5,e_6,e_7) = -1 ,
\end{align*}
and it then follows from the formulas in Appendix \ref{AppendixB_forms} that
\[
\lambda_1 = -\frac{b\sqrt{2}}{18\sqrt{a}}, \quad \lambda_2 =\frac{\sqrt{a}}{6\sqrt{2}} , \quad \lambda_3= -\frac{\sqrt{a}}{8\sqrt{2}}, \quad \lambda_4=-\frac{1}{14} .
\]
For higher degrees, we note that the squaring construction is compatible with the Hodge star in the following sense: For any multi-index $I=\{i_1,\dots,i_k\}$, define a multi-vector $ e_I:=e_{i_1}\wedge \dots \wedge e_{i_k}$, and recall that $\ast e_I =\text{sign}(I\cup \widehat{I}) e_{\widehat{I}}$, where $\widehat{I}=\{1,2,\dots, 15\}\setminus I$ (with union and complement taken as ordered sets). Since $e_I\wedge (e_{\widehat{I}})=\text{sign}(I\cup \widehat{I}) e_1\wedge e_2\wedge \dots \wedge e_{15}$ acts on the spin representation by $\text{sign}(I\cup \widehat{I}) \Id$, we then have
\begin{align*}
\omega_{(k)}(e_I)&= \Re \langle   e_{I}\cdot \psi, \psi\rangle = \text{sign}(I\cup \widehat{I}) \ \Re \langle   e_{I}\cdot e_I\cdot e_{\widehat{I}}\cdot \psi, \psi\rangle = (-1)^{\frac{k(k+1)}{2}}\text{sign}(I\cup \widehat{I})  \ \Re \langle  e_{\widehat{I}}\cdot \psi, \psi\rangle \\
&= (-1)^{\frac{k(k+1)}{2}} \omega_{(15-k)}(\ast e_I),
\end{align*}
and it follows that $\omega_{(15-k)} = (-1)^{\frac{k(k+1)}{2}} \ast\omega_{(k)}$. In particular, the forms $\omega_{(k)}$ with $k\geq 8$ are determined by those in Table \ref{Tab:S15squaringconstruction}. 
\end{proof}
%
%
%
%
%
%
%
%
%
%
%
%
%
%
Finally, we calculate the Ambrose-Singer torsion and determine its type:
\begin{proposition}
For any $a,b>0$ the sphere $(S^{15}= \Spin(9)/\Spin(7), g_{a,b})$ has Ambrose-Singer torsion of type $\mathcal{T}_{\totallyskew}\oplus \mathcal{T}_{\CT}$, given by {\small
\begin{align*}
	T^{\AS}(e_i,-) &=\frac{1}{2\sqrt{2a}} e_i\lrcorner \omega \quad  (i=1,\dots, 7),\quad
	T^{\AS}(e_i,-) = \omega_i' \quad (i=8,\dots,15),  
\end{align*}
} 
where $\omega_i'$ are the $(0,2)$-tensors determined by 
\[
\omega_i'(e_j,e_k) = \begin{cases*}
\frac{1}{2\sqrt{2a}}  \omega(e_i,e_j,e_k)  & if $j<k$, \\ \frac{\sqrt{a}}{2b\sqrt{2}} \omega(e_i,e_j,e_k)  & if $j>k$, \\
0 & if $j=k$.
\end{cases*}
\] 
The projection of $T^{\AS}$ onto $\mathcal{T}_{\totallyskew}$ is 
\[
T^{\AS}_{\totallyskew} = \frac{(a+2b)}{6b\sqrt{2a}} \omega,
\]
with $T^{\AS}= T^{\AS}_{\totallyskew}$ if and only if $a=b$. 
\end{proposition}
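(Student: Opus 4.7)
The plan is to reduce the entire statement to understanding the commutator brackets $[e_i,e_j]_{\mathfrak{m}}$ and matching them against components of the invariant $3$-form $\omega$. Since $\Uplambda^{\AS}\equiv 0$, formula (\ref{torsionatorigin}) gives $T^{\AS}(X,Y)=-[X,Y]_{\mathfrak{m}}$, so everything is bracket bookkeeping.

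First, I would exploit the octonionic Hopf fibration (\ref{octhopf}): the decomposition $\mathfrak{spin}(9)=\mathfrak{spin}(7)\oplus\mf\oplus\mb$ contains the symmetric pair $(\Spin(9),\Spin(8))$, forcing $[\mb,\mb]\subseteq\mathfrak{spin}(8)=\mathfrak{spin}(7)\oplus\mf$ and $[\mathfrak{spin}(8),\mb]\subseteq\mb$. Consequently the brackets of $\mathfrak{m}$ split into three cases:
\[
[\mf,\mf]_{\mathfrak{m}}\subseteq\mf,\qquad [\mf,\mb]_{\mathfrak{m}}\subseteq\mb,\qquad [\mb,\mb]_{\mathfrak{m}}\subseteq\mf.
\]
These are exactly the $(i,j,k)$-splittings for which $g_{a,b}(T^{\AS}(e_i,e_j),e_k)$ can be nonzero, and by the formulas in Appendix \ref{AppendixB_forms} they are also the splittings for which $\omega(e_i,e_j,e_k)$ is nonzero, since $\omega$ lives in $\Lambda^3\mf\oplus(\mf\otimes\Lambda^2\mb)$.

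Second, I would compute the brackets in the $B_0$-orthonormal basis $\widehat{e}_i=\nu'_{21+i}$ using the Clifford-algebra structure of $\mathfrak{spin}(9)$ from Appendix \ref{AppendixB_forms}, and then rescale to the $g_{a,b}$-orthonormal basis via (\ref{S15ONB}). The key short-cut is $\Spin(7)$-equivariance: Table \ref{Tab:invariant_forms_S15} shows that $\omega$ is, up to scaling, the unique invariant $3$-form on $\mathfrak{m}$, so the three bracket operations $\Lambda^2\mf\to\mf$, $\mf\otimes\mb\to\mb$, $\Lambda^2\mb\to\mf$ are each determined up to a single scalar by the corresponding component of $\omega$. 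The rescalings $e_i=\widehat{e}_i/\sqrt{a}$ (fibre) or $\widehat{e}_i/\sqrt{b}$ (base) then produce precisely the coefficients $\tfrac{1}{2\sqrt{2a}}$ (when at most one of $i,j,k$ is horizontal) and $\tfrac{\sqrt{a}}{2b\sqrt{2}}$ (when two are horizontal and one vertical) stated in the proposition; these three overall scalars can be fixed by evaluating a single bracket in each case.

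Finally, the torsion type: Table \ref{Tab:invariant_forms_S15} contains no invariant $1$-forms, so the trace $c_{12}(A)$ of the difference tensor vanishes identically and hence $T^{\AS}_{\vectorial}=0$ by (\ref{vectorialprojection}). The skew projection $T^{\AS}_{\totallyskew}$ is an invariant $3$-form, hence a scalar multiple of $\omega$; applying the cyclic-sum formula (\ref{skewprojection}) together with the isomorphism (\ref{torsionisomorphism2}) to a single well-chosen triple of basis vectors determines the scalar as $\tfrac{(a+2b)}{6b\sqrt{2a}}$. The equality $T^{\AS}=T^{\AS}_{\totallyskew}$ is then equivalent to total cyclic symmetry of $g_{a,b}(T^{\AS}(e_i,e_j),e_k)$ in $(i,j,k)$, which by inspection of the explicit formula forces $\tfrac{1}{2\sqrt{2a}}=\tfrac{\sqrt{a}}{2b\sqrt{2}}$, i.e., $a=b$. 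The main obstacle is purely a bookkeeping one — pinning down these scalars via an explicit Clifford computation in the basis of Appendix \ref{AppendixB_forms} — but the equivariance argument reduces it to a small number of sample brackets rather than a full case analysis over $15\times 15$ pairs.
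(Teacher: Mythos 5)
Your plan is sound and in fact more conceptual than the paper's (implicit) brute-force computation; the key ideas---$T^{\AS}=-[\cdot,\cdot]_{\mathfrak{m}}$, the bracket decomposition forced by the octonionic Hopf fibration, $\Spin(7)$-equivariance reducing each bracket component to a single scalar, and the absence of invariant $1$-forms killing $T^{\AS}_{\vectorial}$---are all correct. The cyclic-sum check on a single triple does indeed yield $T^{\AS}_{\totallyskew}=\frac{a+2b}{6b\sqrt{2a}}\omega$ and the $a=b$ criterion.

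There is, however, one imprecision that needs repairing. You write that $\omega$ lives in $\Lambda^3\mf\oplus(\mf\otimes\Lambda^2\mb)$, but by Table~\ref{Tab:isotropytypesS15} (and the explicit formula in Appendix~\ref{AppendixB_forms}) $\omega$ is \emph{purely} of type $\mf\otimes\Lambda^2\mb$; it has no $\Lambda^3\mf$ component. This is not cosmetic: you have only argued $[\mf,\mf]_{\mathfrak{m}}\subseteq\mf$, and if that bracket were nonzero the torsion $3$-tensor would have a $\Lambda^3\mf$ piece which cannot be expressed through $\omega$, so the stated formula $T^{\AS}(e_i,-)=\frac{1}{2\sqrt{2a}}\,e_i\lrcorner\,\omega$ for $i\le 7$ would fail on vertical pairs. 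You must establish the stronger fact $[\mf,\mf]_{\mathfrak{m}}=0$. This follows either because $\Spin(8)/\Spin(7)\cong S^7$ is a symmetric pair (the $\Spin(7)$ here sits in $\Spin(8)$ via the spin representation, which by triality is conjugate to the vector embedding), or more in the spirit of your equivariance argument because $\Lambda^2\R^7\cong\mathfrak{so}(7)$ and $\R^7$ are non-isomorphic irreducible $\Spin(7)$-modules, so the only equivariant map $\Lambda^2\mf\to\mf$ is zero. With that inserted, only the two bracket operations $\mf\otimes\mb\to\mb$ and $\Lambda^2\mb\to\mf$ survive, each an isotypic multiple of the corresponding component of $\omega$, and your Schur-lemma scalar-matching goes through exactly as sketched. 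The paper gives no explicit proof of this proposition (it is left as a direct commutator computation), so your route, once the $[\mf,\mf]$ point is patched, is a valid and somewhat leaner alternative.
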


%
%
%
%
%
%
%
%
%
%
\section{Generalized Killing Spinors on Round Spheres}
In this section we revisit our findings for the round metric in each case, and compare with the known results of Moroianu and Semmelmann \cite{GKSEinstein,GKSspheres}. In Section 4 of \cite{GKSspheres} it is shown that, for the round sphere $S^n$ ($n\geq 3$), if there exists a generalized Killing spinor with exactly two eigenvalues, then those eigenvalues are equal to $\frac{1}{2}$, $-\frac{3}{2}$ (up to a change of orientation) and they occur with multiplicity $m_{\frac{1}{2}}$ and $m_{-\frac{3}{2}}= m_{\frac{1}{2}} +1$ respectively. Furthermore, they show that if such a spinor exists, then $n=3$ or $7$, giving the multiplicities $(m_{\frac{1}{2}},m_{-\frac{3}{2}})=(1,2)$ or $(3,4)$. In what follows we examine each of the 3- and 7-dimensional cases from our classification and determine whether, when equipped with the round metric, they admit invariant generalized Killing spinors with exactly two eigenvalues.  
\begin{enumerate}[(I).]
	\item $G=\SO(4)$, $\SO(8)$, $\U(2)$, or $\U(4)$. By Theorems \ref{SOtheorem} and \ref{Utheorem} there are no invariant spinors to consider in these cases.
	\item $G=\SU(2)\cong \Sp(1)$. By Theorem \ref{deformedSasakianinvariantspinors} and Corollary \ref{sunweakprop}, the round metric $g_{a,b}\rvert_{a= b=\frac{1}{2}}$ admits a pair of invariant Killing spinors for the constant $\frac{1}{2}$, but no invariant generalized Killing spinors.
	\item $G=\Sp(1)\Sp(1)$. By Theorem \ref{explicitspinorsK=sp1} there are no invariant spinors to consider in this case.
	\item $G=\Sp(1)\U(1)$. By Theorem \ref{explicitspinorsK=u1} there are no invariant spinors to consider in this case.
	\item $G=\Sp(2)$. Considering Example \ref{sp2sp1} with the round metric $g_{\alpha,\delta}\rvert_{\alpha=\delta=1}$, one recovers the canonical spinor described in \cite{3Sasdim7, 3str}, consistent also with Theorem 4.10 in \cite{GKSspheres}.
	\item $G=\Sp(2)\Sp(1)$. The round metric in this case is given by $g_{a,b}\rvert_{a=\frac{5}{24}, \ b=\frac{1}{24}}$. Substituting these values of $a$ and $b$ into the endomorphism $A$ from Proposition \ref{sp1twistedGKS} shows that the spinor $\psi_0$ from Theorem \ref{example_invariant_spinors_sp_twisted_7sphere} is an invariant generalized Killing spinor with two distinct eigenvalues, and the associated endomorphism is consistent with Theorem 4.10 in \cite{GKSspheres}; this theorem also implies the existence of a 3-Sasakian structure with $\psi_0$ as the canonical spinor, however this structure cannot be invariant as a consequence of Corollary \ref{S7_sp2sp1_no_inv_sasakian_structure}.
	\item $G=\Sp(2)\U(1)$. The round metric in this case is given by $g_{a,b,c}\rvert_{a=\frac{5}{24}, \ b= \frac{1}{12}, \ c=\frac{1}{24}}$, and by Corollary \ref{spnu1alphasasakianparamterization} there is a compatible invariant Sasakian structure. By Remark \ref{u1twistedroundmetric}, the spinor $\psi_0$ from Theorem \ref{u1twistedGKS} is an invariant generalized Killing spinor with two distinct eigenvalues, and the associated endomorphism is consistent with Theorem 4.10 in \cite{GKSspheres}; this theorem also implies the existence of a 3-Sasakian structure with $\psi_0$ as the canonical spinor, however this structure cannot be invariant since the space of invariant vectors is 1-dimensional.
	\item $G=\Spin(7)$. In this case, for any invariant metric, the 1-dimensional space of invariant spinors consists of Killing spinors. 
\end{enumerate}
%
%
%
%
%
%
%
%
%
%
\appendix
\section{An Explicit Construction of the Lie Algebras $\mathfrak{su}(3)$, $\mathfrak{g}_2$, $\mathfrak{spin}(7)$, and $\mathfrak{spin}(9)$}
The exceptional cases in Table \ref{Tab:homogeneousspheres} can be greatly simplified by constructing in a unified manner the Lie algebra inclusions $\mathfrak{su}(3)\subset \mathfrak{g}_2 \subset \mathfrak{spin}(7) \subset \mathfrak{spin}(9)$. This Appendix is devoted to the exposition of this construction, parts of which may be found in Chaper 4.4 of \cite{BFGK}:
\begin{lemma}
	(Based on Chapter 4.4, Lemma 15 in \cite{BFGK}). The Lie algebra $\mathfrak{g}_2$ (resp. $\mathfrak{su}(3)$) may be realized as the stabilizer of one (resp. two) spinors in the real spin representation $\Sigma_8:=\R^8$ of $\mathfrak{spin}(7)$. Explicitly, if $\epsilon_1,\dots\epsilon_7$ is the standard basis of $\R^7$ and $\phi_1,\dots,\phi_8$ is the standard basis of the real spinor module $\Sigma_7=\R^8$, then $\mathfrak{g}_2$ and $\mathfrak{su}(3)$ are realized inside $\mathfrak{spin}(7)=\Span_{\R}\{ \epsilon_i\epsilon_j\}  $ via
	\begin{align*}
		\mathfrak{g}_2& \cong \text{\emph{stab}}_{\mathfrak{spin}(7)}\{\phi_1 \} \cong  \left\{\sum_{1\leq i<j \leq 7} \omega_{i,j} \epsilon_i\epsilon_j\: \begin{multlined} \omega_{1,2}+\omega_{3,4} + \omega_{5,6}=0, \\ -\omega_{1,3}+\omega_{2,4} -\omega_{6,7}=0, \ -\omega_{1,4}-\omega_{2,3} -\omega_{5,7} =0, \\ -\omega_{1,6}-\omega_{2,5} +\omega_{3,7}=0, \ \omega_{1,5}-\omega_{2,6} -\omega_{4,7}=0, \\ \omega_{1,7}+\omega_{3,6} +\omega_{4,5} =0, \ \omega_{2,7}+ \omega_{3,5}-\omega_{4,6}=0  \end{multlined}     \right\} , \\
		\mathfrak{su}(3)&\cong \text{\emph{stab}}_{\mathfrak{spin}(7)} \{\phi_1,\phi_2\} = \left\{ \sum_{1\leq i<j\leq 7} \omega_{i,j}\epsilon_i\epsilon_j\: \begin{multlined} \omega_{1,2}+\omega_{3,4}+\omega_{5,6}=0 \\ \omega_{1,3}=\omega_{2,4}, \ \omega_{1,4}+\omega_{2,3}=0, \  \omega_{1,5}=\omega_{2,6}, \\ \omega_{1,6}+\omega_{2,5}=0, \ \omega_{3,5}=\omega_{4,6}, \ \omega_{3,6}+\omega_{4,5}=0, \\ \omega_{1,7}=\omega_{2,7}=\dots=\omega_{6,7}=0 \end{multlined}    \right\}. 
	\end{align*}
\end{lemma}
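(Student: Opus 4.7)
The strategy rests on two classical facts: $\Spin(7)$ acts transitively on the unit sphere $S^7 \subset \Sigma_7 = \R^8$ of its real spin representation with isotropy $\G_2$, and the induced action of $\G_2$ on the orthogonal complement $\phi_1^\perp \cong \R^7$ is transitive on $S^6$ with isotropy $\SU(3)$. At the Lie algebra level this identifies $\mathfrak{g}_2$ (resp.\ $\mathfrak{su}(3)$) with the stabilizer of $\phi_1$ (resp.\ of the pair $\{\phi_1,\phi_2\}$) inside $\mathfrak{spin}(7) \subset Cl(7)$, acting on $\Sigma_7$. The content of the lemma is then purely to identify explicit linear equations cutting out these stabilizers in terms of the basis $\{\epsilon_i \epsilon_j\}_{i<j}$.

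To produce these equations, I would first fix a concrete model of Clifford multiplication $\epsilon_i\colon \Sigma_7 \to \Sigma_7$ as explicit $8\times 8$ real matrices—for instance, via iterated Kronecker products of Pauli-type matrices, or, equivalently, by identifying $\Sigma_7 \cong \mathbb{O}$ so that $\epsilon_i \cdot \phi$ corresponds to left multiplication by a unit imaginary octonion. With such a model fixed, compute the $21$ spinors $\epsilon_i \epsilon_j \cdot \phi_1 \in \Sigma_7$. Requiring
\[
\sum_{1\leq i<j\leq 7} \omega_{i,j}\,\epsilon_i \epsilon_j \cdot \phi_1 = 0
\]
then unpacks, coordinate by coordinate, into seven linear equations on the $21$ coefficients $\omega_{i,j}$; with a judicious choice of convention these are precisely the seven relations displayed in the statement. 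This cuts out a $14$-dimensional subspace of $\mathfrak{spin}(7)$, and since the conceptual argument already forces $\mathfrak{g}_2$ to embed into this stabilizer, equality of dimensions completes the identification.

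For the $\mathfrak{su}(3)$ description I would repeat the procedure with the second basis spinor $\phi_2$, obtaining a second copy of $\mathfrak{g}_2$ as a stabilizer cut out by seven analogous equations. Intersecting both systems yields $14$ linear conditions on the $\omega_{i,j}$; simplifying this combined system produces the relations listed in the lemma, and the dimension count collapses to $8 = \dim \mathfrak{su}(3)$, again matching the conceptual upper bound.

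The main obstacle is bookkeeping rather than conceptual: one must commit to a convention for Clifford multiplication on $\Sigma_7$ and carry out the $21 + 21$ computations consistently enough that the resulting sign pattern agrees with the specific equations stated (and with \cite{BFGK}). Different conventions for the octonion product, or different orderings of the Kronecker factors, produce equivalent but differently-signed systems, so the real work lies in aligning conventions rather than in any nontrivial structural argument.
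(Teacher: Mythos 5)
Your approach is essentially the one the paper uses: the lemma is attributed to \cite{BFGK} for the conceptual fact that $\mathfrak{g}_2$ and $\mathfrak{su}(3)$ arise as stabilizers of one and two spinors respectively, and the immediately following Remark in the paper fixes the same kind of concrete $8\times 8$ matrix realization $\rho(\epsilon_i)$ that you propose, from which the displayed linear equations are obtained by unpacking $\sum \omega_{i,j}\,\rho(\epsilon_i)\rho(\epsilon_j)\,\phi_1 = 0$ (and likewise for $\phi_2$) coordinate by coordinate. One small imprecision in your count for $\mathfrak{su}(3)$: you describe intersecting the two stabilizer systems as ``14 linear conditions'' with the dimension ``collapsing'' to $8$, but the correct accounting is that each $\rho(\epsilon_i)\rho(\epsilon_j)$ is skew-symmetric, so the $\phi_1$-component of $\sum\omega_{i,j}\rho(\epsilon_i)\rho(\epsilon_j)\phi_2$ is $-\langle \phi_2, \sum\omega_{i,j}\rho(\epsilon_i)\rho(\epsilon_j)\phi_1\rangle$ and is hence already implied by the $\phi_1$-stabilizer equations; thus only $7+6=13$ conditions are independent (precisely matching the $1+6+6$ relations listed in the statement), giving $21-13=8$ directly rather than by any further collapse.
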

\begin{remark}
		In order to find bases for these Lie algebras we use the explicit realization of the spin representation obtained from the following matrices: 
		\begin{align*} 
		\rho(\epsilon_1) &:= E^{(8)}_{1, 8} + E^{(8)}_{2, 7} - E^{(8)}_{3, 6} - E^{(8)}_{4, 5}, \quad 
		\rho(\epsilon_2) := -E^{(8)}_{1, 7} + E^{(8)}_{2, 8} + E^{(8)}_{3, 5} - 
		E^{(8)}_{4, 6},\\
		\rho(\epsilon_3) &:= -E^{(8)}_{1, 6} + E^{(8)}_{2, 5} - E^{(8)}_{3, 8} + 
		E^{(8)}_{4, 7},\quad 
		\rho(\epsilon_4) := -E^{(8)}_{1, 5} - E^{(8)}_{2, 6} - E^{(8)}_{3, 7} - 
		E^{(8)}_{4, 8},\\
		\rho(\epsilon_5) &:= -E^{(8)}_{1, 3} - E^{(8)}_{2, 4} + E^{(8)}_{5, 7} + 
		E^{(8)}_{6, 8},\quad 
		\rho(\epsilon_6) := E^{(8)}_{1, 4} - E^{(8)}_{2, 3} - E^{(8)}_{5, 8} + E^{(8)}_{6, 7},\\
		\rho(\epsilon_7) &:= E^{(8)}_{1, 2} - E^{(8)}_{3, 4} - E^{(8)}_{5, 6} + E^{(8)}_{7, 8} 
		\end{align*}
		(see Chapter 4.4 in \cite{BFGK}). By substituting these into the equations of the preceding lemma and subsequently orthogonalizing with respect to $B_0$, one obtains the following proposition.
\end{remark}
\begin{proposition} \label{onbasisg2su3}
	A $B_0$-orthonormal basis for $\mathfrak{g}_2$ given by 
	\begin{align*}
	\nu_1& := \frac{1}{4} (\rho (\epsilon_1) \rho (\epsilon_2)-\rho (\epsilon_5) \rho (\epsilon_6)),\quad \nu_2:= \frac{1}{4} (\rho (\epsilon_3) \rho (\epsilon_5)+\rho (\epsilon_4) \rho (\epsilon_6)), \\ \nu_3&:= \frac{1}{4} (\rho (\epsilon_3) \rho (\epsilon_6)-\rho (\epsilon_4) \rho (\epsilon_5)), \quad \nu_4:= \frac{1}{4} (\rho (\epsilon_1) \rho (\epsilon_3)+\rho (\epsilon_2) \rho (\epsilon_4)),\\ \nu_5&:= \frac{1}{4} (\rho (\epsilon_1) \rho (\epsilon_4)-\rho (\epsilon_2) \rho (\epsilon_3)), \quad \nu_6:= \frac{1}{4} (\rho (\epsilon_1) \rho (\epsilon_5)+\rho (\epsilon_2) \rho (\epsilon_6)),\\ \nu_7&:= \frac{1}{4} (\rho (\epsilon_1) \rho (\epsilon_6)-\rho (\epsilon_2) \rho (\epsilon_5)), \quad \nu_8:= -\frac{\rho (\epsilon_1) \rho (\epsilon_2)-2 \rho (\epsilon_3) \rho (\epsilon_4)+\rho (\epsilon_5) \rho (\epsilon_6)}{4 \sqrt{3}},\\ \nu_9&:= \frac{2 \rho (\epsilon_1) \rho (\epsilon_7)-\rho (\epsilon_3) \rho (\epsilon_6)-\rho (\epsilon_4) \rho (\epsilon_5)}{4 \sqrt{3}},\quad \nu_{10}:= \frac{2 \rho (\epsilon_2) \rho (\epsilon_7)-\rho (\epsilon_3) \rho (\epsilon_5)+\rho (\epsilon_4) \rho (\epsilon_6)}{4 \sqrt{3}}, \\ \nu_{11}&:= \frac{\rho (\epsilon_1) \rho (\epsilon_3)-\rho (\epsilon_2) \rho (\epsilon_4)-2 \rho (\epsilon_6) \rho (\epsilon_7)}{4 \sqrt{3}},\quad \nu_{12} :=\frac{\rho (\epsilon_1) \rho (\epsilon_4)+\rho (\epsilon_2) \rho (\epsilon_3)-2 \rho (\epsilon_5) \rho (\epsilon_7)}{4 \sqrt{3}},\\ \nu_{13}&:= \frac{\rho (\epsilon_1) \rho (\epsilon_5)-\rho (\epsilon_2) \rho (\epsilon_6)+2 \rho (\epsilon_4) \rho (\epsilon_7)}{4 \sqrt{3}},\quad \nu_{14}:= \frac{\rho (\epsilon_1) \rho (\epsilon_6)+\rho (\epsilon_2) \rho (\epsilon_5)+2 \rho (\epsilon_3) \rho (\epsilon_7)}{4 \sqrt{3}},
		\end{align*}
		with the first 8 elements $\nu_1,\dots \nu_8$ forming a $B_0$-orthonormal basis for the subalgebra $\mathfrak{su}(3)$.
\end{proposition}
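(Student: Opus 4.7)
The strategy is to leverage the preceding lemma, which characterizes $\mathfrak{g}_2$ (respectively $\mathfrak{su}(3)$) as the subspace of $\mathfrak{spin}(7) = \Span_{\R}\{\rho(\epsilon_i)\rho(\epsilon_j)\}_{1\leq i<j\leq 7}$ cut out by $7$ (respectively $15$) explicit linear equations in the coefficients $\omega_{i,j}$. A dimension count gives $\dim\mathfrak{g}_2 = 21-7 = 14$ and $\dim\mathfrak{su}(3) = 8$, so it suffices to exhibit $14$ (respectively $8$) $B_0$-orthonormal elements lying in the appropriate linear subspace.

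First I would reduce $B_0$-orthonormality to a combinatorial identity on the basis $\{\rho(\epsilon_i)\rho(\epsilon_j)\}_{i<j}$. Since the $\rho(\epsilon_i)$ are real $8\times 8$ matrices satisfying the Clifford relation $\rho(\epsilon_p)\rho(\epsilon_q)+\rho(\epsilon_q)\rho(\epsilon_p) = -2\delta_{p,q}\Id$, the product $\rho(\epsilon_i)\rho(\epsilon_j)\rho(\epsilon_k)\rho(\epsilon_l)$ with $i<j$, $k<l$ is traceless unless $\{i,j\}=\{k,l\}$, in which case it equals $-\Id$. Thus
\[
B_0\bigl(\rho(\epsilon_i)\rho(\epsilon_j),\rho(\epsilon_k)\rho(\epsilon_l)\bigr) = -\Re\tr\bigl(\rho(\epsilon_i)\rho(\epsilon_j)\rho(\epsilon_k)\rho(\epsilon_l)\bigr) = 8\,\delta_{\{i,j\},\{k,l\}},
\]
so the $\binom{7}{2}=21$ generators $\rho(\epsilon_i)\rho(\epsilon_j)$ are mutually $B_0$-orthogonal with common norm-squared $8$.

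Second, I would verify directly that each $\nu_k$ lies in $\mathfrak{g}_2$ by reading off its coefficients $\omega_{i,j}$ in the basis $\{\rho(\epsilon_i)\rho(\epsilon_j)\}$ and substituting into the seven linear constraints of the lemma. For instance $\nu_1$ has only $\omega_{1,2}=\tfrac14$, $\omega_{5,6}=-\tfrac14$ nonzero, and the first constraint $\omega_{1,2}+\omega_{3,4}+\omega_{5,6}=0$ holds trivially, while the remaining six equations involve indices absent from $\nu_1$. A similar case-by-case check works for $\nu_2,\ldots,\nu_{14}$. For the subalgebra claim, I would then note that $\nu_1,\ldots,\nu_8$ do not involve $\epsilon_7$, so all the coefficients $\omega_{i,7}$ vanish automatically; the remaining $\mathfrak{su}(3)$-constraints likewise follow from inspection, exploiting the fact that the nonzero pairs $(i,j)$ appearing in $\nu_1,\ldots,\nu_8$ are confined to $\{1,\ldots,6\}$ and arranged in the precise symmetric pattern dictated by the lemma.

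Third, the $B_0$-orthonormality of $\{\nu_1,\ldots,\nu_{14}\}$ reduces, via the identity above, to a routine linear algebra check on the explicit coefficient vectors: for each $\nu_k$, the sum of squared coefficients weighted by $8$ must equal $1$, and for each pair $\nu_k,\nu_l$ with $k\neq l$, the weighted inner product of coefficient vectors must vanish. For example $\|\nu_1\|_{B_0}^2 = \tfrac{1}{16}(8+8)=1$ and $\|\nu_8\|_{B_0}^2 = \tfrac{1}{48}(8+4\cdot 8+8)=1$, with off-diagonal pairings evident from disjoint supports of the coefficient vectors (the normalizations $\tfrac14$ versus $\tfrac{1}{4\sqrt{3}}$ are chosen precisely to handle the two regimes). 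Orthonormality implies linear independence, and combining with the dimension count forces $\{\nu_1,\ldots,\nu_{14}\}$ and $\{\nu_1,\ldots,\nu_8\}$ to be bases for $\mathfrak{g}_2$ and $\mathfrak{su}(3)$ respectively. The only real obstacle is organizational: none of the individual verifications is deep, but the $\binom{14}{2}+14=105$ inner products to inspect must be handled systematically, which I would do by partitioning the $\nu_k$ according to the set of $\epsilon$-indices appearing in each, so that most pairings vanish by disjoint support.
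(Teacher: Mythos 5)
Your proposal is correct and follows essentially the same substitute-and-verify strategy the paper invokes in the remark preceding the proposition (``by substituting these into the equations of the preceding lemma and subsequently orthogonalizing with respect to $B_0$''). The one genuinely useful addition you make explicit is the trace identity $B_0(\rho(\epsilon_i)\rho(\epsilon_j),\rho(\epsilon_k)\rho(\epsilon_l)) = 8\,\delta_{\{i,j\},\{k,l\}}$, which reduces orthonormality to a purely combinatorial check on the coefficient vectors and makes the hand verification tractable -- the paper leaves this implicit.
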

We now wish to extend this to $B_0$-orthonormal bases of $\mathfrak{spin}(7)$ and $\mathfrak{spin}(9)$. Denoting by $\iota\: \text{Mat}_{8}(\R)\hookrightarrow \text{Mat}_9(\R)$ the embedding as the lower right hand $8\times 8$ block, one has,
\begin{proposition} \label{onbasisspin7spin9}
	The basis $\{\nu_1,\dots,\nu_{14}\}$ extends to a $B_0$-orthonormal basis of $\mathfrak{spin}(7)$ given by
	$\{\nu_1,\dots,\nu_{14},\nu'_{15},\dots,\nu'_{21}\}$ 
	and a $B_0$-orthonormal basis of $\mathfrak{spin}(9)$ given by \linebreak 
	$
	\{\iota(\nu_1),\dots,\iota(\nu_{14}),\iota(\nu'_{15}),\dots,\iota(\nu'_{21}),\nu'_{22},\dots , \nu'_{36}\}
	$, where
	\begin{align*}
	\nu'_{15}&:= 
	\frac{\rho (\epsilon_1) \rho (\epsilon_2)+\rho (\epsilon_3) \rho (\epsilon_4)+\rho (\epsilon_5) \rho (\epsilon_6)}{2 \sqrt{6}},\quad \nu'_{16}:= \frac{\rho (\epsilon_1) \rho (\epsilon_3)-\rho (\epsilon_2) \rho (\epsilon_4)+\rho (\epsilon_6) \rho (\epsilon_7)}{2 \sqrt{6}},\\ \nu'_{17}&:= \frac{\rho (\epsilon_1) \rho (\epsilon_4)+\rho (\epsilon_2) \rho (\epsilon_3)+\rho (\epsilon_5) \rho (\epsilon_7)}{2 \sqrt{6}},\quad \nu'_{18}:= \frac{-\rho (\epsilon_1) \rho (\epsilon_5)+\rho (\epsilon_2) \rho (\epsilon_6)+\rho (\epsilon_4) \rho (\epsilon_7)}{2 \sqrt{6}},\\ \nu'_{19}&:= -\frac{\rho (\epsilon_1) \rho (\epsilon_6)+\rho (\epsilon_2) \rho (\epsilon_5)-\rho (\epsilon_3) \rho (\epsilon_7)}{2 \sqrt{6}},\quad \nu'_{20}:=\quad \frac{\rho (\epsilon_1) \rho (\epsilon_7)+\rho (\epsilon_3) \rho (\epsilon_6)+\rho (\epsilon_4) \rho (\epsilon_5)}{2 \sqrt{6}}, \\ \nu'_{21}&:= -\frac{\rho (\epsilon_2) \rho (\epsilon_7)+\rho (\epsilon_3) \rho (\epsilon_5)-\rho (\epsilon_4) \rho (\epsilon_6)}{2 \sqrt{6}},
	\end{align*}
	and {\small
\begin{align*}
	\nu'_{22}&:= \sqrt{2}\left(E_{2,3}^{(9)} -\sqrt{\frac{3}{2}} \  \iota(\nu'_{15}) \right), \quad \nu'_{23}:= \sqrt{2}\left(E_{2,4}^{(9)} +\sqrt{\frac{3}{2}} \  \iota(\nu'_{16}) \right), \quad \nu'_{24}:=  \sqrt{2}\left(E_{2,5}^{(9)} +\sqrt{\frac{3}{2}} \  \iota(\nu'_{17}) \right),\\
	\nu'_{25}&:=  \sqrt{2}\left(E_{2,6}^{(9)} -\sqrt{\frac{3}{2}} \  \iota(\nu'_{19}) \right),\quad \nu'_{26}:=  \sqrt{2}\left(E_{2,7}^{(9)} +\sqrt{\frac{3}{2}} \ \iota( \nu'_{18}) \right), \quad \nu'_{27}:=  \sqrt{2}\left(E_{2,8}^{(9)} -\sqrt{\frac{3}{2}} \ \iota( \nu'_{20}) \right), \\
	\nu'_{28}&:=  \sqrt{2}\left(E_{2,9}^{(9)} +\sqrt{\frac{3}{2}} \ \iota( \nu'_{21}) \right), \quad \nu'_{28+i}:=\frac{1}{\sqrt{2}} E^{(9)}_{1,1+i} \ \forall i=1,\dots,8.
\end{align*}
}
\end{proposition}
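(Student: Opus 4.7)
The plan is to verify the two claims in stages, establishing first the basis for $\mathfrak{spin}(7)$ and then the extension to $\mathfrak{spin}(9)$. For each stage I would check three things in order: (i) the new basis elements lie in the ambient Lie algebra; (ii) they are mutually $B_0$-orthonormal and $B_0$-orthogonal to all previously constructed elements; and (iii) dimension counting ($\dim\mathfrak{spin}(7)=21$, $\dim\mathfrak{spin}(9)=36$) then forces the orthonormal system to be a basis.

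For the first extension, since $\dim\mathfrak{g}_2 = 14$ exactly seven further elements are needed. The $\nu'_{15},\ldots,\nu'_{21}$ are explicit linear combinations of the $\rho(\epsilon_i)\rho(\epsilon_j)$ and thus automatically lie in $\mathfrak{spin}(7)$. All orthonormality claims reduce, upon expansion, to inner products of the form $B_0(\rho(\epsilon_i)\rho(\epsilon_j),\rho(\epsilon_k)\rho(\epsilon_\ell))$. Using $\rho(\epsilon_i)^2 = -I$ together with the Clifford anticommutation relations, these simplify to a constant multiple of $\delta_{ik}\delta_{j\ell}$, so the full orthonormality claim becomes an arithmetic verification of the coefficients defining $\nu_1,\ldots,\nu_{14},\nu'_{15},\ldots,\nu'_{21}$. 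Combined with Proposition \ref{onbasisg2su3}, the $21$ resulting elements are pairwise $B_0$-orthonormal and hence linearly independent, so they form a basis of $\mathfrak{spin}(7)$.

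For the second extension, I would view $\mathfrak{spin}(7) \hookrightarrow \mathfrak{so}(8) \cong \iota(\mathfrak{so}(8)) \subseteq \mathfrak{so}(9) = \mathfrak{spin}(9)$ by composing the spin representation with the block embedding $\iota$. A structural observation simplifies the analysis: $\mathfrak{g}_2 \subset \mathfrak{spin}(7)$ is by definition the stabilizer of the vector $\phi_1 \in \R^8$, so its image in $\mathfrak{so}(8)$ consists of matrices with vanishing first row and column. Consequently, the seven elements $\nu'_{15},\ldots,\nu'_{21}$ spanning the $B_0$-complement of $\mathfrak{g}_2$ in $\mathfrak{spin}(7)$ are exactly those whose first row and column are nontrivial. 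The $15$ additional basis elements now split naturally: the eight $\nu'_{29},\ldots,\nu'_{36} = \tfrac{1}{\sqrt{2}} E^{(9)}_{1,1+i}$ have support only in the first row and column of the $9\times 9$ matrix, and are trivially $B_0$-orthonormal and $B_0$-orthogonal to everything else; the seven $\nu'_{22},\ldots,\nu'_{28}$ are of the form $\sqrt{2}(E^{(9)}_{2,j} \pm \sqrt{3/2}\,\iota(\nu'_k))$, the subtractions being precisely the Gram--Schmidt corrections needed to enforce $B_0$-orthogonality against the only part of $\iota(\mathfrak{spin}(7))$ that fails to be $B_0$-orthogonal to $E^{(9)}_{2,j}$, namely $\iota(\nu'_{15}),\ldots,\iota(\nu'_{21})$.

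The main computational burden is therefore the explicit evaluation of the pairings $B_0(\iota(\nu'_k), E^{(9)}_{2,j})$ for $k\in\{15,\ldots,21\}$ and $j\in\{3,\ldots,9\}$, confirming that the specific correction coefficients $\pm\sqrt{3/2}$ in the definitions of $\nu'_{22},\ldots,\nu'_{28}$ are exactly those demanded by Gram--Schmidt, and that the outer factor $\sqrt{2}$ normalizes each to unit $B_0$-norm. This reduces, upon expanding each $\nu'_k$ in terms of the $\rho(\epsilon_a)\rho(\epsilon_b)$, to reading off the $(1,j-1)$-entries of the resulting $8\times 8$ matrices and checking a fixed list of trace identities. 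Once these checks are complete, the $36$ elements are $B_0$-orthonormal, hence linearly independent, and $21+15=36=\dim\mathfrak{so}(9)$ forces the system to be a basis of $\mathfrak{spin}(9)$.
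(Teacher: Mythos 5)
Your plan is correct and matches the paper's implicit approach, which is a direct verification by substituting the explicit matrices $\rho(\epsilon_i)$ and checking $B_0$-orthonormality together with a dimension count. Your structural observation — that $\rho(\mathfrak{g}_2) = \mathfrak{spin}(7) \cap \mathfrak{so}(7)$ consists of matrices with vanishing first row and column (because $\mathfrak{g}_2$ stabilizes $\phi_1$), so that under $\iota$ the only part of $\mathfrak{spin}(7)$ that pairs nontrivially with $E^{(9)}_{2,j}$ is $\Span\{\nu'_{15},\dots,\nu'_{21}\}$ — is a genuinely useful organizational point: it explains at a glance why each $\nu'_{22},\dots,\nu'_{28}$ needs a Gram--Schmidt correction against exactly one $\iota(\nu'_k)$ and none against $\iota(\nu_1),\dots,\iota(\nu_{14})$, which the paper's bare formulas do not motivate. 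The remaining burden you identify (reading off the first row/column of each $\nu'_k$ from the expansion in $\rho(\epsilon_a)\rho(\epsilon_b)$ to confirm that $B_0(\iota(\nu'_k), E^{(9)}_{2,j}) = \pm\sqrt{3/2}\,\delta$, and that the factor $\sqrt{2}$ normalizes) is exactly the arithmetic check the paper leaves implicit; you have correctly reduced everything to that. One small caveat worth making explicit in a finished write-up: it is not enough that $E^{(9)}_{2,j}$ is orthogonal to the $\mathfrak{g}_2$ part — you also need that each $E^{(9)}_{2,j}$ pairs with \emph{only one} of the $\iota(\nu'_k)$, which is a further claim that falls out of the same trace computation and should be stated rather than assumed.
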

\newpage
\section{Invariant Differential Forms on $S^{15}=\Spin(9)/\Spin(7)$}\label{AppendixB_forms}
Here we give explicit formulas, in terms of the basis (\ref{S15ONB}), for the differential forms on $S^{15}=\Spin(9)/\Spin(7)$ discussed in Section \ref{spin9spin7}:
\begin{align*}
	\omega&:=   -e_{1,  8,  9}+e_{1,  10,  11}+e_{1,  12,  13}-e_{1,  14,  15}-e_{2,  8,  10}-e_{2,  9,  11}+e_{2,  12,  14}+e_{2,  13,  15} \\ &\qquad    -e_{3,  8,  11}+e_{3,  9,  10}+e_{3,  12,  15}-e_{3,  13,  14}-e_{4,  8,  12}-e_{4,  9,  13}-e_{4,  10,  14}-e_{4,  11,  15} \\ &\qquad   -e_{5,  8,  13}+e_{5,  9,  12}-e_{5,  10,  15}+e_{5,  11,  14}-e_{6,  8,  14}+e_{6,  9,  15}+e_{6,  10,  12}-e_{6,  11,  13} \\ &\qquad   -e_{7,  8,  15}-e_{7,  9,  14}+e_{7,  10,  13}+e_{7,  11,  12}     , \\
	\Psi &:= e_{8,9,10,11} + e_{8,9,12,13} -e_{8,9,14,15} + e_{8,10,12,14} + e_{8,10,13,15} + e_{8,11,12,15} \\ &\qquad  -e_{8,11,13,14}   -e_{9,10,12,15}  + e_{9,10,13,14} + e_{9,11,12,14} + e_{9,11,13,15} \\ &\qquad  -e_{10,11,12,13} + e_{10,11,14,15} + e_{12,13,14,15} , \\
	\sqrt{\frac{a}{2}} \ d\omega &= e_{1,2,8,11}-e_{1,2,9,10}+e_{1,2,12,15}-e_{1,2,13,14}-e_{1,3,8,10}-e_{1,3,9,11}-e_{1,3,12,14}-e_{1,3,13,15}\\
	&\qquad +e_{1,4,8,13}-e_{1,4,9,12}-e_{1,4,10,15}+e_{1,4,11,14}-e_{1,5,8,12}-e_{1,5,9,13}+e_{1,5,10,14}+e_{1,5,11,15}\\
	&\qquad -e_{1,6,8,15}-e_{1,6,9,14}-e_{1,6,10,13}-e_{1,6,11,12}+e_{1,7,8,14}-e_{1,7,9,15}+e_{1,7,10,12}-e_{1,7,11,13}\\
	&\qquad +e_{2,3,8,9}-e_{2,3,10,11}+e_{2,3,12,13}-e_{2,3,14,15}+e_{2,4,8,14}+e_{2,4,9,15}-e_{2,4,10,12}-e_{2,4,11,13}\\
	&\qquad +e_{2,5,8,15}-e_{2,5,9,14}-e_{2,5,10,13}+e_{2,5,11,12}-e_{2,6,8,12}+e_{2,6,9,13}-e_{2,6,10,14}+e_{2,6,11,15}\\
	&\qquad -e_{2,7,8,13}-e_{2,7,9,12}-e_{2,7,10,15}-e_{2,7,11,14}+e_{3,4,8,15}-e_{3,4,9,14}+e_{3,4,10,13}-e_{3,4,11,12}\\
	&\qquad -e_{3,5,8,14}-e_{3,5,9,15}-e_{3,5,10,12}-e_{3,5,11,13}+e_{3,6,8,13}+e_{3,6,9,12}-e_{3,6,10,15}-e_{3,6,11,14}\\
	&\qquad -e_{3,7,8,12}+e_{3,7,9,13}+e_{3,7,10,14}-e_{3,7,11,15}+e_{4,5,8,9}+e_{4,5,10,11}-e_{4,5,12,13}-e_{4,5,14,15}\\
	&\qquad +e_{4,6,8,10}-e_{4,6,9,11}-e_{4,6,12,14}+e_{4,6,13,15}+e_{4,7,8,11}+e_{4,7,9,10}-e_{4,7,12,15}-e_{4,7,13,14}\\
	&\qquad -e_{5,6,8,11}-e_{5,6,9,10}-e_{5,6,12,15}-e_{5,6,13,14}+e_{5,7,8,10}-e_{5,7,9,11}+e_{5,7,12,14}-e_{5,7,13,15}\\
	&\qquad -e_{6,7,8,9}-e_{6,7,10,11}-e_{6,7,12,13}-e_{6,7,14,15}-\frac{3 a e_{8,9,10,11}}{2b}-\frac{3 a e_{8,9,12,13}}{2b}+\frac{3 a e_{8,9,14,15}}{2b}\\
	&\qquad -\frac{3 a e_{8,10,12,14}}{2b}-\frac{3 a e_{8,10,13,15}}{2b}-\frac{3 a e_{8,11,12,15}}{2b}+\frac{3 a e_{8,11,13,14}}{2b}+\frac{3 a e_{9,10,12,15}}{2b}\\
	&\qquad -\frac{3 a e_{9,10,13,14}}{2b} -\frac{3 a e_{9,11,12,14}}{2b}-\frac{3 a e_{9,11,13,15}}{2b}+\frac{3 a e_{10,11,12,13}}{2b}-\frac{3 a e_{10,11,14,15}}{2b}\\
 &\qquad -\frac{3 a e_{12,13,14,15}}{2b}    \\
	\sqrt{\frac{a}{2}} \ d\Psi&= e_{1,8,10,12,15}-e_{1,8,10,13,14}-e_{1,8,11,12,14}-e_{1,8,11,13,15}+e_{1,9,10,12,14}+e_{1,9,10,13,15}\\
	&\qquad +e_{1,9,11,12,15}-e_{1,9,11,13,14}-e_{2,8,9,12,15}+e_{2,8,9,13,14}+e_{2,8,11,12,13}-e_{2,8,11,14,15}\\
	&\qquad -e_{2,9,10,12,13}+e_{2,9,10,14,15}+e_{2,10,11,12,15}-e_{2,10,11,13,14}+e_{3,8,9,12,14}+e_{3,8,9,13,15}\\
	&\qquad -e_{3,8,10,12,13}+e_{3,8,10,14,15}-e_{3,9,11,12,13}+e_{3,9,11,14,15}-e_{3,10,11,12,14}-e_{3,10,11,13,15}\\
	&\qquad +e_{4,8,9,10,15}-e_{4,8,9,11,14}+e_{4,8,10,11,13}-e_{4,8,13,14,15}-e_{4,9,10,11,12}+e_{4,9,12,14,15}\\
	&\qquad -e_{4,10,12,13,15}+e_{4,11,12,13,14}-e_{5,8,9,10,14}-e_{5,8,9,11,15}-e_{5,8,10,11,12}+e_{5,8,12,14,15}\\
	&\qquad -e_{5,9,10,11,13}+e_{5,9,13,14,15}+e_{5,10,12,13,14}+e_{5,11,12,13,15}+e_{6,8,9,10,13}+e_{6,8,9,11,12}\\
	&\qquad -e_{6,8,10,11,15}-e_{6,8,12,13,15}-e_{6,9,10,11,14}-e_{6,9,12,13,14}+e_{6,10,13,14,15}+e_{6,11,12,14,15}\\
	&\qquad -e_{7,8,9,10,12}+e_{7,8,9,11,13}+e_{7,8,10,11,14}+e_{7,8,12,13,14}-e_{7,9,10,11,15}-e_{7,9,12,13,15}\\
	&\qquad -e_{7,10,12,14,15}+e_{7,11,13,14,15}.
\end{align*} 

We also record in Table \ref{Tab:isotropytypesS15} the \emph{isotropy types} of the forms $\omega, \Psi, d\omega, d\Psi$ from Section \ref{spin9spin7}, i.e. the number of factors from each isotropy component. The isotropy types of all other invariant forms in Table \ref{Tab:invariant_forms_S15} may be easily deduced from these. 
\begin{table}[h!] 
	\centering	
	\caption{Isotropy Types of Invariant Forms on $S^{15}=\Spin(9)/\Spin(7)$}
	\begin{tabular}{ |l||l| }
		\hline
		Form & Isotropy Type \\
		\hline
		$\omega $   &    $\mathfrak{m}_F\otimes \Lambda^2\mathfrak{m}_B$   \\
		$\Psi$ & $\Lambda^4\mathfrak{m}_B$ \\
		$d\omega$ & $(\Lambda^2\mathfrak{m}_F \otimes \Lambda^2\mathfrak{m}_B) \oplus (\Lambda^4 \mathfrak{m}_B)$ \\
		$d\Psi$ & $\mathfrak{m}_F\otimes \Lambda^4 \mathfrak{m}_B$ \\
		\hline
	\end{tabular}
	\label{Tab:isotropytypesS15}
\end{table}

\bigskip
\bigskip

\bibliography{bibliodatabase}
\bibliographystyle{alpha}


\end{document}